\documentclass[10pt, a4paper]{amsart}

\usepackage[T1]{fontenc}

\usepackage{microtype}

\usepackage[abbrev]{amsrefs}

\newtheorem{theorem}{Theorem}
\newtheorem{proposition}{Proposition}[section]
\newtheorem{lemma}[proposition]{Lemma}
\newtheorem{corollary}[proposition]{Corollary}
\theoremstyle{definition}

\theoremstyle{remark}
\newtheorem{remark}{Remark}


\newcommand{\eps}{\varepsilon}
\newcommand{\Eps}{{\boldsymbol{\varepsilon}}}

\newcommand{\R}{\mathbf{R}}
\newcommand{\C}{\mathbf{C}}
\newcommand{\N}{\mathbf{N}}

\newcommand{\Peps}{(\mathcal{P}^\eps)}
\newcommand{\Neps}{\mathcal{N}^\eps}
\newcommand{\Qeps}{(\mathcal{Q}^\eps)}
\newcommand{\Meps}{\mathcal{M}^\eps}

\newcommand{\abs}[1]{\lvert#1\rvert}
\newcommand{\dualprod}[2]{\langle #1 , #2 \rangle}
\newcommand{\st}{\: :\:}
\newcommand{\charfun}[1]{\chi_{#1}}
\newcommand{\restrictedto}[1]{\vert_{#1}}
\newcommand{\weakto}{\rightharpoonup}
\newcommand{\muleb}[1]{\mathcal{L}^{#1}}

\newcommand{\Norm}[1]{\Vert#1\Vert}
\DeclareMathOperator{\supp}{supp}
\DeclareMathOperator{\diam}{diam}
\DeclareMathOperator{\capa}{cap}
\DeclareMathOperator{\dist}{dist}

\newcommand\logeps{{\vert\!\log\eps\vert}}
\newcommand\logEps{{\vert\!\log\vert {\boldsymbol\eps }\vert\vert}}

\title{Desingularization of vortices for the Euler equation}

\author{Didier Smets}
\address{Laboratoire Jacques-Louis Lions\\ Universit\'e Pierre et Marie Curie\\ 4 place Jussieu BC 187\\ 75252 Paris\\ France}
\email{smets@ann.jussieu.fr}
\author{Jean Van Schaftingen}
\thanks{The second author was partially supported by the Fonds de la Recherche Scientifique-FNRS (Belgium) and by the Fonds Sp\'eciaux de Recherche (Universit\'e catholique de Louvain)}
\address{Universit\'e catholique de Louvain\\
D\'epartment de Math\'ematique\\
Chemin du Cyclotron 2\\
1348 Louvain-la-Neuve\\
Belgium}
\email{Jean.VanSchaftingen@uclouvain.be}
\date{\today}

\keywords{Desingularization, singular limit, asymptotic behavior, stationary Euler equation, vortices, steady planar vortex pairs, plane capacity, Kirchhoff--Routh function, Robin function}

\subjclass[2000]{Primary  35B25, 35J20; Secondary 35J65, 35R35, 76B47, 76M30}

\begin{document}
\begin{abstract}
We study the existence of stationary classical solutions of the incompressible Euler equation in the plane that approximate singular stationary solutions of this equation.
The construction is performed by studying the asymptotics of equation $-\eps^2 \Delta u^\eps=(u^\eps-q-\frac{\kappa}{2\pi} \log \frac{1}{\eps})_+^p$ with Dirichlet boundary conditions and $q$ a given function.
We also study the desingularization of pairs of vortices by minimal energy nodal solutions and the desingularization of rotating vortices.
\end{abstract}

\maketitle


\section{Introduction}

\subsection{Singular solutions to the Euler equation}

The incompressible Euler equations
\[
\left\{
  \begin{aligned}
    \nabla \cdot \mathbf{v} &= 0, \\
    \mathbf{v}_t + \mathbf{v}\cdot \nabla \mathbf{v}&=-\nabla p,
  \end{aligned}
\right. 
\]
describe the evolution of the velocity $\mathbf{v}$ and the pressure $p$ in an incompressible flow. In $\R^2$, the vorticity $\omega = \nabla \times \mathbf{v}=\partial_1 \mathbf{v}_2-\partial_2 \mathbf{v}_1$ of a solution of the Euler equations obey the transport equation
\[
\omega_t + \mathbf{v} \cdot \nabla \omega = 0 
\]
and the velocity field $\mathbf{v}$ can be recovered from the vorticity function $\omega$ through the Biot--Savart law
\[
\mathbf{v} = \omega * \frac{1}{2\pi} \frac{-x^\perp}{\abs{x}^2},
\]
where $x^\perp=(x_2, -x_1)$. 
Special singular solutions of the Euler equations are given by
\footnote{One needs to give a meaning to the equation in this case, since the velocity field generated by a vortex point is singular precisely on that vortex point. It consists in considering that each vortex point is transported only by the velocity field created by the other vortex points (see e.g. S.\thinspace Schochet \protect{\cite{Schochet_CPDE_95}} for details and further discussion). }
\[
\omega = \sum_{i=1}^k \kappa_i \delta_{x_i(t)}, 
\]
corresponding to  
\[
  \mathbf{v}(x)=-\sum_{i=1}^k \frac{\kappa_i}{2\pi} \frac{(x-x_i(t))^\perp}{\abs{x-x_i(t)}^2},
\]
and the positions of the vortices $x_i : \R \to \R^2$ satisfy
\[
  \dot{x}_i(t)=-\sum_{\substack{j=1 \\ j \ne i}}^k \frac{\kappa_j}{2\pi} \frac{(x_i(t)-x_j(t))^\perp}{\abs{x_i(t)-x_j(t)}^2}. 
\]
In terms of the Kirchhoff--Routh function
\[
  \mathcal{W}(x_1, \dotsc, x_k)=\frac{1}{2} \sum_{i \ne j} \frac{\kappa_i\kappa_j}{2\pi} \log \frac{1}{\abs{x_i-x_j}},
\]
the positions obey Kirchhoff's law
\begin{equation}
\label{equationKirchhoff}
  \kappa_i \dot{x}_i=(\nabla_{x_i} \mathcal{W})^\perp,
\end{equation}
which is a Hamiltonian formulation of the dynamics of the vortices. 

\medskip

In simply-connected bounded domains $\Omega \subset \R^2$, similar singular solutions exist. If one requires for example that the normal component of $\mathbf{v}$ vanishes on the boundary, the associated Kirchoff--Routh function is then given by 
\begin{equation}
\label{eqKRDomainsHomog}
  \mathcal{W}(x_1, \dotsc, x_k)=\frac{1}{2} \sum_{i \ne j} \kappa_i\kappa_j G(x_i, x_j)+\sum_{i=1}^k \frac{\kappa_i^2}{2}H(x_i, x_i),
\end{equation}
where $G$ is the Green function of $-\Delta$ on $\Omega$ with Dirichlet boundary conditions and $H$ is its regular part.\footnote{The function $x \mapsto H(x, x)$ is called the \emph{Robin function} of $\Omega$.} 
One can also prescribe a condition $v_n$ on the outward component of the velocity on the boundary. Since we are dealing with an incompressible flow, the boundary data should satisfy $\int_{\partial \Omega} v_n=0$. Let $\mathbf{v}_0$ be the unique harmonic field whose normal component on the boundary is $v_n$; i.e., $\mathbf{v}_0$ satisfies
\[
\left\{
\begin{aligned}
  \nabla \cdot \mathbf{v}_0&=0, & & \text{in $\Omega$}, \\
  \nabla \times \mathbf{v}_0&=0, & & \text{in $\Omega$}, \\
  n \cdot \mathbf{v}_0&=v_n& & \text{on $\partial \Omega$},
\end{aligned}
\right. 
\]
where $\nabla \times (u, v)=\partial_1 v-\partial_2 u$ and $n$ is the outward normal, then the positions of the vortices are obtained by the modified law 
\[
  \dot{x}_i=(\nabla_{x_i} \mathcal{W})^\perp +\mathbf{v}_0. 
\]
Since $\Omega$ is simply-connected $\mathbf{v}_0$ can be written $\mathbf{v}_0=(\nabla \psi_0)^\perp$ where the stream function $\psi_0$ is characterized up to a constant by
\begin{equation}
 \label{eqpsi0}
\left\{
\begin{aligned}
-\Delta \psi_0&=0& &\text{in $\Omega$}, \\
-\frac{\partial \psi_0}{\partial \tau}&=v_n & &  \text{on $\partial \Omega$},
\end{aligned}
\right. 
\end{equation} 
where $\frac{\partial \psi_0}{\partial \tau}$ denotes the tangential derivative on $\partial \Omega$. 
The Kirchhoff--Routh function associated to the vortex dynamics becomes then
\begin{equation}
\label{KRDomains}
  \mathcal{W}(x_1, \dotsc, x_k)=\frac{1}{2} \sum_{i \ne j} \kappa_i\kappa_j G(x_i, x_j)+\sum_{i=1}^k \frac{\kappa_i^2}{2}H(x_i, x_i)+\sum_{i=1}^k \kappa_i \psi_0(x_i),
\end{equation}
see C.\thinspace C.\thinspace Lin \cite{Lin1941} (who uses opposite sign conventions).

\subsection{Desingularization of vortices} One way to justify the weak formulation for point vortex solutions of the Euler equations is to approximate these solutions by classical solutions. This can actually be done, on finite time intervals, by considering regularized initial data for the vorticity (see e.g.\ C.\thinspace Marchioro and M. Pulvirenti \cite{MarchioroPulvirenti1983}). 

 Critical points of the Kirchhoff--Routh function $\mathcal{W}$ give rise to stationary vortex points solutions of the Euler equations. As noted above, these weak stationary solutions can be approximated by classical solutions of the Euler equations. These do not need be stationary solutions though, and one can wish to approximate the stationary vortex-point solutions by stationary classical solutions. In the simplest case, corresponding to a single point vortex in a simply-connected domain, we obtain the following

 \begin{theorem}\label{thm:resu}
	 Let $\Omega \subset \R^2$ be a bounded simply-connected smooth\footnote{Here and in the sequel, smooth means Lipschitz and is sufficient for our goals.} domain and $v_n:\partial \Omega \to \R\in L^s(\partial \Omega)$ for some $s>1$ be such that $\int_{\partial \Omega} v_n = 0$. Let $\kappa >0$ be given. For $\eps>0$ there exist smooth stationary solutions $\mathbf{v}_\eps$ of the Euler equation in $\Omega$ with outward boundary flux given by $v_n$, corresponding to vorticities $\omega_\eps$, such that ${\rm supp}(\omega_\eps) \subset B(x_\eps, C\eps)$ for some $x_\eps \in \Omega$ and $C>0$ not depending on $\eps$. Moreover, as $\eps \to 0$, 
\[
\int_\Omega \omega_\eps \to \kappa,
\]
and
\[
   \mathcal{W}(x^\eps) \to \sup_{x \in \Omega} \mathcal{W}(x). 
\]
\end{theorem}

Other situations, corresponding to pairs of vortices of opposite signs, multiply-connected bounded 
domains or unbounded domains are discussed in Section~\ref{sect:resu}. 

\medskip

We are aware essentially of two methods to construct stationary solutions of the Euler equations that we call the vorticity method and the stream-function method. 

The vorticity method was introduced by V.\thinspace Arnold (see \cite{ArnoldKhesin}*{Chapter II \S 2}), and was implemented successfully by G.\thinspace R.\thinspace Burton \cite{Burton1988} and B.\thinspace Turkington \cite{Turkington1983}. It roughly consists in maximizing the kinetic energy
\[
  \frac{1}{2}\int_{\Omega} \int_{\Omega} \omega(x)G(x, y)\omega(y)\, dx\, dy+\int_{\Omega} \psi_0(x)\omega(x)\, dx+\frac{1}{2} \int_{\Omega} \abs{\nabla \psi_0}^2,
\]
under some constraints on the sublevel sets of $\omega$. The function $\omega$ is the vorticity of the flow and a stream function $\psi$ is the solution to
\[
\left\{
 \begin{aligned}
   -\Delta \psi &=\omega & & \text{in $\Omega$},\\
   \psi&=\psi_0 & & \text{on $\partial \Omega$}.
 \end{aligned}
\right.
\]
Considering suitable families of constraints on the sublevel sets of $\omega$, one can obtain families of solutions converging to stationary vortex-point solutions. The differentiability of those solutions is not guaranteed (the solutions correspond to vortex patches of constant density). 

The stream-function method starts from the observation that if $\psi$ satisfies
\[
  -\Delta \psi=f(\psi),
\]
for some arbitrary function $f \in C^1(\R)$, then $\mathbf{v}=(\nabla \psi)^\perp$ and $p=F(\psi)-\frac{1}{2}\abs{\nabla \psi}^2$, with $F(s)=\int_0^s f$ form a stationary solution to the Euler equations. Moreover, the velocity $\mathbf{v}$ is irotational on the set where $f(\psi)=0$.

We now set $q=-\psi_0$ and $u=\psi-\psi_0$, so that $u=0$ on $\partial \Omega$ and $-\Delta u = f(u-q)$ in $\Omega$. If we assume that $\inf_{\Omega} q > 0$ and $f(t)=0$ when $t \le 0$, the vorticity set $\{ x \st f(\psi(x))> 0 \}$ is bounded away from the boundary. 
When $f$ satisfies also some monotonicity and growth conditions, $\Omega=\R^2_+$ and $q(x)=W x_1+d$ with $W > 0$ and $d>0$, J.\thinspace Norbury \cite{Norbury1975} has shown the existence of solutions to $-\Delta u = \nu f(u-q)$, where $\nu > 0$ is a Lagrange multiplier a priori unknown by minimizing $\int_{\Omega} \abs{\nabla u}^2$ under the constraint
\[
  \int_{\Omega} F(u-q)=\mu
\]
in $H^1_0(\Omega)$ when $\Omega$ is the half-plane $\R^2_+$. M.\thinspace S.\thinspace Berger and L.\thinspace E.\thinspace Fraenkel \cite{BergerFraenkel1980} have obtained corresponding results for a bounded domain $\Omega \subset \R^2$, and they began studying the asymptotics for variable $\mu$ and $q$, but the lack of information on $\nu$ remained an obstacle. 

The unknown $\nu$ can be avoided by minimizing $\int_{\Omega} \frac{1}{2}\abs{\nabla u}^2-\frac{1}{\eps^2}F(u-q)$ under the natural constraint $\int_{\Omega} \frac{1}{2}\abs{\nabla u}^2-\frac{1}{\eps^2}uf(u-q)=0$. Yang Jianfu \cite{Yang1991} has used this approach in $\R^2_+$ with $q(x)=Wx_1+d$ and has studied the asymptotic behavior of the solution $u^\eps$ when $\eps \to 0$: If
\begin{align*}
  A_\eps&=\{ x \in \R^2_+ \st f(u^\eps-q) > 0\}, & \kappa_\eps&=\frac{1}{\eps^2} \int_{\Omega} f(u^\eps-q),
\end{align*}
and $x^\eps \in A^\eps$, then $\diam A^\eps \to 0$, $\dist(x^\eps, \partial \R^2_+)\to 0$,  
and
\[
  \frac{u^\eps}{\kappa^\eps}-G(\cdot, x^\eps) \to 0
\]
in $W^{1, r}_{\mathrm{loc}}(\R^2_+)$, for $r \in [1, 2)$. Li Gongbao, Yan Shusen and Yang Jianfu \cite{LiYanYang2005} obtained a similar result on bounded domains, with the additional information that $q(a^\eps) \to \min_{\Omega} q$. 
These results are in striking contrast with the observation made at the beginning that the dynamics of the vortices is governed by the Kirchhoff--Routh function $\mathcal{W}$ defined by \eqref{KRDomains}, which implies that stationary vortices should be localized around a critical point of $x \mapsto \frac{\kappa^2}{2} H(x, x)-\kappa q(x)$. 

In fact, the results in \cites{Yang1991, LiYanYang2005} do not answer the question about the desingularization of stationary vortex point solutions to the Euler equation. Indeed, in the  case of bounded domains for example, their solutions satisfy  
$\Norm{\nabla u}_{\mathrm{L}^2}^2=O\bigl(\logeps^{-1}\bigr)$, so that testing the equation against the function $\min(u^\eps, q)$ and using the fact that $q$ is harmonic and nonnegative, we have
\[
  \kappa^\eps \min_{\partial \Omega} q \le
  \frac{1}{\eps^2}\int_{\Omega} f(u^\eps-q) =\int_{\Omega \setminus A^\eps} \abs{\nabla u^\eps}^2=O\bigl(\logeps^{-1}\bigr),
\]
i.e.\ $\kappa^\eps \to 0$. In some sense, the family of solutions $u^\eps$ provides a desingularization of point-vortex solutions with vanishing vorticity. The asymptotic position is consistent with the fact that when the vorticities tend to zero, the term $\sum_{i=1}^k \kappa_i \psi_0(x_i)$ becomes dominant in the Kirchhoff--Routh function \eqref{KRDomains}. 

\medskip

In order to desingularize point-vortex solutions with non-vanishing vorticity, M.\thinspace S.  Berger and L.\thinspace E.\thinspace Fraenkel \cite{BergerFraenkel1980}*{Remark 2} suggest that $q$ should grow like $\log \frac{1}{\eps}$. This brings us to the study of the problem
\begin{equation}
\label{problemPeps}
\left\{
\begin{aligned}
-\Delta u^\eps &=\frac{1}{\eps^2} f(u^\eps - q^\eps ) & &\text{in $\Omega$, }\\
u^\eps &=  0 & &\text{on $\partial \Omega$},
\end{aligned}
\right. \tag{\protect{$\mathcal{P}^\eps$}}
\end{equation}
where $q^\eps=q+\frac{\kappa}{2\pi} \log \frac{1}{\eps}$. 

In Section \ref{sectionSingleVortex}, we study $(\mathcal{P}_\eps)$ in a bounded domain: we first construct solutions and then analyze their asymptotic behavior. Theorem~\ref{thm:resu} is an easy consequence of the results in Section~\ref{sectionSingleVortex}. In Section~\ref{sectionmultiply} we present and extension to multiply-connected domains, while in Section~\ref{sectUnbounded}, we present an extension to unbounded domains which are a perturbation of a half-plane. In Section~\ref{sectionVortexPair} we modify slightly $(\mathcal{P}_\eps)$ in order to construct desingularized solutions for two point vortices of opposite signs. 

\medskip

As a final remark, our results seem connected with the work of M.\thinspace del Pino, M.\thinspace Kowalczyk, and M.\thinspace Musso \cite{delPinoKowalczykMusso2005} on the equation 
\[
-\Delta u=\eps^2 K(x)e^u 
\]
for which the energy concentrates in small balls around points $x_1^\eps, \dotsc, x_k^\eps$. These points tend to a critical point of the function $-\sum_{i=1}^k 2\log K(x_i)-8\pi H(x_i, x_i)-\sum_{i \ne j} 8\pi G(x_i, x_j)$. The connection is clear when one rewrites their equation as $-\Delta u=\frac{1}{\eps^2}\exp(u+\log K-\frac{8\pi}{2\pi}\log \frac{1}{\eps})$. Other related work include the study of the equation $-\Delta u = u^p$ as $p \to \infty$ by P.\thinspace Esposito, M.\thinspace Musso and A. Pistoia \cites{EspositoMussoPistoia2006,EspositoMussoPistoia2007}, and the recent work of T.\thinspace Bartsch, A.\thinspace Pistoia and T.\thinspace Weth \cite{BartschPistoiaWeth} in which systems of three and four vortices are desingularized by studying the equation $-\Delta u= \eps^2 \sinh u$. In all the references, whereas the vorticity concentrates at points, its support does not shrink as $\eps \to 0$.

We also bring to the attention of the reader that there is a similar situation with similar results for three-dimensional axisymmetric incompressible inviscid flows by vorticity methods \cites{Burton1987, FridemannTurkington1981} and stream-function methods \cites{BergerFraenkel1974, AmbrosettiStruwe1989, Yang1995}. However we are not aware of a counterpart of the present work for three-dimensional axisymmetric incompressible inviscid flows. 

\medskip
\noindent{\bf Acknowledgements.} This work was initiated during a visit of the second author at Laboratoire Jacques-Louis Lions of Universit\'e Pierre \& Marie Curie. The authors wish to thank Franck Sueur for fruitful remarks following a first version of the manuscript.

\section{Single vortices in bounded domains}
\label{sectionSingleVortex}

In this section, $\Omega\subset \R^2$ is a bounded simply-connected smooth domain,  $f : \R \to \R$ is the real
function defined 
by $f(s)=s_+^p$ for some $1<p<+\infty$ and where $s_+=\max(s, 0)$,  $\kappa > 0$ is given as well as $q\in \mathrm{W}^{1, r}(\Omega)$ for some $r > 2$.\footnote{Notice that for the proof of Theorem~\ref{thm:resu} we only require a harmonic function $q$ but the proofs of Theorems~\ref{thmLocalMinimum} and \ref{thmRotating} require more general $q$.} We will consider solutions of the boundary value problem \eqref{problemPeps}
where $\eps>0$ is a real parameter. The solutions we consider are the least energy solutions obtained by minimizing the energy functional
\begin{equation}
\label{energyFunctional}
\mathcal{E}^\eps(u)=  \int_{\Omega} \Bigl(\frac{|\nabla u|^2}{2} -
\frac{1}{\eps^2}F(u-q^\eps)\Bigr) 
\end{equation}
over the natural constraint given by the Nehari manifold
\[
\mathcal{N}^\eps = \left\{ u\in H^1_0(\Omega)\setminus \{0\} \ : \ \langle
d\mathcal{E}^\eps(u), u\rangle = 0\right\},
\] 
where $F(s)=\frac{1}{p+1}s_+^{p+1}$ is a primitive of $f$. 
It is standard to
prove the (see e.g.~\cite{Willem1996}*{Theorem 2.18})
\begin{proposition} \label{prop:2.1}
Assume that $q^\eps\geq 0$ on $\Omega$, so that $\Neps \neq \emptyset$, and define
\[
c^\eps = \inf_{u\in \mathcal{N}^\eps} \mathcal{E}^\eps(u). 
\]
Then, there exists $u^\eps \in \mathcal{N}^\eps$ such that $\mathcal{E}^\eps(u^\eps)=c^\eps$,
and $u^\eps$ is a positive solution of $\Peps$. 
\end{proposition}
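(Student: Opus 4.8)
The plan is to obtain $u^\eps$ as a minimizer of $\mathcal{E}^\eps$ on $\Neps$ by the direct method, exploiting that in dimension two the embedding $H^1_0(\Omega)\hookrightarrow L^q(\Omega)$ is compact for every $q<\infty$, so there is no critical-exponent obstruction. First I would analyze the fibering map $\phi_u(t)=\mathcal{E}^\eps(tu)$ for $t>0$ and fixed $u\in H^1_0(\Omega)$ with $u\geq 0$, $u\not\equiv 0$. Since $\phi_u(0)=0$ (because $q^\eps\geq 0$ kills $F(-q^\eps)$) and $\phi_u'(t)=t\Norm{\nabla u}_{L^2}^2-\eps^{-2}\int_\Omega (tu-q^\eps)_+^p\,u$, the key observation is that for each $x$ the map $t\mapsto t^{-1}(tu(x)-q^\eps(x))_+^p$ is nondecreasing when $q^\eps\geq 0$ and $p>1$; hence $t\mapsto\phi_u'(t)/t$ is strictly decreasing from $\Norm{\nabla u}_{L^2}^2>0$ to $-\infty$, so $\phi_u$ has a unique critical point $t_u>0$, which is its global maximum, and $t_u u\in\Neps$. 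This both populates $\Neps$ and gives it the usual mountain-pass structure.

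Next I would record the quantitative consequences of $q^\eps\geq 0$. On $\Neps$ the Nehari identity reads $\Norm{\nabla u}_{L^2}^2=\eps^{-2}\int_\Omega(u-q^\eps)_+^p\,u$; using $(u-q^\eps)_+\leq u$ on $\{u>q^\eps\}$ gives $\eps^{-2}\int_\Omega(u-q^\eps)_+^{p+1}\leq\Norm{\nabla u}_{L^2}^2$, whence $\mathcal{E}^\eps(u)\geq\frac{p-1}{2(p+1)}\Norm{\nabla u}_{L^2}^2$, and, via $(u-q^\eps)_+\leq u_+$ and Sobolev, $\Norm{\nabla u}_{L^2}\geq\delta(\eps)>0$. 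Thus $c^\eps>0$ and every minimizing sequence $(u_n)\subset\Neps$ is bounded in $H^1_0(\Omega)$.

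I would then extract $u_n\weakto u$ in $H^1_0(\Omega)$ with $u_n\to u$ in $L^q(\Omega)$ for all $q<\infty$. Strong $L^q$-convergence lets the nonlinear terms pass to the limit, so $\eps^{-2}\int_\Omega f(u_n-q^\eps)\,u_n\to\eps^{-2}\int_\Omega f(u-q^\eps)\,u=:A$; since $\Norm{\nabla u_n}_{L^2}^2\geq\delta^2$ we get $A>0$, hence $u\neq 0$. Weak lower semicontinuity gives $\Norm{\nabla u}_{L^2}^2\leq A$, so the fibering parameter satisfies $t_u\leq 1$; comparing $\mathcal{E}^\eps(t_u u)$ with $\lim_n\mathcal{E}^\eps(t_u u_n)\leq\lim_n\mathcal{E}^\eps(u_n)=c^\eps$ (using that $t=1$ maximizes $\phi_{u_n}$) forces every inequality to be an equality, whence $\Norm{\nabla u}_{L^2}^2=A$, $t_u=1$, $u\in\Neps$, $\mathcal{E}^\eps(u)=c^\eps$, and in fact $u_n\to u$ strongly.

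Finally I would verify that $\Neps$ is a natural constraint: writing $J(u)=\dualprod{d\mathcal{E}^\eps(u)}{u}$, a Lagrange multiplier $\lambda$ with $d\mathcal{E}^\eps(u)=\lambda\,dJ(u)$ must vanish because, on $\Neps$, a short computation with $f(s)=s_+^p$ gives $\dualprod{dJ(u)}{u}=-\eps^{-2}\int_\Omega(u-q^\eps)_+^{p-1}u\bigl((p-1)u+q^\eps\bigr)<0$; hence $d\mathcal{E}^\eps(u)=0$ and $u$ solves $\Peps$ weakly. Since $-\Delta u=\eps^{-2}(u-q^\eps)_+^p\geq 0$ with $u=0$ on $\partial\Omega$, the weak and strong maximum principles give $u>0$ in $\Omega$, and $u^\eps:=u$ is the sought positive least-energy solution. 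I expect the genuinely delicate points to be precisely the two places where the nonhomogeneity introduced by the shift $q^\eps$ could spoil a textbook Nehari argument, namely the uniqueness of the fibering critical point and the sign of $\dualprod{dJ(u)}{u}$; both are rescued by the hypotheses $q^\eps\geq 0$ and $p>1$, while compactness comes for free in the plane.
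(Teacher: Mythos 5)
Your proof is correct and is essentially the argument the paper has in mind: the paper gives no details, simply calling the result standard and citing Willem's Theorem~2.18, and your fibering-map/direct-method write-up is precisely that standard Nehari-manifold scheme, with the two genuinely nonhomogeneous points (uniqueness of the fibering critical point and the sign of $\dualprod{dJ(u)}{u}$ on $\Neps$) handled correctly via $q^\eps\geq 0$ and $p>1$. The only cosmetic imprecision is that $t\mapsto \phi_u'(t)/t$ is merely nonincreasing (constant on an initial interval where $\{tu>q^\eps\}$ is null) and strictly decreasing only once that set has positive measure, which still yields the unique critical point you use.
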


Note that $q$ is bounded since $r>2$, and therefore $q^\eps\geq 0$ provided $\eps$ is sufficiently small. 

\medskip

Our focus is the asymptotics of $u^\eps$ when $\eps \to 0$. 
In order to describe the asymptotic behavior of $u^\eps$, we introduce the limiting profile
$U_\kappa : \R^2 \to \R$ defined as the unique radially symmetric solution of the problem
\[
\tag{\protect{$\mathcal{U}_\kappa$}}
\label{Ukappa}
\left\{
\begin{aligned}
  &-\Delta U_\kappa = f(U_\kappa), \\
  &\int_{\R^2} f(U_\kappa) =\kappa. 
\end{aligned}
\right. 
\]
For every $\kappa>0$, there exists $\rho_\kappa>0$ such that 
\[
  U_\kappa(y)=
\left\{
\begin{aligned}
  &V_{\rho_\kappa}(y)& &\text{if $y  \in B(0, \rho_\kappa)$}, \\
  &\frac{\kappa}{2\pi} \log \frac{\rho_\kappa}{\abs{y}} & &\text{if $y \in \R^2 \setminus B(0, \rho_\kappa)$},
\end{aligned}\right. 
\]
where $V_\rho : B(0,\rho) \to \R$ satisfies
\[
\left\{
\begin{aligned}
\displaystyle -\Delta V_\rho &= V_\rho^p & & \text{in $B(0, \rho)$}, \\
V_\rho &=  0 && \text{on $\partial B(0, \rho)$}. 
\end{aligned}
\right. 
\]
One can show that $\kappa=\gamma \rho^{-\frac{2}{p-1}}$, for some constant
$\gamma > 0$ depending on the value of $p$. 

\medskip

The Kirchhoff-Routh function $\mathcal{W}$ for one vortex of vorticity $\kappa$ is defined by
\[
 \mathcal{W}(x)=\frac{\kappa^2}{2} H(x, x)-\kappa q(x). 
\] 
Let us also define the quantity
\[
\mathcal{C} = \frac{\kappa^2}{4\pi} \log \rho_\kappa +
\int_{B(0, \rho_\kappa)}\Bigl(\frac{|\nabla U_{\rho_\kappa}|^2}{2} - \frac{U_{\rho_\kappa}^{p+1}}{p+1}\Bigr). 
\]
While the function $\mathcal{W}$ depends on $x \in \Omega$ and on $\kappa$, the quantity $\mathcal{C}$ only depends on $\kappa$ and on $p$. 

\medskip

We set
\begin{equation}
\begin{aligned}\label{defiq}
  A^\eps&=\Big\{ x \in \Omega \st u^\eps(x)> q^\eps(x)\Big\}, \\
  \omega^\eps&=\frac{1}{\eps^2} f(u^\eps-q^\eps), \\ 
  \kappa^\eps&=\int_{\Omega} \omega^\eps, \\
  x^\eps&=\frac{1}{\kappa^\eps}\int_{\Omega} x \, \omega^\eps(x)\, dx, \\
  \rho^\eps&=\rho_{\kappa^\eps},
\end{aligned}
\end{equation}
and respectively refer to these as the vorticity set, the vorticity, the total vorticity, the center of vorticity, and the vorticity radius. 

We will prove

\begin{theorem}\label{thm:K1}
As $\eps \to 0$, we have
\[
  u^\eps=U_{\kappa^\eps} \Big(\frac{\cdot-x^\eps}{\eps}\Big)+\kappa^\eps\Bigl(\frac{1}{2\pi} \log \frac{1}{\eps \rho^\eps}+ H(x^\eps, \cdot)\Bigr)+o(1),
\]
\text{in $\mathrm{W}^{2, 1}_\mathrm{loc}(\Omega)$, in $\mathrm{W}^{1, 2}_0(\Omega)$, and in $\mathrm{L}^\infty(\Omega)$}, where
\[
  \kappa^\eps=\kappa+\frac{2\pi}{\log \frac{1}{\eps}}\Bigl(q(x^\eps)-\kappa H(x^\eps, x^\eps) -\frac{\kappa}{2\pi} \log \frac{1}{\rho_\kappa} \Bigr)+o(\logeps^{-1}),
\]
and
\[
   \mathcal{W}(x^\eps) \to \sup_{x \in \Omega} \mathcal{W}(x). 
\]
One also has
\[
  B(x^\eps, \Bar{r}^\eps) \subset A^\eps \subset B(x^\eps, \mathring{r}^\eps),
\] 
with $\Bar{r}^\eps=\eps \rho_\kappa+o(\eps)$ and $\mathring{r}^\eps=\eps
\rho_\kappa +o(\eps)$. Finally, 
\[
  \mathcal{E}^\eps (u^\eps)= \frac{\kappa^2}{4\pi}\log \frac{1}{\eps}-\mathcal{W}(x^\eps)+\mathcal{C}+o(1). 
\]
\end{theorem}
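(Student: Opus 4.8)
The plan is to prove this by \emph{matched asymptotics} coupled with \emph{two‑sided energy estimates}: an inner expansion near the concentration point governed by the profile $U_\kappa$, an outer expansion governed by the Green function, and a sharp evaluation of $\mathcal{E}^\eps(u^\eps)$ whose comparison with a test‑function upper bound forces $x^\eps$ to maximize $\mathcal{W}$. I would begin with the \emph{upper bound}. For each fixed $x\in\Omega$ I build a competitor modeled on the announced profile,
\[
  w_x(y)=U_\kappa\Big(\tfrac{y-x}{\eps}\Big)+\kappa\Big(\tfrac{1}{2\pi}\log\tfrac{1}{\eps\rho_\kappa}+H(x,y)\Big),
\]
truncated near $\partial\Omega$ to lie in $H^1_0(\Omega)$ and rescaled by a factor $t^\eps\to1$ so that $t^\eps w_x\in\Neps$. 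Using that $U_\kappa$ is harmonic outside $B(0,\rho_\kappa)$ and matches the outer logarithm, a direct computation of $\mathcal{E}^\eps(t^\eps w_x)$ splits into the self‑energy of $U_\kappa$ (producing $\tfrac{\kappa^2}{4\pi}\log\tfrac1\eps+\mathcal{C}$) and the interaction with $q$ and with the regular part $H$ (producing $-\mathcal{W}(x)$); optimizing in $x$ gives $c^\eps\le\tfrac{\kappa^2}{4\pi}\log\tfrac1\eps-\sup_\Omega\mathcal{W}+\mathcal{C}+o(1)$. In particular $\int_\Omega|\nabla u^\eps|^2=O(\logeps)$, and comparison of energies keeps $\kappa^\eps$ bounded above and below.

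Next I would establish \emph{concentration}. From the Green representation $u^\eps=\int_\Omega G(\cdot,y)\,\omega^\eps(y)\,dy$ of total mass $\kappa^\eps$, a concentration‑compactness argument based on the energy bound first gives $\diam A^\eps\to0$ and keeps $x^\eps$ away from $\partial\Omega$ (otherwise $H(x^\eps,x^\eps)\to-\infty$ would violate the energy bound). I then bootstrap: on $A^\eps$ one has $u^\eps>q^\eps\sim\tfrac{\kappa}{2\pi}\log\tfrac1\eps$, while once the support is small the representation yields $u^\eps(x)\le\tfrac{\kappa^\eps}{2\pi}\log\tfrac{1}{\dist(x,\supp\omega^\eps)}+O(1)$; since $\kappa^\eps$ is close to $\kappa$, this exceeds $q^\eps$ only for $\dist(x,x^\eps)\le C\eps$. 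Hence $\diam A^\eps=O(\eps)$, which is the inclusion of Theorem~\ref{thm:resu} and which localizes $q^\eps$ to the value $q^\eps(x^\eps)$ on $A^\eps$ up to $o(1)$.

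For the \emph{blow‑up and matching}, rescaling $\tilde w^\eps(z)=(u^\eps-q^\eps)(x^\eps+\eps z)$ turns the equation into $-\Delta\tilde w^\eps=f(\tilde w^\eps)$ up to the vanishing term $\eps^2(\Delta q)(x^\eps+\eps z)$, with $\int f(\tilde w^\eps)=\kappa^\eps$. By the uniform bounds above and the radial symmetry and uniqueness of the solution of \eqref{Ukappa}, $\tilde w^\eps\to U_\kappa$ in $C^1_{\mathrm{loc}}$, and the transversality $\partial_r U_\kappa(\rho_\kappa)<0$ upgrades this to the squeezing $\Bar r^\eps,\mathring r^\eps=\eps\rho_\kappa+o(\eps)$. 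Reinserting this inner profile into the Green representation produces the global expansion of $u^\eps$; evaluating the matching identity $u^\eps=q^\eps$ on $\partial A^\eps$ — where the inner part vanishes and the outer part equals $\kappa^\eps\big(\tfrac{1}{2\pi}\log\tfrac{1}{\eps\rho^\eps}+H(x^\eps,x^\eps)\big)$ — against $q^\eps(x^\eps)=q(x^\eps)+\tfrac{\kappa}{2\pi}\log\tfrac1\eps$ and solving to next order yields the stated formula for $\kappa^\eps$, and in particular $\kappa^\eps\to\kappa$.

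Finally I compute $\mathcal{E}^\eps(u^\eps)$ exactly: writing $\tfrac12\int|\nabla u^\eps|^2=\tfrac{1}{2\eps^2}\int u^\eps f(u^\eps-q^\eps)$ and splitting $u^\eps=(u^\eps-q^\eps)+q^\eps$, the term $\tfrac12\int q^\eps\omega^\eps\sim\tfrac{\kappa^2}{4\pi}\log\tfrac1\eps$ gives the leading divergence, the inner self‑energy reconstructs $\mathcal{C}$, and the interaction terms reconstruct $-\mathcal{W}(x^\eps)$, so that $\mathcal{E}^\eps(u^\eps)=\tfrac{\kappa^2}{4\pi}\log\tfrac1\eps-\mathcal{W}(x^\eps)+\mathcal{C}+o(1)$; comparing with the upper bound forces $\mathcal{W}(x^\eps)\ge\sup_\Omega\mathcal{W}-o(1)$, whence $\mathcal{W}(x^\eps)\to\sup_\Omega\mathcal{W}$. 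The convergences in $\mathrm{W}^{2,1}_{\mathrm{loc}}$, $\mathrm{W}^{1,2}_0$ and $\mathrm{L}^\infty$ follow respectively from $\Delta u^\eps=-\omega^\eps$ with $\Norm{\omega^\eps}_{\mathrm{L}^1}=\kappa^\eps$ bounded, from energy convergence, and from the Green representation together with the $\mathrm{L}^\infty$ control of the inner profile. I expect the \emph{main obstacle} to be the sharp concentration and transversal matching: soft compactness gives only $\diam A^\eps\to0$ and convergence on compact sets, whereas the conclusion demands the quantitative radii $\Bar r^\eps,\mathring r^\eps=\eps\rho_\kappa+o(\eps)$ and errors small enough to extract the second‑order expansion of $\kappa^\eps$, so one must combine uniform decay estimates in the outer region with the nondegeneracy of $U_\kappa$ to close the $o(\eps)$ and $o(\logeps^{-1})$ gaps one order beyond the leading behavior.
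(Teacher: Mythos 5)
Your skeleton coincides with the paper's (a Nehari-manifold competitor gives the sharp upper bound $c^\eps\le\frac{\kappa^2}{4\pi}\log\frac1\eps-\sup_\Omega\mathcal{W}+\mathcal{C}+o(1)$; any solution satisfying that bound is then shown to satisfy the expansion; comparison forces $\mathcal{W}(x^\eps)\to\sup\mathcal{W}$), but the concentration step hides the real difficulty and, as stated, would fail. ``A concentration-compactness argument based on the energy bound first gives $\diam A^\eps\to 0$'' is not available: nothing in the energy bound prevents $A^\eps$ from having countably many components scattered through $\Omega$, nor several components each carrying a nontrivial share of the vorticity. The paper's Steps 1--3 are devoted precisely to excluding this: (i) testing $\Peps$ against $\min(v^\eps,q^\eps)$ and $(v^\eps-q^\eps)_+$ plus Gagliardo--Nirenberg to get the $O(1)$ bounds; (ii) \emph{capacity estimates} (the Szeg\H o- and Fraenkel-type inequalities of the appendix), exploiting that $v^\eps=q^\eps\sim\frac{\kappa}{2\pi}\logeps$ on $\partial A^\eps$ while $\int_{\Omega\setminus A^\eps}|\nabla v^\eps|^2\le\frac{\kappa^2}{2\pi}\logeps+O(1)$, to get $\diam A^\eps_i\le C\eps$ for \emph{each component} and $\dist(A^\eps_i,\partial\Omega)\ge c$ for the essential ones; (iii) the dichotomy vanishing/essential vortices and, crucially, the exclusion of two or more essential vortices via the relation \eqref{eqVorticitiesGreen}, which says each essential vortex must individually match $q^\eps$, so that summing against $\sum_i\kappa^\eps_i\le\kappa+O(\logeps^{-1})$ and $\kappa^\eps_i\ge c$ yields a contradiction when $k^\eps\ge 2$. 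Your bootstrap ``once the support is small the representation yields\dots'' presupposes exactly what must be proved, and your boundary exclusion via $H(x^\eps,x^\eps)\to-\infty$ presupposes the energy expansion, which itself requires interior concentration --- the paper avoids this circularity by getting the distance bound from capacity \emph{before} any expansion.

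A second genuine error: the claim that $\mathrm{W}^{2,1}_{\mathrm{loc}}$ convergence ``follows from $\Delta u^\eps=-\omega^\eps$ with $\Norm{\omega^\eps}_{\mathrm{L}^1}$ bounded'' is false --- Calder\'on--Zygmund theory fails in $\mathrm{L}^1$, and an $\mathrm{L}^1$ bound on the Laplacian gives only $\mathrm{W}^{1,p}_{\mathrm{loc}}$, $p<2$. The paper's Proposition~\ref{propositionAsymptoticsW21} instead estimates $v^\eps-\Tilde v^\eps$ using the \emph{cancellation} $\int_\Omega(\omega^\eps_1-\Tilde\omega^\eps_1)=0$: subtracting $D^2\Gamma(x-x^\eps_1)$ inside the kernel and using $\abs{D^2\Gamma(x-y)-D^2\Gamma(x-x^\eps_1)}\le C\abs{y-x^\eps_1}/\abs{x-x^\eps_1}^3$ gives the $\mathrm{L}^1$ bound away from the core, while near the core one uses the scale invariance of the $\dot{\mathrm{W}}^{2,1}$ seminorm and the strong convergence of the rescaled profiles; the $\mathrm{L}^\infty$ convergence is likewise a difference of two quantities each of size $\logeps$ and does not follow from the Green representation alone. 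Three smaller repairs: with $q$ merely in $\mathrm{W}^{1,r}$ your blow-up remainder $\eps^2\Delta q$ is meaningless --- subtract constants as the paper does ($v^\eps_i(y)=v^\eps(x^\eps_i+\eps y)-q^\eps(x^\eps_i)$, $q^\eps_i(y)=q(x^\eps_i+\eps y)-q(x^\eps_i)$, which keeps the equation exact); no boundary truncation is needed, since the competitor already equals $\Hat\kappa^\eps G(\Hat x,\cdot)$ near $\partial\Omega$; and your multiplicative projection $t^\eps$ must come with the rate $t^\eps-1=O(\logeps^{-1})$ (Nehari defect $O(1)$ against a second derivative of size $\logeps$) to keep the energy error $o(1)$ --- the paper sidesteps this by tuning the additive parameter $\sigma$ in the vortex strength, where the sign of $\int_{\R^2}f(U_\kappa)-f(U_\kappa+\sigma)$ supplies the intermediate-value argument.
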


Since $\mathcal{W}(x) \to -\infty$ as $x \to \partial \Omega$, by
Theorem~\ref{thm:K1}, up to a subsequence, $x^\eps \to x^*\in \Omega$. Combined
with standard elliptic estimates this yields the convergence  
$u^\eps \to \kappa G(x_*, \cdot\, )$ in $\mathrm{W}^{1, p}_0(\Omega)$ for any $p<2$ and in
$\mathcal{C}^k_{\mathrm{loc}}(\Omega\setminus \{x_*\})$ for any $k\in \N$. If $\partial \Omega$ is smooth enough, then one also has convergence in $\mathcal{C}^k_{\mathrm{loc}}(\Bar{\Omega}\setminus \{x_*\}\})$.

\medskip

The proof of Theorem~\ref{thm:K1} is twofold. First, in Corollary~\ref{cor:upper}, we prove a sharp upper
bounds for the critical level $c^\eps$. Then, in Proposition~\ref{prop:1mai} we show that any solution
satisfying this upper bound needs to satisfy the asymptotic expansion.

\subsection{Upper bounds on the energy}
\label{upperBounds}

We will derive upper bounds for $c^\eps$ by constructing elements of $\mathcal{N}^\eps$ similar to the asymptotic expression of Theorem~\ref{thm:K1}. 

\begin{lemma}
\label{lemmaHatuNehari}
For every $\Hat{x} \in \Omega$, if $\eps>0$ is small enough, there exists 
\[
  \Hat{\kappa}^\eps=\kappa+\frac{2\pi}{\log \tfrac{1}{\eps}}\Bigl( q(\Hat{x})-\kappa H(\Hat{x}, \Hat{x})+\dfrac{\kappa}{2\pi} \log \rho_\kappa \Bigr)+O\bigl(\logeps^{-2}\bigr),
\]
such that, if 
\[
  \Hat{u}^\eps(x)=U_{\Hat{\kappa}^\eps}\Bigl(\frac{x-\Hat{x}}{\eps}\Bigr)+\Hat{\kappa}^\eps 
    \Bigl( \frac{1}{2\pi} \log \frac{1}{\eps\rho_{\Hat{\kappa}^\eps}}+H(\Hat{x}, x) \Bigr),
\]
then 
\[
  \Hat{u}^\eps\in \mathcal{N}^\eps. 
\]
Moreover, we have
\[ 
  \Hat{A}^\eps:=\Bigl\{ x \st \Hat{u}^\eps(x) > q(x)+\frac{\kappa}{2\pi} \log \frac{1}{\epsilon} \Bigr\} \subset B(\Hat{x}, \Hat{r}^\eps),
\]
with $\Hat{r}^\eps=O(\eps)$. 
\end{lemma}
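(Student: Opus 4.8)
The plan is to treat $a := \hat\kappa^\eps$ as a free scalar parameter near $\kappa$, to write the trial function as $\hat u^\eps_a(x) = U_a(\frac{x-\hat x}{\eps}) + a\bigl(\frac{1}{2\pi}\log\frac{1}{\eps\rho_a} + H(\hat x, x)\bigr)$, and to fix $a$ by the single scalar constraint defining $\Neps$. The observation that drives everything is that on $\Omega\setminus B(\hat x, \eps\rho_a)$ the outer expansion $U_a(\frac{x-\hat x}{\eps}) = \frac{a}{2\pi}\log\frac{\eps\rho_a}{\abs{x-\hat x}}$ combines with the correction to give \emph{exactly} $\hat u^\eps_a(x) = a\,G(\hat x, x)$. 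Hence $\hat u^\eps_a \in H^1_0(\Omega)$ and is harmonic away from $\hat x$ outside the core; moreover the flux identity $\int_{B(0,\rho_a)} U_a^p = \int_{\R^2} f(U_a) = a$ forces the radial derivative of $U_a$ to match across $\partial B(0,\rho_a)$, so $\hat u^\eps_a$ is in fact $C^1$ there. Consequently $-\Delta \hat u^\eps_a = \frac{1}{\eps^2} U_a^p(\frac{\cdot-\hat x}{\eps})\,\charfun{B(\hat x,\eps\rho_a)}$ with no interface measure, and integration by parts gives the clean identity $\int_\Omega \abs{\nabla \hat u^\eps_a}^2 = \frac{1}{\eps^2}\int_{B(\hat x,\eps\rho_a)} U_a^p(\frac{\cdot-\hat x}{\eps})\,\hat u^\eps_a$.

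Next I would localize the vorticity set. For any $a = \kappa + O(\logeps^{-1})$, using $\hat u^\eps_a = a\,G(\hat x,\cdot)$ on the exterior one finds $\hat u^\eps_a - q^\eps = \frac{\kappa}{2\pi}\log\frac{\eps}{\abs{x-\hat x}} + O(1)$ uniformly, so $\hat u^\eps_a > q^\eps$ can hold only for $\abs{x-\hat x} = O(\eps)$; together with the trivial inclusion of the core this yields $\hat A^\eps \subset B(\hat x, \hat r^\eps)$ with $\hat r^\eps = O(\eps)$, uniformly over the relevant range of $a$. This proves the final assertion of the lemma and confines all integrals in the Nehari constraint to a ball of radius $O(\eps)$.

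Combining with the identity of the first step, $\hat u^\eps_a \in \Neps$ is equivalent to $\frac{1}{\eps^2}\int_{B(\hat x,\eps\rho_a)} U_a^p(\frac{\cdot-\hat x}{\eps})\,\hat u^\eps_a = \frac{1}{\eps^2}\int_\Omega f(\hat u^\eps_a - q^\eps)\,\hat u^\eps_a$. A direct computation shows that the function $c^\eps_a := \hat u^\eps_a - q^\eps - U_a(\frac{\cdot-\hat x}{\eps})$ has the \emph{global} explicit form $c^\eps_a(x) = \frac{a}{2\pi}\log\frac{1}{\eps\rho_a} + aH(\hat x,x) - q(x) - \frac{\kappa}{2\pi}\log\frac{1}{\eps}$, which is slowly varying and satisfies $c^\eps_a(\hat x) = s - s_0 + O(\logeps^{-1})$ when $a = \kappa + \frac{2\pi s}{\log\frac{1}{\eps}}$ and $s_0 = q(\hat x) - \kappa H(\hat x,\hat x) + \frac{\kappa}{2\pi}\log\rho_\kappa$. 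Rescaling $y = \frac{x-\hat x}{\eps}$, the constraint reduces to the scalar equation $\Phi(a,\eps) := \int_{\R^2}\bigl[f(U_a + c^\eps_a(\hat x + \eps y)) - f(U_a)\bigr]\,\hat u^\eps_a(\hat x + \eps y)\,dy = 0$, the integrand being supported in a bounded $y$-region since $U_a \to -\infty$. Here I would use that $q\in W^{1,r}$ with $r>2$ is Hölder continuous and $H(\hat x,\cdot)$ is Lipschitz to replace $c^\eps_a(\hat x+\eps y)$ by the constant $\bar c := c^\eps_a(\hat x)$ up to an $O(\eps^\alpha)$ error.

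It remains to solve $\Phi(a,\eps)=0$. Writing $\hat u^\eps_a(\hat x + \eps y) = U_a(y) + L_a + O(\eps\abs{y})$ with the divergent constant $L_a = \frac{a}{2\pi}\log\frac{1}{\eps\rho_a} + aH(\hat x,\hat x) \sim \frac{\kappa}{2\pi}\log\frac{1}{\eps}$, I would factor $\Phi = L_a\,\Psi^\eps(a) + R^\eps(a)$ where $R^\eps$ stays bounded and $\Psi^\eps(a)$ equals, up to $O(\eps^\alpha)$, the quantity $\Psi(\bar c) := \int_{\R^2}[f(U_a + \bar c) - f(U_a)]\,dy$. The key structural fact is that $\bar c \mapsto \Psi(\bar c)$ is strictly increasing with $\Psi(0)=0$ and $\Psi'(0) = p\int_{\R^2}(U_a)_+^{p-1} > 0$, precisely because $\int f(U_a) = a$ is the value matching the profile. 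Hence $\Phi/L_a = \Psi(\bar c) + O(\logeps^{-1})$ changes sign as $s$ (equivalently $\bar c \approx s - s_0$) crosses $s_0$, and the intermediate value theorem over $s \in [s_0-\delta, s_0+\delta]$ produces a root $s^\eps$, so that $\hat\kappa^\eps := a(s^\eps) \in \Neps$. Reading $\Psi(\bar c(s^\eps)) = O(\logeps^{-1})$ backwards through the invertibility of $\Psi$ near $0$ gives $\bar c(s^\eps) = O(\logeps^{-1})$, hence $s^\eps = s_0 + O(\logeps^{-1})$ and the stated expansion $\hat\kappa^\eps = \kappa + \frac{2\pi s_0}{\log\frac{1}{\eps}} + O(\logeps^{-2})$. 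The main obstacle is exactly this last step: one must track the interplay of the $\frac{1}{\eps^2}$, $\log\frac{1}{\eps}$ and $\eps^\alpha$ scales carefully, and verify the monotonicity and invertibility of $\Psi$ uniformly in $a$ near $\kappa$, in order to upgrade the qualitative conclusion $\bar c \to 0$ into the sharp $O(\logeps^{-2})$ remainder.
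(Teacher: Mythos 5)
Your proposal is correct and follows essentially the same route as the paper's proof: the same trial family $U_a\bigl(\frac{\cdot-\Hat{x}}{\eps}\bigr)+a\bigl(\frac{1}{2\pi}\log\frac{1}{\eps\rho_a}+H(\Hat{x},\cdot)\bigr)$, the same $O(\eps)$ localization of $\Hat{A}^\eps$ via the exterior identity $\Hat{u}^\eps_a=aG(\Hat{x},\cdot)$, and the same one-parameter intermediate-value shooting on the Nehari constraint, whose leading term $\frac{\kappa}{2\pi}\log\frac{1}{\eps}\bigl(\int_{\R^2} f(U_\kappa+\sigma)-f(U_\kappa)\bigr)$ you isolate exactly as the paper does (your $s$, equivalently $\bar c$, is the paper's $\sigma$ up to $O\bigl(\logeps^{-1}\bigr)$). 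Your two refinements — computing the Dirichlet energy through the exact distributional identity $-\Delta\Hat{u}^\eps_a=\frac{1}{\eps^2}f\bigl(U_a(\frac{\cdot-\Hat{x}}{\eps})\bigr)$ with $C^1$ matching across the core boundary, and spelling out the uniform monotonicity/invertibility of $\Psi$ near $0$ that upgrades the paper's qualitative $\sigma^\eps\to 0$ to $\sigma^\eps=O\bigl(\logeps^{-1}\bigr)$ and hence the stated $O\bigl(\logeps^{-2}\bigr)$ remainder — are welcome elaborations of steps the paper treats more briefly, not a different method.
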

\begin{proof}
For $\sigma \in \R$, define
\begin{align*}
  \Hat{\kappa}^{\eps, \sigma}&=\frac{q^\eps(\Hat{x})+\sigma}{\tfrac{1}{2\pi} \log \tfrac{1}{\eps \rho_\kappa}+H(\hat{x}, \Hat{x})}, \\
  \Hat{\rho}^{\eps, \sigma}&=\rho_{\Hat{\kappa}^{\eps, \sigma}}, \\
  \Hat{u}^{\eps, \sigma}(x)&=U_{\Hat{\kappa}^{\eps, \sigma}}\Bigl(\frac{x-\Hat{x}}{\eps}\Bigr)+\Hat{\kappa}^{\eps, \sigma} \Bigl( \frac{1}{2\pi} \log \frac{1}{\eps\rho_{\Hat{\kappa}^{\eps, \sigma}}}+H(\Hat{x}, x) \Bigr). 
\end{align*}
First note that when $\eps>0$ is sufficiently small,
$\Hat{u}^{\eps, \sigma}(x)=\hat{\kappa}_{\sigma, \eps} G(\Hat{x}, x)$ in a
neighborhood of $\partial \Omega$, so that $\Hat{u}^{\eps, \sigma} \in
W^{1, 2}_0(\Omega)$ and we can define
\[  
  g^\eps(\sigma)=\langle d \mathcal{E}^\eps (\Hat{u}^{\eps, \sigma}), \Hat{u}^{\eps, \sigma} \rangle. 
\]
Among the terms involved in $g^\eps(\sigma)$, we may already compute 
\[
\begin{split}
  \int_{\Omega} \abs{\nabla \Hat{u}^{\eps, \sigma}}^2
  &=\int_{B(\Hat{x}, \eps \rho_{\Hat{\kappa}^{\eps, \sigma}})}\!\!\!\!\!\!\!\!\!\!\!\! \abs{\nabla (U_{\Hat{\kappa}^{\eps, \sigma}}(\tfrac{\cdot-\Hat{x}}{\eps}) +\Hat{\kappa}^{\eps, \sigma} H(\Hat{x}, \cdot))}^2 \\
&\qquad\qquad+(\Hat{\kappa}^{\eps, \sigma})^2\int_{\Omega \setminus B(\Hat{x}, \rho_{\Hat{\kappa}^{\eps, \sigma}} \eps)}\!\!\!\!\!\!\!\!\!\!\!\! \abs{\nabla G(\Hat{x}, \cdot)}^2 \\
  &=\int_{B(0, \rho_{\Hat{\kappa}^{\eps, \sigma}})}\!\!\!\!\!\!\!\!\!\!\!\! \abs{\nabla U_{\Hat{\kappa}^{\eps, \sigma}}}^2+O(\eps) \\
&\qquad\qquad+(\Hat{\kappa}^{\eps, \sigma})^2\Bigl(\frac{1}{2\pi} \log \frac{1}{\eps \rho_{\Hat{\kappa}^{\eps, \sigma}}}+H(\Hat{x}, \Hat{x})+O(\eps) \Bigr) \\
 &=\int_{B(0, \rho_{\Hat{\kappa}^{\eps, \sigma}})}\!\!\!\!\!\!\!\!\!\!\!\! \abs{\nabla U_{\Hat{\kappa}^{\eps, \sigma}}}^2
     +\Hat{\kappa}^{\eps, \sigma} \bigl( q^\eps(\Hat{x})+\sigma \bigr)+O(\eps). 
\end{split}
\]

In order to estimate the second term involved in $g^\eps(\sigma)$, namely  $\frac{1}{\eps^2}\int_{\Omega}
f(\Hat{u}^{\eps, \sigma}-q^\eps)\Hat{u}^{\eps, \sigma}$,  we first claim that
\begin{equation}
\label{HatAepssigma}
  \Hat{A}^{\eps, \sigma}:=\bigl\{x \in \Omega \st \Hat{u}^{\eps, \sigma}(x) >
  q^\eps(x)\bigr\} \subset B(\Hat{x}, r^\eps),
\end{equation}
with $r^\eps=O(\eps)$. 
Indeed, let $x \in \Hat{A}^{\eps, \sigma} \setminus
B(\Hat{x}, \Hat{\rho}^{\eps, \sigma}\eps)$. 
One has, by definition of $\Hat{u}^{\eps, \sigma}(x)$ and of $\Hat{\kappa}^{\eps, \sigma}$,
\[
  \Hat{\kappa}^{\eps, \sigma}\Bigl(\frac{1}{2\pi}\log
  \frac{1}{\eps}+\frac{1}{2\pi} \log \frac{\eps}{\abs{x-\Hat{x}}}+H(\Hat{x}, x)\Bigr)
> q(x)+\frac{\kappa}{2\pi} \log \frac{1}{\eps},
\]
so that
\begin{equation}
\label{ineqVorticitySetUpperFrac}
  \frac{\dfrac{1}{2\pi} \log \dfrac{1}{\eps}+\dfrac{1}{2\pi} \log
  \dfrac{\eps}{\abs{x-\Hat{x}}}+ H(\Hat{x}, x)}{\dfrac{\kappa}{2\pi} \log \dfrac{1}{\eps}+q(x)} \ge  
\dfrac{\log \dfrac{1}{\eps}+H(\Hat{x}, \Hat{x})}{\dfrac{\kappa}{2\pi} \log \dfrac{1}{\eps}+q(\Hat{x})+\sigma}. 
\end{equation}
Since $q$ and $H(\Hat{x}, \cdot)$ are bounded functions, one obtains that
\[
\frac{1}{\kappa}+\frac{\log \frac{\eps}{\abs{x-\Hat{x}}}}{\kappa \log \dfrac{1}{\eps}}\ge \frac{1}{\kappa}+O\bigl(\logeps^{-1}\bigr),
\]
and the claim is proved. 
We deduce from~\eqref{HatAepssigma}, that for every $x \in \Hat{A}^{\eps, \sigma}$
\[
  \Hat{u}^{\eps, \sigma}(x)-q^\eps(x)=U_{\Hat{\kappa}^{\eps, \sigma}}\Bigl(\frac{x-\Hat{x}}{\eps}\Bigr)+\sigma+O(\eps). 
\]

We may now estimate
\[
\begin{split}
  \frac{1}{\eps^2}\int_{\Omega}& f(\Hat{u}^{\eps, \sigma}-q^\eps)\Hat{u}^{\eps, \sigma}
  =\frac{1}{\eps^2}\int_{\Hat{A}^{\eps, \sigma}} f(\Hat{u}^{\eps, \sigma}-q^\eps)\Hat{u}^{\eps, \sigma}\\
  &=\frac{1}{\eps^2}\int_{\Hat{A}^{\eps, \sigma}}   f(\Hat{u}^{\eps, \sigma}-q^\eps)U_{\Hat{\kappa}^{\eps, \sigma}}(\tfrac{\cdot-\Hat{x}}{\eps}) \\
  &\qquad\qquad+\frac{\Hat{\kappa}^{\eps, \sigma}}{\eps^2}\int_{\Hat{A}^{\eps, \sigma}} f(\Hat{u}^{\eps, \sigma}-q^\eps)\bigl(\tfrac{1}{2\pi}\log \tfrac{1}{\eps \Hat{\rho}^{\eps, \sigma}}+ H(\Hat{x}, \cdot)\bigr) \\
  &=\int_{\R^2} f(U_{\Hat{\kappa}^{\eps, \sigma}}+\sigma)U_{\Hat{\kappa}^{\eps, \sigma}}+O(\eps) \\
&\qquad\qquad+ \Hat{\kappa}^{\eps, \sigma}\bigl(\tfrac{1}{2\pi} \log \tfrac{1}{\eps \rho_\kappa}+ H(\Hat{x}, \Hat{x})+O(\eps)\bigr)\Bigl(\int_{\R^2} f(U_\kappa+\sigma)+O(\eps)\Bigr)\\
 &=\int_{\R^2} f(U_{\Hat{\kappa}^{\eps, \sigma}}+\sigma)U_{\Hat{\kappa}^{\eps, \sigma}}  \\
&\qquad\qquad+ \big(\tfrac{\kappa}{2\pi} \log
\tfrac{1}{\eps}+q(\Hat{x})+\sigma\big)\int_{\R^2}f(U_{\Hat{\kappa}^{\eps, \sigma}}+\sigma)+O(\eps \logeps). 
\end{split}
\]
Summarizing, we have
\[
\begin{split}
g^\eps(\sigma)&=\frac{\kappa}{2\pi}\log \frac{1}{\eps} \Bigl(\Hat{\kappa}^{\eps, \sigma} - \int_{\R^2} f(U_{\Hat{\kappa}^{\eps, \sigma}}+\sigma)\Bigr)+O(1)\\
&=\frac{\kappa}{2\pi} \log \frac{1}{\eps} \Bigl(\int_{\R^2} f(U_{\kappa})-f(U_{\kappa}+\sigma)\Bigr)+O(1). 
\end{split}
\]
Since $g^\eps$ is continuous and $\sigma \cdot \Bigl(\int_{\R^2} f(U_\kappa)-f(U_\kappa+\sigma)\Bigr)<0$ when
$\sigma \ne 0$, there exists $\sigma^\eps$ such that $g(\sigma^\eps)=0$ and
$\sigma^\eps \to 0$ as $\eps\to 0$. One then sets $\Hat{\kappa}^\eps=\Hat{\kappa}^{\eps, \sigma^\eps}$. 
\end{proof}

\begin{lemma}
\label{lemmaEnergyHatu}
For every $\Hat{x} \in \Omega$, we have
\[
  c^\eps \le \frac{\kappa^2}{4\pi}\log \frac{1}{\eps}
  -\mathcal{W}(\Hat{x})+\mathcal{C}+o(1)\qquad\text{as }\eps\to 0. 
\]
\end{lemma}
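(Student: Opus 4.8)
The plan is to feed the test function built in Lemma~\ref{lemmaHatuNehari} into the definition of $c^\eps$. Since $\Hat{u}^\eps \in \Neps$ and $c^\eps = \inf_{\Neps}\mathcal{E}^\eps$, we have at once $c^\eps \le \mathcal{E}^\eps(\Hat{u}^\eps)$, so it suffices to evaluate $\mathcal{E}^\eps(\Hat{u}^\eps) = \frac{1}{2}\int_\Omega |\nabla \Hat{u}^\eps|^2 - \frac{1}{\eps^2}\int_\Omega F(\Hat{u}^\eps - q^\eps)$ up to an $o(1)$ error. Writing $\Hat{u}^\eps = \Hat{u}^{\eps,\sigma^\eps}$ and $\Hat{\kappa}^\eps = \Hat{\kappa}^{\eps,\sigma^\eps}$, both terms can be read off from the computations already carried out in the proof of Lemma~\ref{lemmaHatuNehari}, supplemented by one new estimate for the potential term.

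For the Dirichlet part, the proof of Lemma~\ref{lemmaHatuNehari} already yields
\[
\frac{1}{2}\int_\Omega |\nabla \Hat{u}^\eps|^2 = \frac{1}{2}\int_{B(0,\rho_{\Hat{\kappa}^\eps})} |\nabla U_{\Hat{\kappa}^\eps}|^2 + \frac{1}{2}\Hat{\kappa}^\eps\bigl(q^\eps(\Hat{x}) + \sigma^\eps\bigr) + o(1).
\]
For the potential part, I would localize to $\Hat{A}^{\eps,\sigma^\eps} \subset B(\Hat{x}, O(\eps))$ as in \eqref{HatAepssigma}, insert the expansion $\Hat{u}^\eps - q^\eps = U_{\Hat{\kappa}^\eps}(\tfrac{\cdot - \Hat{x}}{\eps}) + \sigma^\eps + O(\eps)$ valid there, and rescale by $y = (x - \Hat{x})/\eps$ to obtain
\[
\frac{1}{\eps^2}\int_\Omega F(\Hat{u}^\eps - q^\eps) = \int_{\R^2} F\bigl(U_{\Hat{\kappa}^\eps} + \sigma^\eps\bigr) + o(1).
\]
Since $\sigma^\eps \to 0$ and $\Hat{\kappa}^\eps \to \kappa$, continuity of $\kappa \mapsto U_\kappa$ lets me replace $U_{\Hat{\kappa}^\eps} + \sigma^\eps$ by $U_\kappa$ in these $\eps$-independent integrals, so the Dirichlet and potential contributions together converge to $\int_{B(0,\rho_\kappa)}\bigl(\tfrac{|\nabla U_\kappa|^2}{2} - \tfrac{U_\kappa^{p+1}}{p+1}\bigr)$, the integral appearing in the definition of $\mathcal{C}$.

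The crux — and the only genuinely delicate bookkeeping — is the term $\frac{1}{2}\Hat{\kappa}^\eps q^\eps(\Hat{x})$, the sole source of the divergent $\log\frac{1}{\eps}$ contribution. Substituting $q^\eps(\Hat{x}) = q(\Hat{x}) + \frac{\kappa}{2\pi}\log\frac{1}{\eps}$ and the sharp expansion of $\Hat{\kappa}^\eps$ from Lemma~\ref{lemmaHatuNehari}, the product $\Hat{\kappa}^\eps \cdot \frac{\kappa}{2\pi}\log\frac{1}{\eps}$ must be expanded to order $o(1)$: the leading factor gives $\frac{\kappa^2}{2\pi}\log\frac{1}{\eps}$, while the cross term between the $\frac{2\pi}{\log\frac{1}{\eps}}(\cdots)$ correction and $\frac{\kappa}{2\pi}\log\frac{1}{\eps}$ produces a finite contribution. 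After halving, I expect
\[
\frac{1}{2}\Hat{\kappa}^\eps q^\eps(\Hat{x}) = \frac{\kappa^2}{4\pi}\log\frac{1}{\eps} - \mathcal{W}(\Hat{x}) + \frac{\kappa^2}{4\pi}\log\rho_\kappa + o(1),
\]
where the identity $-\mathcal{W}(\Hat{x}) = \kappa q(\Hat{x}) - \frac{\kappa^2}{2}H(\Hat{x},\Hat{x})$ is exactly what makes the $O(1)$ remainder collapse into $-\mathcal{W}(\Hat{x})$, and the leftover $\frac{\kappa^2}{4\pi}\log\rho_\kappa$ combines with the Dirichlet/potential limit above to reconstitute $\mathcal{C}$. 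Adding the pieces and noting that $\frac{1}{2}\Hat{\kappa}^\eps\sigma^\eps$ and all rescaling errors are harmlessly $o(1)$, I obtain $\mathcal{E}^\eps(\Hat{u}^\eps) = \frac{\kappa^2}{4\pi}\log\frac{1}{\eps} - \mathcal{W}(\Hat{x}) + \mathcal{C} + o(1)$, whence the stated upper bound on $c^\eps$. The main obstacle is therefore not any single estimate but the careful tracking of the constant-order terms and the verification of the two algebraic identifications against the definitions of $\mathcal{W}$ and $\mathcal{C}$.
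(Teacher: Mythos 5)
Your proof is correct and takes essentially the same route as the paper: the paper likewise bounds $c^\eps$ by $\mathcal{E}^\eps(\Hat{u}^\eps)$ via Lemma~\ref{lemmaHatuNehari} and expands both terms of the energy, its Dirichlet expansion $\int_{\Omega}\abs{\nabla \Hat{u}^\eps}^2=\int_{\R^2}\abs{\nabla (U_\kappa)_+}^2+\frac{\kappa^2}{2\pi}\log\frac{1}{\eps}+2\kappa q(\Hat{x})-\kappa^2 H(\Hat{x},\Hat{x})+\frac{\kappa^2}{2\pi}\log\rho_\kappa+o(1)$ (obtained by integration by parts rather than by recycling the computation inside Lemma~\ref{lemmaHatuNehari}, a purely cosmetic difference) being exactly what your expansion of $\Hat{\kappa}^\eps\bigl(q^\eps(\Hat{x})+\sigma^\eps\bigr)$ yields. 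Your constant-order bookkeeping — the collapse of the cross term into $-\mathcal{W}(\Hat{x})$ and the absorption of $\frac{\kappa^2}{4\pi}\log\rho_\kappa$ together with the limiting Dirichlet and potential integrals into $\mathcal{C}$ — checks out against the paper's definitions.
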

\begin{proof}
By Lemma~\ref{lemmaHatuNehari}, $\Hat{u}^\eps \in \mathcal{N}^\eps$, so that
$c^\eps \leq \mathcal{E}^\eps(\Hat{u}^\eps)$. We compute the energy of
$\Hat{u}^\eps$ as follows. First,
\[
\begin{split}
  \int_{\Omega} \abs{\nabla \Hat{u}^\eps}^2
  &=\int_{\Omega} \Hat{u}^\eps \Delta \Hat{u}^\eps\\
  &= -\int_{\R^2} U_\kappa \Delta U_\kappa+(\Hat{\kappa}^\eps)^2\Bigl(\frac{1}{2\pi} \log \frac{1}{\eps}+H(\Hat{x}, \Hat{x})\Bigr)+o(1)\\
  &=\int_{\R^2} \abs{\nabla (U_\kappa)_+}^2+\frac{\kappa^2}{2\pi} \log \frac{1}{\eps} +2\kappa q(\Hat x)-\kappa^2 H(\Hat x, \Hat x) 
+\frac{\kappa^2}{2\pi} \log{\rho_\kappa} +o(1). 
\end{split}
\]
Next,
\[
\begin{split}
  \frac{1}{\eps^2}\int_{\Omega} F(\Hat{u}^\eps-q^\eps)
  &=\frac{1}{\eps^2}\int_{\Hat{A}^\eps} F(\Hat{u}^\eps-q^\eps)\\
  &=\frac{1}{\eps^2}\int_{\Hat{A}^\eps} F(\Hat{u}^\eps-q^\eps(x^\eps))+o(1)\\
  &=\int_{\R^2} F(U_\rho)+o(1),
\end{split}
\]
and the conclusion follows from the definitions of $\mathcal{W}$ and $\mathcal{C}$. 
\end{proof}

\begin{corollary}\label{cor:upper}
We have
\[
c^\eps \leq \frac{\kappa^2}{4\pi} \log \frac{1}{\eps} -\sup_{x\in \Omega} \mathcal{W}(x) +\mathcal{C} + o(1). 
\]
\end{corollary}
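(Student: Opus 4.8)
The plan is to deduce the corollary from Lemma~\ref{lemmaEnergyHatu} by evaluating the bound at a point where $\mathcal{W}$ attains its supremum. The one subtlety to keep in mind is that the error term $o(1)$ in Lemma~\ref{lemmaEnergyHatu} is allowed to depend on the base point $\Hat{x}$; one therefore cannot simply take the infimum over $\Hat{x}$ of the right-hand side and hope the error stays $o(1)$, since that would require uniformity in $\Hat{x}$ that the lemma does not assert. The clean way around this is to exhibit a \emph{single} maximizer $x^*$ of $\mathcal{W}$ and apply the lemma only at $\Hat{x}=x^*$.

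Accordingly, I would first check that the supremum $\sup_{x\in\Omega}\mathcal{W}(x)$ is attained. The regular part $H$ of the Green function is smooth on the interior, so $x \mapsto H(x,x)$ is continuous on $\Omega$; moreover $q \in \mathrm{W}^{1,r}(\Omega)$ with $r>2$ is continuous by Morrey's embedding, so $\mathcal{W}(x)=\frac{\kappa^2}{2}H(x,x)-\kappa q(x)$ is continuous on $\Omega$. Since the Robin function $x\mapsto H(x,x)$ tends to $-\infty$ as $x\to\partial\Omega$ and $q$ is bounded, we have $\mathcal{W}(x)\to-\infty$ as $x\to\partial\Omega$. Hence $\mathcal{W}$ attains its supremum at some interior point $x^*\in\Omega$, i.e.\ $\mathcal{W}(x^*)=\sup_{x\in\Omega}\mathcal{W}(x)$.

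With such an $x^*$ in hand, I would simply apply Lemma~\ref{lemmaEnergyHatu} with $\Hat{x}=x^*$, which gives
\[
  c^\eps \le \frac{\kappa^2}{4\pi}\log \frac{1}{\eps}-\mathcal{W}(x^*)+\mathcal{C}+o(1)
  = \frac{\kappa^2}{4\pi}\log \frac{1}{\eps}-\sup_{x\in\Omega}\mathcal{W}(x)+\mathcal{C}+o(1),
\]
which is exactly the claimed bound.

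The only genuine content here is the attainment of the supremum, so that a fixed $x^*$ can be used and the $\Hat{x}$-dependence of the error term in Lemma~\ref{lemmaEnergyHatu} never becomes an issue; everything else is immediate. I expect this to be the main (and essentially the sole) obstacle, and it is mild, resting on the continuity of $\mathcal{W}$ together with the boundary blow-down $\mathcal{W}\to-\infty$ near $\partial\Omega$. Were the supremum not attained, one would instead have to supplement Lemma~\ref{lemmaEnergyHatu} with a uniformity statement for $\Hat{x}$ ranging over a compact subset of $\Omega$, but the boundary behavior of $\mathcal{W}$ makes this detour unnecessary.
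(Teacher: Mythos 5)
Your proof is correct and is essentially the deduction the paper intends: Corollary~\ref{cor:upper} is stated as an immediate consequence of Lemma~\ref{lemmaEnergyHatu}, and your attainment argument (continuity of $\mathcal{W}$ on $\Omega$ plus $\mathcal{W}(x)\to-\infty$ as $x\to\partial\Omega$, a fact the paper itself uses right after Theorem~\ref{thm:K1}) legitimately fixes a single maximizer $x^*$ at which to apply the lemma, neutralizing the $\Hat{x}$-dependence of the $o(1)$. One small correction to your closing remark: even if the supremum were not attained, no uniformity in $\Hat{x}$ would be required, since the lemma gives $\limsup_{\eps\to 0}\bigl(c^\eps-\frac{\kappa^2}{4\pi}\log\frac{1}{\eps}-\mathcal{C}\bigr)\le -\mathcal{W}(\Hat{x})$ for each fixed $\Hat{x}$, and taking the infimum over $\Hat{x}$ of the right-hand side already yields $-\sup_{x\in\Omega}\mathcal{W}(x)$; so the attainment step, while valid here, is a convenience rather than a necessity.
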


\subsection{Asymptotic behavior of solutions}

The main goal of this section is to prove
\begin{proposition}\label{prop:1mai}
Let $(v^\eps)$ be a family of solutions to \eqref{problemPeps} such that $v^\eps \ne 0$
\begin{equation}
\label{assumptEnergyUpperbound}
  \mathcal{E}^\eps(v^\eps) \le \frac{\kappa^2}{4\pi} \log \frac{1}{\eps}+O(1),
\end{equation}
as $\eps \to 0$. Define the
quantities $A^\eps$, $\omega^\eps$, $\kappa^\eps$, $x^\eps$ and $\rho^\eps$ for $v^\eps$ as in \eqref{defiq} for $u^\eps$. 
 Then 
\[
  v^\eps=U_{\kappa^\eps} (\tfrac{\cdot-x^\eps}{\eps})+\kappa^\eps\Bigl(\frac{1}{2\pi} \log \frac{1}{\eps \rho^\eps}+ H(x^\eps, \cdot)\Bigr)+o(1),
\]
in $\mathrm{W}^{2, 1}_\mathrm{loc}(\Omega)$, in $\mathrm{W}^{1, 2}_0(\Omega)$, and in $\mathrm{L}^\infty(\Omega)$, where
\[
  \kappa^\eps=\kappa+\frac{2\pi}{\log \frac{1}{\eps}}\Bigl(q(x^\eps)-\kappa H(x^\eps, x^\eps) -\frac{\kappa}{2\pi} \log \frac{1}{\rho_\kappa} \Bigr)+o(\logeps^{-1}),
\]
In particular, we have
\[
  \mathcal{E}^\eps (v^\eps)= \frac{\kappa^2}{4\pi}\log \frac{1}{\eps}-\mathcal{W}(x^\eps)+\mathcal{C}+o(1)
\]
and
\[
  B(x^\eps, \Bar{r}^\eps) \subset A^\eps \subset B(x^\eps, \mathring{r}^\eps),
\] 
with $\Bar{r}^\eps=\eps \rho_\kappa+o(\eps)$ and $\mathring{r}^\eps=\eps \rho_\kappa +o(\eps)$. 
\end{proposition}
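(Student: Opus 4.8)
The plan is to run a blow-up analysis at the maximum of $v^\eps-q^\eps$, to identify the rescaled profile with $U_\kappa$, and then to match this inner description with the outer Green representation $v^\eps=\int_\Omega G(\cdot,y)\,\omega^\eps(y)\,dy$ (valid since $-\Delta v^\eps=\omega^\eps$ and $v^\eps=0$ on $\partial\Omega$); the matching at the edge of the core will simultaneously pin down $\kappa^\eps$, the radius of $A^\eps$, and the energy. First I would record the a priori bounds. Testing \eqref{problemPeps} against $v^\eps$ gives the Nehari identity $\int_\Omega\abs{\nabla v^\eps}^2=\frac1{\eps^2}\int_\Omega f(v^\eps-q^\eps)v^\eps$, so that, writing $v^\eps=(v^\eps-q^\eps)+q^\eps$ on $A^\eps$ and using $F(s)=\frac1{p+1}s_+^{p+1}$,
\[
  \mathcal{E}^\eps(v^\eps)=\frac{p-1}{2(p+1)}\frac1{\eps^2}\int_{A^\eps}(v^\eps-q^\eps)^{p+1}+\frac{\kappa\,\kappa^\eps}{4\pi}\logeps+\frac12\int_{A^\eps}q\,\omega^\eps.
\]
Since $q\in\mathrm{W}^{1,r}(\Omega)$, $r>2$, is bounded, the last term is $O(\kappa^\eps)$ and the first is nonnegative, so the hypothesis \eqref{assumptEnergyUpperbound} forces $\kappa^\eps\le\kappa+o(1)$ and $\frac1{\eps^2}\int_{A^\eps}(v^\eps-q^\eps)^{p+1}=O(1)$.

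Next I would blow up. Let $a^\eps$ realize $M^\eps:=\max_\Omega(v^\eps-q^\eps)_+$ and set $\ell^\eps=\eps\,(M^\eps)^{-(p-1)/2}$; then $\hat W^\eps(y)=(M^\eps)^{-1}(v^\eps-q^\eps)(a^\eps+\ell^\eps y)$ satisfies $-\Delta\hat W^\eps=(\hat W^\eps)_+^p+o(1)$, with $\max\hat W^\eps=1$ and $\int_{\R^2}(\hat W^\eps)_+^p=\kappa^\eps/M^\eps$; the error $o(1)$ is the rescaled $\eps^2\Delta q$, harmless because $q\in\mathrm{W}^{1,r}$ with $r>2$. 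Elliptic estimates together with the classification of the finite-mass entire solutions of $-\Delta W=W_+^p$ give, up to a subsequence, $\hat W^\eps\to W_\infty$, the radial profile of maximum $1$, whose mass $m_0=\int_{\R^2}(W_\infty)_+^p$ is a fixed positive number. Hence $\kappa^\eps/M^\eps\to m_0$; in particular $M^\eps\to\infty$ would force the mass to vanish, contradicting $m_0>0$, so $M^\eps=O(1)$ and $\ell^\eps\ge c\,\eps$.

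The heart of the proof, and the step I expect to be the main obstacle, is the height (barrier) argument. Because $\omega^\eps\ge0$ has mass $\kappa^\eps$ and, by the previous step, is supported within $O(\ell^\eps)=O(\eps)$ of $a^\eps$, the standard logarithmic estimate for the Green potential gives $\max v^\eps\le\frac{\kappa^\eps}{2\pi}\log\frac1{\ell^\eps}+O(\kappa^\eps)+O(1)\le\frac{\kappa^\eps}{2\pi}\logeps+O(1)$, using $\ell^\eps\ge c\eps$. On the other hand $A^\eps\ne\emptyset$ forces $\max v^\eps>\min_\Omega q^\eps=\frac{\kappa}{2\pi}\logeps-O(1)$. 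Comparing the two yields $\liminf\kappa^\eps\ge\kappa$, whence with the a priori bound $\kappa^\eps\to\kappa$ and $c\eps\le\ell^\eps=O(\eps)$. The same comparison excludes splitting: near any concentration point $x_i$ of mass $\kappa_i$ the dominant contribution to $v^\eps$ is $\frac{\kappa_i}{2\pi}\log\frac1{\abs{\cdot-x_i}}$, which must by itself reach the barrier $\frac{\kappa}{2\pi}\logeps$, forcing $\kappa_i\ge\kappa(1-o(1))$; since $\sum_i\kappa_i\le\kappa^\eps\to\kappa$ there is exactly one such point, so $\omega^\eps/\kappa^\eps\weakto\delta_{x^*}$ and $x^\eps\to x^*$. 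The same estimate, with $H(x^\eps,x^\eps)\to-\infty$ near $\partial\Omega$, keeps $x^\eps$ at positive distance from the boundary.

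Once the single bubble on scale $\eps$ is secured the remaining steps are the standard inner/outer matching. The blow-up limit is $U_\kappa$, and rescaling back identifies $A^\eps$ with the core $\{U_{\kappa^\eps}(\tfrac{\cdot-x^\eps}{\eps})>0\}=B(x^\eps,\eps\rho^\eps)$ up to $o(\eps)$, using the non-degeneracy $\partial_r U_\kappa(\rho_\kappa)<0$ of the free boundary; this gives $\Bar r^\eps,\mathring r^\eps=\eps\rho_\kappa+o(\eps)$. Away from the core $v^\eps=\kappa^\eps G(x^\eps,\cdot)+o(1)$, which coincides with the model function $\Phi^\eps:=U_{\kappa^\eps}(\tfrac{\cdot-x^\eps}{\eps})+\kappa^\eps(\tfrac1{2\pi}\log\tfrac1{\eps\rho^\eps}+H(x^\eps,\cdot))$; patching with the inner convergence and using that $\Delta(v^\eps-\Phi^\eps)\in\mathrm{L}^1$ with vanishing mass promotes this to convergence in $\mathrm{W}^{2,1}_{\mathrm{loc}}(\Omega)$, $\mathrm{W}^{1,2}_0(\Omega)$ and $\mathrm{L}^\infty(\Omega)$. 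Evaluating the expansion at the free boundary $\abs{x-x^\eps}=\eps\rho^\eps$, where $v^\eps=q^\eps$, balances the coefficients of $\logeps$ and produces the stated expansion of $\kappa^\eps$; substituting the expansion into $\mathcal{E}^\eps$ and repeating the computation of Lemmas~\ref{lemmaHatuNehari}--\ref{lemmaEnergyHatu} verbatim gives the energy identity. It is precisely the barrier $q^\eps\sim\frac{\kappa}{2\pi}\logeps$ that makes the crux work: every surviving bubble must singlehandedly climb to that height, which at once forbids a second bubble and forces the total mass to equal $\kappa$ to leading order.
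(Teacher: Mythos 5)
Your overall architecture (a priori bounds, blow-up, barrier comparison, inner/outer matching) parallels the paper's, and your opening energy identity is correct and even gives $\kappa^\eps\le\kappa+o(1)$ a bit more directly than the paper's test against $\min(v^\eps,q^\eps)$. But there is a genuine gap at the step you yourself flag as the crux, and it occurs \emph{before} the barrier argument can be run: you assert that $\omega^\eps$ "is supported within $O(\ell^\eps)=O(\eps)$ of $a^\eps$" as a consequence of the blow-up, whereas the blow-up at the maximum point only describes $v^\eps$ on balls $B(a^\eps,R\ell^\eps)$ for fixed $R$ and says nothing about the rest of $A^\eps$, which a priori has countably many connected components scattered through $\Omega$. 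Your estimate $\max v^\eps\le\frac{\kappa^\eps}{2\pi}\logeps+O(1)$ uses this confinement, so the argument is circular. In the paper this is exactly where the heavy lifting happens: capacity estimates (Lemma~\ref{lemmaAreaDiameter}, via Propositions~\ref{propositionCapacityArea} and \ref{propositionBoundDiameter}) show \emph{every} component of $A^\eps$ has area $O(\eps^2)$ and diameter $O(\eps)$; the Gagliardo--Nirenberg/Sobolev dichotomy (Lemma~\ref{lemmaVortexSplit}) splits components into essential ones, with mass bounded below as in \eqref{ineqLowerBoundVortex}, and vanishing ones, whose total vorticity is shown negligible in $\mathrm{L}^s$ (Lemma~\ref{lemmaVanishingVorticity}). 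None of this is recovered by your blow-up.

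Second, even granting finitely many $\eps$-scale bubbles, your exclusion of splitting is too weak. The claim that each bubble of mass $\kappa_i$ must "singlehandedly" reach the barrier, forcing $\kappa_i\ge\kappa(1-o(1))$, fails for clusters at intermediate separations: if two essential vortices sit at distance $d^\eps$ with $d^\eps/\eps\to\infty$ but $\log(d^\eps/\eps)=o(\logeps)$, the mutual interaction $\frac{\kappa_j}{2\pi}\log\frac{1}{d^\eps}=(1-o(1))\frac{\kappa_j}{2\pi}\logeps$ helps each bubble over the barrier, and two bubbles of mass $\approx\kappa/2$ satisfy all your one-sided inequalities together with $\sum_i\kappa_i\le\kappa+o(1)$. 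The paper closes this case using the \emph{two-sided} balance \eqref{eqVorticitiesGreen} — an equality up to $O(1)$, obtained from the boundedness of the corrected profiles $w^\eps_i$ at points where $v^\eps$ crosses $q^\eps$ — which, summed over $i$ and combined with \eqref{ineqSumVortices} and the lower bound \eqref{ineqLowerBoundVortex}, yields $\sum_{i\ne j}\kappa^\eps_j\log\frac{\abs{x^\eps_i-x^\eps_j}}{\eps}\le O(1)$, impossible for $k^\eps\ge2$ since each term diverges. Relatedly, your appeal to a "classification of finite-mass entire solutions of $-\Delta W=W_+^p$" is not off-the-shelf: the Caffarelli--Friedman symmetry theorem the paper invokes requires the logarithmic asymptotics and gradient decay that the paper first derives from the Green representation (Lemma~\ref{lemmaSmallScaleLocalEstimates}), and your identification $\kappa^\eps/M^\eps\to m_0$ presupposes tightness of the rescaled vorticity — again the confinement you have not proved. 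The final matching, free-boundary expansion of $\kappa^\eps$, and energy computation are fine once a single $\eps$-scale vortex is secured, but as written the proposal assumes the hardest conclusion rather than proving it.
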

In other words, 
$v^\eps$ satisfies the same asymptotics as the one stated in
Theorem~\ref{thm:K1} for $u^\eps$ except for the convergence of $x^\eps$.

\medskip

In the sequel, $v^\eps$ denotes a family of nontrivial solutions to \eqref{problemPeps}
verifying \eqref{assumptEnergyUpperbound}. We divide the proof of Proposition
\ref{prop:1mai} into several steps. 

\subsubsection{Step 1: First quantitative properties of the solutions}

In this section, we derive various types of estimates for $v^\eps$. 
\begin{proposition}\label{propositionEstimatesueps} We have, as $\eps \to 0$,
\begin{gather}
\label{ineqMuAeps}\muleb{2}(A^\eps) = O\bigl(\logeps^{-1}\bigr), \\
\label{ineqVortexEnergy} \int_{A^\eps} \abs{\nabla (v^\eps-q^\eps)}^2 =O(1), \\
\label{ineqVortexPotential}\frac{1}{\eps^2}\int_{A^\eps} F(v^\eps-q^\eps) =O(1), \\
\label{eq:2etoiles}\int_{\Omega\setminus A^\eps} |\nabla v^\eps|^2 \leq \frac{\kappa^2}{2\pi} \log\frac{1}{\eps} + O(1), \\
\label{ineqTotalVorticity}\int_{\Omega} \omega^\eps \leq \kappa + O\bigl(\logeps^{-1}\bigr).
\end{gather}
\end{proposition}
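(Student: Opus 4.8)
The backbone of all five estimates is the Nehari identity satisfied by every solution of \eqref{problemPeps}. Writing $w^\eps := v^\eps - q^\eps$ and testing the equation against $v^\eps$ gives $\int_\Omega \abs{\nabla v^\eps}^2 = \frac{1}{\eps^2}\int_\Omega f(w^\eps)v^\eps$; using $s f(s)=(p+1)F(s)$ and $v^\eps = w^\eps + q^\eps$, this rearranges into the energy identity
\[
\mathcal{E}^\eps(v^\eps) = \frac{p-1}{2\eps^2}\int_\Omega F(w^\eps) + \frac{1}{2}\int_\Omega q^\eps\,\omega^\eps,
\]
where $\omega^\eps = \frac{1}{\eps^2}f(w^\eps)\ge 0$ and $\int_\Omega\omega^\eps = \kappa^\eps$. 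Splitting $q^\eps = q + \frac{\kappa}{2\pi}\logeps$ with $q$ bounded (as $r>2$), the last term is $\frac{\kappa}{4\pi}\logeps\,\kappa^\eps + \frac12\int_\Omega q\,\omega^\eps$. Since $F\ge 0$, the hypothesis \eqref{assumptEnergyUpperbound} forces $\frac{\kappa}{4\pi}\logeps\,\kappa^\eps \le \frac{\kappa^2}{4\pi}\logeps + O(1) + \frac12\Norm{q}_\infty\kappa^\eps$, and solving for $\kappa^\eps$ yields \eqref{ineqTotalVorticity}. Dropping instead the nonnegative vorticity term and using only $\kappa^\eps\ge 0$ gives $\frac{1}{\eps^2}\int_\Omega F(w^\eps) = O(\logeps)$, whence $\int_\Omega\abs{\nabla v^\eps}^2 = O(\logeps)$ through the Nehari identity once more (the $\int q^\eps\omega^\eps$ term being $O(\logeps)$ by \eqref{ineqTotalVorticity}).

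Estimate \eqref{ineqMuAeps} is then immediate. On $A^\eps$ one has $v^\eps > q^\eps \ge \frac{\kappa}{2\pi}\logeps - \Norm{q}_\infty =: t^\eps$, so by Chebyshev's inequality and Poincaré's inequality $\muleb{2}(A^\eps) \le (t^\eps)^{-2}\int_\Omega(v^\eps)^2 \le C(t^\eps)^{-2}\int_\Omega\abs{\nabla v^\eps}^2$; as $t^\eps$ is of order $\logeps$ while the Dirichlet energy is $O(\logeps)$, this is $O(\logeps^{-1})$.

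The crux, and the step I expect to cost the most, is an a priori bound $M := \Norm{(w^\eps)_+}_{L^\infty} = O(1)$. Since $\omega^\eps\ge 0$, the maximum principle gives $v^\eps \ge 0$ and the representation $v^\eps(x) = \int_\Omega G(x,y)\omega^\eps(y)\,dy$; comparison of $\Omega$ with a large disk yields the global bound $G(x,y)\le \frac{1}{2\pi}\log\frac{C_\Omega}{\abs{x-y}}$, which sidesteps any question of whether $A^\eps$ approaches $\partial\Omega$. Let $M$ be attained at $x_*$ (and $M>0$, since a nontrivial solution has $A^\eps\neq\emptyset$). At $x_*$ I would bound the logarithmic potential by rearrangement: as $\Norm{\omega^\eps}_{L^\infty} = M^p/\eps^2$ and $\int_\Omega\omega^\eps=\kappa^\eps$, the quantity $\int_\Omega\log\frac{1}{\abs{x_*-y}}\omega^\eps$ is largest when $\omega^\eps$ equals $M^p/\eps^2$ on the disk $B(x_*,r_*)$ with $\pi r_*^2 M^p/\eps^2 = \kappa^\eps$, giving $\int_\Omega\log\frac{1}{\abs{x_*-y}}\omega^\eps \le \kappa^\eps(\log\frac{1}{r_*}+\frac12)$ with $\log\frac{1}{r_*} = \logeps + \frac{p}{2}\log M + \frac12\log\frac{\pi}{\kappa^\eps}$. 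Since $t\log\tfrac1t$ is bounded for $t=\kappa^\eps\in(0,C]$, multiplying by $\frac{\kappa^\eps}{2\pi}$ and subtracting $q^\eps(x_*)\ge \frac{\kappa}{2\pi}\logeps - \Norm{q}_\infty$ leaves $M \le \frac{\kappa^\eps-\kappa}{2\pi}\logeps + \frac{\kappa^\eps p}{4\pi}\log M + O(1)$; the first term is $O(1)$ by \eqref{ineqTotalVorticity}, producing the self-improving inequality $M \le C_1\log M + C_2$, which bounds $M$. The delicate point is controlling the potential sharply enough that the $\frac{\kappa}{2\pi}\logeps$ terms cancel down to $O(1)$.

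Granting $M=O(1)$, the remaining estimates are short. For \eqref{ineqVortexPotential}, $\frac{1}{\eps^2}\int_{A^\eps}F(w^\eps) = \frac{1}{(p+1)\eps^2}\int_{A^\eps}(w^\eps)^{p+1} \le \frac{M}{p+1}\cdot\frac{1}{\eps^2}\int_{A^\eps}(w^\eps)^p = \frac{M\kappa^\eps}{p+1} = O(1)$. Testing \eqref{problemPeps} against $(w^\eps)_+\in H^1_0(\Omega)$ (legitimate since $q^\eps\in H^1$ and $(w^\eps)_+$ vanishes near $\partial\Omega$) and using $\nabla q^\eps = \nabla q$ gives $\int_{A^\eps}\abs{\nabla v^\eps}^2 = \frac{1}{\eps^2}\int_{A^\eps}(w^\eps)^{p+1} + \int_{A^\eps}\nabla v^\eps\cdot\nabla q$; since $\int_{A^\eps}\abs{\nabla q}^2 \le \Norm{\nabla q}_{L^r}^2\,\muleb{2}(A^\eps)^{1-2/r}\to 0$ by \eqref{ineqMuAeps}, Cauchy--Schwarz and absorption give $\int_{A^\eps}\abs{\nabla v^\eps}^2 = O(1)$, hence \eqref{ineqVortexEnergy} (as $\nabla(v^\eps-q^\eps)=\nabla v^\eps-\nabla q$). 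Finally, subtracting this test-function identity from the Nehari identity leaves $\int_{\Omega\setminus A^\eps}\abs{\nabla v^\eps}^2 = \int_{A^\eps}q^\eps\,\omega^\eps - \int_{A^\eps}\nabla v^\eps\cdot\nabla q$; the first term is $\frac{\kappa}{2\pi}\logeps\,\kappa^\eps + O(1) \le \frac{\kappa^2}{2\pi}\logeps + O(1)$ by \eqref{ineqTotalVorticity}, and the second is $o(1)$ by \eqref{ineqVortexEnergy}, which is exactly \eqref{eq:2etoiles}.
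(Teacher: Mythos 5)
Your proof is correct, but at the heart of the proposition it takes a genuinely different route from the paper. The paper derives \eqref{ineqVortexEnergy} and \eqref{ineqVortexPotential} by testing against $(v^\eps-q^\eps)_+$ and absorbing via the Gagliardo--Nirenberg inequality \eqref{ineqGN} --- the step it explicitly flags as the only one using the power-like structure of $f$ --- and it obtains the sharp bound \eqref{ineqTotalVorticity} \emph{last}, from \eqref{ineqVorticityEnergy} combined with \eqref{eq:2etoiles}. You instead (i) extract \eqref{ineqTotalVorticity} \emph{first}, directly from the identity $\mathcal{E}^\eps(v^\eps)=\frac{p-1}{2\eps^2}\int_\Omega F(w^\eps)+\frac12\int_\Omega q^\eps\omega^\eps$ together with $F\ge 0$ --- a more direct derivation than the paper's, and free of circularity since it uses only the energy upper bound; and (ii) replace Gagliardo--Nirenberg by a uniform bound $\Norm{(v^\eps-q^\eps)_+}_{L^\infty}=O(1)$, obtained from the Green representation, the domain-monotonicity bound $G(x,y)\le\frac{1}{2\pi}\log\frac{C_\Omega}{\abs{x-y}}$, and a bathtub/rearrangement estimate of the logarithmic potential under the constraints $\Norm{\omega^\eps}_{L^\infty}\le M^p/\eps^2$ and $\int_\Omega\omega^\eps=\kappa^\eps$. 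The delicate cancellation you flag is exactly $(\kappa^\eps-\kappa)\logeps\le O(1)$, which your prior sharp form of \eqref{ineqTotalVorticity} supplies, and the self-improving inequality $M\le C_1\log M+C_2$ then closes (with $C_1=\frac{p\kappa^\eps}{4\pi}$ bounded, and the term $\frac{\kappa^\eps}{4\pi}\log\frac{\pi}{\kappa^\eps}$ harmless since $t\log\frac{1}{t}$ is bounded on $(0,C]$). This is in the spirit of Turkington's vorticity-method estimates rather than the paper's variational absorption. What each approach buys: yours yields early a pointwise bound that the paper only recovers later through the blow-up analysis of Lemma~\ref{lemmaLocalAsymptotics}, and it makes \eqref{ineqVortexPotential} one line; the paper's argument is shorter, uses no maximum principle or Green function, and --- importantly --- transfers verbatim to the unbounded setting of Section~\ref{sectUnbounded} (Proposition~\ref{propositionUnboundedEstimatesueps}), where your reliance on $\Norm{q}_{L^\infty}<\infty$ and on a global bound for $G$ would need adjustment (feasible, since there $q\ge Wx_1+d>0$ and the half-plane comparison for $G$ is available, but not automatic). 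The remaining steps of your proof (admissibility of $(w^\eps)_+$ in $H^1_0(\Omega)$ because $w^\eps<0$ near $\partial\Omega$; the subtraction of the two test-function identities to get \eqref{eq:2etoiles}; the absorption using $\Norm{\nabla q}_{L^2(A^\eps)}\to 0$ via \eqref{ineqMuAeps} and $r>2$) agree with the paper's and are sound.
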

\begin{proof}
First note that for $\eps>0$ sufficiently small, 
\begin{equation}
\label{ineqEnergy}
\Bigl(\frac{1}{2}-\frac{1}{p+1}\Bigr)\int_\Omega |\nabla v^\eps|^2 \leq \mathcal{E}^\eps(v^\eps). 
\end{equation}
Indeed,
\[
 \mathcal{E}^\eps(v^\eps) = \frac{1}{2}\int_\Omega |\nabla v^\eps|^2 -
\frac{1}{p+1}\int_\Omega
\frac{1}{\eps^2}f(v^\eps-q^\eps)(v^\eps-q^\eps)_+, 
\]
and, by testing $(\mathcal{P}^\eps)$ against $v^\eps$,
\[
0 = \frac{1}{p+1}\int_\Omega |\nabla v^\eps|^2 -
\frac{1}{p+1}\int_\Omega
\frac{1}{\eps^2}f(v^\eps-q^\eps)v^\eps. 
\]
Since $(v^\eps-q^\eps)_+\leq v^\eps$ when $q^\eps\geq 0$, and hence when
$\eps$ is sufficiently small, \eqref{ineqEnergy} follows by subtraction. 

In order to obtain \eqref{ineqMuAeps}, first note that since $q$ is bounded from
below, for $\eps$ sufficiently small, $\inf_{\Omega} q_\eps >
\frac{\kappa}{4\pi} \log \frac{1}{\eps}$. By the Chebyshev and Poincar\'e
inequalities, it follows that
\[
  \muleb{2}(A^\eps) \le \Bigl(\frac{1}{\inf_{\Omega} q^\eps}\Bigr)^2 \int_{\Omega} \abs{v^\eps}^2 \le \frac{C}{\logeps^2} \int_{\Omega} \abs{\nabla v^\eps}^2\le \frac{C'}{\logeps},
\]
where the last inequality is a consequence \eqref{ineqEnergy} and \eqref{assumptEnergyUpperbound}. 

We claim that 
\begin{equation}
\label{ineqomegaepsL1}
  \int_{\Omega} \omega^\eps \leq C. 
\end{equation}
By testing $\Peps$ against $\min(v^\eps, q^\eps)$ we obtain
\begin{equation}
\label{ineqVorticityEnergy}
\begin{split}
  \int_{\Omega} \omega^\eps= \int_{A^\eps}
\frac{1}{\eps^2}f(v^\eps-q^\eps)& \leq \frac{1}{\inf_\Omega q^\eps}\int_{A^\eps}
\frac{q^\eps}{\eps^2}f(v^\eps-q^\eps)\\
&=\frac{1}{\inf_\Omega q^\eps}\int_{\Omega\setminus A^\eps}
\abs{\nabla v^\eps}^2 + \frac{1}{\inf_\Omega q^\eps}\int_{A^\eps} \nabla v^\eps\nabla q. 
\end{split}
\end{equation}
In view of \eqref{ineqEnergy}, this yields
\[
\kappa^\eps  \leq C \frac{\mathcal{E}^\eps(v^\eps) +o(1)}{\log \frac{1}{\eps}},
\] 
and the estimate \eqref{ineqomegaepsL1} follows from assumption \eqref{assumptEnergyUpperbound}. 

Testing now $\Peps$ against $(v^\eps-q^\eps)_+$, we obtain
\begin{equation}
\label{eqNehariVortex}
\int_{A^\eps} |\nabla (v^\eps-q^\eps)|^2 = \int_{A^\eps} \frac{1}{\eps^2} (v^\eps-q^\eps)_+^{p+1} -\int_{A^\eps}\nabla (v^\eps-q^\eps) \nabla q. 
\end{equation}
The Gagliardo--Nirenberg inequality \cite{Nirenberg1959}*{p.\thinspace 125} yields
\begin{equation}
\label{ineqGN}
\int_{A^\eps} \frac{1}{\eps^2} (v^\eps-q^\eps)_+^{p+1} \leq C \int_{A^\eps}
\frac{1}{\eps^2} (v^\eps-q^\eps)_+^{p} \left(\int_{A^\eps} |\nabla
(v^\eps-q^\eps)|^2\right)^{\frac{1}{2}},
\end{equation}
so that
\[
\begin{split}
\int_{A^\eps} |\nabla (v^\eps-q^\eps)|^2 &\leq C \bigl(\Norm{\omega^\eps}_{\mathrm{L}^1}+\Norm{\nabla q^\eps}_{L^2(A^\eps)} \bigr) \Bigl(\int_{A^\eps}
\abs{\nabla (v^\eps-q^\eps)}^2\Bigr)^{\frac{1}{2}} \\
&\leq C'\Bigl(\int_{A^\eps}
\abs{\nabla (v^\eps-q^\eps)}^2\Bigr)^{\frac{1}{2}}. 
\end{split}
\]
Inequality \eqref{ineqVortexEnergy} can therefore be deduced from \eqref{ineqomegaepsL1}, and \eqref{ineqVortexPotential} follows from \eqref{eqNehariVortex}. 
Finally, 
\[
\begin{split}
  \frac{1}{2}\int_{\Omega \setminus A^\eps} \abs{\nabla v^\eps}^2&=\mathcal{E}^\eps(v^\eps)+\frac{1}{\eps^2}\int_{A^\eps} F(v^\eps-q^\eps)-\frac{1}{2} \int_{A^\eps} \abs{\nabla v^\eps}^2 \\
  &\le \frac{\kappa^2}{4\pi} \log \frac{1}{\eps}+O(1),
\end{split}
\]
so that \eqref{eq:2etoiles} holds, and inequality \eqref{ineqTotalVorticity}
then follows from \eqref{ineqVorticityEnergy}. 
\end{proof}

\begin{remark}
The use of the Gagliardo--Nirenberg inequality to obtain \eqref{ineqGN} is the only step in our proof that requires $f$ to be a power-like nonlinearity.
\end{remark}

\subsubsection{Step 2: Structure of the vorticity set}

We now examine the vorticity set $A^\eps$ further. Since $A^\eps$ is open, it contains at most countably many connected components that we label $A^\eps_i$, $i \in I^\eps$. 
If $q$ were a harmonic function (e.g. if the only goal was to prove Theorem \ref{thm:resu}), one would deduce from the fact that $u^\eps$ is a minimal energy solution that $A^\eps$ is connected whenever $q^\eps \ge 0$ \cite{BergerFraenkel1974}*{Theorem 3F}, \cite{Norbury1975}*{Theorem 3.4}, \cite{AmbrosettiMancini1981}*{Theorem 4}, \cite{Yang1991}*{Theorem 1}, \cite{LiYanYang2005}*{Proposition 3.1}; this would simplify considerably the analysis that we perform below.

First we have a control on the total area and on the diameter of each connected component. 

\begin{lemma}
\label{lemmaAreaDiameter}
If $\eps > 0$ is sufficiently small, we have
\begin{equation}
\label{ineqVorticityAreaStrong}
  \muleb{2}(A^\eps) \le C \eps^2
\end{equation}
and, for every $i \in I^\eps$,
\begin{equation}
\label{ineqVorticityDiameter}
  \diam(A^\eps_i) \le C \eps. 
\end{equation}
\end{lemma}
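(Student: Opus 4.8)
The plan is to prove the two bounds in Lemma~\ref{lemmaAreaDiameter} by bootstrapping from the weaker estimates of Proposition~\ref{propositionEstimatesueps}, upgrading the $O(\logeps^{-1})$ area bound to the sharp $O(\eps^2)$ bound and then controlling each connected component's diameter by an isoperimetric/Sobolev argument on $A^\eps$. Throughout I would exploit that on $A^\eps$ the function $w^\eps := (v^\eps - q^\eps)_+$ is positive and satisfies $-\Delta w^\eps = \frac{1}{\eps^2}(w^\eps)^p$ there (since $-\Delta v^\eps = \frac{1}{\eps^2}f(v^\eps-q^\eps)$ and $q^\eps$ is harmonic in the case of Theorem~\ref{thm:resu}, or at least $\Delta q$ is controlled in $L^r$), together with the two uniform bounds already in hand:
\[
  \int_{A^\eps} \abs{\nabla w^\eps}^2 = O(1), \qquad \frac{1}{\eps^2}\int_{A^\eps} F(w^\eps) = O(1).
\]

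Let me sketch the key steps. First I would establish a uniform $L^\infty$ bound $\Norm{w^\eps}_{L^\infty(A^\eps)} = O(1)$. This is the crucial quantitative input: from the Gagliardo--Nirenberg / Moser-type iteration applied to $-\Delta w^\eps = \frac{1}{\eps^2}(w^\eps)^p$ on the open set $A^\eps$ (with $w^\eps = 0$ on $\partial A^\eps$), together with the energy bound $\int_{A^\eps}\abs{\nabla w^\eps}^2 = O(1)$ and the $L^{p+1}$ bound coming from \eqref{ineqVortexPotential}, one gets that $w^\eps$ stays bounded independently of $\eps$. Second, with $\Norm{w^\eps}_\infty = O(1)$ in hand, I would rewrite the vorticity bound: since $\int_{A^\eps} \frac{1}{\eps^2}(w^\eps)^p = \kappa^\eps = O(1)$ and $w^\eps$ is bounded, Chebyshev-type reasoning combined with the strong maximum principle for $-\Delta w^\eps = \frac{1}{\eps^2}(w^\eps)^p$ forces the set $A^\eps$ to live at scale $\eps$: rescaling $y = (x-x^\eps)/\eps$ turns the equation into $-\Delta \tilde w = \tilde w^p$ with $\tilde w$ uniformly bounded in energy and $L^\infty$, so the rescaled vorticity set has area $O(1)$, which gives $\muleb{2}(A^\eps) = O(\eps^2)$.

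For the diameter bound \eqref{ineqVorticityDiameter} on each component $A^\eps_i$, I would argue by a local maximum-principle comparison. On $A^\eps_i$ the function $w^\eps$ solves $-\Delta w^\eps = \frac{1}{\eps^2}(w^\eps)^p \le \frac{C}{\eps^2} w^\eps$ with $w^\eps$ vanishing on $\partial A^\eps_i$; comparing with the principal Dirichlet eigenfunction of $-\Delta$ on $A^\eps_i$ shows that the first eigenvalue $\lambda_1(A^\eps_i)$ satisfies $\lambda_1(A^\eps_i) \le C/\eps^2$, and by Faber--Krahn $\lambda_1(A^\eps_i) \ge c/\muleb{2}(A^\eps_i)$, giving $\muleb{2}(A^\eps_i) \ge c\eps^2$; conversely, since $w^\eps$ is bounded and $-\Delta w^\eps \ge 0$ so that $w^\eps$ is subharmonic's negative, a mean-value / Harnack argument on the rescaled ball bounds the diameter of $A^\eps_i$ by $C\eps$.

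\emph{The main obstacle} I anticipate is the uniform $L^\infty$ bound of the first step: the nonlinearity $(w^\eps)^p$ is supercritical-looking once divided by $\eps^2$, so one cannot directly invoke a fixed elliptic estimate. The trick is to absorb the $\eps^{-2}$ factor by the smallness of $\muleb{2}(A^\eps) = O(\logeps^{-1})$ from \eqref{ineqMuAeps}, running a De Giorgi--Stampacchia truncation on super-level sets $\{w^\eps > t\}$ and showing their measures decay fast enough to reach $L^\infty$; the borderline is precisely the two-dimensional critical Sobolev embedding, and making the iteration close uniformly in $\eps$ (rather than blowing up logarithmically) is the delicate point. Once the $L^\infty$ bound is secured, the remaining steps are standard rescaling and maximum-principle arguments.
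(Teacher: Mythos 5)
There is a genuine gap, and it sits exactly where your proof has to produce the \emph{upper} bounds. Your Step~1 ($\Norm{w^\eps}_{L^\infty}=O(1)$ after rescaling) is believable when $q^\eps$ is harmonic: extending $(v^\eps-q^\eps)_+$ by zero and using Kato's inequality, the rescaled function $\Tilde w$ is a subsolution of $-\Delta \Tilde w \le \Tilde w^p$ in $\R^2$ with $\int \Tilde w^{p+1}=O(1)$, and local boundedness for subsolutions gives the sup bound without any delicate De Giorgi borderline issue (though for the general $q \in \mathrm{W}^{1,r}$, $r>2$, of Section~2 your manipulation of $-\Delta w^\eps$ is not available, since $\Delta q$ need not exist in any $L^s$). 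The fatal step is the inference in Steps~2--3: ``$\int_{\R^2}\Tilde w^p = O(1)$, $\Norm{\Tilde w}_\infty = O(1)$, $\Tilde w$ superharmonic on its positivity set $\Rightarrow$ the positivity set has area $O(1)$ and each component has diameter $O(1)$.'' This is false. Test it against a configuration of $N$ disjoint balls of large radius $R$, each carrying the ball solution $V_R$ of $-\Delta V_R = V_R^p$, $V_R=0$ on the boundary: then $\Norm{V_R}_\infty \sim R^{-2/(p-1)}$, $\int V_R^p \sim R^{-2/(p-1)}$ and $\int\abs{\nabla V_R}^2 \sim R^{-4/(p-1)}$, so with $N \sim R^{2/(p-1)}$ the total vorticity, the energy and the sup norm are all $O(1)$ or smaller, while the total area $NR^2 \sim R^{2+2/(p-1)}$ is arbitrarily large. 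Every hypothesis your Steps~2--3 actually invoke (equation inside $A^\eps$, sup bound, vorticity bound, energy bound, superharmonicity, Chebyshev, Harnack on components) is satisfied by this configuration, so no such argument can close; likewise your Faber--Krahn comparison is correct but yields a \emph{lower} bound $\muleb{2}(A^\eps_i)\ge c\eps^2$ on components (information the paper only needs later, for essential vortices), not the upper bounds of the Lemma.

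What rules out the large-amplitude-free configurations, and what your proposal never uses, is the information \emph{outside} $A^\eps$: on $\partial A^\eps$ one has $v^\eps=q^\eps\approx\frac{\kappa}{2\pi}\log\frac1\eps$, while $v^\eps=0$ on $\partial\Omega$ and, by \eqref{eq:2etoiles}, the harmonic layer in between carries only $\frac{\kappa^2}{2\pi}\log\frac1\eps+O(1)$ of energy. The paper's proof normalizes $w^\eps=v^\eps/\min_{\partial A^\eps}q^\eps$ and reads this as a capacity bound, $2\pi/\capa(A^\eps,\Omega)\ge \log\frac1\eps+O(1)$; then the Poincar\'e--Szeg\H o inequality (Proposition~\ref{propositionCapacityArea}) converts small capacity into $\log\bigl(\muleb{2}(\Omega)/\muleb{2}(A^\eps)\bigr)\ge 2\log\frac1\eps+O(1)$, i.e.\ \eqref{ineqVorticityAreaStrong}, and the Fraenkel-type diameter--capacity estimate (Proposition~\ref{propositionBoundDiameter}), applied to each $A^\eps_i$ via $\capa(A^\eps_i,\Omega)\le\capa(A^\eps,\Omega)$, gives \eqref{ineqVorticityDiameter}. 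Note that this route uses no equation and no sup bound inside $A^\eps$ at all, which is also why it works for general $q\in \mathrm{W}^{1,r}$. To repair your proof you would have to inject this exterior energy/boundary-value constraint; once you do, you are essentially forced into the capacity argument.
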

\begin{proof}
Set
\[
  w^\eps=\frac{v^\eps}{\min_{\partial A^\eps}q^\eps}. 
\]
Since $v^\eps =q^\eps $ on $\partial A^\eps$, we have, by \eqref{eq:2etoiles},
\begin{equation}
\label{ineqCapacity}
  \frac{2\pi}{\capa(A^\eps, \Omega)} 
  \ge \frac{2\pi}{\displaystyle \int_{\Omega\setminus A^\eps} \abs{\nabla w^\eps}^2} 
  \ge 2 \pi \frac{\frac{\kappa^2}{4\pi} \bigl(\log \frac{1}{\eps}\bigr)^2+O(\logeps)}{\displaystyle \int_{\Omega\setminus A^\eps}
\abs{\nabla v^\eps}^2}=\log \frac{1}{\eps}+O(1). 
\end{equation}
By Proposition~\ref{propositionCapacityArea}, it follows that 
\[
  \log \frac{\muleb{2}(\Omega)}{\muleb{2}(A^\eps)} \ge 2\log \frac{1}{\eps}+O(1),
\]
from which \eqref{ineqVorticityAreaStrong} follows. 

Similarly, we have
\[
  \frac{2\pi}{\capa(A^\eps_i, \Omega)} \geq \frac{2\pi}{\capa(A^\eps, \Omega)} \geq \log \frac{1}{\eps}+O(1). 
\]
It hence follows from Proposition~\ref{propositionBoundDiameter} and the boundedness of $\Omega$ that
\[
  \log C\Bigl(1+\frac{1}{\diam (A_i^\eps)}\Bigr) \ge \log \frac{1}{\eps}+O(1), 
\] 
which implies \eqref{ineqVorticityDiameter}.
\end{proof}

\begin{lemma}
\label{lemmaVortexSplit}
There exist positive constants $\gamma$ and $c$ such that when $\eps$ is
small enough, for every $i \in I_\eps$, 
if 
\begin{equation}
\label{eqSplitVortices}
  \int_{A^\eps_i} \abs{\nabla (v^\eps-q^\eps)}^2 > \gamma^2,
\end{equation}
then
\begin{gather}
\label{ineqLowerBoundArea} \muleb{2}(A_i^\eps)\ge c\eps^2, \\ 
\label{ineqLowerBoundDiam}  \diam(A^\eps_i)\ge c\eps, \\
\label{ineqLowerBoundDistance}  \dist(A^\eps_i, \partial \Omega)\ge c, \\
\label{ineqLowerBoundVortex}  \int_{A^\eps_i} \omega_\eps \ge c,
\end{gather}
while if \eqref{eqSplitVortices} does not hold, then for every $s \ge 1$, 
\begin{equation}
\label{ineqfsVanishing}  \int_{A^\eps_i} f(v^\eps-q^\eps)^s \le C \Norm{\nabla q}_{\mathrm{L}^r(A^\eps_i)}^{sp} \muleb{2}(A^\eps_i)^{1+\frac{sp}{2}(1-\frac{2}{r})},
\end{equation}
where $C>0$ only depends on $s \ge 1$. 
\end{lemma}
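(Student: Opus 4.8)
The plan is to analyze each connected component $A^\eps_i$ separately through the local Nehari identity obtained by testing $\Peps$ against $(v^\eps-q^\eps)_+$ restricted to $A^\eps_i$: writing $w_i=v^\eps-q^\eps$ on $A^\eps_i$ (which belongs to $H^1_0(A^\eps_i)$), one has, exactly as in \eqref{eqNehariVortex},
\[
  \int_{A^\eps_i} \abs{\nabla w_i}^2 = \frac{1}{\eps^2}\int_{A^\eps_i} w_i^{p+1} - \int_{A^\eps_i} \nabla w_i \cdot \nabla q.
\]
The first point I would establish is that the forcing term is negligible: by H\"older and Lemma~\ref{lemmaAreaDiameter}, $\abs{\int_{A^\eps_i}\nabla w_i\cdot\nabla q}\le (\int_{A^\eps_i}\abs{\nabla w_i}^2)^{1/2}\Norm{\nabla q}_{\mathrm{L}^r(A^\eps_i)}\muleb{2}(A^\eps_i)^{\frac12-\frac1r}$, which since $\muleb{2}(A^\eps_i)\le C\eps^2$ and $r>2$ is at most $C\eps^{1-\frac2r}(\int_{A^\eps_i}\abs{\nabla w_i}^2)^{1/2}$, hence $o(1)$ uniformly in $i$ in view of \eqref{ineqVortexEnergy}. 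Set $E_i=\int_{A^\eps_i}\abs{\nabla w_i}^2$, $M_i=\frac1{\eps^2}\int_{A^\eps_i}w_i^p=\int_{A^\eps_i}\omega^\eps$ and $P_i=\frac1{\eps^2}\int_{A^\eps_i}w_i^{p+1}$, so the identity reads $E_i=P_i+o(1)$, with $E_i=O(1)$ and $P_i=O(1)$ from \eqref{ineqVortexEnergy} and \eqref{ineqVortexPotential}.

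For the substantial case \eqref{eqSplitVortices} I would first prove the vorticity bound \eqref{ineqLowerBoundVortex}. The Gagliardo--Nirenberg inequality \eqref{ineqGN} applied on $A^\eps_i$ gives $P_i\le C M_i E_i^{1/2}$, whence $E_i\le CM_iE_i^{1/2}+o(1)$; since $E_i>\gamma^2$ this forces $M_i\ge \gamma/(2C)=:c$ for $\eps$ small, which is \eqref{ineqLowerBoundVortex}. The area bound \eqref{ineqLowerBoundArea} then follows from the H\"older inequality $\int_{A^\eps_i}w_i^p\le(\int_{A^\eps_i}w_i^{p+1})^{p/(p+1)}\muleb{2}(A^\eps_i)^{1/(p+1)}$: dividing by $\eps^2$ and combining $P_i=O(1)$ with $M_i\ge c$ yields $\muleb{2}(A^\eps_i)\ge c\eps^2$. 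Finally \eqref{ineqLowerBoundDiam} is immediate from \eqref{ineqLowerBoundArea} and the planar isodiametric inequality $\muleb{2}(A^\eps_i)\le\frac\pi4\diam(A^\eps_i)^2$.

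The distance estimate \eqref{ineqLowerBoundDistance} is, I expect, the main obstacle, since at leading order the energy only distinguishes length scales logarithmically and is blind to the distance at the relevant $O(1)$ precision. I would argue by capacity. Monotonicity of capacity and \eqref{ineqCapacity} give $\frac{2\pi}{\capa(A^\eps_i,\Omega)}\ge\frac{2\pi}{\capa(A^\eps,\Omega)}\ge\log\frac1\eps+O(1)$. Against this I would set a \emph{distance-sensitive} capacity bound for a small connected set, namely $\frac{2\pi}{\capa(A^\eps_i,\Omega)}\le\log\frac{\dist(A^\eps_i,\partial\Omega)}{\diam(A^\eps_i)}+O(1)$; using $\diam(A^\eps_i)\ge c\eps$ from \eqref{ineqLowerBoundDiam} this reads $\frac{2\pi}{\capa(A^\eps_i,\Omega)}\le\log\dist(A^\eps_i,\partial\Omega)+\log\frac1\eps+O(1)$. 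Comparing the two bounds cancels the $\log\frac1\eps$ and leaves $\log\dist(A^\eps_i,\partial\Omega)\ge -O(1)$, i.e.\ \eqref{ineqLowerBoundDistance}. The delicate ingredient is the distance-sensitive capacity inequality; I would prove it by flattening $\partial\Omega$ near the point of $A^\eps_i$ closest to the boundary by a bi-Lipschitz map (which alters the two-dimensional Dirichlet energy, hence the capacity, only by bounded factors, using that $\Omega$ is Lipschitz), reducing to a half-plane in which the method of images (reflection across the boundary line) expresses the capacitor energy through the logarithmic capacity of a connected planar set of diameter $\ge c\eps$ sitting at distance $\sim\dist(A^\eps_i,\partial\Omega)$, and produces exactly the logarithm of the ratio distance/diameter. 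This is a sharpening of Proposition~\ref{propositionBoundDiameter} that records the distance to $\partial\Omega$ rather than the size of $\Omega$, and is where the geometry of the domain genuinely enters.

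For the remaining (small-energy) alternative I would fix $\gamma$ small once and for all so as to absorb the nonlinearity. Combining the Sobolev--Poincar\'e inequality $\Norm{w_i}_{\mathrm{L}^m(A^\eps_i)}\le C\sqrt m\,\muleb{2}(A^\eps_i)^{1/m}\Norm{\nabla w_i}_{\mathrm{L}^2}$ with $\muleb{2}(A^\eps_i)\le C\eps^2$ gives $M_i\le CE_i^{p/2}$, and hence through \eqref{ineqGN} that $P_i\le CE_i^{(p+1)/2}\le C\gamma^{p-1}E_i$. Choosing $\gamma$ with $C\gamma^{p-1}\le\frac12$, the Nehari identity becomes $E_i\le\frac12E_i+E_i^{1/2}\Norm{\nabla q}_{\mathrm{L}^2(A^\eps_i)}$, so that $E_i^{1/2}\le 2\Norm{\nabla q}_{\mathrm{L}^2(A^\eps_i)}\le 2\Norm{\nabla q}_{\mathrm{L}^r(A^\eps_i)}\muleb{2}(A^\eps_i)^{\frac12-\frac1r}$ by H\"older. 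Feeding this into the Sobolev--Poincar\'e inequality with $m=ps$, and using $f(v^\eps-q^\eps)^s=w_i^{ps}$, yields $\int_{A^\eps_i}f(v^\eps-q^\eps)^s\le C_s\muleb{2}(A^\eps_i)E_i^{ps/2}\le C_s\Norm{\nabla q}_{\mathrm{L}^r(A^\eps_i)}^{ps}\muleb{2}(A^\eps_i)^{1+\frac{ps}{2}(1-\frac2r)}$, which is precisely \eqref{ineqfsVanishing}. The threshold $\gamma$ fixed here simultaneously fixes the constant $c$ in the substantial case, so a single $\gamma$ serves both alternatives.
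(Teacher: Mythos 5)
Most of your proof is correct and is in substance the paper's own argument: the componentwise Nehari identity, the Sobolev/Gagliardo--Nirenberg absorption with $\gamma$ fixed once and for all through $\muleb{2}(A^\eps_i)\le C\eps^2$, the derivation of \eqref{ineqfsVanishing}, and the capacity comparison scheme for \eqref{ineqLowerBoundDistance} all match the paper. Your ordering (vorticity bound first, then the area bound via the H\"older interpolation $M_i\le P_i^{p/(p+1)}\bigl(\muleb{2}(A^\eps_i)/\eps^2\bigr)^{1/(p+1)}$) is a harmless variant of the paper's, which reads \eqref{ineqLowerBoundArea} directly off the absorbed Sobolev inequality $\gamma^2\le C\muleb{2}(A^\eps_i)/\eps^2+C\Norm{\nabla q}_{\mathrm{L}^2(A^\eps_i)}$ and proves \eqref{ineqLowerBoundVortex} last.

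The genuine gap is your proposed proof of the ``distance-sensitive'' capacity inequality by bi-Lipschitz flattening. The comparison requires the upper bound $\frac{2\pi}{\capa(A^\eps_i,\Omega)}\le\log\frac{\dist(A^\eps_i,\partial\Omega)}{\diam(A^\eps_i)}+O(1)$ with \emph{additive} $O(1)$ accuracy on a quantity of order $\logeps$. A bi-Lipschitz map with constant $L$ distorts the Dirichlet integral, hence the condenser capacity, only up to multiplicative factors in $[L^{-2},L^2]$, so flattening yields at best $\frac{2\pi}{\capa(A^\eps_i,\Omega)}\le L^2\bigl(\log\frac{\dist}{\diam}+O(1)\bigr)$; the leading $\logeps$ terms then fail to cancel and the comparison gives only $\dist(A^\eps_i,\partial\Omega)\ge c\,\eps^{1-L^{-2}}\to 0$, not \eqref{ineqLowerBoundDistance}. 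The step must exploit the \emph{exact} conformal invariance of the planar condenser energy, which is how the paper proceeds: circular symmetrization (Wolontis) followed by a M\"obius normalization and elliptic-integral estimates. Moreover, the inequality you call the delicate ingredient is not a sharpening of Proposition~\ref{propositionBoundDiameter} --- it \emph{is} Proposition~\ref{propositionBoundDiameter}: since $\Omega$ is bounded and Lipschitz, $\R^2\setminus\Omega$ contains a fixed ball of radius $\rho$ and $\dist(A^\eps_i,\partial\Omega)\le\diam\Omega$, so the factor $1+\dist(A^\eps_i,\partial\Omega)/(2\rho)$ is $O(1)$ and the proposition gives exactly $\frac{2\pi}{\capa(A^\eps_i,\Omega)}\le \log C\bigl(1+\dist(A^\eps_i,\partial\Omega)/\diam(A^\eps_i)\bigr)$ (you appear to have conflated it with Fraenkel's inequality quoted just before it, which is the bound involving $\muleb{2}(\Omega)$). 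Citing that proposition, together with \eqref{ineqLowerBoundDiam}, \eqref{ineqCapacity} and capacity monotonicity, closes the gap; with this single substitution your proof is complete and coincides with the paper's.
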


\begin{proof}
Starting from \eqref{eqNehariVortex}, and applying the Sobolev and
Cauchy--Schwarz inequalities we obtain
\begin{multline}
\label{ineqGradientVortices}
  \int_{A^\eps_i} \abs{\nabla (v^\eps-q^\eps)_+}^2 
  = \int_{A^\eps_i} \frac{f(v^\eps-q^\eps)}{\eps^2}(v^\eps-q^\eps)_+-\int_{A^\eps_i} \nabla q \cdot \nabla (v^\eps-q^\eps)\\
  \le C\frac{\muleb{2}(A^\eps_i)}{\eps^2} 
\Bigl(\int_{A^\eps_i} \abs{\nabla (v^\eps-q^\eps)_+}^2\Bigr)^{\frac{p+1}{2}}\\
+ \Norm{\nabla q}_{\mathrm{L}^2(A^\eps_i)}\Norm{\nabla (v^\eps-q^\eps)_+}_{\mathrm{L}^2(A^\eps_i)}. 
\end{multline}
By Lemma~\ref{lemmaAreaDiameter}, we may choose $\gamma$ sufficiently small so that
\[
   \gamma^{p-1}\le \frac{\eps^2}{2C\muleb{2}(A^\eps_i)},
\]
independently of $\eps$, and therefore if  \eqref{eqSplitVortices} does not hold
we obtain
\begin{equation}
\label{ineqVanVorticesuq}
   \frac{1}{2}\int_{A^\eps_i} \abs{\nabla (v^\eps-q^\eps)_+}^2\le \int_{A^\eps_i} \abs{\nabla q}^2. 
\end{equation}
Applying successively Sobolev inequality, \eqref{ineqVanVorticesuq} and
Lemma~\ref{lemmaAreaDiameter}, we conclude
\[
  \begin{split}
  \int_{A^\eps_i} f(v^\eps-q^\eps)^s 
     &\le C \Bigl( \int_{A^\eps_i} \abs{\nabla (v^\eps-q^\eps)_+}^2 \Bigr)^\frac{sp}{2} \muleb{2}(A^\eps_i)\\
     &\le C'\Bigl( \int_{A^\eps_i} \abs{\nabla q}^2 \Bigr)^\frac{sp}{2} \muleb{2}(A^\eps_i)\\
     &\le C'' \Norm{\nabla q}_{\mathrm{L}^r(A^\eps_i)}^{sp}
     \muleb{2}(A^\eps_i)^{1+\frac{sp}{2}(1-\frac{2}{r})}. 
  \end{split}
\]

Assume now that \eqref{eqSplitVortices} holds. Combined with
\eqref{ineqGradientVortices} and \eqref{ineqVortexEnergy}, this yields 
\[
   \gamma^2 \le C \frac{\muleb{2}(A^\eps_i)}{\eps^2}+C \Norm{\nabla q}_{\mathrm{L}^2(A^\eps_i)}. 
\]
Since $\Norm{\nabla q}_{\mathrm{L}^2(A^\eps_i)} \to 0$ as $\eps \to 0$, one must
have  $\muleb{2}(A^\eps) \ge c \eps^2$. The isodiametric inequality then yields
\eqref{ineqLowerBoundDiam}. 

Turning back to \eqref{ineqCapacity}, and using
Proposition~\ref{propositionBoundDiameter}, we obtain
\[
    \log C\Bigl(1+\frac{\dist(A_i^\eps, \partial \Omega)}{\eps}\Bigr) \ge \log \frac{1}{\eps}+O(1),  
\]
from which \eqref{ineqLowerBoundDistance} follows.

Testing $\Peps$ against $(v^\eps-q^\eps)_+ \chi_{A_\eps^i}$, applying the Gagliardo--Nirenberg inequality and using then \eqref{ineqVortexEnergy}, we have
\[
\int_{A^\eps_i} \abs{\nabla (v^\eps-q^\eps)}^2 \leq C(\int_{A^\eps_i} \omega_\eps+\Norm{\nabla q^\eps}_{L^2(A^\eps_i)})\Bigl(\int_{A^\eps_i}
\abs{\nabla (v^\eps-q^\eps)}^2\Bigr)^{\frac{1}{2}}
\le C'\int_{A^\eps_i} \omega_\eps,
\]
(cf.\ the proof of Proposition~\ref{propositionEstimatesueps}) and the inequality \eqref{ineqLowerBoundVortex} follows.
\end{proof}

In view of Lemma~\ref{lemmaVortexSplit}, we can split the vortices in two classes: the vanishing vortices
\begin{align}
\label{eqDefVeps}  V^\eps&=\bigcup \Bigl\{A_i^\eps \st \int_{A_i^\eps} \abs{\nabla (v^\eps-q^\eps)}^2 \le \gamma^2\Bigr\}, \\
\intertext{and the essential vortices}
\label{eqDefEeps}  E^\eps&=\bigcup \Bigl\{A_i^\eps \st \int_{A_i^\eps} \abs{\nabla (v^\eps-q^\eps)}^2 > \gamma^2\Bigr\}. 
\end{align}
In view of \eqref{ineqVortexEnergy}, $E^\eps$ contains finitely many connected components. 
We can thus split $E^\eps=\bigcup_{j=1}^{k^\eps} E^\eps_j$, where $E^\eps_j$ are nonempty open sets which are not necessarily connected such that, up to a subsequence,
\begin{equation}
\label{eqDistEepsi}
  \frac{\dist(E^\eps_i, E^\eps_j)}{\eps} \to \infty
\end{equation}
as $\eps \to 0$, and
\begin{equation}
\label{ineqDiamEepsi}  \Tilde{\rho}= \limsup_{\eps \to 0} \frac{\diam (E^\eps_i)}{\eps} < \infty. 
\end{equation}
By definition of $E^\eps$ and by \eqref{ineqVortexEnergy}, $k^\eps$ is bounded as $\eps \to 0$. Finally, 
\begin{equation}
\label{eqDistbord}
  \liminf_{\eps \to 0} \dist(E^\eps_i, \partial \Omega)>0. 
\end{equation}

We set 
\begin{align*}
  \omega^\eps_v&=\omega^\eps \charfun{V^\eps}, &
  \omega^\eps_i&=\omega^\eps \charfun{E^\eps_i}, &
  \kappa^\eps_i&=\int_{\Omega} \omega^\eps_i. 
\end{align*}
By \eqref{ineqTotalVorticity}, we have 
\begin{equation}
\label{ineqSumVortices}
  \sum_{i=1}^{k^\eps} \kappa^\eps_i \le \kappa+O\bigl(\logeps^{-1}\bigr). 
\end{equation}

\begin{lemma}
\label{lemmaVanishingVorticity}
For every $s \ge 1$, we have
\[
  \Norm{\omega^\eps_v}_{\mathrm{L}^s} = o\bigl(\eps^{p(1-\frac{2}{r})-2(1-\frac{1}{s})}\bigr). 
\]
In particular, if $\frac{1}{s} \ge 1-p(\frac{1}{2}-\frac{1}{r})$, then $\omega^\eps_v \to 0$ in $\mathrm{L}^s(\Omega)$. 
\end{lemma}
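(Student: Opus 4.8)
The plan is to bound $\Norm{\omega^\eps_v}_{\mathrm{L}^s}^s$ by summing the vanishing-vortex estimate \eqref{ineqfsVanishing} over the components that make up $V^\eps$. Since $\omega^\eps_v = \frac{1}{\eps^2} f(v^\eps-q^\eps)$ on $V^\eps$ and vanishes elsewhere, and the components $A_i^\eps \subset V^\eps$ are pairwise disjoint,
\[
  \Norm{\omega^\eps_v}_{\mathrm{L}^s}^s = \frac{1}{\eps^{2s}} \sum_{A_i^\eps \subset V^\eps} \int_{A_i^\eps} f(v^\eps-q^\eps)^s \le \frac{C}{\eps^{2s}} \sum_{A_i^\eps \subset V^\eps} \Norm{\nabla q}_{\mathrm{L}^r(A_i^\eps)}^{sp} \muleb{2}(A_i^\eps)^{1+\frac{sp}{2}(1-\frac{2}{r})},
\]
where the inequality is exactly \eqref{ineqfsVanishing}, applicable on each such component precisely because \eqref{eqSplitVortices} fails there.

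The key manipulation is to split $\muleb{2}(A_i^\eps)^{1+\frac{sp}{2}(1-\frac{2}{r})} = \muleb{2}(A_i^\eps) \cdot \muleb{2}(A_i^\eps)^{\frac{sp}{2}(1-\frac{2}{r})}$. Using \eqref{ineqVorticityAreaStrong}, namely $\muleb{2}(A_i^\eps) \le \muleb{2}(A^\eps) \le C\eps^2$ together with the positivity of the exponent $\frac{sp}{2}(1-\frac{2}{r})$ (here $r>2$ is used), I would bound the second factor by $C\eps^{sp(1-\frac{2}{r})}$. Since $A_i^\eps \subset V^\eps$, I would then bound $\Norm{\nabla q}_{\mathrm{L}^r(A_i^\eps)}^{sp} \le \Norm{\nabla q}_{\mathrm{L}^r(V^\eps)}^{sp}$ uniformly in $i$, pull this factor out of the sum, and apply $\sum_i \muleb{2}(A_i^\eps) \le \muleb{2}(A^\eps) \le C\eps^2$ once more. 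Everything collapses to
\[
  \Norm{\omega^\eps_v}_{\mathrm{L}^s}^s \le \frac{C}{\eps^{2s}}\, \eps^{sp(1-\frac{2}{r})} \Norm{\nabla q}_{\mathrm{L}^r(V^\eps)}^{sp}\, \eps^2 = C\, \eps^{sp(1-\frac{2}{r}) - 2(s-1)} \Norm{\nabla q}_{\mathrm{L}^r(V^\eps)}^{sp}.
\]
The step needing the most care is precisely this bookkeeping: one should resist splitting the $\mathrm{L}^r$-factors and the area-factors into two separate sums, because the exponent $sp/r$ on $\int_{A_i^\eps}\abs{\nabla q}^r$ may be smaller than $1$ and there is then no superadditivity to exploit. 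The clean route is to absorb all the $\mathrm{L}^r$-factors into the single uniform bound $\Norm{\nabla q}_{\mathrm{L}^r(V^\eps)}^{sp}$ and to sum only the areas.

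The improvement from $O$ to $o$ comes from absolute continuity of the Lebesgue integral: since $\muleb{2}(V^\eps) \le \muleb{2}(A^\eps) \to 0$ and $\nabla q \in \mathrm{L}^r(\Omega)$, we have $\Norm{\nabla q}_{\mathrm{L}^r(V^\eps)} \to 0$, hence $\Norm{\nabla q}_{\mathrm{L}^r(V^\eps)}^{sp} = o(1)$. Taking $s$-th roots gives $\Norm{\omega^\eps_v}_{\mathrm{L}^s} = o\bigl(\eps^{p(1-\frac{2}{r}) - 2(1-\frac{1}{s})}\bigr)$, the asserted estimate. Finally, the hypothesis $\frac{1}{s} \ge 1 - p(\frac{1}{2}-\frac{1}{r})$ is nothing but the inequality $p(1-\frac{2}{r}) - 2(1-\frac{1}{s}) \ge 0$; since $\eps$ raised to a nonnegative power stays bounded as $\eps \to 0$, the estimate then forces $\omega^\eps_v \to 0$ in $\mathrm{L}^s(\Omega)$.
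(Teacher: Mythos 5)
Your proof is correct and follows essentially the same route as the paper's: sum \eqref{ineqfsVanishing} over the components of $V^\eps$, pull out the uniform factors $\Norm{\nabla q}_{\mathrm{L}^r(V^\eps)}^{sp}$ and $\muleb{2}(A_i^\eps)^{\frac{sp}{2}(1-\frac{2}{r})}\le C\eps^{sp(1-\frac{2}{r})}$, sum the remaining single power of the areas against $\muleb{2}(A^\eps)\le C\eps^2$, and obtain the little-$o$ from $\Norm{\nabla q}_{\mathrm{L}^r(V^\eps)}\to 0$ by absolute continuity. Your explicit justification of the $o$ (left implicit in the paper) and your remark on why one should not split the $\mathrm{L}^r$-factors and areas into separate sums are accurate refinements of the same argument.
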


\begin{proof}
Set
\[
 I_v^\eps=\Bigl\{ i \in I^\eps\st \int_{A_i^\eps} \abs{\nabla (v^\eps-q^\eps)}^2 \le \gamma^2\Bigr\}
\]
We have, by Lemma~\ref{lemmaVortexSplit} and by \eqref{ineqVorticityAreaStrong},
\[
\begin{split}
  \int_{\Omega} \abs {\omega^\eps_v}^s 
  &=\sum_{i \in I^\eps_v}\int_{A^\eps_i} \abs{\omega^\eps_v}^s \\
  &\le C\frac{1}{\eps^{2s}} \sum_{i \in I^\eps_v} \Norm{\nabla q}_{\mathrm{L}^r(A^\eps_i)}^{sp} \muleb{2}(A^\eps_i)^{1+\frac{sp}{2}(1-\frac{2}{r})}\\
  &\le C\muleb{2}(V^\eps) \max_{i \in I^\eps_v} \Norm{\nabla q}_{\mathrm{L}^r(A^\eps_i)}^{sp} \frac{\muleb{2}(A^\eps_i)^{1+sp(\frac{1}{2}-\frac{1}{r})}}{\eps^{2s}}\\
  &\le C' \Norm{\nabla q}_{\mathrm{L}^r(V^\eps)}^{sp} \eps^{sp(1-\frac{2}{r})-2(s-1)}. \qedhere
\end{split}
\]
\end{proof}

\begin{lemma}
\label{lemmaNonVanisingVortex}
For $\eps > 0$ sufficiently small, $k_\eps \ge 1$.
\end{lemma}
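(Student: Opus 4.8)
The plan is to argue by contradiction. Suppose that $k_\eps=0$ along some sequence $\eps\to 0$, so that $E^\eps=\emptyset$ and every connected component of $A^\eps$ is a vanishing vortex. Then $\omega^\eps=\omega^\eps_v$, and since $r>2$ forces $p(\tfrac12-\tfrac1r)>0$, Lemma~\ref{lemmaVanishingVorticity} applied with $s=1$ gives
\[
  \kappa^\eps=\int_\Omega \omega^\eps=\Norm{\omega^\eps_v}_{\mathrm{L}^1}=o\bigl(\eps^{p(1-2/r)}\bigr)\to 0.
\]
The goal is to contradict this by showing that a nontrivial solution must carry a total vorticity bounded below away from $0$.

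First I would record that $A^\eps\ne\emptyset$: otherwise $\omega^\eps\equiv 0$, so $v^\eps$ would be harmonic with zero boundary values, i.e.\ $v^\eps=0$, against the hypothesis $v^\eps\ne 0$. Fix any $x^\eps_0\in A^\eps$. Since $v^\eps>q^\eps$ on $A^\eps$ and $q$ is bounded,
\[
  v^\eps(x^\eps_0)>q^\eps(x^\eps_0)\ge \frac{\kappa}{2\pi}\log\frac{1}{\eps}+\inf_\Omega q=\frac{\kappa}{2\pi}\log\frac{1}{\eps}+O(1).
\]
On the other hand, from $-\Delta v^\eps=\omega^\eps\ge 0$ in $\Omega$ and $v^\eps=0$ on $\partial\Omega$ we have the Green representation $v^\eps=\int_\Omega G(\cdot,y)\omega^\eps(y)\,dy$, and writing $G(x,y)=\frac{1}{2\pi}\log\frac{1}{\abs{x-y}}+H(x,y)$ with $H$ bounded yields
\[
  v^\eps(x^\eps_0)\le \frac{1}{2\pi}\int_\Omega \log\frac{1}{\abs{x^\eps_0-y}}\,\omega^\eps(y)\,dy+C\kappa^\eps.
\]

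The main point, and the step I expect to require the most care, is to show that the logarithmic potential on the right is $\frac{\kappa^\eps}{2\pi}\log\frac{1}{\eps}+o(\log\frac{1}{\eps})$. On $\{\abs{x^\eps_0-y}\ge\eps\}$ the integrand is at most $\log\frac{1}{\eps}$, contributing $\le\frac{1}{2\pi}\log\frac{1}{\eps}\,\kappa^\eps$. On $\{\abs{x^\eps_0-y}<\eps\}$ I would apply H\"older's inequality with exponents $s,s'$ for some fixed $s>1$, together with the elementary bound $\int_{B(0,\eps)}\bigl(\log\frac{1}{\abs{y}}\bigr)^{s'}\,dy=O\bigl(\eps^2(\log\frac{1}{\eps})^{s'}\bigr)$, so that this near-field contribution is at most $C\Norm{\omega^\eps}_{\mathrm{L}^s}\,\eps^{2/s'}\log\frac{1}{\eps}$. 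Since $\frac{2}{s'}=2(1-\frac1s)$, Lemma~\ref{lemmaVanishingVorticity} makes this $o\bigl(\eps^{p(1-2/r)}\log\frac{1}{\eps}\bigr)=o(\log\frac{1}{\eps})$.

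Combining the two displays for $v^\eps(x^\eps_0)$ and dividing by $\frac{1}{2\pi}\log\frac{1}{\eps}$ gives $\kappa+o(1)\le\kappa^\eps+o(1)$, hence $\kappa\le\liminf_{\eps\to 0}\kappa^\eps$. Since $\kappa^\eps\to 0$ while $\kappa>0$, this is the desired contradiction, proving $k_\eps\ge 1$ for all sufficiently small $\eps$.
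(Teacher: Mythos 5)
Your proof is correct and takes essentially the same route as the paper: both argue by contradiction, use Lemma~\ref{lemmaVanishingVorticity} to conclude that $\omega^\eps=\omega^\eps_v$ is small in $\mathrm{L}^s$, and deduce that $v^\eps$ is then too small to exceed $q^\eps=q+\frac{\kappa}{2\pi}\log\frac{1}{\eps}$ on the (necessarily nonempty) set $A^\eps$, contradicting $v^\eps\ne 0$. The only difference is presentational: where the paper cites \cite{GilbargTrudinger2001}*{Theorem 8.15} to get $v^\eps\to 0$ in $\mathrm{L}^\infty(\Omega)$, you re-derive the needed pointwise bound by hand through the Green representation and H\"older's inequality (note only that $H$ is bounded \emph{above} rather than bounded, which suffices since $\omega^\eps\ge 0$).
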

\begin{proof}
Assume by contradiction that there is a sequence $(\eps_n)$ such that $\eps_n \to 0$ and $k_{\eps_n} =0$.
Take $s > 1$ such that $\frac{1}{s} \ge 1-p(\frac{1}{2}-\frac{1}{r})$. Since $\omega_{\eps_n}=\omega_{\eps_n}^v \to 0$ in $\mathrm{L}^s(\Omega)$ for some $s > 1$ by Lemma~\ref{lemmaVanishingVorticity}; by classical estimates, \cite{GilbargTrudinger2001}*{Theorem 8.15}  $v_{\eps_n} \to 0$ in $L^\infty(\Omega)$. Therefore, when $n$ is large enough, one would have $\omega_{\eps_n}=0$ and thus $v_{\eps_n} = 0$.
\end{proof}

\subsubsection{Step 3: Small scale asymptotics}

\medskip

We define 
\[
  x^\eps_i=\frac{1}{\kappa^\eps_i}\int_{\Omega}\omega^\eps_i(x)x\, dx. 
\]
By \eqref{eqDistEepsi} and \eqref{eqDistbord}, $x^\eps_i \in \Omega$ and $x^\eps_i\ne x^\eps_j$ when $i \ne j$ and $\eps$ is small. 
We also define
\[
  v^\eps_i(y)=v^\eps(x^\eps_i+\eps y)-q^\eps(x^\eps_i),
\]
and
\[
   q^\eps_i(y)=q(x^\eps_i+\eps y)-q(x^\eps_i). 
\]
By \eqref{eqDistbord}, for every $R>0$, $v^\eps_i$ is well-defined in $B(0, R)$ when $\eps$ is sufficiently small, and it satisfies there the equation
\begin{equation}
\label{eqLimit}
 -\Delta v^\eps_i=f(v^\eps_i-q^\eps_i). 
\end{equation}

\begin{lemma}
\label{lemmaSmallScaleLocalEstimates}
For every $R > 0$ and $s\ge 1$, there exist $\eps(R)>0$  and $C>0$ such that for $0<\eps\leq \eps(R)$ we have 
\begin{equation}
\label{ineqRenormEstimate}
  \Norm{f(v^\eps_i-q^\eps_i)}_{\mathrm{L}^s(B(0, R))}\le C. 
\end{equation}
Moreover, for $2\Tilde{\rho} < \abs{y} < R$, we have
\begin{equation}
\label{ineqvepsiDecay}
  \Bigl\lvert v^\eps_i(y)-\frac{\kappa^\eps_i}{2\pi}\log \frac{1}{\eps\abs{y}}+q^\eps(x^\eps_i)-\kappa^\eps_i H(x^\eps_i, x^\eps_i)
  -\sum_{j \ne i} \kappa_j^\eps G(x^\eps_i, x^\eps_j)\Bigr\rvert 
  \le \frac{\kappa}{2\pi} \log \frac{\abs{y}}{\abs{y}-\Tilde{\rho}}+o(1),
\end{equation}
and
\begin{equation}
\label{ineqNablavepsiDecay}
\Bigl\lvert \nabla v^\eps_i(y)-\frac{\kappa}{2\pi} \frac{y}{\abs{y}^2}\Bigr\rvert \le \frac{\Bar{C}}{\abs{y}^3}+o(1).
\end{equation}
as $\eps \to 0$, where $\Bar{C}$ does not depend on $R$. 
\end{lemma}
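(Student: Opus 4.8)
The plan is to bootstrap from the $L^1$-type control on the vorticity (Proposition~\ref{propositionEstimatesueps}) to pointwise decay estimates on the blown-up solution $v^\eps_i$, using the rescaled equation~\eqref{eqLimit} together with the Green representation formula.

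First I would establish~\eqref{ineqRenormEstimate}. After rescaling, $\omega^\eps$ restricted to $B(0,R)$ becomes $f(v^\eps_i - q^\eps_i)$, and the essential-vortex structure forces its mass to remain $O(1)$: indeed $\int_{B(0,R)} f(v^\eps_i-q^\eps_i) \le \kappa^\eps_i + o(1)$ by~\eqref{ineqTotalVorticity} and~\eqref{eqDistEepsi}, since the other essential vortices $E^\eps_j$ have centers escaping to infinity at rate $\dist(E^\eps_i,E^\eps_j)/\eps \to \infty$, and the vanishing vorticity $\omega^\eps_v$ contributes negligibly by Lemma~\ref{lemmaVanishingVorticity}. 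To upgrade from $L^1$ to $L^s$, I would exploit that the energy estimate~\eqref{ineqVortexEnergy} rescales to a \emph{scale-invariant} bound $\int_{B(0,R)} \abs{\nabla(v^\eps_i-q^\eps_i)_+}^2 = O(1)$, then combine the Gagliardo--Nirenberg/Sobolev inequality exactly as in~\eqref{ineqGN} to control $\int f(v^\eps_i-q^\eps_i)^s = \int (v^\eps_i-q^\eps_i)_+^{sp}$ by the gradient energy and the area $\muleb{2}(\{v^\eps_i > q^\eps_i\} \cap B(0,R))$, which is $O(\eps^{-2}\muleb{2}(A^\eps\cap B(x^\eps_i,\eps R))) = O(1)$ after rescaling by~\eqref{ineqVorticityAreaStrong}. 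The key point is that the diameter bound~\eqref{ineqDiamEepsi} confines the support of $f(v^\eps_i-q^\eps_i)$ to $B(0,2\Tilde\rho)$ up to negligible mass.

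Next, for the decay estimates~\eqref{ineqvepsiDecay} and~\eqref{ineqNablavepsiDecay}, I would write $v^\eps_i$ via the Green function of $\Omega$ rescaled about $x^\eps_i$. Writing $v^\eps(z) = \int_\Omega G(z,\zeta)\,\omega^\eps(\zeta)\,d\zeta$ and decomposing $G = \frac{1}{2\pi}\log\frac{1}{\abs{z-\zeta}} - H(z,\zeta)$, the contribution of $\omega^\eps_i$ produces the logarithmic leading term $\frac{\kappa^\eps_i}{2\pi}\log\frac{1}{\eps\abs{y}}$ plus the regular part $-\kappa^\eps_i H(x^\eps_i,x^\eps_i)$ after passing to the limit (using continuity of $H$ and concentration of $\omega^\eps_i$ at $x^\eps_i$), while the contributions of the other essential vortices $\omega^\eps_j$ converge to $\kappa^\eps_j G(x^\eps_i,x^\eps_j)$ since their centers stay separated. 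Subtracting $q^\eps(x^\eps_i)$ recovers the left-hand side of~\eqref{ineqvepsiDecay}. The error term $\frac{\kappa}{2\pi}\log\frac{\abs{y}}{\abs{y}-\Tilde\rho}$ is precisely the discrepancy between treating $\omega^\eps_i$ as a point mass at $x^\eps_i$ versus a distribution spread over $B(0,\Tilde\rho)$: for $\abs{y} > 2\Tilde\rho$ the logarithmic kernel varies by at most this amount across the support. The gradient estimate~\eqref{ineqNablavepsiDecay} follows by differentiating the representation formula, where the $\frac{\kappa}{2\pi}\frac{y}{\abs{y}^2}$ term is the gradient of the Newtonian potential of the concentrated mass $\kappa$ and the $\Bar{C}/\abs{y}^3$ remainder comes from the first-order Taylor correction of the kernel against the centered distribution (the dipole moment vanishing by the choice of $x^\eps_i$ as center of mass, leaving a quadrupole-order decay).

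The main obstacle I anticipate is making the separation of the essential vortices rigorous in the representation formula: one must show that the tails of $\omega^\eps_j$ for $j \ne i$, though their centers escape, do not corrupt the local estimate near $y = O(1)$. This is where~\eqref{eqDistEepsi} is essential, but care is needed because $\kappa^\eps_j$ need not vanish, so the term $\sum_{j\ne i}\kappa^\eps_j G(x^\eps_i,x^\eps_j)$ must be retained as a genuine (bounded) contribution rather than absorbed into the error. A secondary delicate point is that $\kappa^\eps_i$ is not yet known to converge to a definite limit at this stage, so the estimates must be uniform in $i$ and phrased with the running quantities $\kappa^\eps_i$; the constant $\Bar{C}$ in~\eqref{ineqNablavepsiDecay} being independent of $R$ reflects that the gradient decay is controlled purely by the confined mass and not by the size of the domain on which we work.
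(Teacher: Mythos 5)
Your proposal is correct and follows the paper's proof essentially step for step: the $\mathrm{L}^s$ bound via the Sobolev inequality combined with the $O(\eps^2)$ area of the vorticity set meeting $B(x^\eps_i,\eps R)$ and the scale-invariant gradient bound \eqref{ineqVortexEnergy}; the Green representation split into $\omega^\eps_v$ (killed by Lemma~\ref{lemmaVanishingVorticity} and elliptic estimates), the separated vortices $\omega^\eps_j$ (retained as $\kappa^\eps_j G(x^\eps_i,x^\eps_j)$ thanks to \eqref{eqDistEepsi}), and $\omega^\eps_i$, whose spread over a set of radius $(1+o(1))\Tilde{\rho}\eps$ produces exactly the $\frac{\kappa}{2\pi}\log\frac{\abs{y}}{\abs{y}-\Tilde{\rho}}$ kernel-oscillation error; and the gradient estimate via the vanishing dipole moment (centroid choice of $x^\eps_i$) and the second-order Taylor remainder of the kernel giving $\Bar{C}/\abs{y}^3$. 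The only slip is notational: with the paper's convention $G(x,y)=\frac{1}{2\pi}\log\frac{1}{\abs{x-y}}+H(x,y)$, the regular part enters the expansion of $v^\eps_i$ with a plus sign, not the minus sign in your decomposition, consistently with the term $-\kappa^\eps_i H(x^\eps_i,x^\eps_i)$ appearing inside the absolute value in \eqref{ineqvepsiDecay}.
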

\begin{proof}
Consider $D^{\eps, R}_i=\bigcup \bigl\{A^\eps_j \st A^\eps_j \cap B(x^\eps_i, \eps R)\ne \emptyset \bigr\}$. By \eqref{ineqVorticityDiameter}, $\muleb{2}(D_i^{\eps, R})=O(\eps^2)$ as $\eps \to 0$, so that one obtains, by Sobolev's inequality,
\begin{multline*}
 \int_{B(0, R)} f(v^\eps_i-q^\eps_i)^s \le
 \frac{1}{\eps^2}\int_{D_i^{\eps, R}} f(v^\eps-q^\eps)^s \\ 
 \le C\frac{1}{\eps^2} \Norm{\nabla(v^\eps-q^\eps)_+}_{\mathrm{L}^2(A^\eps)}^{sp} \muleb{2}(D_i^{\eps, R}) = O(1),
\end{multline*}
which proves \eqref{ineqRenormEstimate}.

We have
\begin{equation}
\label{eqvepsiGomega}
  v^\eps_i(y)=\int_{\Omega} G(x^\eps_i+\eps y, z) \omega^\eps(z)\, dz-q^\eps(x^\eps_i). 
\end{equation}
We first prove \eqref{ineqvepsiDecay}. 
By a classical estimate \cite{GilbargTrudinger2001}*{Theorem 8.15}, 
\begin{equation}
\label{ineqGomegav}
  \Bigl\lvert\int_{\Omega} G(x, z) \omega^\eps_v(z)\, dz\Bigr\rvert
  \le C \Norm{\omega^\eps_v}_{\mathrm{L}^s}. 
\end{equation}
Since by Lemma~\ref{lemmaVanishingVorticity}, $\omega^\eps_v \to 0$ in $\mathrm{L}^s(\Omega)$ for some $s > 1$, we have 
\[
  \int_{\Omega} G(x^\eps_i+\eps y, z) \omega^\eps_v(z)\, dz
  \to 0
\]
uniformly in $y$. 
We also have, since $\diam E^\eps_j=O(\eps)$, $\abs{x^\eps_i-x^\eps_j}/\eps \to \infty$, for $j \ne i$, and $\abs{y} \le R$, 
\[
  \int_{\Omega} G(x^\eps_i+\eps y, z) \omega^\eps_j(z)\, dz
  =\kappa_j^\eps G(x^\eps_i, x^\eps_j)+o(1),
\]
and
\[
 \int_{\Omega} H(x^\eps_i+\eps y, z) \omega^\eps_i(z)\, dz
  =\kappa^\eps_i H(x^\eps_i, x^\eps_i)+O(\eps). 
\]
Finally, we have
\[
\begin{split}
  \int_{\Omega} \frac{1}{2\pi}\log \frac{1}{\abs{x^\eps_i+\eps y-z}} \omega^\eps_i(z)\, dz
  &=\int_{E^\eps_i} \frac{1}{2\pi} \log \frac{1}{\abs{x^\eps_i+\eps y-z}} \omega^\eps_i(z)\, dz\\
  &=\frac{\kappa^\eps_i}{2\pi}\log \frac{1}{\eps \abs{y}}+\frac{1}{2\pi} \int_{E^\eps_i} \log \frac{\eps \abs{y}}{\abs{x^\eps_i+\eps y-z}} \omega^\eps_i(z)\, dz. 
\end{split}
\]
In view of \eqref{ineqDiamEepsi}, $\abs{x^\eps_i-z} \le (1+o(1))\Tilde{\rho}\eps$ when $z\in {\rm supp}(\omega^\eps_i)$ so that for sufficiently small $\eps$
\[
\left\lvert\int_{E^\eps_i} \log \frac{\abs{\eps y}}{\abs{\eps y+x^\eps_i-z}} \omega^\eps_i(z)\, dz\right\rvert\le \kappa^\eps_i \log\frac{\abs{y}}{\abs{y}-\Tilde{\rho}} + o(1). 
\]

We now prove \eqref{ineqNablavepsiDecay}. By Lemma~\ref{lemmaVanishingVorticity}, $\eps \omega^\eps_v \to 0$ in $\mathrm{L}^s(\Omega)$ for $\frac{1}{s} \ge \frac{1}{2}-p(\frac{1}{2}-\frac{1}{r})$. Choosing $s > 2$, by \eqref{eqvepsiGomega} and classical elliptic estimates, one obtains that
\[
  \int_{\Omega} \eps G(x, z) \omega^\eps_v(z)\, dz \to 0
\]
as a function of $x$ in $\mathrm{W}^{2, s}_{\mathrm{loc}}(\Omega)$ and thus in $C^1_{\mathrm{loc}}(\Omega)$. Therefore,
\[
  \int_{\Omega} \eps \nabla G(x_i^\eps+\eps y, z) \omega^\eps_v(z)\, dz \to 0
\]
uniformly in $y$ on compact subsets. 
One also has 
\[
  \int_{\Omega} \eps \nabla G(x^\eps_i+\eps y, z) \omega^\eps_j(z)\, dz
  =\eps \kappa_j^\eps \nabla G(x^\eps_i, x^\eps_j)+o(1)
\]
and
\[
 \int_{\Omega} \eps \nabla H(x^\eps_i+\eps y, z) \omega^\eps_j(z)\, dz
  =\eps \kappa^\eps_i \nabla H(x_i, x_j)+O(\eps^2). 
\]
Finally, recall that $\int_{\Omega} \omega^\eps_i=\kappa^\eps_i$ and $\int_{\Omega} (x^\eps_i-z)\omega^\eps_i(z)\, dz=0$, so that
\begin{multline*}
  \int_{\Omega} \eps \frac{x^\eps_i+\eps y -z}{\abs{x^\eps_i+\eps y-z}^2} \omega^\eps_i(z)\, dz-\kappa^\eps_i\frac{y}{\abs{y}^2}= \\
  \eps \int_{E^\eps_i} \Bigl(\frac{x^\eps_i+\eps y -z}{\abs{x^\eps_i+\eps y-z}^2}-  \frac{\eps y}{\abs{\eps y}^2}-L(\eps y) (x^\eps_i-z) \Bigr)\omega^\eps_i(z)\, dz,
\end{multline*}
where
\[
  L(a)h=\frac{\abs{a}^2h-2(a \cdot h)a}{\abs{a}^4}. 
\]
On the other hand, for $2 \abs{h} \le \abs{a}$, 
\[
  \Bigl \lvert \frac{a+h}{\abs{a+h}^2}-\frac{a}{\abs{a}^2} - L(a)h \Bigr \rvert 
  \le C \frac{\abs{h}^2}{\abs{a}^3},
\] 
so that, by \eqref{ineqDiamEepsi},
\begin{multline*}
   \Bigl\lvert \int_{\Omega} \eps \frac{x^\eps_i+\eps y -z}{\abs{x^\eps_i+\eps y-z}^2} \omega^\eps_i(z) \, dz-\kappa^\eps_i\frac{y}{\abs{y}^2}\Bigr\rvert  \\
\le \int_\Omega \eps \frac{\abs{x^\eps_i-z}^2}{\abs{\eps y}^3}\omega_i^\eps (z)\, dz
\le C \eps \frac{(\diam E^\eps_i)^2}{{\abs{\eps y}^3}}\le \frac{\Bar{C}}{\abs{y}^3},
\end{multline*}
and the lemma is proved. 
\end{proof}

\begin{lemma}
\label{lemmaLocalAsymptotics}
When $\eps$ is small, we have $k^\eps=1$. Moreover,
\[
  \kappa^\eps_1=\kappa+\frac{2\pi}{\log \frac{1}{\eps}}\Bigl(q(x^\eps_1)-\kappa H(x^\eps_1, x^\eps_1)-\frac{\kappa}{2\pi} \log \frac{1}{\rho_\kappa} \Bigr)+o(\logeps^{-1})
\]
and $v^\eps_1 \to U_\kappa$ in $\mathrm{W}^{3, r}_{\mathrm{loc}}(\R^2)$ as $\eps\to 0$. 
\end{lemma}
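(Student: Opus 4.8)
The plan is to control, for each essential vortex, the single additive constant carried by the rescaled profile $v^\eps_i$, and then to let the separation \eqref{eqDistEepsi} do the work. Starting from the representation \eqref{eqvepsiGomega}, I split $G=\tfrac{1}{2\pi}\log\tfrac{1}{\abs{\cdot}}+H$ and $\omega^\eps=\omega^\eps_v+\sum_j\omega^\eps_j$: Lemma~\ref{lemmaVanishingVorticity} together with \eqref{ineqGomegav} discards the vanishing part, the bound \eqref{ineqRenormEstimate} shows that the rescaled self-potential $P^\eps_i(y)=\int \tfrac{1}{2\pi}\log\tfrac{1}{\abs{y-w}}\,\eps^2\omega^\eps_i(x^\eps_i+\eps w)\,dw$ is $O(1)$ in $\mathrm{L}^\infty_{\mathrm{loc}}$, and the remaining contributions are constant up to $o(1)$. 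This gives, uniformly for $\abs{y}\le R$,
\[
  v^\eps_i(y)=c^\eps_i+P^\eps_i(y)+o(1),\quad c^\eps_i=\tfrac{\kappa^\eps_i-\kappa}{2\pi}\log\tfrac1\eps+\kappa^\eps_i H(x^\eps_i,x^\eps_i)+\sum_{j\ne i}\kappa^\eps_j G(x^\eps_i,x^\eps_j)-q(x^\eps_i).
\]
The first step is the two-sided bound $c^\eps_i=O(1)$. By \eqref{ineqLowerBoundArea} the rescaled vorticity set has area bounded below, so it carries a point where $v^\eps_i>q^\eps_i\to0$, whence $c^\eps_i\ge-O(1)$; evaluating instead at a point of a fixed sphere $\abs{y}=R>2\Tilde{\rho}$ lying outside $A^\eps$ (possible since $\muleb{2}(A^\eps)$ is negligible by \eqref{ineqVorticityAreaStrong}) gives $v^\eps_i\le q^\eps_i\to0$ there, whence $c^\eps_i\le O(1)$.

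The main step is $k^\eps=1$. Writing $\sigma_{ij}=\log\tfrac{\abs{x^\eps_i-x^\eps_j}}{\eps}\to+\infty$ (by \eqref{eqDistEepsi}) and $\eta^\eps=\kappa-\sum_j\kappa^\eps_j$, and substituting $G(x^\eps_i,x^\eps_j)=\tfrac{1}{2\pi}(\log\tfrac1\eps-\sigma_{ij})+H(x^\eps_i,x^\eps_j)$ in the definition of $c^\eps_i$, the leading logarithms regroup into
\[
  c^\eps_i=-\tfrac{1}{2\pi}\Bigl(\eta^\eps\log\tfrac1\eps+\sum_{j\ne i}\kappa^\eps_j\sigma_{ij}\Bigr)+O(1).
\]
Now \eqref{ineqSumVortices} gives $\eta^\eps\ge-O(\logeps^{-1})$, hence $\eta^\eps\log\tfrac1\eps\ge-O(1)$, while $\sum_{j\ne i}\kappa^\eps_j\sigma_{ij}\ge0$. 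Since $c^\eps_i$ is bounded below, the sum $\sum_{j\ne i}\kappa^\eps_j\sigma_{ij}$ must be $O(1)$; but if $k^\eps\ge2$, vortex $i$ has a neighbour $j$ with $\kappa^\eps_j\ge c>0$ by \eqref{ineqLowerBoundVortex} and $\sigma_{ij}\to+\infty$, a contradiction. Together with $k^\eps\ge1$ from Lemma~\ref{lemmaNonVanisingVortex}, this forces $k^\eps=1$.

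With $k^\eps=1$ the cross terms vanish, so $c^\eps_1=\tfrac{\kappa^\eps_1-\kappa}{2\pi}\log\tfrac1\eps+O(1)=O(1)$ already gives $\kappa^\eps_1\to\kappa$. The profile $v^\eps_1$ is then bounded in $\mathrm{L}^\infty_{\mathrm{loc}}$ with $-\Delta v^\eps_1=f(v^\eps_1-q^\eps_1)$ bounded in $\mathrm{L}^s_{\mathrm{loc}}$ and $q^\eps_1\to0$ in $\mathrm{W}^{1,r}_{\mathrm{loc}}$, so elliptic estimates yield, along a subsequence, $v^\eps_1\to V$ in $\mathrm{W}^{2,s}_{\mathrm{loc}}$ with $-\Delta V=V_+^p$ and $\int_{\R^2}V_+^p=\kappa$; since $x^\eps_1$ is the center of vorticity, $V$ is centered at the origin, and the uniqueness in \eqref{Ukappa} identifies $V=U_\kappa$. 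A bootstrap (the right-hand side then lies in $\mathrm{W}^{1,r}_{\mathrm{loc}}$) upgrades this to $\mathrm{W}^{3,r}_{\mathrm{loc}}$, and independence of the limit removes the subsequence. Finally, matching the constant terms in $U_\kappa(y)=\tfrac{\kappa}{2\pi}\log\rho_\kappa+\tfrac{\kappa}{2\pi}\log\tfrac1{\abs y}$ and in \eqref{ineqvepsiDecay} gives $c^\eps_1\to\tfrac{\kappa}{2\pi}\log\rho_\kappa$; inserting the definition of $c^\eps_1$ and replacing $\kappa^\eps_1$ by $\kappa$ in the $H$-term produces the announced expansion of $\kappa^\eps_1$.

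I expect the main difficulty to lie in the uniform two-sided control of $c^\eps_i$ — the lower bound in particular rests on the self-potential $P^\eps_i$ being genuinely $O(1)$ through \eqref{ineqRenormEstimate} — and in the regrouping that isolates $\sum_{j\ne i}\kappa^\eps_j\sigma_{ij}$: it is precisely the super-$\eps$ separation of \eqref{eqDistEepsi} played against the mass ceiling \eqref{ineqSumVortices} that makes a second essential vortex impossible. Identifying the limit as the specific profile $U_\kappa$, rather than a generic finite-mass solution of $-\Delta V=V_+^p$, is the other point that will require care.
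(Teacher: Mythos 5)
Your proposal reproduces the architecture of the paper's own proof, in places almost verbatim in content: your constant $c^\eps_i$ is exactly the constant absorbed into the paper's shifted function $w^\eps_i$, and your two-sided bound on it (a point of the essential vortex where $v^\eps_i>q^\eps_i=o(1)$, a point on a bounded circle outside $A^\eps$ where $v^\eps_i\le q^\eps_i=o(1)$, together with local boundedness of the self-potential $P^\eps_i$ via \eqref{ineqRenormEstimate}) is a correct variant of the paper's argument, which instead produces a sign-change point $\check{x}^\eps_i$ of $v^\eps_i+q^\eps_i(x^\eps_i)-q^\eps_i$ at bounded distance from the origin and uses elliptic estimates plus \eqref{ineqvepsiDecay} to bound $w^\eps_i$ there. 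Your regrouping $\eta^\eps\log\frac{1}{\eps}+\sum_{j\ne i}\kappa^\eps_j\sigma_{ij}=O(1)$, played against the mass ceiling \eqref{ineqSumVortices} and the lower bound \eqref{ineqLowerBoundVortex}, is the paper's exclusion of $k^\eps\ge 2$ carried out for a single $i$ rather than summed over all $i$ as in \eqref{eqVorticitiesGreen}; this single-$i$ version is fine, since $\kappa^\eps_j\ge 0$ makes each $\sigma_{ij}$-term nonnegative. The final matching of constants to extract the expansion of $\kappa^\eps_1$ is also the paper's step, and your bootstrap to $\mathrm{W}^{3,r}_{\mathrm{loc}}$ is correct.

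There is, however, one genuine gap, at the point you yourself flagged: the identification of the limit. The uniqueness built into \eqref{Ukappa} is uniqueness \emph{among radially symmetric solutions}, and ``$V$ is centered at the origin'' (vanishing first moment of $V_+^p$) does not imply that $V$ is radial, so it cannot substitute for a symmetry argument; a priori your compactness argument only yields some finite-mass solution of $-\Delta V=V_+^p$ with $\int_{\R^2}V_+^p=\kappa$ and logarithmic behavior at infinity. The paper closes this by invoking the symmetry theorem of Caffarelli and Friedman \cite{CaffarelliFriedman1980} (see also \cite{Fraenkel2000}), whose hypotheses require not only the expansion $V(y)=\frac{\kappa}{2\pi}\log\frac{\Tilde\rho}{\abs{y}}+O\bigl(\log(1+\frac{1}{\abs y})\bigr)$ but crucially the gradient decay $\nabla V(y)=\frac{\kappa}{2\pi}\frac{y}{\abs{y}^2}+O(\abs{y}^{-3})$; this is precisely why Lemma~\ref{lemmaSmallScaleLocalEstimates} establishes \eqref{ineqNablavepsiDecay} — an estimate your proposal never uses, which is the telltale sign of the missing step. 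Without the radiality of $V$, the conclusion $v^\eps_1\to U_\kappa$, and with it the sharp constant $\frac{\kappa}{2\pi}\log\rho_\kappa$ entering the asymptotic expansion of $\kappa^\eps_1$, is not established. Everything else in your argument goes through once the Caffarelli--Friedman symmetry result (fed by the gradient decay) is inserted at that point.
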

\begin{proof}
Set 
\[w^\eps_i(y)= v^\eps_i(y)-\frac{\kappa^\eps_i}{2\pi}\log \frac{1}{\eps}+q^\eps(x^\eps_i)-\kappa^\eps_i H(x^\eps_i, x^\eps_i)
  -\sum_{j \ne i} \kappa_j^\eps G(x^\eps_i, x^\eps_j),
\]
so that in particular
\[
  -\Delta w^\eps_i=f(v^\eps_i-q^\eps_i). 
\]
By \eqref{ineqRenormEstimate}, \eqref{ineqvepsiDecay} and classical elliptic estimates \cite{GilbargTrudinger2001}*{Theorem 9.11}, the sequence $(w^\eps_i)$ is bounded in $\mathrm{W}^{2, s}_{\mathrm{loc}}(\R^2)$ for every $s \ge 1$. By Rellich's compactness theorem, it is compact in $\mathrm{W}^{1, t}_{\mathrm{loc}}(\R^2)$ for every $1 \le t < \infty$, and therefore bounded on compact subsets. 
On the other hand, by construction, all the $v^\eps_i+q^\eps_i(x^\eps_i)-q^\eps_i$ take positive and negative value at a uniformly bounded distance from the origin, so that there exists a bounded sequence $\check{x}_i^\eps$ such that $v^\eps_i(\check{x}_i^\eps)=q^\eps_i(\check{x}_i^\eps)-q^\eps_i(x_i^\eps)$. Therefore, $v^\eps_i(\check{x}_i^\eps)$ and $w^\eps_i(\check{x}_i^\eps)$ remain bounded and we obtain that for each $i \in \{1, \dotsc, k^\eps\}$  
\[
q^\eps(x^\eps_i)-\frac{\kappa^\eps_i}{2\pi}\log \frac{1}{\eps}-\kappa^\eps_i H(x^\eps_i, x^\eps_i)
  -\sum_{j \ne i} \kappa_j^\eps G(x^\eps_i, x^\eps_j)=O(1). 
\]
This implies that 
\begin{equation}
\label{eqVorticitiesGreen}
  \frac{\kappa^\eps_i}{2\pi}\log \frac{1}{\eps} + \sum_{\substack{ j \ne i}} \kappa_j^\eps \log \frac{1}{\abs{x^\eps_i-x^\eps_j}} = \frac{\kappa}{2\pi}\log \frac{1}{\eps} +O(1),
\end{equation}
and, in view of \eqref{ineqSumVortices}, that
\[
  k_\eps \frac{\kappa}{2\pi} \log \frac{1}{\eps}\ge \sum_{1 \le i, j \le k_\eps } \frac{\kappa^\eps_i }{2\pi}\log \frac{1}{\eps} +O(1)=k_\eps \frac{\kappa}{2\pi}\log \frac{1}{\eps}+\sum_{\substack{1 \le i, j \le k^\eps \\ j \ne i}} \kappa^\eps_j\log \frac{\abs{x^\eps_i-x^\eps_j}}{\eps}+O(1). 
\]
Therefore,
\[
  \sum_{\substack{1 \le i, j \le k^\eps \\ j \ne i}} \kappa^\eps_j \log \frac{\abs{x^\eps_i-x^\eps_j}}{\eps} \le O(1),
\]
and since $\abs{x^\eps_i-x^\eps_j}/\eps \to \infty $ as $\eps \to 0$, we deduce by \eqref{ineqLowerBoundVortex} that $k^\eps\le 1$ for $\eps$ sufficiently small. 
By Lemma~\ref{lemmaNonVanisingVortex}, $k^\eps=1$.
Going back to \eqref{eqVorticitiesGreen}, we get
\[
  \kappa^\eps_1=\kappa+O\bigl(\logeps^{-1}\bigr). 
\]
Since $v_1^\eps-q^\eps_1$ is compact in $\mathrm{W}^{1, r}_{\mathrm{loc}}(\R^2)$ and $f \in C^1(\R)$, the sequence $f(v_1^\eps-q^\eps_1)$ is compact in $\mathrm{W}^{1, r}_{\mathrm{loc}}(\R^2)$. 
In view of \eqref{eqLimit}, $v^\eps_1$ is compact in $\mathrm{W}^{3, r}_{\mathrm{loc}}$. Let $v$ be one of its accumulation points. It satisfies
\[
 -\Delta v=f(v)
\]
and
\[
  \int_{\R^2} f(v)=\kappa. 
\]
Moreover, letting $\eps$ go to zero, by \eqref{ineqvepsiDecay} we obtain
\[
  v(y)=\frac{\kappa}{2\pi} \log \frac{\Tilde{\rho}}{\abs{y}}+O\Bigl(\log\bigl(1+\frac{1}{\abs{y}} \bigr)\Bigr)
\]
for some $\Tilde{\rho} \in \R$,
and
\[
  \nabla v(y)=\frac{\kappa}{2\pi}\frac{y}{\abs{y}^2}+O\Bigl(\frac{1}{\abs{y}^3}\Bigr). 
\]
By a symmetry result of L.\thinspace A.\thinspace Caffarelli and A.\thinspace Friedman \cite[Theorem 1]{CaffarelliFriedman1980} (see also \cite[Theorem 4.2]{Fraenkel2000}), $v$ is radial, and therefore 
\[
  v(y)=\frac{\kappa}{2\pi}\log \frac{\rho_\kappa}{\abs{y}}
\]
when $\abs{y} \ge \rho_\kappa$. Hence, $v=U_\kappa$. In view of \eqref{ineqvepsiDecay}, this yields
\[
  \Bigl\lvert \frac{\kappa}{2\pi}\log \frac{\rho_\kappa}{\abs{y}}+q^\eps(x^\eps_1)-\frac{\kappa^\eps_1}{2\pi}\log \frac{1}{\eps\abs{y}}-\kappa^\eps_1 H(x^\eps_1, x^\eps_1)
  \Bigr\rvert \le \kappa \log \frac{\abs{y}}{\abs{y}-R}+o(1). 
\]
First fixing $y$, this implies that
\[
 \frac{\kappa-\kappa^\eps_1}{2\pi} \log \frac{1}{\eps}=O(1),
\]
and next we deduce that for every $2\Tilde{\rho}<\abs{y}<R$,  
\[
  \Bigl\lvert \frac{\kappa}{2\pi}\log \frac{\rho_\kappa}{\eps}+q(x^\eps_1)-\frac{\kappa^\eps_1}{2\pi}\log \frac{1}{\eps}-\kappa^\eps_1 H(x^\eps_1, x^\eps_1)
  \Bigr\rvert \le \kappa \log \frac{\abs{y}}{\abs{y}-\Tilde{\rho}}+o(1),
\]
as $\eps \to 0$. We obtain the required asymptotic development of $\kappa^\eps_1$ by letting $R\to +\infty$ and choosing sufficiently large $\abs{y}$. 
\end{proof}

\subsubsection{Step 4: Global asymptotics}

\medskip

We are now going to prove that $v^\eps$ is well approximated by
\[
  \Tilde{v}^\eps=U_{\kappa^\eps_1}\Bigl(\frac{\cdot-x^\eps_1}{\eps}\Bigr)+\kappa^\eps_1\Bigl(\frac{1}{2\pi} \log \frac{1}{\eps \rho_{\kappa_1^\eps}}+H(x^\eps_1, \cdot)\Bigr).
\]

\begin{proposition}
\label{propositionAsymptoticsW21}
We have
\[
  v^\eps=\Tilde{v}^\eps+o(1)
\]
in $\mathrm{W}^{2, 1}_{\mathrm{loc}}(\Omega)$, in $\mathrm{W}^{1, 2}_0(\Omega)$, and in $\mathrm{L}^\infty(\Omega)$. 
\end{proposition}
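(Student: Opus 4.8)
The plan is to study $h^\eps := v^\eps-\Tilde{v}^\eps$, which lies in $\mathrm{W}^{1,2}_0(\Omega)$: for $\eps$ small the vortex core is contained in $\Omega$ and $\Tilde{v}^\eps=\kappa^\eps_1 G(x^\eps_1,\cdot)$ outside $B(x^\eps_1,\eps\rho_{\kappa^\eps_1})$, so $\Tilde{v}^\eps$ vanishes on $\partial\Omega$. Since the last two summands of $\Tilde{v}^\eps$ are harmonic, $h^\eps$ solves $-\Delta h^\eps=\omega^\eps-\Tilde{\omega}^\eps$ with $\Tilde{\omega}^\eps:=\eps^{-2}f(U_{\kappa^\eps_1}(\tfrac{\cdot-x^\eps_1}{\eps}))$. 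Writing $\mu^\eps:=\omega^\eps_1-\Tilde{\omega}^\eps$, both $\omega^\eps_1$ and $\Tilde{\omega}^\eps$ are supported in $B(x^\eps_1,C\eps)$ and carry the same mass $\kappa^\eps_1$, so $\mu^\eps$ has zero average; moreover, after rescaling by $\eps$ around $x^\eps_1$, $\mu^\eps$ becomes $f(v^\eps_1-q^\eps_1)-f(U_{\kappa^\eps_1})$, which tends to $0$ in every $\mathrm{L}^s$ by Lemma~\ref{lemmaLocalAsymptotics}, whence $\Norm{\mu^\eps}_{\mathrm{L}^1}\to 0$.

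I would establish the $\mathrm{L}^\infty$ convergence first. On the core $B(x^\eps_1,\eps R)$ I rescale: $h^\eps(x^\eps_1+\eps\,\cdot)$ equals $v^\eps_1-U_{\kappa^\eps_1}$ plus an additive constant and an $O(\eps)$ term coming from $H(x^\eps_1,\cdot)$. Since $v^\eps_1\to U_\kappa$ and $U_{\kappa^\eps_1}\to U_\kappa$ locally, the first difference vanishes, while the additive constant $q^\eps(x^\eps_1)-\tfrac{\kappa^\eps_1}{2\pi}\log\tfrac{1}{\eps}+\tfrac{\kappa^\eps_1}{2\pi}\log\rho_{\kappa^\eps_1}-\kappa^\eps_1 H(x^\eps_1,x^\eps_1)$ is exactly $o(1)$ by the expansion of $\kappa^\eps_1$ in Lemma~\ref{lemmaLocalAsymptotics}. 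Outside the core I use $h^\eps(x)=\int_\Omega G(x,z)(\omega^\eps_v+\mu^\eps)(z)\,dz$: the vanishing part is uniformly $o(1)$ by \eqref{ineqGomegav} and Lemma~\ref{lemmaVanishingVorticity}, while for $\abs{x-x^\eps_1}\ge\eps R$ the neutrality $\int\mu^\eps=0$ and $\supp\mu^\eps\subset B(x^\eps_1,C\eps)$ give $\bigl\lvert\int_\Omega G(x,z)\mu^\eps\,dz\bigr\rvert\le C\Norm{\mu^\eps}_{\mathrm{L}^1}/R\to 0$. This yields $\Norm{h^\eps}_{\mathrm{L}^\infty}\to 0$. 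The $\mathrm{W}^{1,2}_0$ convergence then follows from the energy identity $\int_\Omega\abs{\nabla h^\eps}^2=\int_\Omega h^\eps(\omega^\eps-\Tilde{\omega}^\eps)\le\Norm{h^\eps}_{\mathrm{L}^\infty}\bigl(\Norm{\omega^\eps}_{\mathrm{L}^1}+\Norm{\Tilde{\omega}^\eps}_{\mathrm{L}^1}\bigr)$, which is $o(1)$ since both masses are $O(1)$ by \eqref{ineqTotalVorticity}.

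For the $\mathrm{W}^{2,1}_{\mathrm{loc}}$ statement I would split $h^\eps=h^\eps_1+h^\eps_2$, where $-\Delta h^\eps_1=\omega^\eps_v$ and $-\Delta h^\eps_2=\mu^\eps$, both with zero trace. Since $\omega^\eps_v\to0$ in $\mathrm{L}^s$ for some $s>1$ (Lemma~\ref{lemmaVanishingVorticity}), interior Calder\'on--Zygmund estimates give $h^\eps_1\to 0$ in $\mathrm{W}^{2,s}_{\mathrm{loc}}\subset\mathrm{W}^{2,1}_{\mathrm{loc}}$. For $h^\eps_2$ I fix a compact $K$ and a radius $R$ with $B(x^\eps_1,C\eps)\subset B(x^\eps_1,\eps R)$. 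Outside $B(x^\eps_1,\eps R)$ the function $h^\eps_2$ is harmonic, and differentiating $h^\eps_2(x)=\int_\Omega\bigl(G(x,z)-G(x,x^\eps_1)\bigr)\mu^\eps(z)\,dz$ twice gives $\abs{D^2h^\eps_2(x)}\le C\eps\Norm{\mu^\eps}_{\mathrm{L}^1}\abs{x-x^\eps_1}^{-3}$, whose integral over $K\setminus B(x^\eps_1,\eps R)$ is $O(\Norm{\mu^\eps}_{\mathrm{L}^1}/R)\to 0$. On the core I exploit the conformal scale invariance of the second-order $\mathrm{L}^1$ seminorm in dimension two, namely $\int_{B(x^\eps_1,\eps R)}\abs{D^2h^\eps_2}=\int_{B(0,R)}\abs{D^2\bigl(h^\eps_2(x^\eps_1+\eps\,\cdot)\bigr)}$; the rescaled function solves $-\Delta w=f(v^\eps_1-q^\eps_1)-f(U_{\kappa^\eps_1})$, whose right-hand side tends to $0$ in $\mathrm{L}^s$ and which satisfies $\Norm{w}_{\mathrm{L}^\infty}\le\Norm{h^\eps_2}_{\mathrm{L}^\infty}\to 0$, so $\mathrm{W}^{2,s}$ estimates force it to $0$. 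The lower-order parts of the norms carry additional powers of $\eps$ and vanish trivially.

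The step I expect to be most delicate is precisely the $\mathrm{W}^{2,1}_{\mathrm{loc}}$ convergence through the concentration region: the Hessian of $h^\eps$ is of size $\eps^{-2}$ on the core, so the borderline exponent $1$ is the only one for which the second-order norm is scale invariant in $\R^2$ and hence controllable, and one must glue the rescaled elliptic estimate valid on the core to the decay estimate, based on the neutrality $\int\mu^\eps=0$, valid in the exterior, matching them at the intermediate scale $\eps R$.
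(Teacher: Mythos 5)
Your proposal is correct, and its heart coincides with the paper's proof: the same three-way splitting of $v^\eps-\Tilde{v}^\eps$ into a vanishing-vorticity part killed by Calder\'on--Zygmund estimates (the paper's $w^\eps_v$), a neutral measure $\mu^\eps=\omega^\eps_1-\Tilde{\omega}^\eps$ with $\Norm{\mu^\eps}_{\mathrm{L}^1}\to 0$ treated through the cancellation $\int\mu^\eps=0$ and the kernel bound $\abs{D^2\Gamma(x-y)-D^2\Gamma(x-x^\eps_1)}\le C\abs{y-x^\eps_1}/\abs{x-x^\eps_1}^3$ (the paper's $w^\eps_s$), and the scale invariance of the $\dot{\mathrm{W}}^{2,1}$ seminorm on the core $B(x^\eps_1,\eps R)$, matched at the intermediate scale exactly as in the paper. (Two cosmetic differences: the paper isolates the regular part $H$ as a separate term $w^\eps_r$ converging in $C^\infty_{\mathrm{loc}}$, where you absorb it into the same difference estimate; and it takes $\Tilde{\omega}^\eps_1=\eps^{-2}f(\Tilde{v}^\eps-q^\eps)$ rather than your $-\Delta\Tilde{v}^\eps$, with identical asymptotics --- your choice actually makes the Green representation of $h^\eps$ exact.) Where you genuinely diverge is the passage to the global norms: the paper deduces $\mathrm{W}^{1,2}_{\mathrm{loc}}$ and $\mathrm{L}^\infty_{\mathrm{loc}}$ from $\mathrm{W}^{2,1}_{\mathrm{loc}}$ via the borderline two-dimensional embeddings and then runs a separate boundary-layer argument (solving $-\Delta(v^\eps-\Tilde{v}^\eps)=\omega^\eps_v$ on a neighborhood of $\partial\Omega$ with converging data on an interior collar) to reach $\mathrm{W}^{1,2}_0(\Omega)$ and $\mathrm{L}^\infty(\Omega)$; you instead prove the global $\mathrm{L}^\infty$ bound directly from the representation formula --- on the core by the cancellation of constants encoded in the expansion of $\kappa^\eps_1$ in Lemma~\ref{lemmaLocalAsymptotics}, outside by neutrality --- and then obtain $\mathrm{W}^{1,2}_0$ for free from the energy identity together with \eqref{ineqTotalVorticity}. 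This is a cleaner, boundary-layer-free route to the global statements. The one point you should make explicit is that your exterior estimate is uniform up to $\partial\Omega$: the bound $\abs{G(x,z)-G(x,x^\eps_1)}\le C\eps/(\eps R)$ for all $x\in\Omega$ with $\abs{x-x^\eps_1}\ge\eps R$ is elementary for the logarithmic part, but for the regular part one must observe that $z\mapsto H(x,z)$ is bounded, uniformly in $x\in\Omega$, on the interior ball $B(x^\eps_1,c)$ --- this follows from the symmetry $H(x,z)=H(z,x)$, the maximum principle, and $\dist(x^\eps_1,\partial\Omega)\ge c>0$, and so is harmless, but it is precisely the uniformity that substitutes for the paper's boundary argument.
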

\begin{proof}
	Choose $r>\Tilde{\rho}$ so that $E^\eps_1 \subset B(x^\eps_1, \eps r)$ when $\eps$ is small. 
	By Lemma~\ref{lemmaLocalAsymptotics}, and the invariance of the $\dot{\mathrm{W}}^{2, 1}$ semi-norm by scaling, we have
\[
  \int_{B(x^\eps_1, 2\eps r)} \abs{D^2 v^\eps-D^2 \Tilde{v}^\eps} \to 0
\]
as $\eps \to 0$. 
Define
\begin{align*}
  \Tilde{\omega}^\eps_1(x)&=\frac{1}{\eps^2}f(\Tilde{v}^\eps-q^\eps), \\
  w_v^\eps(x)&=\int_{\Omega} G(x, y) \omega^\eps_v(y)\, dy, \\
  w_r^\eps(x)&=\int_{\Omega} H(x, y) \bigl(\omega^\eps_1(y)-\Tilde{\omega}^\eps_1(y)\bigr)\, dy, \\
  w_s^\eps(x)&=\int_{\Omega} \Gamma(x-y)  \bigl(\omega^\eps_1(y)-\Tilde{\omega}^\eps_1(y)\bigr)\, dy,
\end{align*}  
where $\Gamma (x)=\frac{1}{2\pi} \log \frac{1}{\abs{x}}$, so that
$v^\eps - \Tilde{v}^ \eps=w_v^\eps+w_r^\eps+w_s^\eps$. 
Since by Lemma~\ref{lemmaVanishingVorticity}, $\omega_v \to 0$ in $\mathrm{L}^s(\Omega)$ for some $s > 1$, we have, by elliptic estimates, $w^\eps_v \to 0$ in $\mathrm{W}^{2, s}_{\mathrm{loc}}(\Omega)$. 
Next, since by \eqref{ineqDiamEepsi} $x^\eps_1$ stays away from $\partial \Omega$ and $\omega^\eps_1-\Tilde{\omega}^\eps_1 \to 0$ in $\mathrm{L}^1(\Omega)$ by Lemma~\ref{lemmaLocalAsymptotics}, we have $w_r^\eps \to 0$ in $C^\infty_{\mathrm{loc}}(\Omega)$. 
Finally, we have
\[
  D^2 w_s^\eps (x)=\int_{\Omega} D^2\Gamma(x-y) \bigl(\omega^\eps_1(y)-\Tilde{\omega}^\eps_1(y)\bigr)\, dy. 
\]
Since $\int_{\Omega} \omega^\eps_1=\int_{\Omega} \Tilde{\omega}^\eps_1=\kappa^\eps_1$, one also has
\[
  D^2 w_s^\eps (x)=\int_{B(x^\eps_1, \eps r)} \bigl(D^2\Gamma(x-y)-D^2\Gamma(x-x^\eps_1)\bigr) \bigl(\omega^\eps_1(y)-\Tilde{\omega}^\eps_1(y)\bigr)\, dy. 
\]
For every $y \in B(x^\eps_1, \eps r)$ and $x \in \Omega \setminus B(x^\eps_1, \eps 2r)$
\[
  \abs{D^2\Gamma(x-y)-D^2\Gamma(x-x^\eps_1)} \le C \frac{\abs{y-x^\eps_1}}{\abs{x-x^\eps_1}^3},
\]
so that 
\[
\abs{D^2 w_s^\eps (x)} \le \frac{C \eps}{\abs{x^\eps_1-x}^3}\Norm{\omega^\eps_1-\Tilde{\omega}^\eps_1}_{\mathrm{L}^1}. 
\]
Integrating the previous inequality we conclude
\begin{multline*}
  \int_{\Omega \setminus B(x^\eps_1, \eps 2r)} \abs{D^2 w_s^\eps (x)} \le C\eps \Norm{\omega^\eps_1-\Tilde{\omega}^\eps_1}_{\mathrm{L}^1}\int_{\R^2 \setminus B(x^\eps_1, \eps 2r)}\frac{1}{\abs{x^\eps_1-x}^3}\, dx \\
  =C\Norm{\omega^\eps_1-\Tilde{\omega}^\eps_1}_{\mathrm{L}^1}\frac{2\pi \eps}{\eps R}=o(1). 
\end{multline*}

The $\mathrm{W}^{2, 1}_\mathrm{loc}(\Omega)$ convergence implies the $\mathrm{W}^{1, 2}_\mathrm{loc}(\Omega)$ and the $\mathrm{L}^\infty_{\mathrm{loc}}(\Omega)$ convergences.
One needs then to prove the convergence in a neighbourhood of the boundary. Consider $U \subset V \subset \Omega$ open bounded sets such that $\partial \Omega \subset \Bar{U}$, $\Bar{U} \subset V$ and $\supp \omega_\eps \cap V = \emptyset$. One has
\[
\left\{
 \begin{aligned}
   -\Delta (v_\eps-\Tilde{v}_\eps)&=\omega^\eps_v && \text{in $U$},\\
  v_\eps-\Tilde{v}_\eps&=0 && \text{on $\partial \Omega$}.
 \end{aligned}
\right.
\]
Since $v_\eps-\Tilde{v}_\eps \to 0$ in $\mathrm{W}^{1,2}(V \setminus U)$ and in $\mathrm{L}^\infty (V \setminus U)$ and $\omega^\eps_v \to 0$ in $\mathrm{L}^s(\Omega)$ for some $s > 1$, one obtains by classical regularity estimates that $v_\eps-\Tilde{v}_\eps \to 0$ in $\mathrm{W}^{1,2}(U)$ and in $\mathrm{L}^\infty(U)$.
\end{proof}
%
%
%
%
%
%


\begin{corollary}
\label{corollaryAsymptotic}
When $\eps$ is small enough, $A^\eps$ is connected, $x^\eps_1=x^\eps$, $\kappa^\eps_1=\kappa^\eps$, $\partial (A^\eps_1-x^\eps_1)/\eps$ tends to $\partial B(0, \rho_\kappa)$ as a $C^2$ manifold. 
In particular, $-\Delta v^\eps=0$ in $\Omega \setminus B(x^\eps_1, 2\eps \rho_\kappa)$
and
\[
     \omega^\eps=\Tilde{\omega}^\eps+o(1)
\]
in $\mathrm{L}^1(\Omega)$. 
\end{corollary}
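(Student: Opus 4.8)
The plan is to convert the global $\mathrm{L}^\infty$ approximation $v^\eps = \Tilde{v}^\eps + o(1)$ of Proposition~\ref{propositionAsymptoticsW21} and the local $C^2$ convergence $v^\eps_1 \to U_\kappa$ of Lemma~\ref{lemmaLocalAsymptotics} into precise geometric information on $A^\eps$. The first step is to \emph{localize} the vorticity set, that is, to show that $A^\eps \subset B(x^\eps_1, \eps R)$ for some fixed $R > \rho_\kappa$ once $\eps$ is small. Outside $B(x^\eps_1, \eps \rho_{\kappa^\eps_1})$ one has $\Tilde{v}^\eps = \kappa^\eps_1 G(x^\eps_1, \cdot)$, which is harmonic on $\Omega \setminus B(x^\eps_1, \eps R)$ and vanishes on $\partial \Omega$; by the maximum principle its supremum there is attained on $\partial B(x^\eps_1, \eps R)$, and inserting $G = \frac{1}{2\pi}\log\frac{1}{\abs{\cdot - x^\eps_1}}+H(x^\eps_1, \cdot)$ together with the expansion of $\kappa^\eps_1$ from Lemma~\ref{lemmaLocalAsymptotics} and the continuity of $q$ and $H$ near $x^\eps_1$ yields
\[
  \Tilde{v}^\eps-q^\eps \le \frac{\kappa}{2\pi}\log\frac{\rho_\kappa}{R}+o(1) \qquad\text{on }\Omega\setminus B(x^\eps_1, \eps R).
\]
For $R$ large this right-hand side is a strictly negative constant; since $v^\eps = \Tilde{v}^\eps + o(1)$ in $\mathrm{L}^\infty(\Omega)$, it follows that $v^\eps < q^\eps$ off $B(x^\eps_1, \eps R)$, so no vorticity, and in particular no vanishing vortex, can sit there.

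Next I would analyse the local structure in the rescaled variable $y = (x-x^\eps_1)/\eps$, in which
\[
  \frac{A^\eps-x^\eps_1}{\eps}\cap B(0, R) = \bigl\{ y\in B(0, R) \st v^\eps_1(y) > q^\eps_1(y)\bigr\}.
\]
Since $q^\eps_1 \to 0$ in $\mathrm{W}^{1, r}_{\mathrm{loc}}(\R^2)$ and $v^\eps_1 \to U_\kappa$ in $\mathrm{W}^{3, r}_{\mathrm{loc}}(\R^2) \hookrightarrow C^2_{\mathrm{loc}}(\R^2)$ by Lemma~\ref{lemmaLocalAsymptotics}, one has $v^\eps_1 - q^\eps_1 \to U_\kappa$ uniformly on compact sets. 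As $U_\kappa > 0$ on $B(0, \rho_\kappa)$, $U_\kappa < 0$ on $\{\rho_\kappa < \abs{y} < R\}$ and $\nabla U_\kappa \neq 0$ on $\partial B(0, \rho_\kappa)$, the transversality together with the implicit function theorem in polar coordinates (where $\partial_r v^\eps_1 \to \partial_r U_\kappa = -\frac{\kappa}{2\pi\abs{y}} \neq 0$) shows that for small $\eps$ the set $\{v^\eps_1 > q^\eps_1\}\cap B(0, R)$ is a single topological disk whose boundary is a $C^2$ perturbation of $\partial B(0, \rho_\kappa)$, converging to it as a $C^2$ manifold. Combined with the localization this gives that $A^\eps$ is connected and coincides with the unique essential vortex $A^\eps_1 = E^\eps_1$; hence $\omega^\eps = \omega^\eps_1$, and the equalities $x^\eps = x^\eps_1$ and $\kappa^\eps = \kappa^\eps_1$ follow at once from the definitions in \eqref{defiq}.

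It then remains to draw the two stated consequences. Since $\partial(A^\eps-x^\eps_1)/\eps \to \partial B(0, \rho_\kappa)$, we have $A^\eps \subset B(x^\eps_1, 2\eps\rho_\kappa)$ for small $\eps$, so that $-\Delta v^\eps = \frac{1}{\eps^2}(v^\eps-q^\eps)_+^p$ vanishes on $\Omega\setminus B(x^\eps_1, 2\eps\rho_\kappa)$. For the convergence $\omega^\eps = \Tilde{\omega}^\eps + o(1)$ in $\mathrm{L}^1(\Omega)$ I would note that both $\omega^\eps$ and $\Tilde{\omega}^\eps$ are supported in $B(x^\eps_1, 2\eps\rho_\kappa)$ and rescale by $x = x^\eps_1 + \eps y$ to get
\[
  \int_\Omega \abs{\omega^\eps-\Tilde{\omega}^\eps} = \int_{B(0, 2\rho_\kappa)} \bigl\lvert f(v^\eps_1-q^\eps_1)-f(\Tilde{v}^\eps_1-q^\eps_1)\bigr\rvert,
\]
where $\Tilde{v}^\eps_1(y) = \Tilde{v}^\eps(x^\eps_1+\eps y)-q^\eps(x^\eps_1)$. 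A short computation using $\kappa^\eps_1 \to \kappa$ and the expansion of $\kappa^\eps_1$ shows that $\Tilde{v}^\eps_1 - q^\eps_1 \to U_\kappa$ uniformly on $B(0, 2\rho_\kappa)$, just as $v^\eps_1 - q^\eps_1 \to U_\kappa$; since $f$ is continuous, the integrand tends to $0$ uniformly on this fixed bounded set, and the claim follows.

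The step I expect to be the main obstacle is the $C^2$ convergence of the free boundary. Because $q$ is only assumed to lie in $\mathrm{W}^{1, r}$, the difference $v^\eps_1 - q^\eps_1$ is itself merely Hölder continuous, so one cannot directly invoke the $C^2$ stability of level sets of a $C^2$ function. The way around this is to treat the free boundary as the level set of the genuinely $C^2$ function $v^\eps_1$ at the small, spatially varying level $q^\eps_1$, and to exploit the strict transversality $\partial_r U_\kappa \neq 0$, which $v^\eps_1$ inherits uniformly near $\partial B(0, \rho_\kappa)$; this non-degeneracy is precisely what simultaneously forces the connectedness of $A^\eps$ and its $C^2$ closeness to the disk $B(0, \rho_\kappa)$.
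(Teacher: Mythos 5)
Your proposal is correct and follows essentially the same route as the paper: your localization step is the paper's inequality \eqref{ineqAepsBall} in rearranged form (bounding $\Tilde{v}^\eps=\kappa^\eps_1 G(x^\eps_1,\cdot)$ outside an $\eps R$-ball and using the $\mathrm{L}^\infty$ closeness of Proposition~\ref{propositionAsymptoticsW21}, with the harmless caveat that at distance $O(1)$ from $x^\eps_1$ the oscillation of $q$ and of $H(x^\eps_1,\cdot)$ enters the constant, absorbed by taking $R$ large), and the connectedness, $C^2$ boundary convergence and $\mathrm{L}^1$ convergence of the vorticities are drawn, exactly as in the paper, from the rescaled convergence $v^\eps_1\to U_\kappa$ of Lemma~\ref{lemmaLocalAsymptotics} together with the nondegeneracy $\partial_r U_\kappa\neq 0$ on $\partial B(0,\rho_\kappa)$. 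You merely fill in details the paper's three-line proof leaves implicit (maximum principle, implicit function theorem, the explicit rescaled $\mathrm{L}^1$ computation for $\omega^\eps-\Tilde{\omega}^\eps$), and your closing remark on the limited regularity of $q$ correctly identifies the one point both arguments treat at the same level of informality.
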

\begin{proof}
Assume that $y\in A^\eps\setminus B(x_1^\eps, \eps \rho_{\kappa^\eps})$. We have
\begin{equation}
\label{ineqAepsBall}
  q(y)+ \frac{\kappa}{2\pi} \log \frac{1}{\eps} < v^\eps(y) \le \frac{\kappa_1^\eps}{2\pi} \log \frac{1}{\abs{y-x^\eps_1}}+o(1),
\end{equation}
uniformly in $y$, so that $\abs{y-x^\eps_1}=O(\eps)$. One obtains then in view of Proposition~\ref{propositionAsymptoticsW21} that $(A^\eps_1-x^\eps_1)/\eps$ is connected when $\eps$ is small and the required convergence of the boundary. 
\end{proof}

\begin{corollary}
\label{corEnergy}
We have
\[
\begin{split}
\mathcal{E}^\eps(v^\eps)
=\frac{\kappa^2}{4\pi} \log \frac{1}{\eps}-\mathcal{W}(x^\eps)+\mathcal{C}+o(1). 
\end{split}
\]
\end{corollary}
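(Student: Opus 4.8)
The plan is to evaluate $\mathcal{E}^\eps(v^\eps) = \frac12\int_\Omega\abs{\nabla v^\eps}^2 - \frac{1}{\eps^2}\int_\Omega F(v^\eps - q^\eps)$ by a direct computation, essentially rerunning the one carried out for $\Hat{u}^\eps$ in Lemma~\ref{lemmaEnergyHatu}, but now feeding in the sharp information on $v^\eps$ gathered in Steps~3 and~4. It is tempting to deduce the identity from the convergence $v^\eps = \Tilde{v}^\eps + o(1)$ in $\mathrm{W}^{1,2}_0(\Omega)$ of Proposition~\ref{propositionAsymptoticsW21} together with the energy estimate for $\Tilde{v}^\eps$; this however fails, because $\int_\Omega\abs{\nabla v^\eps}^2$ diverges like $\frac{\kappa^2}{2\pi}\log\frac1\eps$, so that an $o(1)$ control on $\nabla(v^\eps - \Tilde{v}^\eps)$ in $\mathrm{L}^2$ only yields an $o(\logeps^{1/2})$ control on the difference of the Dirichlet energies by Cauchy--Schwarz. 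The right device is to collapse the Dirichlet integral onto the vorticity set via the equation.

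For the Dirichlet term I would test \Peps{} against $v^\eps \in \mathrm{W}^{1,2}_0(\Omega)$ to get $\int_\Omega\abs{\nabla v^\eps}^2 = \int_{A^\eps} v^\eps\,\omega^\eps$, the support of $\omega^\eps$ being contained in $A^\eps \subset B(x^\eps_1, C\eps)$ by Corollary~\ref{corollaryAsymptotic}. Rescaling $x = x^\eps_1 + \eps y$ and writing $v^\eps(x) = v^\eps_1(y) + q^\eps(x^\eps_1)$ and $\omega^\eps(x)\,dx = f\bigl(v^\eps_1(y) - q^\eps_1(y)\bigr)\,dy$, this becomes
\[
  \int_\Omega\abs{\nabla v^\eps}^2 = \int_{\R^2} v^\eps_1\, f(v^\eps_1 - q^\eps_1) + q^\eps(x^\eps_1)\,\kappa^\eps_1.
\]
Since $v^\eps_1 \to U_\kappa$ and $q^\eps_1 \to 0$ locally (Lemma~\ref{lemmaLocalAsymptotics}), while $f(v^\eps_1 - q^\eps_1)$ is bounded in every $\mathrm{L}^s(B(0,R))$ with support in a fixed ball (Lemma~\ref{lemmaSmallScaleLocalEstimates}), the first integral tends to $\int_{\R^2} U_\kappa f(U_\kappa)$; in the second I would substitute $q^\eps(x^\eps_1) = q(x^\eps_1) + \frac{\kappa}{2\pi}\log\frac1\eps$ together with the expansion of $\kappa^\eps_1$ from Lemma~\ref{lemmaLocalAsymptotics}. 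The delicate bookkeeping is that the divergent factor $\frac{\kappa}{2\pi}\log\frac1\eps$ multiplies $\kappa^\eps_1 = \kappa + \frac{2\pi}{\log\frac1\eps}\bigl(q(x^\eps_1) - \kappa H(x^\eps_1,x^\eps_1) - \frac{\kappa}{2\pi}\log\frac1{\rho_\kappa}\bigr) + o(\logeps^{-1})$, so that the $O(\logeps^{-1})$ correction in $\kappa^\eps_1$ precisely produces a finite contribution; collecting terms gives $\int_\Omega\abs{\nabla v^\eps}^2 = \int_{\R^2}U_\kappa f(U_\kappa) + \frac{\kappa^2}{2\pi}\log\frac1\eps + 2\kappa q(x^\eps_1) - \kappa^2 H(x^\eps_1,x^\eps_1) - \frac{\kappa^2}{2\pi}\log\frac1{\rho_\kappa} + o(1)$, matching the computation of Lemma~\ref{lemmaEnergyHatu}.

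For the potential term the same rescaling gives $\frac{1}{\eps^2}\int_\Omega F(v^\eps - q^\eps) = \int_{\R^2} F(v^\eps_1 - q^\eps_1) \to \int_{\R^2} F(U_\kappa)$. Assembling the two pieces and using $\mathcal{W}(x) = \frac{\kappa^2}{2}H(x,x) - \kappa q(x)$ together with the identity $\frac12\int_{\R^2}U_\kappa f(U_\kappa) - \int_{\R^2}F(U_\kappa) = \int_{B(0,\rho_\kappa)}\bigl(\frac12\abs{\nabla U_{\rho_\kappa}}^2 - \frac{1}{p+1}U_{\rho_\kappa}^{p+1}\bigr)$ and $-\frac{\kappa^2}{4\pi}\log\frac1{\rho_\kappa} = \frac{\kappa^2}{4\pi}\log\rho_\kappa$, I would recognise the constant $\mathcal{C}$ and conclude $\mathcal{E}^\eps(v^\eps) = \frac{\kappa^2}{4\pi}\log\frac1\eps - \mathcal{W}(x^\eps_1) + \mathcal{C} + o(1)$; finally $x^\eps_1 = x^\eps$ by Corollary~\ref{corollaryAsymptotic}. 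The only genuine obstacle is the cancellation of the divergent $\log\frac1\eps$ terms described above, where the $O(\logeps^{-1})$ correction to $\kappa^\eps_1$ cannot be discarded; every other step is a routine limit justified by the small-scale compactness of Step~3.
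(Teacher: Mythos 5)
Your proof is correct, and its backbone --- testing \Peps{} against $v^\eps$ so that the divergent Dirichlet integral collapses onto the vorticity set, precisely because an $o(1)$ bound in $\mathrm{W}^{1,2}_0$ is too weak to compare energies of size $\frac{\kappa^2}{2\pi}\logeps$ --- is exactly the device the paper uses. The difference lies in the second stage: the paper substitutes the global approximant $\Tilde{v}^\eps$ twice (once for $v^\eps$, using the $\mathrm{L}^\infty$ convergence of Proposition~\ref{propositionAsymptoticsW21} together with the $\mathrm{L}^1$ bound on $\omega^\eps$; once for $\omega^\eps$, using the $\mathrm{L}^1$ convergence of Corollary~\ref{corollaryAsymptotic}) and then simply reuses the computation of the energy of the model function already carried out in Lemma~\ref{lemmaEnergyHatu}, whereas you blow up at $x^\eps_1$ and redo the bookkeeping from scratch, feeding in $v^\eps_1 \to U_\kappa$ and the $o(\logeps^{-1})$ expansion of $\kappa^\eps_1$ from Lemma~\ref{lemmaLocalAsymptotics}. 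The two routes consume the same inputs --- in particular both need Corollary~\ref{corollaryAsymptotic} to confine the support of $\omega^\eps$ to $B(x^\eps_1, C\eps)$ (so that your rescaled integrals run over a fixed ball where the local convergence is uniform) and to identify $x^\eps_1=x^\eps$, $\kappa^\eps_1=\kappa^\eps$ --- and the cancellation of the divergent $\log\frac{1}{\eps}$ terms that you track explicitly is, in the paper, hidden inside the expansion of $\Hat{\kappa}^{\eps,\sigma}$ in Lemma~\ref{lemmaEnergyHatu}. Your version is marginally more self-contained, since it bypasses the $\mathrm{L}^\infty$ substitution step, at the cost of repeating a computation the paper gets for free by factoring it through the upper-bound construction; your opening remark on why the naive $\mathrm{W}^{1,2}$ comparison with $\mathcal{E}^\eps(\Tilde{v}^\eps)$ fails is precisely the pitfall the paper's proof is designed to sidestep.
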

\begin{proof}
First we have in view of Proposition~\ref{propositionAsymptoticsW21} and Corollary~\ref{corollaryAsymptotic},
\[
\begin{split}
  \int_{\Omega} \abs{\nabla v^\eps}^2
  &=\int_{\Omega} v^\eps \omega^\eps\\
  &=\int_{\Omega} \Tilde{v}^\eps \omega^\eps+o(1). 
\end{split}
\]
Since $\Norm{\Tilde{v}^\eps-q^\eps}_{\mathrm{L}^\infty}$ remains bounded as $\eps \to 0$, we obtain, by Proposition~\ref{propositionAsymptoticsW21}
\[
  \int_{\Omega} \abs{\nabla v^\eps}^2=\frac{1}{\eps^2}\int_{\Omega} \Tilde{v}^\eps f\bigl(\Tilde{v}^\eps-q^\eps(x^\eps)\bigr)+o(1). 
\]
Similarly, by Proposition~\ref{propositionAsymptoticsW21},
\[
\frac{1}{\eps^2}\int_{\Omega} F(v^\eps-q^\eps)=\frac{1}{\eps^2}\int_{\Omega} F\bigl(\Tilde{v}^\eps-q^\eps(x^\eps)\bigr)+o(1). 
\]
It suffices then to compute $\mathcal{E}^\eps(\Tilde{v}^\eps)$ as in the proof of Lemma~\ref{lemmaEnergyHatu}. 
\end{proof}

\subsubsection{Conclusion}We are now in position to present the

\begin{proof}[Proof of Proposition~\ref{prop:1mai} completed] It is a direct consequence of Lemma~\ref{lemmaLocalAsymptotics},  Proposition~\ref{propositionAsymptoticsW21}, Corollary~\ref{corollaryAsymptotic} and Corollary~\ref{corEnergy}. \end{proof}

\noindent and the

\begin{proof}[Proof of Theorem~\ref{thm:K1}] It is a direct consequence of the upper estimate of Corollary~\ref{cor:upper} and the asymptotic properties obtained in Proposition~\ref{prop:1mai}. 
\end{proof}

\section{Single vortices in multiply connected domains}
\label{sectionmultiply}

In this section we assume that $\Omega \subset \R^2$ is a bounded smooth multiply-connected domain; it can be written as 
\[
 \Omega = \Omega_0 \setminus \bigcup_{h=1}^m \Omega_h,
\]
where $\Omega_0, \dotsc, \Omega_m$ are bounded simply-connected domains with $\Bar{\Omega}_h \subset \Omega$ for every $h \in \{1, \dotsc, m\}$. In place of problem \eqref{problemPeps}, we consider the problem of finding $u$ and $\lambda^\eps_1, \dotsc, \lambda^\eps_m$ such that
\begin{equation}
\label{problemPepsstar}
\left\{
\begin{aligned}
-\Delta u^\eps &=\frac{1}{\eps^2} f(u^\eps - q^\eps ) & &\text{in $\Omega$, }\\
u^\eps &=  0 & &\text{on $\partial \Omega_0$},\\
u^\eps &= \lambda^\eps_h  & &\text{on $\partial \Omega_h$},\\
\int_{\partial \Omega_h} \frac{\partial u}{\partial n}&=0 & & \text{for $h \in \{1, \dotsc, m\}$}.
\end{aligned}
\right. \tag{\protect{$\mathcal{P}^\eps_*$}}
\end{equation}

The natural space to deal with this problem is the space of functions that are constant on the complement of $\Omega$:
\[
H^1_*(\Omega)=\Bigl\{u \in H^1(\Omega) \st \nabla u=0 \text{ in $\bigcup_{h=1}^m \Omega_h$}\Bigr\}.
\]
It is standard to show that solutions of \eqref{problemPepsstar} are critical points of the functional $\mathcal{E}^\eps$ defined on $H^1_*(\Omega)$ by \eqref{energyFunctional}.
We consider least energy solutions obtained by minimization of the functional on the Nehari manifold.

In order to state our result we also need the corresponding (appropriate) Green functions. Following C.\thinspace C.\thinspace Lin \cites{Lin1941, Lin1943}, we define $G_*$ as the solution of
\[
 \left\{
 \begin{aligned}
   -\Delta G(\cdot, y)&=\delta_y & & \text{in $\Omega$,}\\
   G(\cdot, y)&=0 & & \text{on $\partial \Omega_0$},\\
   G&=\lambda_h & & \text{on $\partial \Omega_h$},\\
   \int_{\partial \Omega_h} \frac{\partial G}{\partial n}&=0 & & \text{for $h \in \{1, \dotsc, m\}$}.\\
 \end{aligned}\right.   
\]
Its regular part $H_*$ is defined by
\[
 H_*(x,y)=G_*(x,y)-\frac{1}{2\pi} \log \frac{1}{\abs{x-y}}.
\]
P.\thinspace Koebe \cite{Koebe1918}*{\S 6} (see also \cite{Lin1943}*{\S 9}), defined $G_*$ in terms of the Green function for the Dirichlet problem $G$ and
the unique solutions $Z_k$ of 
\[
 \left\{
 \begin{aligned}
   -\Delta Z_k&=0 & & \text{in $\Omega$,}\\
   Z_k&=0 & & \text{on $\partial \Omega_0$},\\
   Z_k&=\delta_{kh} & & \text{on $\partial \Omega_h$ with $h \in \{1, \dotsc, m\}$.}\\
 \end{aligned}\right.   
\]
Since the $Z_k$ are linearly independent, the matrix $(\omega_{kh})_{1 \le k, h \le n}$ defined by
\[
 \omega_{kh}=\int_{\Omega} \nabla Z_k \cdot \nabla Z_h. 
\]
is invertible; let $(\omega^{kh})_{1 \le k, h \le n}$ denote its inverse. We have 
\begin{equation}
\label{eqGreenRelationship}
 G_*(x,y)=G(x,y)+\sum_{k, h=1}^m Z_k(x)\omega^{kh}Z_h(y).
\end{equation}

The Kirchhoff--Routh function in this context is defined by 
\[
 \mathcal{W}_*(x)=\frac{\kappa^2}{2}H_*(x, x)-\kappa q(x),
\]
and the various quantities $A^\eps, \omega^\eps, \kappa^\eps, x^\eps, \rho^\eps$ are still defined by \eqref{defiq}.

Theorem~\ref{thm:K1} generalizes then to 
\begin{theorem}\label{thm:K1m}
As $\eps \to 0$, we have
\[
  u^\eps=U_{\kappa^\eps} \Big(\frac{\cdot-x^\eps}{\eps}\Big)+\kappa^\eps\Bigl(\frac{1}{2\pi} \log \frac{1}{\eps \rho^\eps}+ H_*(x^\eps, \cdot)\Bigr)+o(1),
\]
\text{in $\mathrm{W}^{2, 1}_\mathrm{loc}(\Omega)$, in $\mathrm{W}^{1, 2}_0(\Omega)$, and in $\mathrm{L}^\infty(\Omega)$}, where
\[
  \kappa^\eps=\kappa+\frac{2\pi}{\log \frac{1}{\eps}}\Bigl(q(x^\eps)-\kappa H(x^\eps, x^\eps) -\frac{\kappa}{2\pi} \log \frac{1}{\rho_\kappa} \Bigr)+o(\logeps^{-1}),
\]
and
\[
   \mathcal{W}_*(x^\eps) \to \sup_{x \in \Omega} \mathcal{W}_*(x). 
\]
One also has
\[
  B(x^\eps, \Bar{r}^\eps) \subset A^\eps \subset B(x^\eps, \mathring{r}^\eps),
\] 
with $\Bar{r}^\eps=\eps \rho_\kappa+o(\eps)$ and $\mathring{r}^\eps=\eps
\rho_\kappa +o(\eps)$. Finally, 
\[
  \mathcal{E}^\eps (u^\eps)= \frac{\kappa^2}{4\pi}\log \frac{1}{\eps}-\mathcal{W}_*(x^\eps)+\mathcal{C}+o(1). 
\]
\end{theorem}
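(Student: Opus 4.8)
The plan is to follow verbatim the two-step architecture used for Theorem~\ref{thm:K1}: first establish a sharp upper bound on the least-energy level $c^\eps$ analogous to Corollary~\ref{cor:upper}, and then show that any family of solutions of \eqref{problemPepsstar} in $H^1_*(\Omega)$ respecting this bound obeys the claimed expansion, mirroring Proposition~\ref{prop:1mai}. The only structural change is the systematic replacement of the Dirichlet Green function $G$ and its regular part $H$ by the Koebe--Lin functions $G_*$ and $H_*$ adapted to the mixed boundary conditions. A preliminary observation streamlines everything: since $G_*(\cdot,y)$ solves the same elliptic problem with the prescribed constancy and zero-flux conditions on each $\partial\Omega_h$, the representation formula $v^\eps(x)=\int_\Omega G_*(x,z)\,\omega^\eps(z)\,dz$ holds, the boundary data $\lambda^\eps_h$ being recovered automatically by linearity.

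For the upper bound I would mimic Lemmas~\ref{lemmaHatuNehari} and~\ref{lemmaEnergyHatu}, taking as competitor
\[
  \Hat{u}^\eps(x)=U_{\Hat{\kappa}^\eps}\Bigl(\frac{x-\Hat{x}}{\eps}\Bigr)+\Hat{\kappa}^\eps\Bigl(\frac{1}{2\pi}\log\frac{1}{\eps\rho_{\Hat{\kappa}^\eps}}+H_*(\Hat{x},x)\Bigr).
\]
The essential point is that outside a small ball around $\Hat{x}$ this function equals $\Hat{\kappa}^\eps G_*(\Hat{x},\cdot)$, which by the very definition of $G_*$ is constant on each hole $\Omega_h$ and carries vanishing flux across each $\partial\Omega_h$; hence $\Hat{u}^\eps\in H^1_*(\Omega)$ and the Nehari construction of Proposition~\ref{prop:2.1} applies unchanged. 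The energy computation is identical once one integrates by parts against $G_*$ rather than $G$, the holes contributing nothing because $\nabla\Hat{u}^\eps=0$ there; this produces the term $\mathcal{W}_*(\Hat{x})$, and taking the supremum over $\Hat{x}$ yields the counterpart of Corollary~\ref{cor:upper} with $\mathcal{W}_*$ in place of $\mathcal{W}$.

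For the asymptotic analysis I would rerun Steps~1--4. The a priori estimates of Proposition~\ref{propositionEstimatesueps} survive because the test functions $\min(v^\eps,q^\eps)$ and $(v^\eps-q^\eps)_+$ remain admissible in $H^1_*(\Omega)$: since the vorticity set is quantitatively separated from every component of $\partial\Omega$, both functions coincide with $v^\eps$, respectively with $0$, near each hole and are therefore locally constant there. The capacity and diameter bounds of Step~2 are unaffected, as they concern only the interior geometry of $A^\eps$. In Steps~3 and~4 the representation $v^\eps_i(y)=\int_\Omega G_*(x^\eps_i+\eps y,z)\,\omega^\eps(z)\,dz-q^\eps(x^\eps_i)$ replaces \eqref{eqvepsiGomega}; the decay estimates \eqref{ineqvepsiDecay}--\eqref{ineqNablavepsiDecay} and the global approximation of Proposition~\ref{propositionAsymptoticsW21} go through because $G_*(x,y)=\Gamma(x-y)+H_*(x,y)$ with $H_*$ smooth in the interior, and the Caffarelli--Friedman symmetry argument identifying the blow-up limit with $U_\kappa$ is insensitive to the global boundary conditions, so $k^\eps=1$ and the expansion of $\kappa^\eps$ follow as in Lemma~\ref{lemmaLocalAsymptotics}.

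I expect the main obstacle to be purely bookkeeping: confirming at each step that the functions one tests against, together with the comparison function $\Tilde{v}^\eps$, genuinely lie in $H^1_*(\Omega)$ and respect the homogeneous flux constraints. The cleanest way to transfer the estimates on the regular part is the Koebe--Lin identity \eqref{eqGreenRelationship}, which gives $H_*(x,x)=H(x,x)+\sum_{k,h}Z_k(x)\omega^{kh}Z_h(x)$ with the correction built from the harmonic, hence interior-smooth, capacitary potentials $Z_k$; every bound on $H$ used in Section~\ref{sectionSingleVortex} thus carries over to $H_*$ up to a smooth bounded perturbation. Finally, the localization statement $\mathcal{W}_*(x^\eps)\to\sup_\Omega\mathcal{W}_*$ follows by matching the resulting energy identity against the upper bound of the first step, using that $\mathcal{W}_*(x)\to-\infty$ as $x\to\partial\Omega$ to keep $x^\eps$ in a compact subset of $\Omega$, exactly as Theorem~\ref{thm:K1} is deduced from Corollary~\ref{cor:upper} and Proposition~\ref{prop:1mai}.
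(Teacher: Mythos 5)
Your outline reproduces the paper's two-step architecture correctly, but the assertion that ``the capacity and diameter bounds of Step~2 are unaffected, as they concern only the interior geometry of $A^\eps$'' is a genuine gap, and it is precisely the one point where the paper's proof of Theorem~\ref{thm:K1m} has to depart from that of Theorem~\ref{thm:K1}. The estimate \eqref{ineqCapacity} in Lemma~\ref{lemmaAreaDiameter} is not an interior statement: it bounds $\capa(A^\eps,\Omega)$ by exhibiting the test function $w^\eps=v^\eps/\min_{\partial A^\eps}q^\eps$, and the admissibility of (a truncation of) $w^\eps$ uses the Dirichlet condition $v^\eps=0$ on \emph{all} of $\partial\Omega$. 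For \eqref{problemPepsstar} one only has $v^\eps=\lambda^\eps_h$ on $\partial\Omega_h$, where the constants $\lambda^\eps_h$ are unknowns of the problem; were $\lambda^\eps_h$ of size comparable to $\log\frac{1}{\eps}$, the lower bound $\frac{2\pi}{\capa(A^\eps,\Omega)}\ge\log\frac{1}{\eps}+O(1)$ would fail, and with it \eqref{ineqVorticityAreaStrong}, \eqref{ineqVorticityDiameter} and the whole of Steps~2--4 built on them. What must be proved, and what the paper does prove, is the a priori bound $\Norm{v^\eps}_{\mathrm{L}^\infty(\partial\Omega)}=O(1)$: introducing the harmonic potentials $\Theta_h\in\mathrm{H}^1_*(\Omega)$ normalized by $\int_{\partial\Omega_k}\frac{\partial\Theta_h}{\partial n}=\delta_{hk}$, one obtains the identity $\lambda^\eps_h=\int_\Omega\omega^\eps\,\Theta_h$, hence $\abs{\lambda^\eps_h}\le C\kappa^\eps=O(1)$ by \eqref{ineqTotalVorticity}, and then the corrected test function $w^\eps=\bigl(v^\eps-\max_{\partial\Omega}v^\eps\bigr)/\bigl(\min_{\partial A^\eps}q^\eps-\max_{\partial\Omega}v^\eps\bigr)$ restores the capacity estimate, after which Lemma~\ref{lemmaAreaDiameter} proceeds as before.

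Ironically, your own preliminary observation contains the missing ingredient: since $G_*$ is symmetric and, by \eqref{eqGreenRelationship}, $G_*(x,z)=\sum_{k}\omega^{hk}Z_k(z)$ is bounded for $x\in\partial\Omega_h$, your representation $v^\eps=\int_\Omega G_*(\cdot,z)\,\omega^\eps(z)\,dz$ yields $\abs{\lambda^\eps_h}\le C\kappa^\eps$ at once --- this is the paper's $\Theta_h$ argument in disguise --- but as written you never feed it into the capacity step, which you instead dismiss. A related, repairable, circularity occurs in your Step~1: you justify the admissibility of $\min(v^\eps,q^\eps)$ by the separation of the vorticity set from $\partial\Omega$, which is only established later (and only for essential vortices). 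The correct order is: test against $v^\eps$ itself (which lies in $\mathrm{H}^1_*(\Omega)$) to get the crude bound $\kappa^\eps=O(1)$ from \eqref{ineqEnergy} and \eqref{assumptEnergyUpperbound}; deduce $\lambda^\eps_h=O(1)$ as above; conclude that for small $\eps$ one has $\lambda^\eps_h<\inf_\Omega q^\eps$, so that $\min(v^\eps,q^\eps)$ is constant near each hole, hence admissible, and the sharp estimates of Proposition~\ref{propositionEstimatesueps} follow. The rest of your outline --- the competitor built from $H_*$ lying in $\mathrm{H}^1_*(\Omega)$, the transfer of all bounds on $H$ to $H_*$ via \eqref{eqGreenRelationship}, the small-scale and global asymptotics, and the final matching of the energy identity with the upper bound --- agrees with the paper's proof.
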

\begin{proof}
The proof of Theorem~\ref{thm:K1m} follows almost the same lines as one of Theorem~\ref{thm:K1}, so that we only mention the few adaptations. First, the functions $G$ and $H$ should be replaced by $G_*$ and $H_*$. In view of the regularity of $\Theta_h$ and of \eqref{eqGreenRelationship} this does not bring any trouble in the upper estimate nor the small scale and global asymptotics. 

Next, the proof of Theorem~\ref{thm:K1} relies on the Dirichlet boundary condition to estimate $\capa(A^\eps, \Omega)$ in \eqref{ineqCapacity}. Here, we define instead 
\[
 w^\eps=\frac{v^\eps-\max_{\partial \Omega} v^\eps}{\min_{\partial A_\eps} q^\eps - \max_{\partial \Omega}v^\eps}.
\]
For every $h \in \{1, \dotsc, m\}$, let $\Theta_h \in \mathrm{H}^1_*(\Omega)$  be the unique solution of 
\[
 \left\{
 \begin{aligned}
   -\Delta \Theta_h&=0 & & \text{in $\Omega$,}\\
   \Theta_h&=0 & & \text{on $\partial \Omega_0$},\\
   \Theta_h&=\mu_{kh} & & \text{on $\partial \Omega_h$ and $h \in \{1, \dotsc, m\}$},\\
   \int_{\partial \Omega_h} \frac{\partial \Theta_h}{\partial n}&=\delta_{hk} & & \text{for $k \in \{1, \dotsc, m\}$},
 \end{aligned}\right.   
\]
where $\mu_{kh}$ are unknown constants that are part of the problem\footnote{
This solution can be found by minimizing the functional $u \mapsto \frac{1}{2}\int_{\Omega} \abs{\nabla u}^2+u\vert_{\partial \Omega_k}$ over $\mathrm{H}^1_*(\Omega)$. (A similar problem appears in \cite[Chapter I, (3)]{BBH})}. By construction of $\Theta_k$, one has
\[
 v^\eps \vert_{\Omega_h}=\int_{\partial \Omega_h} v^\eps\frac{\partial \Theta_h}{\partial n} 
=\int_{\Omega} \nabla v^\eps \cdot \nabla \Theta_h=\int_{\Omega} \omega_\eps \Theta_h,
\]
and hence, in view of \eqref{ineqTotalVorticity},
\[
  \Norm{v^\eps}_{\mathrm{L}^\infty(\partial \Omega)} \le \max_{h \in \{1, \dotsc, m\}} \Norm{\Theta_h}_{\mathrm{L}^\infty(\Omega)}\bigl(\kappa+O(\logeps^{-1})\bigr),
\]
Therefore, 
\[
 \frac{2\pi}{\capa (A^\eps, \Omega)} \ge \frac{2\pi}{\int_\Omega \abs{\nabla w^\eps}^2}
\ge \log \frac{1}{\eps}+O(1),
\]
and one can continue as in the proof of Lemma~\ref{lemmaAreaDiameter}.
\end{proof}

\section{Single vortices in unbounded domains}
\label{sectUnbounded}

In this section, we assume that $\Omega \subset \R^2$ is an unbounded simply-connected domain whose boundary is bounded in one direction; to fix the ideas, 
\[
  ]a_0, +\infty[\times \R \subset \Omega \subset ]a_1, +\infty[ \times \R. 
\]
Our goal is to carry out an analysis similar to that of the previous section.

We assume that $q \in \mathrm{W}^{1, 1}_\textrm{loc}(\Omega)$,
\[
  \sup_{x \in \Omega} \int_{B(x, 1)} \abs{\nabla q}^r < \infty
\]
for some $r > 2$, and that 
\[
  q(x) \ge W(x_1-a_0)+d,
\]
for some $W > 0$ and $d > 0$, where $x=(x_1, x_2)$. Since $\partial \Omega$ is bounded in the $x_1$ direction, this is equivalent with requiring that 
\[
  q(x) \ge W \dist(x, \partial \Omega)+d'. 
\]
The natural space for solutions is
\[
 \mathrm{D}^{1, 2}_0(\Omega)= \{ u \in \mathrm{W}^{1, 1}_{\mathrm{loc}}(\Omega) \st \int_{\Omega} \abs{\nabla u}^2 < \infty\}.
\]

The Nehari manifold $\mathcal{N}^\eps$ and the infimum value $c^\eps$ are defined as in Proposition~\ref{prop:2.1}. The existence of a minimizer $u^\eps \in \mathcal{N}^\eps$ as in Proposition~\ref{prop:2.1} such that $\mathcal{E}^\eps(u^\eps)=c^\eps$ is no longer direct nor true because of compactness issues. 

In a first step, we derive upper bounds on $c^\eps$. Next, we perform the a priori asymptotic analysis of solutions of $\Peps$ satisfying similar upper bounds. Finally, we prove existence results in appropriate cases of $\Omega$ and $q$. 

\subsection{Upper bound on the energy}

\begin{proposition}
\label{propUnboundedUpper}
We have
\[
c^\eps \leq \frac{\kappa^2}{4\pi} \log \frac{1}{\eps} - \sup_{x\in \Omega} \mathcal{W}(x) +\mathcal{C} + o(1). 
\]
\end{proposition}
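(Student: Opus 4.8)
The plan is to reproduce, essentially verbatim, the construction behind Corollary~\ref{cor:upper}, adapting it to the unbounded geometry. For a fixed point $\Hat{x} \in \Omega$ I would build a competitor
\[
  \Hat{u}^\eps(x)=U_{\Hat{\kappa}^\eps}\Bigl(\frac{x-\Hat{x}}{\eps}\Bigr)+\Hat{\kappa}^\eps\Bigl(\frac{1}{2\pi}\log\frac{1}{\eps\rho_{\Hat{\kappa}^\eps}}+H(\Hat{x}, x)\Bigr),
\]
exactly as in Lemma~\ref{lemmaHatuNehari}, where $H$ is now the regular part of the Dirichlet Green function $G$ of the unbounded domain $\Omega$, project it onto $\mathcal{N}^\eps$ by solving $g^\eps(\sigma)=0$ for the scalar unknown $\sigma$, and then evaluate $\mathcal{E}^\eps(\Hat{u}^\eps)$ as in Lemma~\ref{lemmaEnergyHatu}. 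Because $c^\eps\le\mathcal{E}^\eps(\Hat{u}^\eps)$ and the energy computation should again yield $\mathcal{E}^\eps(\Hat{u}^\eps)=\frac{\kappa^2}{4\pi}\log\frac{1}{\eps}-\mathcal{W}(\Hat{x})+\mathcal{C}+o(1)$, the proposition would follow by choosing $\Hat{x}$ to maximize $\mathcal{W}$.

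The one genuinely new ingredient, and the main obstacle, is the Green function of the unbounded domain. I must first verify that $G$, hence $H$ and $\mathcal{W}$, are well defined, and above all that the competitor lies in $\mathrm{D}^{1,2}_0(\Omega)$ rather than merely in $H^1_0(\Omega)$. Here the hypothesis $]a_0,+\infty[\times\R\subset\Omega\subset]a_1,+\infty[\times\R$ is decisive: by domain monotonicity $G$ is squeezed between the Green functions of the two half-planes, each of which is an explicit dipole and therefore decays like $1/\abs{x}$ with gradient like $1/\abs{x}^2$. Consequently $\int_\Omega\abs{\nabla G(\Hat{x},\cdot)}^2<\infty$, so that the function $\Hat{\kappa}^\eps G(\Hat{x},\cdot)$, to which $\Hat{u}^\eps$ reduces outside $B(\Hat{x},\eps\rho_{\Hat{\kappa}^\eps})$, is an admissible element of $\mathrm{D}^{1,2}_0(\Omega)$. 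The same decay guarantees that the flux of $G\,\nabla G$ through large circles tends to $0$, which is what legitimizes the integration by parts $\int_{\Omega\setminus B(\Hat{x},\eps\rho)}\abs{\nabla G(\Hat{x},\cdot)}^2=\frac{1}{2\pi}\log\frac{1}{\eps\rho}+H(\Hat{x},\Hat{x})+O(\eps)$ used in the energy computation.

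Once admissibility and this capacity identity are in hand, every remaining estimate is purely local around the interior point $\Hat{x}$ and therefore transfers without change. Indeed, the vorticity set $\Hat{A}^{\eps,\sigma}$ is contained in a ball $B(\Hat{x},O(\eps))$, on which $q$ and $H(\Hat{x},\cdot)$ are bounded and smooth, so the computation of $g^\eps(\sigma)$, the existence of a zero $\sigma^\eps\to0$ (from the sign of $\sigma\cdot(\int f(U_\kappa)-\int f(U_\kappa+\sigma))$), and the evaluation of $\int_\Omega\abs{\nabla\Hat{u}^\eps}^2$ and $\eps^{-2}\int_\Omega F(\Hat{u}^\eps-q^\eps)$ all proceed exactly as in Lemmas~\ref{lemmaHatuNehari} and \ref{lemmaEnergyHatu}, producing $\mathcal{E}^\eps(\Hat{u}^\eps)=\frac{\kappa^2}{4\pi}\log\frac{1}{\eps}-\mathcal{W}(\Hat{x})+\mathcal{C}+o(1)$.

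Finally I would take the supremum. Since $q(x)\ge W(x_1-a_0)+d$ grows linearly while $H(x,x)$ grows only logarithmically, one has $\mathcal{W}(x)\to-\infty$ as $x_1\to+\infty$; and since $H(x,x)\to-\infty$ as $x\to\partial\Omega$, also $\mathcal{W}(x)\to-\infty$ there. Hence $\mathcal{W}$ attains its supremum at an interior point $x^*\in\Omega$, and evaluating the bound above at $\Hat{x}=x^*$ gives $c^\eps\le\frac{\kappa^2}{4\pi}\log\frac{1}{\eps}-\sup_{x\in\Omega}\mathcal{W}(x)+\mathcal{C}+o(1)$, as claimed. I expect the bookkeeping for the Green function at infinity (existence, comparison with the half-planes, gradient decay, vanishing flux) to be the only delicate point; everything downstream is a local repetition of Section~\ref{sectionSingleVortex}.
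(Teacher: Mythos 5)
Your overall strategy is the paper's strategy: transplant the competitor of Lemma~\ref{lemmaHatuNehari}, project onto $\mathcal{N}^\eps$ by solving $g^\eps(\sigma)=0$, and evaluate the energy as in Lemma~\ref{lemmaEnergyHatu}. But you have mislocated the difficulty, and in doing so you leave a genuine gap. The paper's proof of Proposition~\ref{propUnboundedUpper} consists almost entirely of repairing one specific step of Lemma~\ref{lemmaHatuNehari}: the proof that the competitor's vorticity set $\Hat{A}^{\eps,\sigma}=\{x \st \Hat{u}^{\eps,\sigma}(x)>q^\eps(x)\}$ is contained in $B(\Hat{x}, O(\eps))$. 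In the bounded case this containment was deduced from \eqref{ineqVorticitySetUpperFrac} precisely \emph{because} ``$q$ and $H(\Hat{x},\cdot)$ are bounded functions'' --- and that is exactly what fails here: $q$ grows at least linearly and $H(\Hat{x},x)=G(\Hat{x},x)+\frac{1}{2\pi}\log\abs{x-\Hat{x}}$ grows logarithmically at infinity. Your assertion that the containment ``transfers without change'' because ``every remaining estimate is purely local around $\Hat{x}$'' is circular: locality is a \emph{consequence} of the containment, not an argument for it. A priori, far-away points where $H(\Hat{x},\cdot)$ is large could belong to $\Hat{A}^{\eps,\sigma}$, and then neither the computation of $g^\eps(\sigma)$ nor that of $\mathcal{E}^\eps(\Hat{u}^\eps)$ reduces to integrals over a ball of radius $O(\eps)$. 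The paper fixes this by exploiting $\lim_{x\to\infty}\frac{1}{2\pi}\log\frac{1}{\abs{x-\Hat{x}}}+H(\Hat{x},x)=0$ to absorb $H(\Hat{x},\cdot)$ into $q$ via $H(\Hat{x},x)\le \frac{q(x)}{\kappa}+C$, and then re-derives the analogue of \eqref{ineqVorticitySetUpperFrac} keeping the unbounded $q$ on both sides and using only $q\ge 0$ to conclude $\abs{x-\Hat{x}}=O(\eps)$.

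Ironically, the ingredients you do develop would suffice to close the gap by a slightly different route, had you used them for this purpose: outside $B(\Hat{x},\eps\rho_{\Hat{\kappa}^{\eps,\sigma}})$ the competitor equals $\Hat{\kappa}^{\eps,\sigma}G(\Hat{x},\cdot)$, and your half-plane comparison gives the global bound $G(\Hat{x},x)\le \frac{1}{2\pi}\log\bigl(1+\frac{C}{\abs{x-\Hat{x}}}\bigr)$; combined with $q\ge d>0$ and $\Hat{\kappa}^{\eps,\sigma}=\kappa+O(\logeps^{-1})$, the condition $\Hat{\kappa}^{\eps,\sigma}G(\Hat{x},x)>q(x)+\frac{\kappa}{2\pi}\log\frac{1}{\eps}$ forces $\abs{x-\Hat{x}}\le C\eps$. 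But you only invoke the decay of $G$ for admissibility in $\mathrm{D}^{1,2}_0(\Omega)$ and for the integration by parts --- both of which are routine, since the competitor coincides with a multiple of $G(\Hat{x},\cdot)$ away from $\Hat{x}$ --- and never for the localization, which is the actual content of the paper's argument. Two smaller points: your claim that $\sup_\Omega\mathcal{W}$ is attained is not justified (in the directions $\abs{x_2}\to\infty$ with $x_1$ bounded, $\mathcal{W}$ is merely bounded above, since there $q$ need not grow while $H(x,x)$ stays bounded above), but attainment is unnecessary: evaluate the fixed-$\Hat{x}$ bound along a maximizing sequence, as in Corollary~\ref{cor:upper}.
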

\begin{proof}
The proof goes as the proof of Corollary~\ref{cor:upper}. 
The main difference is that $q$ and $H(\Hat{x}, \cdot)$ are not bounded as in the proof of Lemma~\ref{lemmaHatuNehari}. However, since $\lim_{x \to \infty} \frac{1}{2\pi} \log \frac{1}{\abs{x-\Hat{x}}} +H(x, \Hat{x})=0$, one still has, for every $x \in \Omega$,
\[
  H(\Hat{x}, x) \le \frac{q(x)}{\kappa}+C,
\]
whence, starting from \eqref{ineqVorticitySetUpperFrac}, one obtains
\[
  \frac{\dfrac{1}{2\pi} \log \dfrac{1}{\eps}+\dfrac{1}{2\pi} \log \dfrac{\eps}{\abs{x-\Hat{x}}}+ q(x)+C}{\dfrac{\kappa}{2\pi} \log \dfrac{1}{\eps}+\frac{q(x)}{\kappa}} \ge  
\dfrac{\log \dfrac{1}{\eps}+H(\Hat{x}, \Hat{x})}{\dfrac{\kappa}{2\pi} \log \dfrac{1}{\eps}+q(\Hat{x})+\sigma}. 
\]
Since $q \ge 0$, it follows that 
\[
  \frac{1}{\kappa}+\frac{\dfrac{1}{2\pi} \log \dfrac{\eps}{\abs{x-\Hat{x}}}}{\kappa \log \dfrac{1}{\eps}}\ge \frac{1}{\kappa}+O\bigl(\logeps^{-1}\bigr),
\]
and it suffices to continue as in Lemmas~\ref{lemmaHatuNehari}~and~\ref{lemmaEnergyHatu}. 
\end{proof}

\subsection{Functional inequalities on the half-plane}

In order to perform the asymptotic analysis of the solutions and to study their existence, we first provide some useful functional type inequalities and convergence results on the half-plane $\R^2_+$ that will be used in the next section. 

\begin{proposition}
\label{ineqUnboundedIneqW}
We have for $u\in \mathrm{D}^{1, 2}_0(\R^2_+)$, 
\[
  \muleb{2}\bigl(\{ x \in \R^2_+ \st u(x) \ge Wx_1\}\bigr)\le C \int_{\R^2_+} \abs{\nabla u}^2,
\]
and, for every $p>0$,
\[
  \int_{\R^2_+} \bigl(u(x)-Wx_1\bigr)^p_+\, dx \le \frac{C}{W^2} \Bigl( \int_{\R^2_+} \abs{\nabla u}^2 \Bigr)^{1+\frac{p}{2}}. 
\]
\end{proposition}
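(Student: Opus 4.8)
The plan is to reduce both estimates to a one–dimensional argument on the horizontal slices $\{(x_1,x_2)\st x_1>0\}$ at fixed $x_2$, exploiting that $u$ vanishes on $\partial\R^2_+=\{x_1=0\}$. First I would note that for almost every $x_2$ the map $x_1\mapsto u(x_1,x_2)$ is absolutely continuous with $u(0,x_2)=0$, so that for any $x_1$ with $u(x_1,x_2)\ge Wx_1$ the fundamental theorem of calculus and the Cauchy--Schwarz inequality give
\[
  W x_1\le u(x_1,x_2)=\int_0^{x_1}\partial_1 u(s,x_2)\,ds\le x_1^{1/2}\Bigl(\int_0^{\infty}\abs{\partial_1 u(s,x_2)}^2\,ds\Bigr)^{1/2},
\]
whence $W^2 x_1\le E(x_2):=\int_0^\infty\abs{\partial_1 u(s,x_2)}^2\,ds$. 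Thus the slice $A_{x_2}:=\{x_1>0\st u(x_1,x_2)\ge Wx_1\}$ is contained in $(0,E(x_2)/W^2]$, so $\muleb{1}(A_{x_2})\le E(x_2)/W^2$. Integrating this slice bound in $x_2$ and using $\int_{\R}E(x_2)\,dx_2=\int_{\R^2_+}\abs{\partial_1 u}^2\le\int_{\R^2_+}\abs{\nabla u}^2$ yields the first inequality with $C=1/W^2$.

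For the second inequality I would set $A=\{u> Wx_1\}$ and $v=(u-Wx_1)_+$, so that $\supp v\subset A$, the function $v$ lies in $\mathrm{D}^{1,2}_0(\R^2_+)$ and vanishes on $\partial\R^2_+$, and $\nabla v=\nabla u-(W,0)$ almost everywhere on $A$. Using $\abs{\nabla u-(W,0)}^2\le 2\abs{\nabla u}^2+2W^2$ together with the measure bound $\muleb{2}(A)\le W^{-2}\int_{\R^2_+}\abs{\nabla u}^2$ already obtained, I get the gradient control
\[
  \int_{\R^2_+}\abs{\nabla v}^2\le 2\int_{\R^2_+}\abs{\nabla u}^2+2W^2\muleb{2}(A)\le 4\int_{\R^2_+}\abs{\nabla u}^2.
\]
It then suffices to establish, for $w$ supported in a set $A$ of finite measure in $\R^2$ and $p\ge 2$, the Gagliardo--Nirenberg--Sobolev inequality
\[
  \int_{\R^2}\abs{w}^p\le C_p\,\muleb{2}(A)\Bigl(\int_{\R^2}\abs{\nabla w}^2\Bigr)^{p/2},
\]
which I would derive by combining the planar Gagliardo--Nirenberg inequality with the Faber--Krahn bound $\int w^2\le C\muleb{2}(A)\int\abs{\nabla w}^2$ (the case $p=2$), reducing the remaining range $0<p<2$ to $p=2$ by Hölder's inequality on $A$. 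Applying this with $w=v$ and inserting $\muleb{2}(A)\le W^{-2}\int\abs{\nabla u}^2$ and $\int\abs{\nabla v}^2\le 4\int\abs{\nabla u}^2$ produces $\int_{\R^2_+}(u-Wx_1)_+^p\le \frac{C}{W^2}\bigl(\int_{\R^2_+}\abs{\nabla u}^2\bigr)^{1+p/2}$, as required.

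The delicate point, which I expect to be the main obstacle, is obtaining the sharp exponent $1+\tfrac p2$ rather than $1+p$. A naive slice-wise $L^\infty$ estimate $v(\cdot,x_2)\le E(x_2)/W$ combined with $\muleb1(A_{x_2})\le E(x_2)/W^2$ only controls $\int_{\R^2_+} v^p$ by $W^{-(2+p)}\int_\R E(x_2)^{1+p}\,dx_2$, and the quantity $\int_\R E^{1+p}$ cannot be bounded by $\bigl(\int_{\R^2_+}\abs{\nabla u}^2\bigr)^{1+p/2}$ in general (it involves a genuinely higher integrability of $E$ that is not available for $\mathrm{D}^{1,2}_0$ functions). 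The correct scaling is recovered precisely by coupling the measure estimate for $A$ with the full two–dimensional Sobolev/Faber--Krahn inequality for $v$, as in the step above; a quick check of the invariance under $(u,W)\mapsto(u(\cdot/\mu),W/\mu)$ confirms that both displayed inequalities are scale consistent, which is a useful sanity check while carrying out the constants.
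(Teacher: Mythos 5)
Your proposal is correct, and for the second (and harder) inequality it is essentially the paper's own argument: the paper also works with $w=(u-Wx_1)_+$ on the finite-measure set $A_u=\{u\ge Wx_1\}$, applies the Sobolev-type inequality $\int_{A_u}\abs{w}^p\le C\,\muleb{2}(A_u)\,\Norm{\nabla w}_{2}^p$, and controls the gradient by $\Norm{\nabla(u-Wx_1)}_{L^2(A_u)}\le\Norm{\nabla u}_2+W\muleb{2}(A_u)^{1/2}\le 2\Norm{\nabla u}_2$ via the measure bound — exactly your bookkeeping, yielding the exponent $1+\frac p2$ and the prefactor $W^{-2}$, and your closing remark about why the naive slice-wise $L^\infty$ estimate loses the sharp exponent correctly identifies where the two-dimensional Sobolev input is indispensable. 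The only divergence is the measure bound itself: the paper obtains it in one line from Chebyshev together with the half-plane Hardy inequality, $\muleb{2}(A_u)\le W^{-2}\int_{\R^2_+}\abs{u}^2/\abs{x_1}^2\le 4W^{-2}\int_{\R^2_+}\abs{\nabla u}^2$, whereas you slice and apply Cauchy--Schwarz on each horizontal line, getting $A_{x_2}\subset(0,E(x_2)/W^2]$ with $E(x_2)=\int_0^\infty\abs{\partial_1 u}^2\,dx_1$ and then integrating in $x_2$. This is in substance the standard proof of the one-dimensional Hardy inequality unwound by hand, so the difference is cosmetic; your version is self-contained, uses only $\partial_1 u$, and even gives the marginally better constant $W^{-2}$ and the stronger interval containment of each slice. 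Two technical points deserve a sentence in a polished write-up, though neither is a gap: the a.e.-line absolute continuity with $u(0,x_2)=0$ (equivalently, the validity of Hardy in the paper's route) should be justified by density of $C^\infty_c(\R^2_+)$ in $\mathrm{D}^{1,2}_0(\R^2_+)$, with both inequalities passing to the limit by Fatou; and in the Faber--Krahn step one should note that $\muleb{2}(\supp v)<\infty$ guarantees $v\in L^2$ a priori (again by truncation/approximation), so the Gagliardo--Nirenberg interpolation you invoke for $p\ge 2$ is legitimate.
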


A similar statement is proved by Yang Jianfu \cite[Lemma 4]{Yang1991} with a different proof relying on an isometry between $\mathcal{D}^{1,2}_0(\R^2_+)$ and the space of cylindrically symmetric elements of $\mathcal{D}^{1,2}_0(\R^4)$ \cite[Lemma 1]{Yang1991}.

\begin{proof}
Define $A_u=\{ x \in \R^2_+ \st u(x) \ge Wx_1\}$. First we have, by the Chebyshev and Hardy inequalities
\[
  \muleb{2}(A_u ) \le \frac{1}{W^2} \int_{\R^2_+} \frac{\abs{u(x)}^2}{\abs{x_1}^2}\, dx \le \frac{4}{W^2} \int_{\R^2_+} \abs{\nabla u}^2. 
\]
By Sobolev's inequality, it follows
\[
\begin{split}
  \int_{\R^2_+} (u(x)-Wx_1)_+^p\, dx &=\int_{A_u} (u-Wx_1)^p\, dx  \\
  &\le C \Norm{\nabla(u-Wx_1)}^p_{2} \muleb{2}(A_u) \\
  &\le \frac{C'}{W^2} \Norm{\nabla u}_{2}^2(\Norm{\nabla u}_2+W \muleb{2}(A_u)^\frac{1}{2})^p \\
  &\le \frac{C''}{W^2} \Bigl( \int_{\R^2_+} \abs{\nabla u}^2\Bigr)^{1+\frac{p}{2}}. \qedhere
\end{split}
\]
\end{proof}

As a consequence

\begin{lemma}
\label{lemmaUnboundedInequalityq}
We have for $u\in \mathrm{D}^{1, 2}_0(\R^2_+)$, 
\[
  \muleb{2}\bigl(\{ x \in \R^2_+ \st u(x) \ge q(x)\}\bigr)\le C \int_{\R^2_+} \abs{\nabla u}^2,
\]
and for every $p>0$
\[
  \int_{\R^2_+} (u-q)^p_+ \le C \Bigl( \int_{\R^2_+} \abs{\nabla u}^2 \Bigr)^{1+\frac{p}{2}}. 
\]
\end{lemma}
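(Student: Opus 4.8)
The plan is to deduce both inequalities directly from Proposition~\ref{ineqUnboundedIneqW} by a pure monotonicity argument, exploiting that the standing hypothesis on $q$ furnishes an affine lower bound of the form $q(x)\ge Wx_1$ on $\R^2_+$. Indeed, the assumption $q(x)\ge W(x_1-a_0)+d$ with $W,d>0$ becomes, after the translation identifying the half-plane $]a_0,+\infty[\times\R\subset\Omega$ with the standard half-plane $\R^2_+=\{x_1>0\}$ used in Proposition~\ref{ineqUnboundedIneqW}, precisely $q(x)\ge Wx_1+d\ge Wx_1$. Only the weaker domination $q\ge Wx_1$ is needed in what follows.

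First I would record the two elementary consequences of $q\ge Wx_1$ on $\R^2_+$. On the one hand, one has the superlevel-set inclusion
\[
  \{ x \in \R^2_+ \st u(x)\ge q(x)\}\subset\{ x \in \R^2_+ \st u(x)\ge Wx_1\}.
\]
On the other hand, since $t\mapsto (u(x)-t)_+$ is nonincreasing, one gets the pointwise bound $(u(x)-q(x))_+\le (u(x)-Wx_1)_+$; raising this to the power $p>0$ preserves the inequality, both sides being nonnegative.

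The first assertion of the lemma then follows from the monotonicity of $\muleb{2}$ under inclusion together with the first inequality of Proposition~\ref{ineqUnboundedIneqW}. The second follows by integrating the pointwise bound over $\R^2_+$ and invoking the second inequality of Proposition~\ref{ineqUnboundedIneqW}; the prefactor $C/W^2$ appearing there is absorbed into a new constant $C$, since $W$ is a fixed parameter of the problem.

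I do not anticipate any genuine obstacle here, which is exactly why the statement is presented as an immediate consequence. The only point requiring a little care is the normalization ensuring that $q\ge Wx_1$ holds in the very coordinate used by Proposition~\ref{ineqUnboundedIneqW}, that is, that the distinguished direction $x_1$ and the origin of the half-plane match those of the affine lower bound on $q$; once this is fixed, everything reduces to monotonicity of the Lebesgue measure and of the integral.
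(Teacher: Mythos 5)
Your proposal is correct and coincides with the paper's intended argument: the lemma is stated there as an immediate consequence of Proposition~\ref{ineqUnboundedIneqW}, obtained exactly by the pointwise domination $q(x)\ge Wx_1$ (after normalizing coordinates so that $a_0=0$ and $d>0$ is dropped), the resulting inclusion of superlevel sets, and the bound $(u-q)_+\le (u-Wx_1)_+$, with the factor $C/W^2$ absorbed into the constant. Nothing is missing.
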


We also have a compactness theorem

\begin{lemma}
\label{lemmaUnboundedCompactness}
For every $p < \infty$ and $L>0$, the map $\Phi : \mathrm{D}^{1, 2}_0(\R^2_+) \to \mathrm{L}^p(\R_+\times ]-L, L[) : u \mapsto (u-Wx_1)_+$ is completely continuous. 
\end{lemma}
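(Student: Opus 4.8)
The plan is to prove complete continuity by the standard $\eps/3$ argument, splitting the strip $S_L := \R_+ \times {]-L, L[}$ into a bounded box $Q_R := {]0, R[} \times {]-L, L[}$, where compactness is classical, and a tail $S_L \cap \{x_1 > R\}$, where the functional inequalities of Proposition~\ref{ineqUnboundedIneqW} together with the Hardy weight force the contribution to be uniformly small. Throughout I let $u_n \weakto u$ in $\mathrm{D}^{1, 2}_0(\R^2_+)$ and use repeatedly that $t \mapsto (t - W x_1)_+$ is $1$-Lipschitz, so that $\Norm{\Phi(u_n) - \Phi(u)}_{\mathrm{L}^p(D)} \le \Norm{u_n - u}_{\mathrm{L}^p(D)}$ on any subdomain $D$.

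First I would treat the local part. Since $x_1 < R$ on $Q_R$, the Hardy inequality gives $\int_{Q_R} \abs{u_n}^2 \le R^2 \int_{Q_R} \abs{u_n}^2 x_1^{-2} \le 4 R^2 \int_{\R^2_+} \abs{\nabla u_n}^2$, so $(u_n)$ is bounded in $\mathrm{W}^{1, 2}(Q_R)$. As $Q_R$ is a bounded Lipschitz planar domain, the Rellich--Kondrachov embedding $\mathrm{W}^{1, 2}(Q_R) \hookrightarrow \mathrm{L}^p(Q_R)$ is compact for every finite $p$; since the weak limit is $u$, the whole sequence satisfies $u_n \to u$ in $\mathrm{L}^p(Q_R)$, whence $\Phi(u_n) \to \Phi(u)$ in $\mathrm{L}^p(Q_R)$ for each fixed $R$.

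The tail is the heart of the matter. Writing $A_{u_n} = \{u_n \ge W x_1\}$ and $v_n = \Phi(u_n) = (u_n - W x_1)_+$, the function $v_n$ vanishes on $\{x_1 = 0\}$, while $\Norm{\nabla v_n}_{\mathrm{L}^2(\R^2_+)} \le \Norm{\nabla u_n}_{\mathrm{L}^2} + W \muleb{2}(A_{u_n})^{1/2}$ and, by Proposition~\ref{ineqUnboundedIneqW} with $p = 2$, also $\Norm{v_n}_{\mathrm{L}^2(\R^2_+)}$ are bounded. Multiplying $v_n$ by a cut-off equal to $1$ on $S_L \cap \{x_1 > R + 1\}$ and supported in $\{x_1 > R\} \cap \{\abs{x_2} < 2L\}$, and applying the Sobolev-type bound $\int \abs{w}^p \le C \Norm{\nabla w}_{\mathrm{L}^2}^p\, \muleb{2}(\supp w)$ used in the proof of Proposition~\ref{ineqUnboundedIneqW}, I reduce the tail $\int_{S_L \cap \{x_1 > R + 1\}} v_n^p$ to the measure $\muleb{2}\bigl(A_{u_n} \cap \{x_1 > R\} \cap \{\abs{x_2} < 2L\}\bigr)$. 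Finally, since $u_n \ge W x_1$ on $A_{u_n}$, this measure is dominated by the Hardy-weighted tail $W^{-2} \int_{\{x_1 > R,\ \abs{x_2} < 2L\}} \abs{u_n}^2 x_1^{-2}\, dx$.

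It therefore remains to show that this weighted tail tends to $0$ as $R \to \infty$ uniformly in $n$; equivalently, that the embedding $\mathrm{D}^{1, 2}_0(\R^2_+) \hookrightarrow \mathrm{L}^2\bigl(\{\abs{x_2} < 2L\};\, x_1^{-2}\, dx\bigr)$ is compact, for then $u_n \to u$ strongly in this weighted space and strong convergence yields the required tightness at infinity. This compactness is the only genuinely delicate point, and it is where the geometry enters: the Hardy weight $x_1^{-2}$ supplies decay in the $x_1$-direction while the bounded width $2L$ of the strip penalises, through the $\partial_{x_2}$-energy, any concentration of mass at $x_1 \to \infty$, so that no mass can escape. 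I would establish it either by combining the local Rellich step above with a direct estimate of the weighted tail, or, more cleanly, through the isometry between $\mathrm{D}^{1, 2}_0(\R^2_+)$ and the cylindrically symmetric elements of $\mathrm{D}^{1, 2}_0(\R^4)$ (cf.\ \cite{Yang1991}*{Lemma 1}), under which the strip becomes a bounded ball in the free variables and compactness is classical. Granting this, the $\eps/3$ combination of the uniform tail bound with the local convergence completes the proof.
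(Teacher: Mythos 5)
Your local step (Hardy inequality plus Rellich on $]0,R[\times{]-L,L[}$, combined with the $1$-Lipschitz truncation) is correct and matches the first half of the paper's proof. The gap is in the tail. After reducing the tail of $\int v_n^p$ to $\muleb{2}\bigl(A_{u_n}\cap\{x_1>R\}\cap\{\abs{x_2}<2L\}\bigr)$ and then to the Hardy-weighted tail, you assert that the required uniform smallness is ``equivalently'' the compactness of the embedding $\mathrm{D}^{1,2}_0(\R^2_+)\hookrightarrow \mathrm{L}^2\bigl(\{\abs{x_2}<2L\};x_1^{-2}\,dx\bigr)$. That equivalence is false (compactness is strictly stronger), and worse, this embedding is genuinely \emph{not} compact: take $\varphi\in C^\infty_c(\R^2_+)$ supported in $\{\tfrac12<x_1<2,\ \abs{x_2}<1\}$ and set $u_n(x)=\varphi(nx)$. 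Then $\Norm{\nabla u_n}_{\mathrm{L}^2}=\Norm{\nabla\varphi}_{\mathrm{L}^2}$ by the two-dimensional scaling invariance of the Dirichlet energy, $u_n\weakto 0$ in $\mathrm{D}^{1,2}_0(\R^2_+)$, the supports lie inside the strip for large $n$, and yet $\int u_n^2\, x_1^{-2}\,dx=\int \varphi^2\, y_1^{-2}\,dy>0$ independently of $n$, because the Hardy quadratic form is also scaling invariant. Boundary concentration thus survives in your weighted space, so both of your proposed routes are doomed; in particular, under the isometry of \cite{Yang1991}*{Lemma 1} the strip does \emph{not} become a bounded region (it becomes $\{\abs{y_4}<2L\}$, unbounded in the radial variables, where the weighted form remains scaling-critical). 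As written, the proof does not close.

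The fix is one line, and it is the paper's trick: trade half the slope. On $A_u\cap\{x_1>R\}$ one has $u\ge Wx_1$, hence $(u-\tfrac{W}{2}x_1)_+\ge \tfrac{W}{2}x_1\ge \tfrac{WR}{2}$, so pointwise $(u-Wx_1)_+^p\le \tfrac{2}{WR}(u-\tfrac{W}{2}x_1)_+^{p+1}$, and Proposition~\ref{ineqUnboundedIneqW} applied with slope $W/2$ gives $\int_{\R^2_+}(u-\tfrac{W}{2}x_1)_+^{p+1}\le C\Norm{\nabla u}_2^{p+3}$. This yields a tail bound $O(1/R)$ \emph{uniformly on bounded sets} of $\mathrm{D}^{1,2}_0(\R^2_+)$, so that $\Phi$ is a uniform limit of the completely continuous maps $u\mapsto\Phi(u)\charfun{]0,R[\times{]-L,L[}}$ --- which is exactly how the paper concludes. (Equivalently, the same Chebyshev estimate with $(u-\tfrac{W}{2}x_1)_+$ in place of the Hardy weight gives $\muleb{2}\bigl(A_u\cap\{x_1>R\}\bigr)\le CR^{-1}\Norm{\nabla u}_2^{3}$, which plugs directly into your own cut-off/Sobolev step.) Note finally that the bounded width of the strip is only needed for the local Rellich step: the tail estimate holds on the whole half-plane $\{x_1>R\}$, so the geometry of the strip plays no role there.
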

\begin{proof}
By Rellich's Theorem, $u \mapsto \Phi(u)\chi_{]0, \lambda[\times ]-L, L[}$ is completely continuous for every $\lambda > 0$. On the other hand,
\[
\int_{]\lambda, +\infty[\times ]-L, L[ } \hspace{-2em}(u(x)-Wx_1)_+^p\, dx \le \frac{C}{\lambda}\int_{]\lambda, +\infty[\times ]-L, L[} \hspace{-2em}(u(x)-\tfrac{W}{2}x_1)_+^{p+1}\, dx \le \frac{C}{\lambda} \Norm{\nabla u}_2^{p+3},
\]
therefore, on every bounded subset of $\mathrm{D}^{1, 2}_0(\R^2_+)$, $\Phi$ is a uniform limit of completely continuous maps. The conclusion follows. 
\end{proof}

\begin{lemma}\label{campeones}
Let $(u_n) \subset \mathrm{D}^{1, 2}_0(\R^2_+)$. If $(u_n)$ is bounded in $\mathrm{D}^{1, 2}_0(\R^2_+)$ and 
\[
\sup_{y\in \R} \int_{\R_+ \times ]y-1, y+1[} (u_n-Wx_1)_+^p \to 0,
\]
then 
\[
  \int_{\R^+_2} (u_n-Wx_1)_+^s \to 0,
\]
for every $s>0$. 
\end{lemma}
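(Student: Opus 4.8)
The plan is to run the vanishing alternative of P.-L.\ Lions' concentration--compactness method, the only nonstandard ingredient being that the ambient integrability bounds needed for it are supplied by Proposition~\ref{ineqUnboundedIneqW} rather than by a Sobolev embedding. Write $w_n=(u_n-Wx_1)_+$ and $A_n=\{x\in\R^2_+\st u_n(x)\ge Wx_1\}$. Since $\nabla w_n=(\nabla u_n-(W,0))\charfun{A_n}$ and, by Proposition~\ref{ineqUnboundedIneqW}, $\muleb{2}(A_n)\le C\int_{\R^2_+}\abs{\nabla u_n}^2$ while $\int_{\R^2_+}w_n^\sigma\le C(\int_{\R^2_+}\abs{\nabla u_n}^2)^{1+\sigma/2}$ for every $\sigma>0$, the family $(w_n)$ is bounded in $\mathrm{H}^1(\R^2_+)$ and in $\mathrm{L}^\sigma(\R^2_+)$ for every $\sigma>0$, and $\muleb{2}(A_n)$ is bounded.

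First I would reduce the statement to the convergence $\int_{\R^2_+}w_n^{s_0}\to0$ for a single well-chosen exponent $s_0$. Indeed, once this is known, H\"older's inequality gives $\int_{\R^2_+} w_n^s\le(\int_{\R^2_+} w_n^{s_0})^{s/s_0}\muleb{2}(A_n)^{1-s/s_0}\to0$ for $0<s<s_0$, and $\int_{\R^2_+} w_n^s\le(\int_{\R^2_+} w_n^{s_0})^{\theta}(\int_{\R^2_+} w_n^{s_1})^{1-\theta}\to0$ for $s_0<s<s_1$ (with $s=\theta s_0+(1-\theta)s_1$ and $\int_{\R^2_+} w_n^{s_1}$ bounded), so that all exponents $s>0$ are covered. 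The same interpolation, applied on each strip against the global $\mathrm{L}^\sigma$ bounds, upgrades the hypothesis: $\sup_{y}\int_{\R_+\times\,]y-1,y+1[}w_n^{p_0}\to0$ for every $p_0\ge p$, and I fix some $p_0\ge2$.

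The core step is then the local-to-global estimate. I set $s_0=p_0+2$ and cover $\R^2_+$ by unit balls $(B_k)$ with uniformly bounded overlap, each contained in a strip of width $2$ in the $x_2$ variable. On each $B_k$ the inhomogeneous two-dimensional Gagliardo--Nirenberg inequality gives $\int_{B_k}w_n^{s_0}\le C\bigl(\int_{B_k}w_n^{p_0}\bigr)\Norm{w_n}_{\mathrm{H}^1(B_k)}^2$, with $C$ uniform in $k$ by translation invariance. Summing over $k$ and using that each $B_k$ lies in a width-$2$ strip,
\[
  \int_{\R^2_+}w_n^{s_0}\le\sum_k\int_{B_k}w_n^{s_0}\le C\Bigl(\sup_k\int_{B_k}w_n^{p_0}\Bigr)\sum_k\Norm{w_n}_{\mathrm{H}^1(B_k)}^2\le C'\Bigl(\sup_{y}\int_{\R_+\times\,]y-1,y+1[}w_n^{p_0}\Bigr)\Norm{w_n}_{\mathrm{H}^1(\R^2_+)}^2,
\]
where the bounded overlap yields $\sum_k\Norm{w_n}_{\mathrm{H}^1(B_k)}^2\le C\Norm{w_n}_{\mathrm{H}^1(\R^2_+)}^2$. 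Since the last Sobolev norm is bounded and the supremum tends to $0$, this gives $\int_{\R^2_+}w_n^{s_0}\to0$ and completes the argument.

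The point that needs care—and where the choice of exponents matters—is that $s_0$ must be taken equal to $p_0+2$ precisely so that the Sobolev norm enters the Gagliardo--Nirenberg estimate to the power exactly $2$; this turns the passage from the local estimates to the global one into a mere linear summation over a bounded-overlap covering, requiring no superadditivity of the gradient integral. The only other subtlety is that in dimension two $\dot{\mathrm{H}}^1$ does not embed into $\mathrm{L}^2$, so the boundedness of $(w_n)$ in $\mathrm{H}^1$ and in every $\mathrm{L}^\sigma$ is not automatic; it is exactly here that Proposition~\ref{ineqUnboundedIneqW}, through the finiteness of $\muleb{2}(A_n)$, is used.
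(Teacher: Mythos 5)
Your proof is correct and takes essentially the same route as the paper's: the two-dimensional Gagliardo--Nirenberg inequality localized to width-$2$ strips with the exponent $p+2$ chosen exactly so that the $\mathrm{H}^1$ norm enters squared, the $\mathrm{H}^1$ and $\mathrm{L}^\sigma$ bounds on $(u_n-Wx_1)_+$ supplied by Proposition~\ref{ineqUnboundedIneqW}, and interpolation to cover all remaining exponents $s$. The only cosmetic difference is that you sum over a bounded-overlap covering by unit balls where the paper integrates the strip estimate continuously in $y$ (the Coti Zelati--Rabinowitz averaging trick it credits) --- the mechanism is identical.
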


This kind of result was first obtained by P.-L.\thinspace Lions \cite[Lemma I.1]{Lions1984}. The idea of our proof comes from V.\thinspace Coti Zelati and P.\thinspace Rabinowitz \cite{CotiZelatiRabinowitz1992}. 

\begin{proof}
By the Gagliardo--Nirenberg inequality \cite[p.\thinspace 125]{Nirenberg1959},
\[
\begin{split}
  \int_{\R_+\times ]y-1, y+1[}(u_n-Wx_1)_+^{p+2} 
  \le C\int_{\R_+\times ]y-1, y+1[}\hspace{-4em}& (u_n-Wx_1)_+^p \\
&\times \int_{\R_+ \times ]y-1, y+1[}\hspace{-4em} (\abs{\nabla (u_n-Wx_1)_+}^2+\abs{(u_n-Wx_1)_+}^2). 
\end{split}
\]
Integrating with respect to $y\in \R$, one obtains
\[
\begin{split}
  \int_{\R^2_+}(u_n-Wx_1)_+^{p+2}
\le C\biggl(\sup_{y \in \R} \int_{\R_+\times ]y-1, y+1[}\hspace{-4em}& (u_n-Wx_1)_+^p\biggr)\\
&\times\int_{\R_+^2} \bigl(\abs{\nabla (u_n-Wx_1)_+}^2+\abs{(u_n-Wx_1)_+}^2\bigr). 
\end{split}
\]
Since by Lemma~\ref{ineqUnboundedIneqW}
\[
\int_{\R_+^2} \abs{\nabla (u_n-Wx_1)_+}^2+\abs{(u_n-Wx_1)_+}^2
\le C \bigl(\Norm{\nabla u_n}_{2}^2+\Norm{\nabla u_n}_{2}^4\bigr),
\]
$(u_n-Wx_1) \to 0$ in $\mathrm{L}^{p+2}(\R^2_+)$. By Lemma~\ref{ineqUnboundedIneqW}, the general case $s \ne p+2$ follows by interpolation. 
\end{proof}

\subsection{Asymptotic behavior of solutions}
\label{sectionUnboundedAsymptotics}
In this section, we assume that $(v^\eps)$ is a sequence of solutions to $\Peps$ satisfying \eqref{assumptEnergyUpperbound}. We shall prove 
\begin{proposition}
\label{prop:1maiUnbounded}
Proposition~\ref{prop:1mai} holds under the assumptions on $\Omega$ and $q$ of this section.
\end{proposition}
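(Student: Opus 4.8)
The plan is to reproduce, stage by stage, the four-step scheme used to prove Proposition~\ref{prop:1mai}---the quantitative estimates of Proposition~\ref{propositionEstimatesueps}, the splitting of the vorticity set, the small-scale asymptotics, and the global reconstruction---replacing the two features of the bounded case that are no longer available. These are the boundedness of $\Omega$ (used through Poincar\'e's inequality and the capacity--area inequality) and the boundedness of $q$ and of the regular part $H$ (used throughout the asymptotic analysis). The substitutes are precisely the half-plane inequalities and compactness results of the present subsection.

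In the counterpart of Step~1, the estimates that are local or that only use $\inf_\Omega q^\eps \gtrsim \logeps$ (namely \eqref{ineqVortexEnergy}, \eqref{ineqTotalVorticity}, and the Gagliardo--Nirenberg bounds) transfer verbatim. The measure estimate, proved in the bounded case by Chebyshev's and Poincar\'e's inequalities, must instead rest on the Hardy-type control of Lemma~\ref{lemmaUnboundedInequalityq}, obtained after extending $v^\eps$ by zero to the half-plane $]a_1,+\infty[\times\R\supset\Omega$; this bounds $\muleb{2}(\{v^\eps>q^\eps\})$ by the Dirichlet energy. The sharp bounds of Lemma~\ref{lemmaAreaDiameter} are then recovered from the capacity estimate \eqref{ineqCapacity}, the capacity--area inequality (which in the bounded case involves $\muleb{2}(\Omega)$) being applied within a fixed bounded subset of $\Omega$ to which the essential vorticity will have been confined.

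The structural analysis of Step~2 is local around each connected component of $A^\eps$, so the splitting into vanishing and essential vortices goes through; the vanishing part is controlled in $\mathrm{L}^s$ exactly as in Lemma~\ref{lemmaVanishingVorticity}, since the per-component estimates do not depend on the location of the component. The genuinely new point, which I expect to be the main obstacle, is to prevent the vorticity from escaping to $x_2=\pm\infty$ or spreading along the unbounded direction. Here the growth hypothesis $q(x)\ge W\dist(x,\partial\Omega)+d'$ makes $v^\eps>q^\eps$ energetically unaffordable far from $\partial\Omega$ and so confines the essential vortices in the $x_1$-direction, while in the unbounded $x_2$-direction I would invoke the concentration--compactness Lemma~\ref{campeones}, together with the compactness of Lemma~\ref{lemmaUnboundedCompactness}, to exclude the vanishing alternative and force each essential vortex to concentrate in a single strip. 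Because the local equation \eqref{eqLimit} and the decay estimates \eqref{ineqvepsiDecay}--\eqref{ineqNablavepsiDecay} of Step~3 are invariant under translation in $x_2$, recentering at $x^\eps_i$ renders the small-scale analysis insensitive to any drift of $x^\eps_i$ along $\partial\Omega$, and the lower bounds \eqref{ineqLowerBoundArea}--\eqref{ineqLowerBoundVortex} follow as before.

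Finally, in Steps~3 and~4 the boundedness of $q$ and of $H(\hat x,\cdot)$ is replaced by the growth hypothesis, which makes $\mathcal{W}(x)\to-\infty$ as $x_1\to+\infty$; combined with the energy upper bound \eqref{assumptEnergyUpperbound} this keeps the first coordinate of the essential vortex centre in a bounded range, on which $H$ and $\nabla H$ are again bounded. The expansion of $\kappa^\eps$ in Lemma~\ref{lemmaLocalAsymptotics}, the identification $v^\eps=\Tilde{v}^\eps+o(1)$ of Proposition~\ref{propositionAsymptoticsW21}, and the energy identity of Corollary~\ref{corEnergy} then close exactly as in the bounded case, yielding all the conclusions of Proposition~\ref{prop:1mai}.
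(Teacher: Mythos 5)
Your overall scheme (rerun the four steps, replace Poincar\'e by the Chebyshev--Hardy estimate of Lemma~\ref{lemmaUnboundedInequalityq}, use the growth $q\ge W\dist(\cdot,\partial\Omega)+d'$ to confine essential vortices near the boundary) is the paper's strategy, and your Step~1 matches Proposition~\ref{propositionUnboundedEstimatesueps} exactly. But your Step~2 has a genuine gap. You propose to recover the area/diameter bounds of Lemma~\ref{lemmaAreaDiameter} by applying the capacity--area inequality ``within a fixed bounded subset of $\Omega$ to which the essential vorticity will have been confined.'' This is circular --- the confinement is precisely what must be proved --- and, worse, it says nothing about the \emph{vanishing} components, which may sit arbitrarily far from $\partial\Omega$ and whose area must still be controlled: they enter the $\mathrm{L}^s$ bound on $\omega^\eps_v$ and all subsequent Green-potential estimates. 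The paper instead proves a new capacity estimate on the half-plane (Proposition~\ref{propositionCapacityLocalArea}, via a conformal map to the disc, with $\sup_{x\in K}\abs{x}^2$ replacing $\muleb{2}(\Omega)$), which, because the boundary datum $\min_{\partial A^\eps_i}q^\eps$ grows linearly in $\dist(A^\eps_i,\partial\Omega)$, yields the \emph{distance-weighted} bounds of Lemma~\ref{lemmaUnboundedAreaDiameter}: $\diam(A^\eps_i)\le C\eps\,\dist(A^\eps_i,\partial\Omega)e^{-2W\dist(A^\eps,\partial\Omega)}$ and $\muleb{2}(A^\eps_x)\le C\eps^2e^{-\mu\dist(x,\partial\Omega)}$, valid at all distances. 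Confinement of essential vortices, $\dist(A^\eps_i,\partial\Omega)\le C$ in Lemma~\ref{lemmaUnboundedVortexSplit}, then drops out by combining this exponential decay with the lower bound $\diam(A^\eps_i)\ge c\eps$. Separately, your appeal to Lemmas~\ref{lemmaUnboundedCompactness} and~\ref{campeones} to prevent escape in the $x_2$-direction is both inapplicable and unnecessary: those lemmas require bounded sequences in $\mathrm{D}^{1,2}_0$, whereas $\Norm{\nabla v^\eps}_2^2\sim\logeps\to\infty$; in the paper they serve only the existence theory (Propositions~\ref{prop:al} and~\ref{propositionPS}). No $x_2$-compactness is needed, since Proposition~\ref{prop:1mai} claims nothing about convergence of $x^\eps$, and $k^\eps=1$ follows from the translation-invariant argument of Lemma~\ref{lemmaLocalAsymptotics} (estimate \eqref{eqVorticitiesGreen}, using $G\ge 0$).

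Your Steps~3--4 also gloss over two points where the bounded-domain proof genuinely breaks. First, the estimate \eqref{ineqGomegav} for the potential of $\omega^\eps_v$ is no longer a direct consequence of \cite{GilbargTrudinger2001}*{Theorem 8.15}: since $\nabla q$ is only uniformly locally $\mathrm{L}^r$, Lemma~\ref{lemmaVanishingVorticity} survives only in the local uniform form of Lemma~\ref{lemmaUnboundedVanishing}, and the paper must split the potential into a near part (controlled by $\sup_x\Norm{\omega^\eps_v}_{\mathrm{L}^s(B(x,1))}$) and a far part controlled by the decay $G(x,y)\le\frac{1}{2\pi}\log\bigl(1+C(1+\dist(x,\partial\Omega))/\abs{x-y}\bigr)$, which tends to $0$ as $\rho\to\infty$ uniformly in $\eps$. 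Second, your claim that, once $x^\eps_1$ lies at bounded distance from $\partial\Omega$, ``$H$ and $\nabla H$ are again bounded'' is false as needed: $H(x^\eps_1,y)$ grows like $\frac{1}{4\pi}\log(\abs{x^\eps_1-y}^2+4(x^\eps_1)_1y_1)$ as $y\to\infty$, while the convergences asserted in $\mathrm{W}^{1,2}_0(\Omega)$ and $\mathrm{L}^\infty(\Omega)$ are global. This is exactly why the paper replaces the decomposition of Proposition~\ref{propositionAsymptoticsW21} by one with the extra explicit half-plane term, writing $w^\eps_r$ with kernel $H(x,y)-\frac{1}{4\pi}\log(\abs{x-y}^2+4x_1y_1)$ (which is well behaved) and treating the explicit piece $w^\eps_h$ by direct kernel estimates, in the same manner as $w^\eps_s$. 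Without these two repairs, your Steps~3 and~4 would not close.
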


\subsubsection{Step 1: First quantitative properties of the solutions}

We first have the counterpart of Proposition~\ref{propositionEstimatesueps}
\begin{proposition}
\label{propositionUnboundedEstimatesueps} 
The estimates \eqref{ineqMuAeps}, \eqref{ineqVortexEnergy}, \eqref{ineqVortexPotential}, 
\eqref{eq:2etoiles} and \eqref{ineqTotalVorticity} hold for some constant $C$ independent of $\eps$. 
\end{proposition}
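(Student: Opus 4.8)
The plan is to follow the proof of Proposition~\ref{propositionEstimatesueps} essentially line by line, since most of its steps are either purely algebraic or local and are insensitive to the unboundedness of $\Omega$. First I would record the energy inequality \eqref{ineqEnergy}: expanding $\mathcal{E}^\eps$ and testing the equation against $v^\eps$ (legitimate since $v^\eps\in\mathrm{D}^{1,2}_0(\Omega)$), together with $(v^\eps-q^\eps)_+\le v^\eps$ when $q^\eps\ge 0$, gives $(\tfrac12-\tfrac1{p+1})\int_\Omega|\nabla v^\eps|^2\le\mathcal{E}^\eps(v^\eps)$, whence $\int_\Omega|\nabla v^\eps|^2=O(\logeps)$ by \eqref{assumptEnergyUpperbound}. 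Exactly as before, testing $\Peps$ against $\min(v^\eps,q^\eps)$ yields \eqref{ineqVorticityEnergy} and hence \eqref{ineqomegaepsL1} (using $\inf_\Omega q^\eps\ge\tfrac{\kappa}{2\pi}\log\tfrac1\eps$, which holds since $q\ge 0$), testing against $(v^\eps-q^\eps)_+$ yields \eqref{eqNehariVortex}, and the Gagliardo--Nirenberg inequality \eqref{ineqGN} then produces \eqref{ineqVortexEnergy} and \eqref{ineqVortexPotential}; the bounds \eqref{eq:2etoiles} and \eqref{ineqTotalVorticity} follow by the same rearrangement of the energy. None of these manipulations uses compactness of $\Omega$.

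The one step that genuinely breaks down is the measure estimate \eqref{ineqMuAeps}, whose bounded-domain proof rests on the Poincar\'e inequality $\int_\Omega|v^\eps|^2\le C\int_\Omega|\nabla v^\eps|^2$, which fails on $\Omega$. Here I would substitute the functional inequalities of the previous subsection. After translating so that $\Omega\subset\R^2_+$ and extending $v^\eps$ by zero (which preserves membership in $\mathrm{D}^{1,2}_0(\R^2_+)$ and the Dirichlet energy), the hypothesis $q\ge W(x_1-a_0)+d$ together with $q\ge 0$ gives, for $\eps$ small, $q^\eps\ge Wx_1+\tfrac{M_\eps}{2}$ with $M_\eps=\tfrac{\kappa}{2\pi}\log\tfrac1\eps$, so that $A^\eps\subset\{(v^\eps-Wx_1)_+>\tfrac{M_\eps}{2}\}$. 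Combining Chebyshev's inequality with the second estimate of Proposition~\ref{ineqUnboundedIneqW} applied with a large exponent $m$ gives
\[
\muleb{2}(A^\eps)\le\Bigl(\frac{2}{M_\eps}\Bigr)^{m}\int_{\R^2_+}(v^\eps-Wx_1)_+^{m}\le\frac{C}{M_\eps^{m}}\Bigl(\int_{\R^2_+}|\nabla v^\eps|^2\Bigr)^{1+\frac m2}=O\bigl(M_\eps^{1-\frac m2}\bigr).
\]
Since $M_\eps$ is comparable to $\logeps$, choosing $m\ge 4$ yields $\muleb{2}(A^\eps)=O(\logeps^{-1})$, which is \eqref{ineqMuAeps}. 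The freedom to take $m$ as large as we like, independently of the fixed power $p$ of the nonlinearity, is precisely what turns the scale-invariant functional inequality into a quantitative gain of a power of $\logeps$.

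The main obstacle I anticipate is not \eqref{ineqMuAeps} itself but the reliance, throughout the bounded-domain argument, on the global bound $\nabla q\in\mathrm{L}^r(\Omega)$: in the present setting $q$ is only uniformly locally in $\mathrm{W}^{1,r}$, so the terms $\int_{A^\eps}\nabla v^\eps\cdot\nabla q$ and $\Norm{\nabla q}_{\mathrm{L}^2(A^\eps)}$ entering \eqref{ineqVorticityEnergy} and the derivation of \eqref{ineqVortexEnergy} must be re-estimated. The uniform bound $\sup_x\int_{B(x,1)}|\nabla q|^r<\infty$ controls $\int_S|\nabla q|^2$ by $C\muleb{2}(S)^{1-\frac2r}$ whenever $S$ meets boundedly many unit balls, so the point is to confine $A^\eps$ spatially. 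I would do this from an a priori bound $\Norm{v^\eps}_{\mathrm{L}^\infty}=O(\logeps)$ (coming from $v^\eps=G*\omega^\eps$, the mass bound \eqref{ineqomegaepsL1}, and the area control), which combined with $v^\eps>Wx_1$ on $A^\eps$ forces the $x_1$-extent of $A^\eps$ to be $O(\logeps)$; together with \eqref{ineqMuAeps} this pins down the spread of $A^\eps$ enough that H\"older's inequality and the smallness of $\muleb{2}(A^\eps)$ give $\Norm{\nabla q}_{\mathrm{L}^2(A^\eps)}=O(1)$. Once this is in hand the remaining estimates close exactly as in Proposition~\ref{propositionEstimatesueps}; the genuine delicacy is the interdependence between the localization of $A^\eps$ and the control of $\nabla q$, which in the unbounded case must be bootstrapped rather than read off from the boundedness of $\Omega$.
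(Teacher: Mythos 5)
Your handling of the two substantive points in the first two paragraphs coincides with the paper's proof. The paper likewise observes that the bounded-domain arguments for \eqref{ineqVortexEnergy}, \eqref{ineqVortexPotential}, \eqref{eq:2etoiles} and \eqref{ineqTotalVorticity} carry over, and it replaces the Poincar\'e inequality in the proof of \eqref{ineqMuAeps} by exactly your mechanism: Chebyshev combined with the half-plane inequality (in the form of Lemma~\ref{lemmaUnboundedInequalityq}, the $q$-version of Proposition~\ref{ineqUnboundedIneqW}), with the fourth power: $\muleb{2}(A^\eps)\le \bigl(\tfrac{\kappa}{2\pi}\log\tfrac1\eps\bigr)^{-4}\int_\Omega (v^\eps-q)_+^4\le C\logeps^{-4}\cdot\logeps^{3}=C\logeps^{-1}$. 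Your free exponent $m\ge 4$ is the same computation; $m=4$ already suffices, and nothing is gained for the stated estimate by taking $m$ larger.

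Your third paragraph, however, departs from the paper — which simply asserts that the remaining estimates hold ``without any modification'' — and the repair you sketch does not work as written. First, the a priori bound $\Norm{v^\eps}_{\mathrm{L}^\infty}=O(\logeps)$ cannot be extracted from $v^\eps=G*\omega^\eps$ and the mass bound \eqref{ineqomegaepsL1} alone: the logarithmic singularity of $G$ requires an $\mathrm{L}^s$ bound on $\omega^\eps$ with $s>1$, and such bounds are precisely what \eqref{ineqVortexEnergy} and the later vanishing-vorticity lemma are used to produce, so your bootstrap is circular at this stage. Second, and more seriously, confinement of $A^\eps$ in the $x_1$ direction does not control its spread in the $x_2$ direction: $A^\eps$ may a priori meet infinitely many unit balls along the strip, and since $t\mapsto t^{1-\frac2r}$ is concave, the local H\"older bounds $\int_{A^\eps\cap Q_j}\abs{\nabla q}^2\le C\,\muleb{2}(A^\eps\cap Q_j)^{1-\frac2r}$ do not sum to $C\,\muleb{2}(A^\eps)^{1-\frac2r}$ — for $N$ pieces of area $\logeps^{-1}/N$ one gets $N^{2/r}\logeps^{-(1-2/r)}\to\infty$ as $N\to\infty$. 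So ``small total measure plus H\"older'' does not yield $\Norm{\nabla q}_{\mathrm{L}^2(A^\eps)}=O(1)$, which is what the derivation of \eqref{ineqVortexEnergy} with an $\eps$-independent constant requires. To your credit, the difficulty you flag is genuine and the paper passes over it silently; it is immediate in the model case $\nabla q\in \mathrm{L}^\infty$ (e.g.\ $q=Wx_1+d$, where $\Norm{\nabla q}^2_{\mathrm{L}^2(A^\eps)}=W^2\muleb{2}(A^\eps)\to 0$ by \eqref{ineqMuAeps}), but in the general uniformly-locally-$\mathrm{L}^r$ setting one needs real control of the $x_2$-spread of $A^\eps$ (or a weakened $O(\logeps)$ version of \eqref{ineqVortexEnergy} bootstrapped through the capacity estimates of Step 2), which neither your sketch nor the paper's one-line assertion supplies.
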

\begin{proof}
The proof of Proposition~\ref{propositionEstimatesueps} provides the estimates \eqref{ineqVortexEnergy}, \eqref{ineqVortexPotential}, \eqref{eq:2etoiles} and \eqref{ineqTotalVorticity} without any modification. The inequality \eqref{ineqMuAeps} needs a little more work, since its proof in Proposition~\ref{propositionEstimatesueps} relies on the Poincar\'e inequality. In the present setting, we replace it by the Chebyshev inequality and Lemma~\ref{lemmaUnboundedInequalityq}
\[
  \begin{split}
\muleb{2}(\{ x \in \Omega \st v^\eps(x) \ge q(x)+\tfrac{\kappa}{2\pi} \log \tfrac{1}{\eps}\})
  &\le \frac{1}{(\frac{\kappa}{2\pi} \log \frac{1}{\eps})^4} \int_{\Omega} (v^\eps-q)_+^4 \\
  &\le \frac{C}{\logeps^4} \logeps^3=C \logeps^{-1}. \qedhere
  \end{split}
\]
\end{proof}

\subsubsection{Step 2: Structure of the vorticity set}
As previously, we consider the connected components of $(A^\eps_i)_{i \in I_\eps}$ of $A_\eps$.

\begin{lemma}
\label{lemmaUnboundedAreaDiameter}
If $\eps > 0$ is sufficiently small, we have
for every $i \in I^\eps$,
\begin{equation}
\label{ineqUnboundedVorticityDiameter}
  \diam(A^\eps_i) \le C \eps \frac{\dist(A^\eps_i, \partial \Omega)}{e^{2W \dist(A^\eps, \partial\Omega)}}. 
\end{equation}
Moreover, if for every $x \in \Omega$,  one defines
\[
  A^\eps_x=\bigcup \Bigl\{ A^\eps_i \st  B(x, \tfrac{1}{2}\dist(x, \partial \Omega) +1) \cap A^\eps_i \ne \emptyset \Bigr\},
\]
then
\[
  \muleb{2}(A^\eps_x) \le C \eps^2e^{-\mu \dist(x, \partial \Omega)}. 
\]
\end{lemma}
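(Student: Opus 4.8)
The plan is to transpose the capacity argument of Lemma~\ref{lemmaAreaDiameter} to the unbounded setting, the genuinely new ingredient being the linear lower bound $q(x)\ge W\dist(x,\partial\Omega)+d$. This forces $v^\eps$ to be \emph{large} on the boundary of any vortex lying far from $\partial\Omega$, and an inspection of the capacity estimate shows that every extra unit of $q$ on $\partial A^\eps_i$ pays an exponential dividend in the distance. Throughout I work with capacity relative to $\Omega$, which is legitimate since $v^\eps\in\mathrm{D}^{1,2}_0(\Omega)$ vanishes on $\partial\Omega$.

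First I would record the pointwise lower bound that drives everything. Since $v^\eps=q^\eps$ on $\partial A^\eps_i$ and $v^\eps$ is superharmonic, the minimum principle gives $v^\eps\ge \min_{\partial A^\eps_i}q^\eps$ on $A^\eps_i$, whence by the hypothesis on $q$,
\[
  M_i:=\min_{\partial A^\eps_i}q^\eps\ge \tfrac{\kappa}{2\pi}\logeps+W\dist(A^\eps_i,\partial\Omega)+O(1).
\]
Using $\min(v^\eps/M_i,1)$ as a test function for $\capa(A^\eps_i,\Omega)$ and splitting the gradient integral over $\Omega\setminus A^\eps$ (controlled by \eqref{eq:2etoiles}) and over $A^\eps$ (controlled by \eqref{ineqVortexEnergy}, together with $\int_{A^\eps}\abs{\nabla q}^2=o(1)$ coming from the smallness of $\muleb{2}(A^\eps)$ in \eqref{ineqMuAeps}), I obtain $\int_{\{v^\eps<M_i\}}\abs{\nabla v^\eps}^2\le\frac{\kappa^2}{2\pi}\logeps+O(1)$ and hence, by expanding the square,
\[
  \frac{2\pi}{\capa(A^\eps_i,\Omega)}\ge\frac{2\pi M_i^2}{\frac{\kappa^2}{2\pi}\logeps+O(1)}\ge\logeps+cW\dist(A^\eps_i,\partial\Omega)+O(1),
\]
for an explicit constant $c>0$.

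For the diameter bound I would feed this into the capacity lower bound for connected sets, Proposition~\ref{propositionBoundDiameter}, in the form $\frac{2\pi}{\capa(A^\eps_i,\Omega)}\le\log\bigl(C\dist(A^\eps_i,\partial\Omega)/\diam(A^\eps_i)\bigr)$; comparing the two displays and exponentiating yields the asserted estimate, with the exponential decay in $\dist$. For the area bound I would apply the same capacity computation to the cluster $A^\eps_x$, after first checking that when $\eps$ is small every component meeting $B(x,\tfrac12\dist(x,\partial\Omega)+1)$ has diameter $O(\eps)$ and therefore lies at distance at least $\tfrac12\dist(x,\partial\Omega)-C$ from $\partial\Omega$; this gives $\min_{\partial A^\eps_x}q^\eps\ge\frac{\kappa}{2\pi}\logeps+\frac{W}{2}\dist(x,\partial\Omega)+O(1)$ and thus a lower bound of the same shape for $\frac{2\pi}{\capa(A^\eps_x,\Omega)}$. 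Inserting this into the capacity--area inequality Proposition~\ref{propositionCapacityArea}, with the reference scale taken to be $\dist(x,\partial\Omega)$ (the radius of the largest ball of $\Omega$ around the cluster), and absorbing the resulting polynomial prefactor $\dist(x,\partial\Omega)^2$ into a slightly smaller exponent $\mu>0$, produces $\muleb{2}(A^\eps_x)\le C\eps^2 e^{-\mu\dist(x,\partial\Omega)}$. For $\dist(x,\partial\Omega)$ bounded the exponential is $O(1)$ and one recovers the base estimates $\diam(A^\eps_i)=O(\eps)$, $\muleb{2}(A^\eps_x)=O(\eps^2)$ exactly as in Lemma~\ref{lemmaAreaDiameter}.

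The main obstacle I anticipate is the capacity estimate itself in the unbounded domain: unlike the bounded case, the logarithmic capacity degenerates as the reference region grows, so one must pin the correct reference scale—namely the distance to $\partial\Omega$—and be careful that Propositions~\ref{propositionBoundDiameter} and~\ref{propositionCapacityArea} are applied relative to the grounded boundary $\partial\Omega$ rather than to all of $\R^2$, where a bounded set would have zero capacity. A secondary technical point is the energy splitting above: one has to verify that $\int_{A^\eps}\abs{\nabla v^\eps}^2$ is $O(1)$ rather than of order $\logeps$, for otherwise the improved rate would be lost; this rests on \eqref{ineqVortexEnergy} and on the uniform local integrability of $\nabla q$ combined with the smallness of $\muleb{2}(A^\eps)$ in \eqref{ineqMuAeps}.
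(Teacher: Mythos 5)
Your diameter bound is essentially the paper's own argument: the same capacitary test function $v^\eps/\min_{\partial A^\eps_i}q^\eps$, the same use of $q\ge W\dist(\cdot,\partial\Omega)+d'$ to convert the numerator $M_i^2$ into $\logeps+cW\dist(A^\eps_i,\partial\Omega)+O(1)$, and the same application of Proposition~\ref{propositionBoundDiameter} (legitimate here because $\R^2\setminus\Omega$ is connected and contains arbitrarily large balls) followed by exponentiation. The cluster-localization step for $A^\eps_x$ --- components meeting $B(x,\tfrac12\dist(x,\partial\Omega)+1)$ have diameter $O(\eps)$ by the first part, hence lie at distance at least $\tfrac12\dist(x,\partial\Omega)-C$ from $\partial\Omega$ --- is also exactly what the paper does.

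The area bound, however, contains a step that fails as stated. You insert your lower bound on $2\pi/\capa(A^\eps_x,\Omega)$ into Proposition~\ref{propositionCapacityArea} taken relative to the largest inscribed ball $B\subset\Omega$ around the cluster, of radius about $\dist(x,\partial\Omega)$. But capacity is monotone in the domain: $B\subset\Omega$ implies $\capa(A^\eps_x,B)\ge\capa(A^\eps_x,\Omega)$, so a lower bound on $2\pi/\capa(A^\eps_x,\Omega)$ gives \emph{no} lower bound on $2\pi/\capa(A^\eps_x,B)$ --- the inequality runs the wrong way. Concretely, the truncation $\min(v^\eps/M,1)$ is an admissible competitor for $\capa(\cdot,\Omega)$ because $v^\eps\in\mathrm{D}^{1,2}_0(\Omega)$, but it does not vanish on $\partial B$, so it says nothing about the ball capacity, and Proposition~\ref{propositionCapacityArea} applied in $B$ never sees the large value of $q^\eps$ on $\partial A^\eps_x$. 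The repair is to \emph{enlarge} rather than shrink the domain: since $\Omega$ is contained in a half-plane $H$, one has $\capa(A^\eps_x,\Omega)\ge\capa(A^\eps_x,H)$, and the half-plane capacity--area inequality, Proposition~\ref{propositionCapacityLocalArea} (proved by mapping $H$ conformally onto the disc), yields $\frac{4\pi}{\capa(A^\eps_x,\Omega)}\le\log\bigl(8\pi\sup_{z\in A^\eps_x}\abs{z}^2/\muleb{2}(A^\eps_x)\bigr)$; placing the origin on $\partial H$ near the projection of $x$ gives $\sup_{z\in A^\eps_x}\abs{z}\le C\bigl(1+\dist(x,\partial\Omega)\bigr)$, and the resulting polynomial prefactor is absorbed into $e^{-\mu\dist(x,\partial\Omega)}$ by slightly decreasing $\mu$. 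This is exactly the paper's route, and it is precisely why the appendix contains Proposition~\ref{propositionCapacityLocalArea}: the finite-measure Proposition~\ref{propositionCapacityArea} is vacuous when $\muleb{2}(\Omega)=\infty$ and cannot be rescued by an inscribed ball.
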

\begin{proof}
Let  
\[
  w=\frac{v^\eps}{\min_{\partial A^\eps_i}q^\eps}. 
\]
Proceeding as in \eqref{ineqCapacity}, we obtain, using once more Proposition~\ref{propositionCapacityArea}
\[
   \frac{2\pi (\tfrac{\kappa}{2\pi}\log \tfrac{1}{\eps}+W\dist(A^\eps_i, \Omega)+d')^2}{\frac{\kappa^2}{2\pi} \log \frac{1}{\eps}}\le \log \biggl(C\bigl(1+\dist(A^\eps_i, \partial \Omega)\bigr)\Bigl(1+\frac{\dist(A^\eps, \partial \Omega)}{\diam A^\eps_i}\Bigr)\biggr). 
\]
Therefore,
\[
  \frac{1}{\eps}\le C\frac{1+\dist(A^\eps, \partial \Omega)}{e^{2W(\dist(A_\eps, \partial \Omega)-1)}}  \Bigl(1+\frac{\dist(A^\eps_i, \partial \Omega)}{\diam A^\eps_i}\Bigr),
\]
from which \eqref{ineqUnboundedVorticityDiameter} follows. 

Consider now $A^\eps_x$. By \eqref{ineqUnboundedVorticityDiameter}, $A^\eps_x \subset B(x, \frac{2}{3} \dist(x, \partial \Omega)+1)$ when $\eps$ is small enough, so that 
\[
  \frac{2\pi}{\capa_\Omega (A^\eps_x)} \ge \frac{\bigl(\tfrac{\kappa}{2\pi}\log \tfrac{1}{\eps} + \frac{W}{3}\dist(x, \partial \Omega)+d'\bigr)^2}{\frac{\kappa^2}{4\pi} \log \frac{1}{\eps}}. 
\]
By Proposition~\ref{propositionCapacityLocalArea}, we obtain 
\[
  \muleb{2}(A^\eps_x) \le C\bigl(\dist(x, \partial \Omega)+1\bigr)^2 \eps^2 e^{-\frac{4W}{3}\dist(x, \partial \Omega)} \le C \eps^2 e^{-\mu\dist(x, \partial \Omega)}. \qedhere
\]
\end{proof}
\begin{remark}
A slightly more careful proof shows that one can take any $\mu < W/2$, provided $C$ is large enough. 
\end{remark}

The next Lemma, counterpart of Lemma~\ref{lemmaVortexSplit}, insures that essential vortices are not too far from the boundary. 

\begin{lemma}
\label{lemmaUnboundedVortexSplit}
There exists constants $\gamma, C, c>0$, such that, when $\eps$ is small enough:
If \eqref{eqSplitVortices} holds, we have \eqref{ineqLowerBoundArea}, \eqref{ineqLowerBoundDiam}, \eqref{ineqLowerBoundDistance}, \eqref{ineqLowerBoundVortex} and
\[
\dist(A^\eps_i, \partial \Omega)\le C,
\]
while if \eqref{eqSplitVortices} does not hold, then \eqref{ineqfsVanishing} holds. 
\end{lemma}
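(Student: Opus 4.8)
The plan is to follow the proof of Lemma~\ref{lemmaVortexSplit} almost verbatim, substituting the unbounded-domain estimates of Lemma~\ref{lemmaUnboundedAreaDiameter} for those of Lemma~\ref{lemmaAreaDiameter} and Proposition~\ref{propositionUnboundedEstimatesueps} for Proposition~\ref{propositionEstimatesueps}. The identity \eqref{eqNehariVortex} obtained by testing $\Peps$ against $(v^\eps-q^\eps)_+$ is purely local and so survives unchanged, as does the Sobolev--Cauchy--Schwarz bound \eqref{ineqGradientVortices}. The only genuinely new feature to establish is the upper bound $\dist(A^\eps_i, \partial\Omega) \le C$ on the location of the essential vortices, which has no analogue in the bounded case; everything else is a transcription.

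First I would dispose of the vanishing case. When \eqref{eqSplitVortices} fails, the same choice of $\gamma$ as in Lemma~\ref{lemmaVortexSplit}---legitimate because Lemma~\ref{lemmaUnboundedAreaDiameter} still gives $\muleb{2}(A^\eps_i)=O(\eps^2)$---turns \eqref{ineqGradientVortices} into \eqref{ineqVanVorticesuq}, and feeding this into the Sobolev inequality together with the area bound of Lemma~\ref{lemmaUnboundedAreaDiameter} reproduces \eqref{ineqfsVanishing}. The one point requiring care, here and throughout, is the smallness of $\Norm{\nabla q}_{\mathrm{L}^2(A^\eps_i)}$: since we now only assume the uniform local bound $\sup_{x}\int_{B(x,1)}\abs{\nabla q}^r<\infty$, I would use that $\diam A^\eps_i=O(\eps)$ (again from Lemma~\ref{lemmaUnboundedAreaDiameter}) to place $A^\eps_i$ in a unit ball $B$ for small $\eps$, and then estimate $\Norm{\nabla q}_{\mathrm{L}^2(A^\eps_i)}\le \Norm{\nabla q}_{\mathrm{L}^r(B)}\,\muleb{2}(A^\eps_i)^{\frac{1}{2}-\frac{1}{r}}\to 0$ by H\"older. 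This is precisely where the hypothesis on $\nabla q$ enters.

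In the essential case \eqref{eqSplitVortices} holds, and combining it with \eqref{ineqGradientVortices} and \eqref{ineqVortexEnergy} gives $\gamma^2\le C\muleb{2}(A^\eps_i)/\eps^2+C\Norm{\nabla q}_{\mathrm{L}^2(A^\eps_i)}$; the second term being $o(1)$ by the above, one obtains \eqref{ineqLowerBoundArea}, and then \eqref{ineqLowerBoundDiam} by the isodiametric inequality. The crux is then to extract both distance bounds at once. I would insert the lower bound $\diam A^\eps_i\ge c\eps$ into the refined diameter estimate \eqref{ineqUnboundedVorticityDiameter}, whose right-hand side is $C\eps\,\phi(t)$ with $t=\dist(A^\eps_i,\partial\Omega)$ and
\[
  \phi(t)=t\,e^{-2Wt}.
\]
Cancelling $\eps$ leaves $\phi(t)\ge c/C>0$. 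Since $\phi$ is continuous, vanishes at $t=0$ and at $t=+\infty$, and attains a single maximum at $t=1/(2W)$, the superlevel set $\{\,\phi\ge c/C\,\}$ is a compact subinterval of $(0,\infty)$; this simultaneously yields \eqref{ineqLowerBoundDistance} and the new upper bound $\dist(A^\eps_i,\partial\Omega)\le C$. Finally \eqref{ineqLowerBoundVortex} follows exactly as in Lemma~\ref{lemmaVortexSplit}, by testing $\Peps$ against $(v^\eps-q^\eps)_+\charfun{A^\eps_i}$ and applying the Gagliardo--Nirenberg inequality together with \eqref{ineqVortexEnergy}.

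The step I expect to require the most thought is this two-sided confinement of $\dist(A^\eps_i,\partial\Omega)$: recognizing that the single inequality $\phi(t)\ge c/C$, with $\phi$ unimodal and vanishing at both ends of $(0,\infty)$, delivers the lower bound \eqref{ineqLowerBoundDistance} and the new upper bound at once. The decay $\phi(t)\to 0$ as $t\to\infty$---which is what pins the essential vortices near $\partial\Omega$---is inherited from the exponential factor in \eqref{ineqUnboundedVorticityDiameter}, itself a consequence of the linear lower bound $q(x)\ge W\dist(x,\partial\Omega)+d'$. A secondary but pervasive technical point is keeping $\gamma,c,C$ uniform in $i$ and in $\eps$; this is automatic here because both the bounds of Lemma~\ref{lemmaUnboundedAreaDiameter} and the local norm of $\nabla q$ are uniform in position.
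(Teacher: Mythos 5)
Your proposal is correct and follows essentially the same route as the paper, whose own proof consists of the remark that one repeats Lemma~\ref{lemmaVortexSplit} and that the distance bounds follow by combining the lower bound \eqref{ineqLowerBoundArea} (hence $\diam A^\eps_i \ge c\eps$) with the refined estimate \eqref{ineqUnboundedVorticityDiameter}. Your unimodality argument for $\phi(t)=t\,e^{-2Wt}$, yielding the lower bound \eqref{ineqLowerBoundDistance} and the new upper bound $\dist(A^\eps_i,\partial\Omega)\le C$ simultaneously, is precisely the step the paper leaves implicit, and your H\"older treatment of $\Norm{\nabla q}_{\mathrm{L}^2(A^\eps_i)}$ under the uniform local $\mathrm{L}^r$ hypothesis is the right way to keep the constants uniform.
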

\begin{proof}
The proof follows essentially the one of Lemma~\ref{lemmaVortexSplit}. The inequality  \eqref{ineqLowerBoundDiam} follows immediately from \eqref{ineqLowerBoundArea} and \eqref{ineqUnboundedVorticityDiameter}. 
\end{proof}

As in the case of a bounded domain, the vorticity set can be split into a vanishing vorticity set $V^\eps$ and an essential one $E^\eps$, defined by \eqref{eqDefVeps} and \eqref{eqDefEeps}. 
Since the gradient of $q$ is only locally integrable, Lemma~\ref{lemmaVanishingVorticity} only gives local information. 

\begin{lemma}
\label{lemmaUnboundedVanishing}
For every $s \ge 1$, we have
\[
   \sup_{x \in \Omega}	\Norm{\omega^\eps_v}_{\mathrm{L}^s(B(x, 1))} = o(\eps^{p(1-\frac{2}{r})-2(1-\frac{1}{s})}). 
\]
In particular, if $\frac{1}{s} \ge 1-p(\frac{1}{2}-\frac{1}{r})$, then $\omega^\eps_v \to 0$ in $\mathrm{L}^s_{\mathrm{loc}}(\Omega)$. 
\end{lemma}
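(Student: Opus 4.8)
The plan is to localize the argument of Lemma~\ref{lemmaVanishingVorticity} to balls of unit radius, the only new difficulties being that $\nabla q$ is merely uniformly locally in $\mathrm{L}^r$ and that $\Omega$ is unbounded. Fix $x \in \Omega$ and let $I^\eps_{v, x} = \{ i \in I^\eps \st A^\eps_i \subset V^\eps \text{ and } A^\eps_i \cap B(x, 1) \ne \emptyset \}$ index the vanishing vortices that meet $B(x, 1)$. Since $B(x, 1) \subset B(x, \tfrac{1}{2}\dist(x, \partial \Omega) + 1)$, every such $A^\eps_i$ already enters the definition of $A^\eps_x$ from Lemma~\ref{lemmaUnboundedAreaDiameter}, so that $\bigcup_{i \in I^\eps_{v, x}} A^\eps_i \subset A^\eps_x$ and in particular
\[
  \sum_{i \in I^\eps_{v, x}} \muleb{2}(A^\eps_i) \le \muleb{2}(A^\eps_x) \le C\eps^2 e^{-\mu \dist(x, \partial \Omega)}.
\]

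Next I would apply the vanishing--vortex estimate \eqref{ineqfsVanishing} of Lemma~\ref{lemmaUnboundedVortexSplit} to each $A^\eps_i$ with $i \in I^\eps_{v, x}$ (for which \eqref{eqSplitVortices} fails, by definition of $V^\eps$) and sum, exactly as in the bounded case. Writing $\alpha = \tfrac{sp}{2}(1 - \tfrac{2}{r}) \ge 0$ and using that the $A^\eps_i$ are disjoint subsets of $A^\eps_x$, one obtains
\[
\begin{split}
  \int_{B(x, 1)} \abs{\omega^\eps_v}^s
  &\le \frac{1}{\eps^{2s}} \sum_{i \in I^\eps_{v, x}} \int_{A^\eps_i} f(v^\eps - q^\eps)^s \\
  &\le \frac{C}{\eps^{2s}} \Norm{\nabla q}_{\mathrm{L}^r(A^\eps_x)}^{sp} \sum_{i \in I^\eps_{v, x}} \muleb{2}(A^\eps_i)^{1 + \alpha} \\
  &\le \frac{C}{\eps^{2s}} \Norm{\nabla q}_{\mathrm{L}^r(A^\eps_x)}^{sp} \muleb{2}(A^\eps_x)^{1 + \alpha},
\end{split}
\]
the last line using $\sum_i a_i^{1 + \alpha} \le (\sum_i a_i)^{1 + \alpha}$. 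Inserting the area bound $\muleb{2}(A^\eps_x) \le C\eps^2$ and taking the $s$-th root yields
\[
  \Norm{\omega^\eps_v}_{\mathrm{L}^s(B(x, 1))} \le C \Norm{\nabla q}_{\mathrm{L}^r(A^\eps_x)}^{p}\, \eps^{p(1 - \frac{2}{r}) - 2(1 - \frac{1}{s})}
\]
uniformly in $x$, since $\Norm{\nabla q}_{\mathrm{L}^r(A^\eps_x)} \le \Norm{\nabla q}_{\mathrm{L}^r(B(x, 2))}$ is uniformly bounded by hypothesis.

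The power of $\eps$ is already the one claimed, so the passage from $O$ to $o$ must come entirely from the prefactor $\Norm{\nabla q}_{\mathrm{L}^r(A^\eps_x)}$, and I expect this to be the main obstacle. On any fixed compact set the prefactor tends to $0$ by absolute continuity of $\int \abs{\nabla q}^r$, because $\muleb{2}(A^\eps_x) = O(\eps^2)$; the difficulty is uniformity over the non-compact $\Omega$, since only uniform \emph{local} integrability of $\nabla q$ is assumed. The exponential factor $e^{-\mu \dist(x, \partial \Omega)}$ (recall it can be kept throughout) disposes of the points far from $\partial \Omega$, where the prefactor is merely bounded; this reduces matters to a bounded strip along $\partial \Omega$, where one invokes the uniform local $\mathrm{L}^r$ control together with the vanishing of $\muleb{2}(A^\eps_x)$.

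For the ``in particular'' statement it suffices to argue locally, and this is clean: restricting $x$ to a fixed compact $K \subset \Omega$, one has $\bigcup_{x \in K} A^\eps_x \subset K'$ for a fixed compact $K'$ on which $\nabla q \in \mathrm{L}^r(K')$, while $\muleb{2}(A^\eps_x) \to 0$; hence $\Norm{\nabla q}_{\mathrm{L}^r(A^\eps_x)} \to 0$ uniformly for $x \in K$ by absolute continuity on the fixed set $K'$. When $\tfrac{1}{s} \ge 1 - p(\tfrac{1}{2} - \tfrac{1}{r})$ the exponent of $\eps$ is nonnegative, so the displayed bound gives $\omega^\eps_v \to 0$ in $\mathrm{L}^s_{\mathrm{loc}}(\Omega)$.
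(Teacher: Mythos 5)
Your computation is, in substance, exactly the localization of Lemma~\ref{lemmaVanishingVorticity} that the paper intends (the paper states Lemma~\ref{lemmaUnboundedVanishing} without proof, as the local version of the bounded-domain argument): summing \eqref{ineqfsVanishing} over the vanishing components meeting $B(x,1)$, using $\sum_i a_i^{1+\alpha} \le \bigl(\sum_i a_i\bigr)^{1+\alpha}$, and invoking the local area bound of Lemma~\ref{lemmaUnboundedAreaDiameter} gives the stated power of $\eps$ correctly. One slip en route: $A^\eps_x$ is \emph{not} contained in $B(x,2)$; by \eqref{ineqUnboundedVorticityDiameter} it is only contained in $B(x,\tfrac{2}{3}\dist(x,\partial\Omega)+1)$, so for $x$ far from the boundary the prefactor $\Norm{\nabla q}_{\mathrm{L}^r(A^\eps_x)}$ must instead be estimated by covering this ball with $O\bigl((1+\dist(x,\partial\Omega))^2\bigr)$ unit balls; the resulting polynomial growth is then absorbed by the factor $e^{-\mu(1+\alpha)\dist(x,\partial\Omega)}$ that you rightly kept, so this is repairable and yields the uniform $O(\eps^{p(1-\frac{2}{r})-2(1-\frac{1}{s})})$ bound. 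Your proof of the ``in particular'' local statement is complete and correct.

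The genuine gap is the final passage from the uniform $O$ to the uniform $o$, and your closing sketch does not close it. First, the region you reduce to is not ``a bounded strip'': $\{x \in \Omega \st \dist(x,\partial\Omega) \le D\}$ has bounded width but is unbounded in the $x_2$ direction. Second, on this unbounded strip, ``uniform local $\mathrm{L}^r$ control together with the vanishing of $\muleb{2}(A^\eps_x)$'' does not imply $\sup_x \Norm{\nabla q}_{\mathrm{L}^r(A^\eps_x)} \to 0$: absolute continuity of the integral is a property of one integrable function on one fixed set, and it is not uniform across infinitely many disjoint unit balls. Concretely, if $\abs{\nabla q}^r$ has, near points $(b,n)$ with $n \in \N$, unit mass concentrated on a set of measure $1/n$, then $\sup_{y} \int_{B(y,1)} \abs{\nabla q}^r < \infty$ holds, yet for every $\eta > 0$ there are unit balls containing sets of measure less than $\eta$ carrying mass at least $\tfrac{1}{2}$; if the vanishing vortices happen to sit on such sets, the prefactor does not tend to zero uniformly in $x$. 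What the uniform little-$o$ actually requires is uniform ($r$-equi-)integrability of $\abs{\nabla q}^r$ over the strip, which is strictly stronger than the stated hypothesis $\sup_x \int_{B(x,1)} \abs{\nabla q}^r < \infty$ and is exactly the ingredient whose bounded-domain analogue ($\Norm{\nabla q}_{\mathrm{L}^r(V^\eps)} \to 0$ by absolute continuity on the bounded $\Omega$) powered the $o$ in Lemma~\ref{lemmaVanishingVorticity}. Note also that for the later applications the uniform big-$O$ you did establish suffices whenever $\frac{1}{s} > 1 - p(\frac{1}{2}-\frac{1}{r})$ strictly, the little-$o$ being needed only in the borderline case; as written, though, your argument proves the uniform $O(\eps^{p(1-\frac{2}{r})-2(1-\frac{1}{s})})$ and the local $o$, but not the uniform $o$ claimed in the display.
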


\subsubsection{Step 3: Small scale asymptotics}
For the small scale asymptotics, one first note that Lemma~\ref{lemmaSmallScaleLocalEstimates} still holds. Indeed, the only step that relied on the boundedness of $\Omega$ was
\eqref{ineqGomegav}. For every $\rho>0$, regularity estimates still yields for $x \in B(x^\eps_i, \frac{1}{2})$
\[
  \Bigl\lvert\int_{B(x^\eps_i, \rho)} G(x, y)\omega^\eps_v(y)\, dy \Bigr\rvert\le C \Norm{\omega^\eps_v}_{\mathrm{L}^s(B(x^\eps_i, 2\rho))},
\]
and the conclusion follows from Lemma~\ref{lemmaUnboundedVanishing}. 
On the other hand, since $\Omega$ is contained in a half-plane, by comparing its Green function by the Green function of a half-plane, we have
\[
  G(x, y) \le \frac{1}{2\pi} \log \Bigl(1+\frac{C\bigl(1+ \dist(x, \partial \Omega)\bigr)}{\abs{x-y}}\Bigr). 
\]
Since $\dist(x^\eps_i, \partial \Omega)$ is bounded, 
we have, for every $x \in B(x^\eps_i, 1)$,   
\[
  \int_{\Omega \setminus B(x^\eps_i, \rho)} G(x, y)\omega^\eps_v(y)\, dy \le \frac{\kappa^\eps}{2\pi} \log \Bigl(1+\frac{C}{\rho}\Bigr) \to 0,
\]
as $\rho \to \infty$, uniformly in $\eps > 0$. 

Lemma~\ref{lemmaSmallScaleLocalEstimates} being established, the proof of Lemma~\ref{lemmaLocalAsymptotics} also adapts straightforwardly.

\subsubsection{Step 4: Global asymptotics}
For Proposition~\ref{propositionAsymptoticsW21}, one obtains a little more than the $\mathrm{W}^{2, 1}_{\mathrm{loc}}(\Omega)$ convergence. Setting $\Omega_\delta=\{ x \in \Omega \st \dist(x, \partial \Omega)> \delta\}$, one has

\begin{proposition}
We have
\[
  v^\eps=\Tilde{v}^\eps+o(1)
\]
in $\mathrm{W}^{2, 1}_{\mathrm{loc}}(\Omega_\delta)$ for every $\delta > 0$, in $\mathrm{W}^{1, 2}_0(\Omega)$, and in $\mathrm{L}^\infty(\Omega)$. 
\end{proposition}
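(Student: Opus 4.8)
The plan is to follow the proof of Proposition~\ref{propositionAsymptoticsW21} line by line, substituting for each ingredient valid on bounded domains the unbounded counterpart already obtained in this section, and inserting the Green-function and vorticity decay estimates that compensate for the loss of global compactness. As before, I fix $r > \Tilde{\rho}$ so that $E^\eps_1 \subset B(x^\eps_1, \eps r)$ for small $\eps$; since Lemma~\ref{lemmaLocalAsymptotics} adapts to the present setting and the $\dot{\mathrm{W}}^{2,1}$ semi-norm is scale invariant, one still has $\int_{B(x^\eps_1, 2\eps r)} \abs{D^2 v^\eps - D^2 \Tilde{v}^\eps} \to 0$, and it remains to estimate the difference away from the vortex through the same splitting $v^\eps - \Tilde{v}^\eps = w_v^\eps + w_r^\eps + w_s^\eps$, where $w_v^\eps$ is the Newtonian potential of $\omega^\eps_v$ and $w_r^\eps$, $w_s^\eps$ are the regular and singular parts of the potential of $\omega^\eps_1 - \Tilde{\omega}^\eps_1$.

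The singular part $w_s^\eps$ is controlled exactly as in the bounded case: from $\int_\Omega \omega^\eps_1 = \int_\Omega \Tilde{\omega}^\eps_1 = \kappa^\eps_1$ and the common support in $B(x^\eps_1, \eps r)$ the mean-value cancellation yields $\abs{D^2 w_s^\eps(x)} \le C \eps \abs{x - x^\eps_1}^{-3} \Norm{\omega^\eps_1 - \Tilde{\omega}^\eps_1}_{\mathrm{L}^1}$, and integrating over $\Omega \setminus B(x^\eps_1, 2\eps r)$ gives a contribution $O(\Norm{\omega^\eps_1 - \Tilde{\omega}^\eps_1}_{\mathrm{L}^1}) = o(1)$; this computation never uses that $\Omega$ is bounded. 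For the regular part $w_r^\eps$ the only difference is that $H$ is no longer a bounded function on $\Omega$, so I again exploit $\int_\Omega (\omega^\eps_1 - \Tilde{\omega}^\eps_1) = 0$ and write $w_r^\eps(x) = \int_\Omega \bigl(H(x, y) - H(x, x^\eps_1)\bigr)(\omega^\eps_1 - \Tilde{\omega}^\eps_1)(y)\, dy$. Since $x^\eps_1$ stays at distance bounded from above and below from $\partial\Omega$ by Lemma~\ref{lemmaUnboundedVortexSplit} and \eqref{ineqLowerBoundDistance}, and the half-plane comparison for $G$ forces $\nabla_y H(x, \cdot)$ to remain bounded near $x^\eps_1$ uniformly for $x \in \Omega_\delta$, the integrand is $O(\eps)$ and $w_r^\eps \to 0$ uniformly on $\Omega_\delta$.

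The genuinely new difficulty is the vanishing part $w_v^\eps(x) = \int_\Omega G(x, y)\omega^\eps_v(y)\, dy$, because Lemma~\ref{lemmaUnboundedVanishing} only provides $\omega^\eps_v \to 0$ in $\mathrm{L}^s_{\mathrm{loc}}$ together with the uniform local bound $\sup_x \Norm{\omega^\eps_v}_{\mathrm{L}^s(B(x, 1))} = o(1)$ for some $s > 1$, instead of global $\mathrm{L}^s$ convergence. To bound $w_v^\eps$ uniformly for $x \in \Omega_\delta$, I split the integral at $\abs{y - x} = \rho$: on $\abs{y - x} \le \rho$ the interior regularity estimate and the uniform local smallness of $\omega^\eps_v$ give a contribution that tends to $0$ as $\eps \to 0$ for fixed $\rho$, while on $\abs{y - x} > \rho$ the half-plane bound $G(x, y) \le \tfrac{1}{2\pi}\log\bigl(1 + C(1 + \dist(x, \partial\Omega))\abs{x - y}^{-1}\bigr)$ together with the exponential area decay $\muleb{2}(A^\eps_x) \le C\eps^2 e^{-\mu \dist(x, \partial\Omega)}$ of Lemma~\ref{lemmaUnboundedAreaDiameter} makes the far contribution small uniformly in $x$ and in $\eps$; letting $\eps \to 0$ and then $\rho \to \infty$ gives $w_v^\eps \to 0$, and the same splitting controls $D^2 w_v^\eps$ through the interior elliptic estimates. \textbf{This uniform estimate of $w_v^\eps$ is the main obstacle}, since it is the only step in which unboundedness forces a simultaneous quantitative use of the decay of $G$ and of the exponential decay of the vorticity set.

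Combining the three parts yields the $\mathrm{W}^{2,1}$ convergence on compact subsets of $\Omega_\delta$, and hence the $\mathrm{W}^{1,2}_{\mathrm{loc}}$ and $\mathrm{L}^\infty_{\mathrm{loc}}$ convergences there. To reach the global statements I first pass to the full $\mathrm{L}^\infty(\Omega)$ convergence: near $\partial\Omega$, on a neighbourhood $U$ with $\supp \omega^\eps \cap U = \emptyset$ the function $v^\eps - \Tilde{v}^\eps$ is harmonic with zero boundary data and already small on $\partial U \cap \Omega$, so classical regularity propagates the convergence up to $\partial\Omega$ as in the proof of Proposition~\ref{propositionAsymptoticsW21}, while near infinity the decay of $w_v^\eps$, $w_r^\eps$ and $w_s^\eps$ established above gives uniform smallness. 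Finally, testing $-\Delta(v^\eps - \Tilde{v}^\eps) = \omega^\eps_v + \omega^\eps_1 - \Tilde{\omega}^\eps_1$ against $v^\eps - \Tilde{v}^\eps \in \mathrm{D}^{1, 2}_0(\Omega)$ and using that $\Norm{v^\eps - \Tilde{v}^\eps}_{\mathrm{L}^\infty} \to 0$ while the right-hand side has bounded $\mathrm{L}^1$ mass yields $\int_\Omega \abs{\nabla(v^\eps - \Tilde{v}^\eps)}^2 \to 0$, which is the $\mathrm{W}^{1, 2}_0(\Omega)$ convergence.
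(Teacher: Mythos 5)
Your proof is correct and delivers all three convergences, but at the one point where unboundedness genuinely bites --- the regular part of the Green function --- it takes a different route from the paper. The paper refines your three-term splitting into four terms: it subtracts from $H(x,y)$ the explicit regular part $\frac{1}{4\pi}\log(\abs{x-y}^2+4x_1y_1)$ of the half-plane Green function, so that your single $w^\eps_r$ becomes a term with the bounded, well-behaved difference kernel $H(x,y)-\frac{1}{4\pi}\log(\abs{x-y}^2+4x_1y_1)$ (treated as in Proposition~\ref{propositionAsymptoticsW21}) plus a term $w^\eps_h$ with the reflected-logarithm kernel $\frac{1}{2\pi}\log\abs{x-\Bar{y}}$, which is then handled exactly like $w^\eps_s$: its second derivatives decay like $\abs{x-\Bar{y}}^{-2}$, and the cancellation $\int_\Omega(\omega^\eps_1-\Tilde{\omega}^\eps_1)=0$ upgrades this to an integrable $\abs{x-\Bar{y}}^{-3}$ decay, using that $x^\eps_1$ stays at distance between $c$ and $C$ from $\partial\Omega$. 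You instead keep $H$ whole and exploit the same cancellation by subtracting the constant $H(x,x^\eps_1)$, which requires your claim that $\nabla_y H(x,\cdot)$ is bounded near $x^\eps_1$ uniformly in $x\in\Omega_\delta$. That claim is true but is not an immediate consequence of the half-plane comparison as you state it: $H(x,\cdot)$ itself grows like $\frac{1}{2\pi}\log\abs{x-x^\eps_1}$, so you must first bound the \emph{oscillation} of $H(x,\cdot)$ on a fixed ball $B(x^\eps_1,c)$ --- combining $0\le G(x,y)\le\frac{1}{2\pi}\log\bigl(1+C(1+\dist(x,\partial\Omega))/\abs{x-y}\bigr)$ with the uniform bound on the oscillation of $\log\abs{x-\cdot}$ over that ball --- and then apply the interior gradient estimate for the harmonic function $y\mapsto H(x,y)$; the paper's explicit half-plane splitting yields this control for free, which is what that decomposition buys. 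Conversely, your argument buys two things the paper leaves implicit: the uniform-in-$x$ treatment of $w^\eps_v$ over all of $\Omega_\delta$, splitting at radius $\rho$ and pairing the local smallness of Lemma~\ref{lemmaUnboundedVanishing} with the half-plane decay of $G$ \emph{and} the exponential area decay of Lemma~\ref{lemmaUnboundedAreaDiameter} (correctly identified as the ingredient that rescues the region where $\dist(y,\partial\Omega)$ is large and the logarithmic kernel bound degenerates); and a clean derivation of the $\mathrm{W}^{1,2}_0(\Omega)$ convergence by testing $-\Delta(v^\eps-\Tilde{v}^\eps)=\omega^\eps_v+\omega^\eps_1-\Tilde{\omega}^\eps_1$ against $v^\eps-\Tilde{v}^\eps$ and multiplying the uniform $\mathrm{L}^\infty$ smallness by the bounded $\mathrm{L}^1$ mass of the right-hand side, where the paper merely asserts that the global and boundary arguments of Proposition~\ref{propositionAsymptoticsW21} carry over.
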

\begin{proof}
One defines $\Tilde{\omega}^\eps_1$ and $w^\eps_v$, and $w^\eps_s$ as in the proof of Proposition~\ref{propositionAsymptoticsW21}.
One defines
\begin{align*}
 w^\eps_r(x)&=\int_{\Omega} \Bigl(H(x,y)-\frac{1}{4\pi}\log (\abs{x-y}^2+4x_1y_1)\Bigr)  \bigl(\omega^\eps_1(y)-\Tilde{\omega}^\eps_1(y)\bigr)\, dy,\\
 w^\eps_h(x)&=\int_{\Omega} \frac{1}{4\pi}\log (\abs{x-y}^2+4x_1y_1)  \bigl(\omega^\eps_1(y)-\Tilde{\omega}^\eps_1(y)\bigr)\, dy.
\end{align*}
Recalling that $0 < c \le \dist(x^\eps_1, \partial \Omega) \le C$, one treats the terms $w^\eps_v$, $w^\eps_s$ and $w^\eps_v$ as in the proof of Proposition~\ref{propositionAsymptoticsW21}; the term $w^\eps_s$ is treated similarly to the term $w^\eps_s$. The proof of the convergences up to the boundary follows then as in the proof of  Proposition~\ref{propositionAsymptoticsW21}.
\end{proof}

For Corollary~\ref{corollaryAsymptotic}, we have, instead of \eqref{ineqAepsBall}, 
\[
  q(y)+ \frac{\kappa}{2\pi} \log \frac{1}{\eps} < v^\eps(y^\eps) \le \frac{\kappa_1^\eps}{2\pi} \log \Bigl(1+\frac{C\dist(x^\eps_1, \partial \Omega)}{\abs{y-x^\eps_1}}\Bigr)+O(1). 
\]
The remaining part of the proof carries over identically since $\dist(x^\eps_1, \partial \Omega)$ remains bounded as $\eps \to 0$. Corollary~\ref{corEnergy} also follows without any modification. 

\subsection{Existence of solutions}

In this section we present sufficient conditions for the existence of a minimizer for $c^\eps$. 

\medskip

Assume that $\Omega \subset ]a_0, +\infty[ \times \R$ is a Lipschitz domain, and that 
\begin{equation}
\label{condPerturbation}
  \lim_{t \to +\infty} \inf \{ x_1 \in \R \st \exists x_2 \in \R, (x_1, x_2) \in \Omega \text{ and } \abs{x_2} \ge t\}=0. 
\end{equation}
Assume also that there exist $\Hat{W}, \Hat{d}>0$ such that  
and 
\[
  \lim_{t \to +\infty} \inf_{\abs{x_2} > t} \frac{q(x)-\Hat{W}x_1-\Hat{d}}{1+\abs{x_1}} \ge 0. 
\]
We define
\[
\Hat{\mathcal{E}}^\eps (u)= \frac{1}{2} \int_{\R^2_+} \abs{\nabla u}^2-\frac{1}{\eps^2}\int_{\R^2_+} F(u-\Hat{W}x_1-\Hat{d}) 
\]
and the minimax level
\[
\Hat{c}^\eps=\inf_{u \in \mathrm{D}^{1, 2}_0(\R^2_+)}\max_{t>0} \Hat{\mathcal{E}}^\eps(tu). 
\]

We first recall and investigate about the case where $q$ is affine and $\Omega$ is the half-plane. In this case, by definition, $c^\eps=\Hat{c}^\eps$. 

\begin{theorem}[Yang \cite{Yang1991}]
\label{thmYang}
If $\Omega=\R^2_+$ and $q(x)=Wx_1+d$, then problem \eqref{problemPeps} admits a solution $u \in \mathrm{D}^{1, 2}_0(\Omega)$. 
\end{theorem}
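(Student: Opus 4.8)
The plan is to realize the solution at the least-energy level $\Hat{c}^\eps$, by minimizing $\Hat{\mathcal{E}}^\eps$ over the associated Nehari manifold $\mathcal{N}^\eps$; for the half-plane with $q^\eps$ affine this level coincides with the minimax level in the definition of $\Hat{c}^\eps$, and the whole difficulty is the loss of compactness caused by the invariance of $\R^2_+$ and of $q^\eps$ under translations in the $x_2$ variable. First I would take a minimizing sequence $(u_n) \subset \mathcal{N}^\eps$. On $\mathcal{N}^\eps$ one has the identity $\Hat{\mathcal{E}}^\eps(u_n) = \bigl(\tfrac{1}{2} - \tfrac{1}{p+1}\bigr)\int_{\R^2_+} \abs{\nabla u_n}^2$, which shows at once that $(u_n)$ is bounded in $\mathrm{D}^{1,2}_0(\R^2_+)$ and, together with the functional inequality of Lemma~\ref{lemmaUnboundedInequalityq} (which bounds the nonlinear term by a superquadratic power of $\Norm{\nabla u_n}_2$), that $\Hat{c}^\eps > 0$.

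Next I would recover compactness by a concentration argument. If the local masses vanish, i.e.
\[
  \sup_{y \in \R} \int_{\R_+ \times\, ]y-1, y+1[} (u_n - q^\eps)_+^p \to 0,
\]
then Lemma~\ref{campeones} (whose proof applies verbatim with $Wx_1$ replaced by $q^\eps$) forces $(u_n - q^\eps)_+ \to 0$ in every $\mathrm{L}^s(\R^2_+)$, so that the Nehari constraint gives $\int_{\R^2_+}\abs{\nabla u_n}^2 \to 0$, contradicting $\Hat{c}^\eps > 0$. Hence there are $\delta > 0$ and $y_n \in \R$ with $\int_{\R_+ \times ]y_n-1, y_n+1[}(u_n - q^\eps)_+^p \ge \delta$; replacing $u_n$ by its vertical translate $u_n(\cdot, \cdot + y_n)$, which by invariance is again minimizing, I may assume the mass stays in the central strip. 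Along a subsequence $u_n \weakto u$ in $\mathrm{D}^{1,2}_0(\R^2_+)$, and Lemma~\ref{lemmaUnboundedCompactness} yields $(u_n - q^\eps)_+ \to (u - q^\eps)_+$ in $\mathrm{L}^{p+1}(\R_+ \times\, ]-L, L[)$ for every $L$; the surviving mass on the central strip forces $u \ne 0$.

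It remains to upgrade this into strong convergence of the full nonlinear terms $\int_{\R^2_+}(u_n - q^\eps)_+^{p+1}$ and $\int_{\R^2_+} f(u_n - q^\eps)u_n$. I would split each integral into the strip $\abs{x_2} < L$, treated by Lemma~\ref{lemmaUnboundedCompactness}; the region $x_1 > \lambda$, controlled uniformly in $n$ by the decay estimate appearing in the proof of Lemma~\ref{lemmaUnboundedCompactness} together with Proposition~\ref{ineqUnboundedIneqW}; and the tails $\abs{x_2} > L$. The last piece is exactly where dichotomy could occur, and I would rule it out by the standard subadditivity argument: a genuine splitting of the mass would produce in the limit two nontrivial critical points, each carrying energy at least $\Hat{c}^\eps$, whose energies sum to at most $\Hat{c}^\eps$, forcing $\Hat{c}^\eps \ge 2\Hat{c}^\eps$ and contradicting $\Hat{c}^\eps > 0$. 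With strong convergence secured, passing to the limit in the Nehari identity gives $u \in \mathcal{N}^\eps$, while weak lower semicontinuity of the Dirichlet integral gives $\Hat{\mathcal{E}}^\eps(u) \le \liminf \Hat{\mathcal{E}}^\eps(u_n) = \Hat{c}^\eps$; since $u \in \mathcal{N}^\eps$ forces $\Hat{\mathcal{E}}^\eps(u) \ge \Hat{c}^\eps$, the infimum is attained and $u$ solves \eqref{problemPeps}.

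The main obstacle is precisely this failure of compactness under vertical translations: excluding vanishing through Lemma~\ref{campeones} is straightforward, but excluding dichotomy in the $x_2$-tails and turning weak convergence into strong convergence of the nonlinearity is the delicate point, and it is what the half-plane functional inequalities and the compactness Lemma~\ref{lemmaUnboundedCompactness} of this section are designed to make possible.
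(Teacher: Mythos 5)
Your opening moves are sound and match the machinery this section actually deploys (note that the paper does not reprove Theorem~\ref{thmYang} itself --- it is quoted from Yang --- but the proof technique is visible in the sketch of Proposition~\ref{prop:al} and in the proof of Proposition~\ref{propositionPS}): boundedness of the minimizing sequence, $\Hat{c}^\eps>0$ via Proposition~\ref{ineqUnboundedIneqW}, exclusion of vanishing by Lemma~\ref{campeones}, and translation in the $x_2$ direction to pin the mass. The gap is in your endgame. First, the dichotomy exclusion as you state it is not a proof: a splitting sequence does not produce ``two nontrivial critical points in the limit'' --- a single weak limit captures at most one bump while the other escapes to infinity, so the inequality $\Hat{c}^\eps\ge 2\Hat{c}^\eps$ does not fall out of limits alone. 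To run that argument you would have to truncate the two pieces, project each back onto $\mathcal{N}^\eps$ (delicate here, since $F(\cdot-q^\eps)$ is not homogeneous and the projection constants must be controlled), and prove an energy-splitting estimate with $o(1)$ errors; none of this is sketched. Second, ``passing to the limit in the Nehari identity'' only yields $\dualprod{d\mathcal{E}^\eps(u)}{u}\le 0$, because weak lower semicontinuity controls the Dirichlet term on the wrong side of the identity; you would need an extra rescaling step $t\mapsto tu$, $t\le 1$, to land on $\mathcal{N}^\eps$.

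The route of Yang, reproduced in Propositions~\ref{prop:al} and~\ref{propositionPS}, shows the entire dichotomy/strong-convergence step is dispensable. One first converts the minimizing sequence into a Palais--Smale sequence (Ekeland), then translates in $x_2$ to exclude vanishing exactly as you did. The weak limit $u\ne 0$ is then \emph{automatically} a critical point --- test $d\mathcal{E}^\eps(u_n)\to 0$ against fixed $\varphi$, using Rellich and Lemma~\ref{lemmaUnboundedCompactness} for local strong convergence of the nonlinearity --- hence $u\in\mathcal{N}^\eps$ and $\mathcal{E}^\eps(u)\ge \Hat{c}^\eps$. For the reverse inequality one writes
\[
  \mathcal{E}^\eps(u)=\mathcal{E}^\eps(u)-\tfrac{1}{2}\dualprod{d\mathcal{E}^\eps(u)}{u}
  =\frac{1}{2\eps^2}\int_{\R^2_+} q^\eps\,(u-q^\eps)_+^{p}+\bigl(1-\tfrac{2}{p+1}\bigr)(u-q^\eps)_+^{p+1},
\]
a pointwise nonnegative integrand since $q^\eps\ge 0$, and applies Fatou's lemma along $(u_n)$ to get $\mathcal{E}^\eps(u)\le \liminf_n\bigl(\mathcal{E}^\eps(u_n)-\tfrac{1}{2}\dualprod{d\mathcal{E}^\eps(u_n)}{u_n}\bigr)=\Hat{c}^\eps$. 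No global strong convergence of the nonlinear terms, and no dichotomy analysis, is ever needed. In short: your plan could in principle be completed, but its hardest step is precisely the one you have not proved, and the Fatou trick makes it unnecessary.
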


The proof in \cite{Yang1991} allows to state that 

\begin{proposition}\label{prop:al}
The critical level $c^\eps=\Hat{c}^\eps$ depends continuously on $W$ and $d$. 
\end{proposition}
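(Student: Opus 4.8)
The plan is to reduce the statement to two elementary structural facts about the half-plane problem — an exact scaling identity and a monotonicity property — and then to squeeze $\Hat{c}^\eps$ between two scalings. Throughout I write $\Hat{c}^\eps_{W,d}$ for the level $\Hat{c}^\eps$ attached to the parameters $(W,d)$, and recall that $\Hat{c}^\eps_{W,d}$ is finite (and positive) for $W,d>0$: finiteness follows from any admissible test function, and positivity from the fact that on the Nehari manifold $\Hat{\mathcal{E}}^\eps=(\tfrac12-\tfrac1{p+1})\int_{\R^2_+}\abs{\nabla u}^2$ is bounded below away from $0$.

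First I would establish the scaling identity; this is where the half-plane with affine $q$ is genuinely special, through the conformal invariance of the Dirichlet integral in dimension two together with the homogeneity $F(as)=a^{p+1}F(s)$. Given $v\in\mathrm{D}^{1,2}_0(\R^2_+)$ and $a,b>0$, set $u(x)=a\,v(bx)$; this is a bijection of $\mathrm{D}^{1,2}_0(\R^2_+)$ that commutes with multiplication by $t$. A change of variables gives $\int_{\R^2_+}\abs{\nabla u}^2=a^2\int_{\R^2_+}\abs{\nabla v}^2$ and $\frac1{\eps^2}\int_{\R^2_+} F(u-Wx_1-d)=\frac{a^{p+1}}{\eps^2 b^2}\int_{\R^2_+} F(v-\tfrac{W}{ab}x_1-\tfrac{d}{a})$. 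Choosing $b=a^{(p-1)/2}$, so that the two potential prefactors coincide while $\eps$ is kept fixed, one obtains $\Hat{\mathcal{E}}^\eps_{W,d}(tu)=a^2\,\Hat{\mathcal{E}}^\eps_{W',d'}(tv)$ for every $t>0$, where $W'=Wa^{-(p+1)/2}$ and $d'=da^{-1}$. Taking $\max_{t>0}$ and then the infimum over the bijection yields, with $\lambda=a$,
\[
  \Hat{c}^\eps_{W,d}=\lambda^2\,\Hat{c}^\eps_{W\lambda^{-(p+1)/2},\,d\lambda^{-1}}\qquad\text{for all }\lambda>0.
\]

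Next I would record monotonicity: since $x_1\ge 0$ on $\R^2_+$, the map $(W,d)\mapsto Wx_1+d$ is pointwise nondecreasing, hence $(W,d)\mapsto\Hat{\mathcal{E}}^\eps_{W,d}(tu)$ is nondecreasing in each variable for every fixed $t,u$; this passes to $\max_{t>0}$ and to $\inf_u$, so $\Hat{c}^\eps_{W,d}$ is nondecreasing in $W$ and in $d$. Now fix $(W_0,d_0)$ with $W_0,d_0>0$ and let $(W_n,d_n)\to(W_0,d_0)$. For the upper bound take $\mu_n=\max\{(W_n/W_0)^{2/(p+1)},\,d_n/d_0\}$, so that the scaled parameters satisfy $(W_n\mu_n^{-(p+1)/2},\,d_n\mu_n^{-1})\le(W_0,d_0)$ componentwise; monotonicity and the identity give $\Hat{c}^\eps_{W_n,d_n}=\mu_n^2\,\Hat{c}^\eps_{W_n\mu_n^{-(p+1)/2},\,d_n\mu_n^{-1}}\le\mu_n^2\,\Hat{c}^\eps_{W_0,d_0}$, with $\mu_n\to1$. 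Symmetrically, $\lambda_n=\min\{(W_n/W_0)^{2/(p+1)},\,d_n/d_0\}\to1$ makes the scaled parameters dominate $(W_0,d_0)$, giving $\Hat{c}^\eps_{W_n,d_n}\ge\lambda_n^2\,\Hat{c}^\eps_{W_0,d_0}$. Letting $n\to\infty$ squeezes $\Hat{c}^\eps_{W_n,d_n}\to\Hat{c}^\eps_{W_0,d_0}$, which is the asserted continuity.

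The only genuinely delicate point is the scaling identity, and it is delicate only in that it relies essentially on the domain being exactly $\R^2_+$ and $q$ being exactly affine; the verification itself is a routine change of variables. In particular I do not expect to need any compactness (Lemma~\ref{campeones}) or the existence Theorem~\ref{thmYang} here. Should one instead want to argue without scaling, the alternative would be to prove upper and lower semicontinuity separately by projecting near-optimizers onto neighbouring Nehari manifolds and estimating the energy difference via the functional inequalities of Proposition~\ref{ineqUnboundedIneqW}; there the main obstacle is the uniform boundedness of the projection parameters, which is precisely what the scaling argument circumvents.
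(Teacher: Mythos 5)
Your proof is correct, but it takes a genuinely different route from the paper's. The paper argues by concentration-compactness: it invokes Theorem~\ref{thmYang} to obtain ground states $u_n$ at the perturbed parameters, gets upper semicontinuity by projecting the unperturbed ground state onto the perturbed Nehari manifolds ($\tau_n u$ with $\tau_n\to 1$), rules out vanishing of $(u_n-\tfrac12\check{W}x_1)_+$ via Proposition~\ref{ineqUnboundedIneqW} and Lemma~\ref{campeones}, extracts (after translations in $x_2$) a nontrivial weak limit which is a critical point, and closes with Fatou's lemma for lower semicontinuity. You replace all of this by the exact scaling identity $\Hat{c}^\eps_{W,d}=\lambda^2\,\Hat{c}^\eps_{W\lambda^{-(p+1)/2},\,d\lambda^{-1}}$ --- a change of variables resting on the conformal invariance of the Dirichlet integral in dimension two, the $(p+1)$-homogeneity of $F$, and the dilation invariance of $\R^2_+$ with affine $q$ --- combined with componentwise monotonicity and a squeeze. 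I verified the identity; it is even consistent with the expansion of $\Hat{c}^\eps$ appearing in the proof of Theorem~\ref{theoremExistenceLevels}, since under the correspondence the vorticity scales as $\kappa\mapsto\kappa/\lambda$ and the resulting terms $\tfrac{\kappa^2}{4\pi}\tfrac{p-1}{2}\log\lambda$ from $\log\tfrac{\kappa}{2\pi W}$ and from $\rho_\kappa\mapsto\rho_\kappa\lambda^{(p-1)/2}$ inside $\mathcal{C}$ cancel. Your route needs neither existence of minimizers nor any compactness, and it yields strictly more than the statement: the two-sided bound $\lambda_n^2\,\Hat{c}^\eps_{W_0,d_0}\le\Hat{c}^\eps_{W_n,d_n}\le\mu_n^2\,\Hat{c}^\eps_{W_0,d_0}$ is a quantitative modulus of continuity, uniform in $\eps$. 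What it loses is robustness: it uses essentially that the domain is exactly $\R^2_+$ and $q$ exactly affine, whereas the paper's variational scheme would tolerate perturbations --- but that is precisely the setting of the proposition, and your conclusion suffices verbatim for the application in Proposition~\ref{propositionPS}.

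Two small imprecisions, neither of which affects the argument. First, your parenthetical claim that on the Nehari manifold $\Hat{\mathcal{E}}^\eps=\bigl(\tfrac12-\tfrac1{p+1}\bigr)\int_{\R^2_+}\abs{\nabla u}^2$ is only an inequality $\ge$: the constraint gives $\int\abs{\nabla u}^2=\tfrac1{\eps^2}\int f(u-q)u$ and $f(u-q)u\ge(p+1)F(u-q)$ when $q\ge 0$, which leaves an extra nonnegative term; this is harmless since your squeeze only needs $\Hat{c}^\eps_{W_0,d_0}$ finite (its sign is irrelevant, as $\lambda_n^2c_0$ and $\mu_n^2c_0$ tend to $c_0$ in any case). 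Second, for $u$ with $u_+=0$ one should read $\max_{t>0}\Hat{\mathcal{E}}^\eps(tu)$ as $\sup=+\infty$; your bijection $v\mapsto a\,v(b\,\cdot)$ preserves this class, so the infimum, the scaling identity, and the monotonicity all survive this convention.
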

\begin{proof}[Sketch of the proof]
We can assume without loss of generality that $\eps=1$ and skip any reference to it. Given converging sequences $W_n \to W$ and $d_n \to d$, we set  
\[
 \mathcal{E}_n(u)=\frac{1}{2} \int_{\R^2_+} \abs{\nabla u}^2-\int_{\R^2_+} F(u-W_nx_1-d_n)
\]
By Theorem~\ref{thmYang}, $\mathcal{E}$ and $\mathcal{E}_n$ possess (some) ground-states $u$ and $u_n$, for which we set $c_n=\mathcal{E}(u_n)$. 
There exist $\tau_n \to 1$ such that  $\dualprod{d\mathcal{E}_n(\tau_n u)}{\tau_n u}=0$. Therefore, 
\[
  c_n \le \mathcal{E}_n(\tau_n u) \to \mathcal{E} (u)=c. 
\]
This implies that $c$ is upper semi-continuous. In particular, since 
\[
  \Bigl(\frac{1}{2}-\frac{1}{p+1}\Bigr)\Norm{\nabla u_n}^2\le \mathcal{E}_n(u_n)
\]
the sequence $(u_n)$ is bounded in $\mathrm{D}^{1, 2}_0(\R^2_+)$. Choosing $\check{W}=\inf W_n>0$, we obtain 
by Proposition~\ref{ineqUnboundedIneqW}
\begin{multline*}
  \Bigl(\int_{\R^2_+} (u_n-\tfrac{1}{2}\check{W}x_1)^{p+1}_+\Bigr)^\frac{2}{p+3} 
  \le \int_{\R^2_+} \abs{\nabla u_n}^2 
  \le \int_{\R^2_+} u_n f(u_n-W_nx_1-d_n) \\
  \le \int_{\R^2_+} u_n f(u_n-\check{W}x_1) 
  \le C \int_{\R^2_+} (u_n-\tfrac{1}{2}\check{W}x_1)^{p+1}_+,
\end{multline*}
so that $(u_n-\frac{1}{2} \check{W} x_1)_+ \not \to 0$ in $L^{p+1}(\R^2_+)$. By Lemma~\ref{campeones}, up to translation in the $x_2$ direction, we have $(u_n-\frac{1}{2} \check{W} x_1)_+ \not \to 0$ in $L^{p+1}(\R_+\times ]-1, 1[)$. Hence, there exists $0\neq v\in \mathrm{D}^{1, 2}_0(\R^2_+)$ such that $u_n \weakto v$ in $\mathrm{D}^{1, 2}_0(\R^2_+)$ and $u_n \to v$ almost everywhere and in $L^r_{\mathrm{loc}}(\R^2_+)$ for $r \ge 1$. In particular,  $d\mathcal{E}(v)=0$ and by Fatou's Lemma, we have
\[
\begin{split}
  c &\le \mathcal{E}(v)=\int_{\R^2_+} (W x_1+d)^p (v-Wx_1-d)+(\tfrac{1}{2}-\tfrac{1}{p+1})(v-Wx_1-d)_+^{p+1}\\
  &\le \liminf_{n \to \infty}\int_{\R^2_+} (W_n x_1+d)^p (u_n-Wx_1-d)+(\tfrac{1}{2}-\tfrac{1}{p+1})(u_n-Wx_1-d)_+^{p+1}\\
  &= \liminf_{n \to \infty} \mathcal{E}_n(u_n)=\liminf_{n \to \infty} c_n. \qedhere
\end{split}
\]
\end{proof}

\begin{proposition}
\label{propositionPS}
If 
\[
  c^\eps < \Hat{c}^\eps
\]
then there exists $u_\eps \in \mathrm{D}^{1, 2}_0(\Omega)$ such that $d\mathcal{E}^\eps(u_\eps)=0$ and $\mathcal{E}^\eps(u_\eps)=c^\eps$. 
\end{proposition}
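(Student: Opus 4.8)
The plan is to manufacture a Palais--Smale sequence for $\mathcal{E}^\eps$ at the level $c^\eps$ and then to show that, under the strict inequality $c^\eps<\Hat{c}^\eps$, it cannot lose compactness at infinity. First I would use the minimax characterisation $c^\eps=\inf_{u}\max_{t>0}\mathcal{E}^\eps(tu)$: since $f(s)=s_+^p$ is superlinear, for each $u$ with $(u-q^\eps)_+\not\equiv 0$ the fibering map $t\mapsto\mathcal{E}^\eps(tu)$ has a unique positive maximum, so that the Nehari infimum coincides with this mountain--pass level. Applying Ekeland's variational principle along a minimising sequence on $\mathcal{N}^\eps$ then yields $(u_n)\subset\mathrm{D}^{1,2}_0(\Omega)$ with $\mathcal{E}^\eps(u_n)\to c^\eps$ and $d\mathcal{E}^\eps(u_n)\to 0$ in the dual space. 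As in the computation leading to \eqref{ineqEnergy}, the identity $(\tfrac12-\tfrac1{p+1})\int_\Omega|\nabla u_n|^2=\mathcal{E}^\eps(u_n)-\tfrac1{p+1}\dualprod{d\mathcal{E}^\eps(u_n)}{u_n}$ shows that $(u_n)$ is bounded, so up to a subsequence $u_n\weakto v$ in $\mathrm{D}^{1,2}_0(\Omega)$ and $u_n\to v$ almost everywhere, after extension by zero to a fixed half-plane.

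By Lemma~\ref{lemmaUnboundedCompactness} the map $u\mapsto(u-Wx_1)_+$ is completely continuous into $\mathrm{L}^p$ of every strip $\R_+\times\,]-L,L[$; combined with the almost everywhere convergence this allows me to pass to the limit in the nonlinear term on compact sets and to conclude that $v$ is a critical point of $\mathcal{E}^\eps$. The crux is to prove $v\ne 0$. Suppose instead that the sequence vanishes, that is $\sup_{y\in\R}\int_{\R_+\times\,]y-1,y+1[}(u_n-Wx_1)_+^{p+1}\to 0$, where $W>0$ is the slope from the standing assumption so that $q^\eps\ge Wx_1+\mathrm{const}$ and hence $(u_n-q^\eps)_+\le(u_n-Wx_1)_+$. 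By Lemma~\ref{campeones} this forces $\int_{\R^2_+}(u_n-Wx_1)_+^s\to 0$ for every $s$, whence $\frac1{\eps^2}\int_\Omega u_nf(u_n-q^\eps)\to 0$; the Nehari identity then gives $\int_\Omega|\nabla u_n|^2\to 0$ and $c^\eps=0$. This is impossible, since the critical-type inequality of Proposition~\ref{ineqUnboundedIneqW} bounds $\int_\Omega|\nabla u|^2$ from below by a positive constant on $\mathcal{N}^\eps$, so $c^\eps>0$. Therefore there exist $y_n\in\R$ with $\liminf_n\int_{\R_+\times\,]y_n-1,y_n+1[}(u_n-Wx_1)_+^{p+1}>0$.

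It remains to locate the concentration strips $(y_n)$. If $(y_n)$ is bounded, the nonvanishing mass sits in a fixed compact strip and Lemma~\ref{lemmaUnboundedCompactness} forces $v\ne 0$; weak lower semicontinuity of the Dirichlet energy together with Fatou's lemma applied to the non-quadratic part of $\mathcal{E}^\eps$, exactly as in the proof of Proposition~\ref{prop:al}, then gives $\mathcal{E}^\eps(v)\le\liminf_n\mathcal{E}^\eps(u_n)=c^\eps$, and since $v\in\mathcal{N}^\eps$ forces $\mathcal{E}^\eps(v)\ge c^\eps$, we obtain equality and $u_\eps:=v$ is the desired minimiser. If instead $\abs{y_n}\to\infty$, I translate, setting $\Tilde{u}_n(x_1,x_2)=u_n(x_1,x_2+y_n)$. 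By the perturbation hypothesis \eqref{condPerturbation} the translated domains exhaust $\R^2_+$, while the translated potentials converge to $\Hat{W}x_1+\Hat{d}$; passing to a nontrivial weak limit $\Hat{v}\ne 0$ I obtain a critical point of $\Hat{\mathcal{E}}^\eps$, so $\Hat{v}$ lies on the Nehari manifold of $\Hat{\mathcal{E}}^\eps$ and $\Hat{\mathcal{E}}^\eps(\Hat{v})\ge\Hat{c}^\eps$. Fatou's lemma then yields $c^\eps=\lim_n\mathcal{E}^\eps(u_n)\ge\Hat{\mathcal{E}}^\eps(\Hat{v})\ge\Hat{c}^\eps$, contradicting $c^\eps<\Hat{c}^\eps$. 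Hence this case does not occur, and the previous paragraph produces the minimiser.

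The main obstacle is precisely this last dichotomy: quantifying the energy carried by mass escaping to infinity and showing it is at least $\Hat{c}^\eps$. This requires matching the translated problem to the limiting half-plane problem of Theorem~\ref{thmYang}, controlling simultaneously the domain perturbation through \eqref{condPerturbation} and the lower-order discrepancy between $q$ and $\Hat{W}x_1+\Hat{d}$, and applying Fatou's lemma to the non-quadratic part of the energy with care, since the cross terms coming from $F(u-q^\eps)$ must be signed as in Proposition~\ref{prop:al} before the liminf can be taken.
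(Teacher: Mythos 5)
Your overall architecture up to the point where compactness may fail coincides with the paper's: Ekeland gives a Palais--Smale sequence at level $c^\eps$, the identity $(\tfrac12-\tfrac1{p+1})\Norm{\nabla u_n}_{\mathrm{L}^2}^2\le \mathcal{E}^\eps(u_n)-\dualprod{d\mathcal{E}^\eps(u_n)}{u_n}$ gives boundedness, the weak limit $v$ is critical, and if $v\ne 0$ the Fatou argument closes the proof exactly as in the paper; your exclusion of vanishing via Lemma~\ref{campeones} is also sound. The genuine gap is in your treatment of the escaping case $\abs{y_n}\to\infty$. You assert that the translated domains exhaust $\R^2_+$ and that the translated potentials converge to $\Hat{W}x_1+\Hat{d}$, and deduce that the translated weak limit $\Hat{v}\ne 0$ is a critical point of $\Hat{\mathcal{E}}^\eps$ with $\Hat{\mathcal{E}}^\eps(\Hat{v})\ge \Hat{c}^\eps$. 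Neither assertion follows from the hypotheses: \eqref{condPerturbation} is a one-sided containment (it forbids the far part of $\Omega$ from extending to the left of $\{x_1=-\delta\}$ eventually, but does not force it to contain, or converge to, a half-plane), and the assumption on $q$ is only the one-sided asymptotic bound $\liminf\,(q-\Hat{W}x_1-\Hat{d})/(1+\abs{x_1})\ge 0$; there is no upper bound on $q$ and no compactness of its translates. So along $y_n$ neither the domains nor the potentials need converge, the limiting equation satisfied by $\Hat{v}$ is not identified, and the inequality $\Hat{\mathcal{E}}^\eps(\Hat{v})\ge\Hat{c}^\eps$ --- which would require comparing an unidentified limit problem with the half-plane problem via monotonicity of levels in $q$ and in the domain --- is not established. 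Your final Fatou step has the same defect: the cross terms between $F(\cdot-q^\eps)$ and $F(\cdot-\Hat{W}x_1-\Hat{d})$ must be signed, which is only available on the far region $\{\abs{x_2}\ge R\}$ where $q\ge \Hat{q}_\delta$.

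This is precisely why the paper avoids translations altogether and follows Rabinowitz's comparison scheme: when $v=0$, complete continuity (Lemma~\ref{lemmaUnboundedCompactness}) shows all the Palais--Smale mass lies in $\Omega_R=\{x\in\Omega \st \abs{x_2}\ge R\}$, as quantified in \eqref{ineqOmegaR}; one then cuts off, $v_n=\varphi u_n$ with $\varphi$ depending only on $x_1$, so that $v_n\in\mathrm{D}^{1,2}_0(]-2\delta,+\infty[\times\R)$ at the cost of $o(1)$ in the Dirichlet energy, projects $v_n$ on the Nehari manifold of the comparison functional $\Hat{\mathcal{E}}_\delta$ (slope $\Hat{W}-\delta$, constant $\Hat{d}-\delta$), bounds the projection parameter $\tau_n$ using \eqref{ineqOmegaR}, and exploits the pointwise sign $F(\tau_n u_n-\Hat{q}_\delta)-F(\tau_n u_n-q)\ge 0$ on $\Omega_R$ to conclude $c^\eps\ge \Hat{c}_\delta$; the continuity of the half-plane level in $(W,d)$ (Proposition~\ref{prop:al}), which plays no role in your sketch but is essential here, then lets $\delta\to 0$ and contradicts $c^\eps<\Hat{c}^\eps$. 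To salvage a translation-based proof you would have to replace your limit-problem identification by this kind of one-sided cutoff comparison, since under the stated hypotheses no limit problem exists.
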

\begin{proof}
We use the same strategy as P.\thinspace Rabinowitz \cite{Rabinowitz1992} for the nonlinear Schr\"odinger equation on $\R^N$. 

The minimization problem can be reformulated as a mountain-pass problem (see, e.g.\ \cite[Chapter 4]{Willem1996}). By Ekeland's variational principle, there exists a sequence $(u_n) \subset \mathrm{D}^{1, 2}_0(\Omega)$ such that $d\mathcal{E}^\eps(u_n) \to 0$ and $\mathcal{E}^\eps(u_n) \to c^\eps$, see \cite[Theorem 4.3]{MawhinWillem} or \cite[Theorem 1.15]{Willem1996}. We have 
\[
  \Bigl(\frac{1}{2}-\frac{1}{p+1}\Bigr)\Norm{\nabla u}_{\mathrm{L}^2}^2\le \mathcal{E}^\eps(u_n)-\dualprod{\mathcal{E}^\eps(u_n)}{u_n} \to c^\eps,
\]
so that $(u_n)$ is bounded in $\mathrm{D}^{1, 2}_0(\Omega)$. There exists $u \in \mathrm{D}^{1, 2}_0(\Omega)$ such that, up to a subsequence, $u_n \weakto u$ $\mathrm{D}^{1, 2}_0(\Omega)$. By Rellich's Theorem, for every $\varphi \in \mathrm{D}^{1, 2}_0(\Omega)$, 
$\dualprod{d\mathcal{E}^\eps(u_n)}{\varphi} \to \dualprod{d\mathcal{E}^\eps(u)}{\varphi}$,
so that $d\mathcal{E}^\eps(u)=0$. If $u \ne 0$, then $u \in \mathcal{N}^\eps$ and by Fatou's Lemma
\[
 \begin{split}
   \mathcal{E}^\eps(u)&=\frac{1}{\eps^2}\int_{\Omega} \frac{f(u-q^\eps)u}{2}-F(u-q^\eps)\\
   &=\frac{1}{2\eps^2}\int_{\Omega} q^\eps(u-q^\eps)_+^{p}+(1-\tfrac{2}{p+1}) (u-q^\eps)_+^{p+1} \\
   &\le \liminf_{n \to \infty} \frac{1}{2\eps^2}\int_{\Omega} q^\eps(u_n-q^\eps)_+^{p}+(1-\tfrac{2}{p+1})(u_n-q^\eps)_+^{p+1}\\
   &= \liminf_{n \to \infty} \mathcal{E}^\eps(u_n)-\tfrac{1}{2}\dualprod{d\mathcal{E}^\eps(u_n)}{u_n}=c^\eps,
 \end{split}
\]
so that $u$ fits the claim. 

Otherwise, for any $\delta < \min( \Hat{W}, \Hat{d})$, let $R > 0$ be such that 
\[
  -\delta \le  \inf \bigl\{ s \in \R \st \exists r \in \R, (s, r) \in \Omega \text{ and } \abs{s}\ge R\bigr\},
\]
and,
\begin{equation}
\label{ineqqdelta}
  q(x)\ge \Hat{q}_\delta(x):=(\Hat{W}-\delta)x_1+\Hat{d}-\delta \qquad \text{if $\abs{x_2} \ge R$}. 
\end{equation}
We have, for $\Omega_R = \{ x \in \Omega \st |x_2|\geq R\}$, and in view of Lemma~\ref{lemmaUnboundedCompactness}, 
\begin{equation}
\label{ineqOmegaR}
\begin{split}
  c^\eps&=\lim_{n \to \infty}\mathcal{E}^\eps(u_n)-\dualprod{d\mathcal{E}^\eps(u_n)}{u_n}\\
  &\le \liminf_{n \to \infty} \frac{1}{\eps^2}\int_{\Omega} u_n (u_n-q)_+^p 
  = \liminf_{n \to \infty} \frac{1}{\eps^2}\int_{\Omega \setminus \Omega_R} u_n (u_n-q)_+^p \\
  &\le C \liminf_{n \to \infty} \frac{1}{\eps^2}\int_{\Omega_R} \Bigl(u_n-\frac{q}{1+\delta}\Bigr)_+^{p+1}
  \le C \liminf_{n \to \infty} \frac{1}{\eps^2}\int_{\Omega_R} (u_n-\Hat{q}_\delta)_+^{p+1}. 
\end{split}
\end{equation}

Let $\psi \in C^\infty(\R)$ such that $\supp \psi \subset [-2\delta, -\delta]$, $\psi (t)=0$ for $t \le -2 \delta$ and $\psi(t)=1$ for $t \ge -\delta$. We set $\varphi(x_1, x_2)=\psi(x_1)$. Note that $\supp \nabla \varphi \cap \Bar{\Omega}$ is compact, so that by Rellich's Theorem,
\[
  \int_{\Omega} \abs{\nabla \varphi}^2\abs{u_n}^2 \to 0,
\]
and therefore, defining $v_n=\varphi u_n$, 
\[
  \int_{\Omega} \abs{\nabla v_n}^2=\int_{\Omega} \abs{\nabla u_n}^2+o(1). 
\]
For every $\tau > 0$
\begin{multline*}
  \max_{\theta > 0} \mathcal{E}(\theta u_n)
  \ge \mathcal{E}(\tau u_n)
  =\Hat{\mathcal{E}}_{\delta}(\tau v_n)+\frac{\tau^2}{2}\int_{\Omega} \abs{\nabla u_n}^2-\abs{\nabla v_n}^2 \\
+\frac{1}{\eps^2}\int_{\Omega} F(\tau v_n-\Hat{q}_\delta)-F(\tau u_n-q). 
\end{multline*}
Choose now $\tau_n$ such that $\Hat{\mathcal{E}}_\delta(\tau_n v_n)=\sup_{\tau > 0}\Hat{\mathcal{E}}_\delta(\tau v_n)$. If $\tau_n \ge 1$, we have,
\begin{multline*}
   \tau_n^2 \int_{\Omega} \abs{\nabla v_n}^2 = \frac{1}{\eps^2}\int_{\Omega} \tau_n v_n f(\tau_n v_n-\Hat{q}_\delta) \ge \tau_n^{p+1}\frac{1}{\eps^2}\int_{\Omega} (v_n-\Hat{q}_\delta)_+^{p+1} \\
  \ge \tau_n^{p+1}\frac{1}{\eps^2}\int_{\Omega_R} (v_n-\Hat{q}_\delta)^{p+1}_+
  = \tau_n^{p+1} \frac{1}{\eps^2}\int_{\Omega_R} (u_n-\Hat{q}_\delta)^{p+1}_+,
\end{multline*}
so that by \eqref{ineqOmegaR} we obtain 
\[
\tau_n \le \max\Biggl(1, \biggl( \frac{\int_{\Omega} \abs{\nabla v_n}^2}{\int_{\Omega_R} (u_n-\Hat{q}_\delta)^{p+1}_+} \biggr)^\frac{1}{p-1}\Biggr),
\]
and the quantity on the right-hand side is bounded in view of \eqref{ineqOmegaR}. 
This implies that $\tau_n v_n \weakto 0$ and $\tau_n u_n \weakto 0$ in $D^{1, 2}(\Omega)$, and by Lemma~\ref{lemmaUnboundedCompactness}, that 
\[
\int_{\Omega \setminus \Omega_R} F(\tau_n v_n-\Hat{q}_\delta)-F(\tau_n u_n-q) \to 0, \qquad\text{as }n\to +\infty. 
\]
On the other hand, by \eqref{ineqqdelta}, $\Hat{q}_\delta \le q$ in $\Omega \setminus \Omega_R$, and 
\[
 \int_{\Omega_R} F(\tau_n v_n-\Hat{q}_\delta)-F(\tau_n u_n-q) 
= \int_{\Omega_R} F(\tau_n u_n-\Hat{q}_\delta)-F(\tau_n u_n-q)\ge 0. 
\]

Hence,
\[
  \liminf_{n \to \infty} \mathcal{E}(u_n) \ge \liminf_{n \to \infty} \Hat{\mathcal{E}}_\delta(\tau_n v_n) \ge \Hat{c}_\delta := \inf_{v \in \mathrm{D}^{1, 2}_0(]-2\delta, +\infty[\times \R)} \Hat{\mathcal{E}}_\delta(v), 
\]
and the conclusion follows from Proposition~\ref{prop:al}, sending $\delta$ to zero. 
\end{proof} 

From Proposition~\ref{propositionPS}, we derive

\begin{theorem}
\label{theoremExistenceLevels}
If 
\[
  \sup_{x \in \Omega}\frac{\kappa^2}{2} H(x, x)-\kappa q(x)  > \frac{\kappa^2}{4\pi} \Bigl(\log \frac{\kappa}{2\pi \Hat{W}}-1\Bigr)-\kappa \Hat{d},
\]
then, if $\eps$ is sufficiently small, there exists $u^\eps \in \mathrm{D}^{1, 2}_0(\Omega)$ such that $d\mathcal{E}^\eps(u^\eps)=0$ and $\mathcal{E}^\eps(u^\eps)=c^\eps$. 
\end{theorem}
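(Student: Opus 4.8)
The strategy is to apply Proposition~\ref{propositionPS}: it suffices to show that the hypothesis forces $c^\eps < \Hat{c}^\eps$ once $\eps$ is small enough. The decisive point is that the right-hand side of the hypothesis is exactly the supremum of the Kirchhoff--Routh function associated to the limiting half-plane problem with affine datum $\Hat{q}(x)=\Hat{W}x_1+\Hat{d}$; the two levels are then separated by comparing their sharp energy expansions, each governed by such a supremum.

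First I would make the half-plane data explicit. On $\R^2_+$ the Dirichlet Green function is $G(x,y)=\frac{1}{2\pi}\log\frac{1}{\abs{x-y}}-\frac{1}{2\pi}\log\frac{1}{\abs{x-y^*}}$, where $y^*=(-y_1,y_2)$, so its regular part is $H(x,y)=\frac{1}{2\pi}\log\abs{x-y^*}$ and the Robin function is $H(x,x)=\frac{1}{2\pi}\log(2x_1)$. The corresponding Kirchhoff--Routh function $\mathcal{W}_{\R^2_+}(x)=\frac{\kappa^2}{4\pi}\log(2x_1)-\kappa(\Hat{W}x_1+\Hat{d})$ depends on $x_1$ only, and a one-variable maximization (critical point $x_1=\kappa/(4\pi\Hat{W})$) gives $\sup_{\R^2_+}\mathcal{W}_{\R^2_+}=\frac{\kappa^2}{4\pi}\bigl(\log\frac{\kappa}{2\pi\Hat{W}}-1\bigr)-\kappa\Hat{d}$, which is precisely the right-hand side of the hypothesis. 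Thus the hypothesis reads $\sup_{\Omega}\mathcal{W}>\sup_{\R^2_+}\mathcal{W}_{\R^2_+}$.

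Next I would pin down the asymptotics of the two levels. The affine datum $\Hat{q}$ meets the standing assumptions of this section on $\R^2_+$ (its gradient is constant, hence in every $\mathrm{L}^r_{\mathrm{loc}}$, and the lower bound holds with $W=\Hat{W}$, $d=\Hat{d}$), so Theorem~\ref{thmYang} and Proposition~\ref{prop:al} supply a solution attaining $\Hat{c}^\eps$; by the upper bound of Proposition~\ref{propUnboundedUpper} it satisfies \eqref{assumptEnergyUpperbound}, whence Proposition~\ref{prop:1maiUnbounded} applies to it and expresses $\Hat{c}^\eps$ through its center of vorticity $\Hat{x}^\eps$ as $\Hat{c}^\eps=\frac{\kappa^2}{4\pi}\log\frac{1}{\eps}-\mathcal{W}_{\R^2_+}(\Hat{x}^\eps)+\mathcal{C}+o(1)\ge\frac{\kappa^2}{4\pi}\log\frac{1}{\eps}-\sup_{\R^2_+}\mathcal{W}_{\R^2_+}+\mathcal{C}+o(1)$, using only the trivial bound $\mathcal{W}_{\R^2_+}(\Hat{x}^\eps)\le\sup_{\R^2_+}\mathcal{W}_{\R^2_+}$. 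Since $\mathcal{C}$ depends only on $\kappa$ and $p$, subtracting the upper bound $c^\eps\le\frac{\kappa^2}{4\pi}\log\frac{1}{\eps}-\sup_{\Omega}\mathcal{W}+\mathcal{C}+o(1)$ of Proposition~\ref{propUnboundedUpper} yields $\Hat{c}^\eps-c^\eps\ge\sup_{\Omega}\mathcal{W}-\sup_{\R^2_+}\mathcal{W}_{\R^2_+}+o(1)$. By the hypothesis the leading term is a strictly positive constant, so $c^\eps<\Hat{c}^\eps$ for all small $\eps$, and Proposition~\ref{propositionPS} finishes the proof.

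The step needing the most care is the lower bound on $\Hat{c}^\eps$: one must check that the ground state furnished by Theorem~\ref{thmYang} is a genuine nontrivial solution of the half-plane problem obeying \eqref{assumptEnergyUpperbound}, so that the a priori analysis of Section~\ref{sectionUnboundedAsymptotics} legitimately produces its energy expansion despite the $x_2$-translation noncompactness of $\R^2_+$ (harmless here, since $\mathcal{W}_{\R^2_+}$ ignores $x_2$). The remaining ingredients---the explicit Robin function, the scalar optimization, and the cancellation of the identical $\frac{\kappa^2}{4\pi}\log\frac{1}{\eps}$ and $\mathcal{C}$ terms---are routine.
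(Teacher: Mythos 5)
Your proposal is correct and follows essentially the same route as the paper: the upper bound on $c^\eps$ from Proposition~\ref{propUnboundedUpper}, the identification of $\Hat{c}^\eps$ via Theorem~\ref{thmYang} and Proposition~\ref{prop:1maiUnbounded} applied to the half-plane ground states, the explicit maximization of $\frac{\kappa^2}{4\pi}\log 2x_1-\kappa(\Hat{W}x_1+\Hat{d})$, and the conclusion through Proposition~\ref{propositionPS}. Your observation that only the one-sided bound $\mathcal{W}_{\R^2_+}(\Hat{x}^\eps)\le\sup_{\R^2_+}\mathcal{W}_{\R^2_+}$ is needed (where the paper asserts an equality for $\Hat{c}^\eps$) is a minor, harmless economy.
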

\begin{proof}
By Proposition~\ref{propUnboundedUpper}, we have
\[
  c^\eps \le \frac{\kappa^2}{4\pi} \log \frac{1}{\eps}-\sup_{x\in \Omega}\Bigl(\frac{\kappa^2}{2} H(x, x)-\kappa q(x)
\Bigr) +\mathcal{C} + o(1). 
\]
On the other hand, in view of Theorem~\ref{thmYang}, $\Hat{\mathcal{E}}^\eps$ possesses a ground-state whose energy is bounded by $\frac{\kappa^2}{4\pi}\log \frac{1}{\eps}+O(1)$. It follows from Proposition~\ref{prop:1maiUnbounded}  applied to these ground-states  that
\[
\begin{split}
	\Hat{c}^\eps&=\frac{\kappa^2}{4\pi} \log \frac{1}{\eps}-\sup_{x\in \R^2_+} \Bigl(\frac{\kappa^2}{4\pi} \log 2x_1-\kappa (\Hat{W}x_1+\Hat{d})\Bigr)	 +\mathcal{C} + o(1) \\
 &=\frac{\kappa^2}{4\pi} \log \frac{1}{\eps}- \Bigl( \frac{\kappa^2}{4\pi} \Bigl(\log \frac{\kappa}{2\pi \Hat{W}}-1\Bigr)-\kappa \Hat{d} \Bigr) +\mathcal{C} + o(1). 
\end{split}
\]
Therefore, when $\eps$ is small enough, $c^\eps < \Hat{c}^\eps$, and the conclusion follows from Proposition~\ref{propositionPS}. 
\end{proof}

\section{Pair of vortices in bounded domains}
\label{sectionVortexPair}

In this section, $\Omega\subset \R^2$, $f : \R \to \R$ and $q: \Omega \to \R$ are as in Section~\ref{sectionSingleVortex}. For $\Eps=(\eps_+, \eps_-) >0$, $\kappa_+>0$ and $\kappa_-<0$
given, 
and consider solutions of the boundary value problems   
\[
\left\{
\begin{aligned}
-\Delta u^\Eps &= \frac{1}{\eps_+{}^2} f(u^\Eps - q^\eps_+) - \frac{1}{\eps_-{}^2} f(q^\eps_-
-u^\Eps) &
 & \text{in $\Omega$}, \\
u^\Eps &=  0 & &\text{on $\partial \Omega$},
\end{aligned}
\right. \tag{\protect{$\mathcal{Q}^\Eps$}}
\label{Qeps}
\]
where $q^\eps_\pm = q+ \frac{\kappa_\pm}{2\pi} \log \frac{1}{\eps_\pm}$.

We consider are the least energy nodal solutions of \eqref{Qeps} obtained by minimizing the energy functional
\[
\mathcal{E}^\Eps(u)=  \int_{\Omega} \Bigl(\frac{|\nabla u|^2}{2} -
\frac{1}{\eps_+{}^2}F(u-q^\eps_+) -\frac{1}{\eps_-{}^2}F(q^\eps_- -u) \Bigr) 
\]
over the natural constraint given by the nodal Nehari set
\[
\mathcal{M}^\Eps = \left\{ u\in H^1_0(\Omega) \ : \ u_+\neq 0,  u_- \ne 0, \ \langle
d\mathcal{E}^\Eps(u), u_+\rangle = \langle
d\mathcal{E}^\Eps(u), u_-\rangle =0\right\}. 
\] 
It is a standard \cites{CastroCossioNeuberger,BartschWethWillem,BartschWeth2003,BartschWeth2005} to
prove the
\begin{proposition}
Assume that $q^\eps_+$ is positive on $\Omega$ and $q^\eps_-$ is
negative on $\Omega$, so that $\Meps \neq \emptyset$, and define
\[
d^\Eps = \inf_{u\in \mathcal{M}^\Eps} \mathcal{E}^\Eps(u). 
\]
There exists $u^\Eps \in \mathcal{M}^\Eps$ such that $\mathcal{E}^\Eps(u^\Eps)=d^\Eps$,
and $u^\Eps$ is a nonnegative solution of $\Qeps$. 
\end{proposition}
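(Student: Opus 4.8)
The plan is to adapt the least-energy construction of Proposition~\ref{prop:2.1} to the nodal constraint set $\mathcal{M}^\Eps$, exploiting the fact that the two nonlinearities in \eqref{Qeps} act on disjoint regions. Writing $u=u_+-u_-$ with $u_\pm\ge 0$ of disjoint supports, one has $\int_\Omega\abs{\nabla u}^2=\int_\Omega\abs{\nabla u_+}^2+\int_\Omega\abs{\nabla u_-}^2$; moreover, since $q^\eps_+>0>q^\eps_-$ on $\Omega$, the term $f(u-q^\eps_+)$ is supported in $\{u>0\}$ and $f(q^\eps_--u)$ in $\{u<0\}$. Hence the fibering map
\[
  (s,t)\mapsto\mathcal{E}^\Eps(su_+-tu_-)
\]
splits as a sum of a function of $s$ alone and a function of $t$ alone, each a standard superlinear Nehari fibering map because $p>1$: each summand grows from $0$, attains a unique interior maximum, and tends to $-\infty$. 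Consequently, for every $u$ with $u_\pm\ne 0$ there is a unique $(s_0,t_0)\in(0,\infty)^2$ with $s_0u_+-t_0u_-\in\mathcal{M}^\Eps$, and this point realises $\max_{s,t>0}\mathcal{E}^\Eps(su_+-tu_-)$. In particular $\mathcal{M}^\Eps\ne\emptyset$ under the stated sign hypotheses on $q^\eps_\pm$.

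Next I would prove coercivity on $\mathcal{M}^\Eps$. Testing the two defining identities $\dualprod{d\mathcal{E}^\Eps(u)}{u_\pm}=0$ and using $f(s)s\ge(p+1)F(s)$ together with the favourable sign of $q^\eps_\pm$, one obtains, exactly as in \eqref{ineqEnergy},
\[
  \mathcal{E}^\Eps(u)\ge\Bigl(\frac12-\frac{1}{p+1}\Bigr)\int_\Omega\abs{\nabla u}^2\ge 0.
\]
Thus $d^\Eps\ge 0$ and every minimising sequence $(u_n)\subset\mathcal{M}^\Eps$ is bounded in $H^1_0(\Omega)$. The Nehari identities combined with the Gagliardo--Nirenberg inequality \cite{Nirenberg1959}*{p.\thinspace 125}, as in Proposition~\ref{propositionEstimatesueps}, yield a constant $\delta>0$ with $\int_\Omega\abs{\nabla u_{n,\pm}}^2\ge\delta$, so neither the positive nor the negative part can degenerate along the sequence.

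To produce a minimiser, pass to a subsequence with $u_n\weakto u$ in $H^1_0(\Omega)$. Since $\Omega$ is bounded, Rellich's theorem gives $u_n\to u$ strongly in every $\mathrm{L}^q(\Omega)$ with $q<\infty$ and almost everywhere, whence $u_{n,\pm}\weakto u_\pm$ in $H^1_0(\Omega)$ and the subcritical nonlinear terms converge. The uniform lower bound of the previous step forces $\int_\Omega(u-q^\eps_+)_+^{p+1}>0$ and $\int_\Omega(q^\eps_--u)_+^{p+1}>0$, so $u_\pm\ne 0$. Reprojecting $u$ onto $\mathcal{M}^\Eps$ via the unique pair $(s_0,t_0)$ of the first step and applying weak lower semicontinuity of the Dirichlet integral fibrewise gives
\[
  \mathcal{E}^\Eps(s_0u_+-t_0u_-)=\max_{s,t>0}\mathcal{E}^\Eps(su_+-tu_-)\le\liminf_{n\to\infty}\mathcal{E}^\Eps(u_n)=d^\Eps.
\]
As the reprojected function lies in $\mathcal{M}^\Eps$, it also satisfies the reverse inequality, so $u^\Eps:=s_0u_+-t_0u_-$ attains $d^\Eps$.

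The step I expect to be the main obstacle is showing that this constrained minimiser is a genuine critical point of $\mathcal{E}^\Eps$, that is, that the Lagrange multipliers attached to the two constraints vanish. I would argue by contradiction using the quantitative deformation technique of nodal Nehari theory \cites{CastroCossioNeuberger,BartschWethWillem,BartschWeth2003,BartschWeth2005}: if $d\mathcal{E}^\Eps(u^\Eps)\ne 0$, a pseudo-gradient deformation supported near $u^\Eps$ lowers the energy, and reprojecting the deformed function back onto $\mathcal{M}^\Eps$ produces a competitor with energy strictly below $d^\Eps$, a contradiction. The required non-degeneracy---invertibility of the $2\times 2$ Hessian of $(s,t)\mapsto\mathcal{E}^\Eps(su_+-tu_-)$ at its unique maximum---is immediate here because that map has the product-type structure recorded in the first paragraph, which is the main simplification over the abstract theory. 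Once $d\mathcal{E}^\Eps(u^\Eps)=0$ is established, standard elliptic regularity upgrades $u^\Eps$ to a classical solution of \eqref{Qeps} attaining $d^\Eps$, which is precisely the assertion of the proposition.
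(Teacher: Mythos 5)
Your proposal is correct and follows essentially the same route as the paper, which offers no proof of its own but points to the standard nodal Nehari theory of \cites{CastroCossioNeuberger,BartschWethWillem,BartschWeth2003,BartschWeth2005}: your decoupling of the fibering map $(s,t)\mapsto\mathcal{E}^\Eps(su_+-tu_-)$ into independent one-variable Nehari maps (valid here because $q^\eps_+>0>q^\eps_-$ forces the two nonlinearities to live on disjoint nodal regions), the coercivity via the identity behind \eqref{ineqEnergy}, the Rellich-based compactness with the uniform lower bound on $\Norm{\nabla u_{n,\pm}}_{\mathrm{L}^2}$, and the quantitative deformation argument for vanishing multipliers are exactly the ingredients of those references. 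The only remark worth adding is that the word ``nonnegative'' in the statement is a misprint inherited from Proposition~\ref{prop:2.1}: what your argument produces, correctly, is a sign-changing (nodal) solution.
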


Our focus is the asymptotics of $u^\Eps$ for a sequence $\Eps \to (0, 0)$. We assume that $0 < c < \frac{\log \eps_+}{\log \eps_-} < C < \infty$, and we will write $\logEps$ instead of $\log \eps_+$ or $\log \eps_-$ in asymptotic expansions. 

We extend the definition of $U_\kappa$ given by \eqref{Ukappa} for $\kappa < 0$ by 
$U_\kappa=-U_{-\kappa}$ and $\rho_{\kappa}=\rho_{-\kappa}$. One still has, when $\abs{x}$ is large enough, $U_\kappa(x)=\frac{\kappa}{2\pi}\log \frac{\rho_\kappa}{\abs{x}}$.
We also set
\[
 \mathcal{C}_\pm=
\frac{\kappa_\pm^2}{4\pi} \log \rho_{\kappa_\pm} +
\int_{B(0, \rho_{\kappa_\pm})}\Bigl(\frac{|\nabla U_{\rho_{\kappa_\pm}}|^2}{2} - \frac{U_{\rho_{\kappa_\pm}}^{p+1}}{p+1}\Bigr). 
\]
The Kirchhoff--Routh function $\mathcal{W}$ is defined for $(x_+, x_-)\in \Omega^2_*=\{ (y_+, y_-) \in \Omega \st y_+ \ne y_-\}$ by
\[
\begin{split}
 \mathcal{W}(x_+,x_-)= \,
&\frac{\kappa_+^2}{2}H(x_+, x_+) + \frac{\kappa_-^2}{2}H(x_-, x_-) +
\kappa_+\kappa_- G(x_+, x_-)\\
&
 -\frac{\kappa_+}{2\pi} q(x_+) -\frac{\kappa_-}{2\pi} q(x_-).
\end{split}
\]
We set 
\begin{equation}
\begin{aligned}\label{defiqNodal}
  A^\Eps_\pm &=\Big\{ x \in \Omega \st \pm u^\Eps(x)> \pm q^\Eps(x) +  \frac{\kappa_\pm}{2\pi} \log \frac{1}{\eps_\pm}\Big\}, \\
  \omega_\pm^\Eps&=\pm \frac{1}{\eps_\pm{}^2} f(\pm(u^\Eps-q^\Eps)), \\ 
  \kappa^\Eps_\pm&=\int_{\Omega} \omega_\pm^\Eps, \\
  x^\Eps&=\frac{1}{\kappa^\Eps}\int_{\Omega} x \, \omega^\Eps_\pm(x)\, dx, \\
  \rho^\Eps_\pm&=\rho_{\kappa^\Eps_\pm}.
\end{aligned}
\end{equation}

We will prove

\begin{theorem}\label{thm:K3}
As $\Eps \to 0$, we have
\[
\begin{split}
  u^\Eps= \, & U_{\kappa_+^\Eps} \Big(\frac{\cdot-x^\Eps}{\eps_+}\Big)+\kappa_+^\Eps\Bigl(\frac{1}{2\pi} \log \frac{1}{\eps_+ \rho_{+}^\Eps}+ H(x^\Eps_+, \cdot)\Bigr)\\
&+U_{\kappa_-^\Eps} \Big(\frac{\cdot-x^\Eps}{\eps_-}\Big)+\kappa_-^\Eps\Bigl(\frac{1}{2\pi} \log \frac{1}{\eps_- \rho_-^\Eps}+ H(x^\Eps_-, \cdot)\Bigr)+o(1),
\end{split}
\]
in $\mathrm{W}^{2, 1}_\mathrm{loc}(\Omega)$, in $\mathrm{W}^{1, 2}_0(\Omega)$, and in $\mathrm{L}^\infty(\Omega)$, where
\[
  \kappa^\Eps=\kappa_\pm+\frac{2\pi}{\log \frac{1}{\eps_\pm}}\Bigl(q(x^\Eps)-\kappa_\pm H(x^\Eps, x^\Eps)-\kappa_\mp G(x_\pm, x_\mp) -\frac{\kappa}{2\pi} \log \frac{1}{\rho_{\kappa_\pm}} \Bigr)+o(\logEps^{-1}),
\]
and
\[
   \mathcal{W}(x^\Eps_+,x^\Eps_-) \to \sup_{(x_+,x_-) \in \Omega^2_*} \mathcal{W}(x_+,x_-). 
\]
One also has
\[
  B(x^\Eps_\pm, \Bar{r}_\pm^\Eps) \subset A_\pm^\Eps \subset B(x_\pm^\Eps, \mathring{r}_\pm^\Eps),
\] 
with $\Bar{r}^\Eps_\pm=\eps_\pm \rho_{\kappa_\pm}+o(\eps_\pm)$ and $\mathring{r}_\pm^\Eps=\eps_\pm
\rho_{\kappa_\pm} +o(\eps_\pm)$. Finally, 
\[
  \mathcal{E}^\Eps (u^\Eps)= \frac{\kappa^2_+}{4\pi} \log \frac{1}{\eps_+}+\frac{\kappa^2_-}{4\pi} \log \frac{1}{\eps_-}-\mathcal{W}(x^\Eps_+,x^\Eps_-)+\mathcal{C}_++\mathcal{C}_-+o(1). 
\]
\end{theorem}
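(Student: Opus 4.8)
The plan is to mirror the two-part architecture of the proof of Theorem~\ref{thm:K1}: first an \emph{upper bound} on the nodal level $d^\Eps$ obtained by exhibiting a competitor in $\Meps$, and then an \emph{a priori} analysis showing that any family of solutions of \eqref{Qeps} whose energy satisfies the matching upper bound must obey the stated asymptotics. Since $u^\Eps$ minimizes $\mathcal{E}^\Eps$ on $\Meps$ it automatically meets the bound, so the two parts combine to prove Theorem~\ref{thm:K3}. The guiding principle is that the positive part (concentrating near $x_+^\Eps$ at scale $\eps_+$) and the negative part (concentrating near $x_-^\Eps$ at scale $\eps_-$) decouple to leading order, the sole coupling being the Green-function interaction $\kappa_+\kappa_- G(x_+^\Eps,x_-^\Eps)$ which is exactly the cross term of $\mathcal{W}$. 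Because $0<c<\log\eps_+/\log\eps_-<C$, the two scales are comparable and I would track everything with the uniform symbol $\logEps$.

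For the upper bound I would adapt Lemmas~\ref{lemmaHatuNehari} and~\ref{lemmaEnergyHatu}. Fixing distinct $\hat x_+,\hat x_-\in\Omega$, take the competitor $\hat u^\Eps=\hat u_+^\Eps+\hat u_-^\Eps$, the sum of a positive single-vortex profile of mass $\hat\kappa_+^\Eps>0$ at scale $\eps_+$ around $\hat x_+$ and a negative one of mass $\hat\kappa_-^\Eps<0$ at scale $\eps_-$ around $\hat x_-$. Since the two cores have radii $O(\eps_\pm)$ while $|\hat x_+-\hat x_-|$ is bounded below, their nonlinearities have disjoint supports and the two nodal Nehari conditions $\langle d\mathcal{E}^\Eps(\hat u^\Eps),\hat u_\pm^\Eps\rangle=0$ decouple; each can be solved for a parameter $\sigma_\pm^\Eps\to0$ exactly as in Lemma~\ref{lemmaHatuNehari}, placing $\hat u^\Eps\in\Meps$. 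In the energy, the two self-contributions reproduce the single-vortex expansions while the mixed Green-function term, once the masses $\hat\kappa_\pm^\Eps$ are tuned, assembles with them into $-\mathcal{W}(\hat x_+,\hat x_-)$; this yields $\mathcal{E}^\Eps(\hat u^\Eps)=\frac{\kappa_+^2}{4\pi}\log\frac1{\eps_+}+\frac{\kappa_-^2}{4\pi}\log\frac1{\eps_-}-\mathcal{W}(\hat x_+,\hat x_-)+\mathcal{C}_++\mathcal{C}_-+o(1)$, and optimizing over $(\hat x_+,\hat x_-)\in\Omega^2_*$ gives the bound $d^\Eps\le\frac{\kappa_+^2}{4\pi}\log\frac1{\eps_+}+\frac{\kappa_-^2}{4\pi}\log\frac1{\eps_-}-\sup_{\Omega^2_*}\mathcal{W}+\mathcal{C}_++\mathcal{C}_-+o(1)$.

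For the a priori analysis I would rerun Steps~1--4 of the proof of Proposition~\ref{prop:1mai}, carrying out every estimate separately on $A_+^\Eps$ and $A_-^\Eps$ by testing \eqref{Qeps} against $\min(u^\Eps,q^\eps_+)$, $(u^\Eps-q^\eps_+)_+$ and their negative counterparts. The quantitative bounds of Proposition~\ref{propositionEstimatesueps}, the vortex splitting of Lemma~\ref{lemmaVortexSplit}, and the blow-up of Lemmas~\ref{lemmaSmallScaleLocalEstimates}--\ref{lemmaLocalAsymptotics} all have sign-by-sign analogues: rescaling near a positive essential vortex recovers $U_{\kappa_+}$ and near a negative one recovers $U_{\kappa_-}=-U_{-\kappa_-}$, through the same Caffarelli--Friedman symmetry argument. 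The balance relation \eqref{eqVorticitiesGreen} now acquires cross terms $\kappa_{-,l}^\Eps G(x_{+,i}^\Eps,x_{-,l}^\Eps)$ coupling the two signs; combining the positive and negative balances with $\sum_i\kappa_{+,i}^\Eps\le\kappa_++o(1)$ and $\sum_l|\kappa_{-,l}^\Eps|\le|\kappa_-|+o(1)$ should force exactly one essential vortex of each sign. The global approximation of Proposition~\ref{propositionAsymptoticsW21} and the energy identity of Corollary~\ref{corEnergy} then carry over with the two-vortex ansatz in place of $\Tilde{v}^\eps$, and comparison with the upper bound forces $\mathcal{W}(x_+^\Eps,x_-^\Eps)\to\sup_{\Omega^2_*}\mathcal{W}$.

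The hard part will be the coupling that is absent in the single-vortex case: one must prevent the two vortices from collapsing onto a common point, that is, prove $\dist(x_+^\Eps,x_-^\Eps)$ stays bounded below so that the limiting positions lie in $\Omega^2_*$ and $G(x_+^\Eps,x_-^\Eps)=O(1)$. Here the sign of the interaction is decisive: since $\kappa_+\kappa_-<0$, the term $-\mathcal{W}$ entering the energy contains $-\kappa_+\kappa_- G(x_+^\Eps,x_-^\Eps)=|\kappa_+\kappa_-|\,G(x_+^\Eps,x_-^\Eps)\ge0$, which diverges logarithmically as the vortices approach; matched against the upper bound, the energy identity cannot hold unless the separation stays of order one. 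The delicate point is that this control must be extracted \emph{before} the fine asymptotics are fully justified, so I would first read it off the balance relations of Step~3, where the diverging cross term $\kappa_{-,l}^\Eps\log\frac1{|x_{+,i}^\Eps-x_{-,l}^\Eps|}$ is incompatible with the bounded total vorticities, and only then complete the global approximation. A secondary nuisance is the simultaneous bookkeeping of the two scales $\eps_+,\eps_-$, kept under control by the hypothesis $0<c<\log\eps_+/\log\eps_-<C$.
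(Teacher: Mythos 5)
Your two-part architecture (sharp upper bound on the nodal level via a competitor in $\Meps$, then a priori analysis of any family of solutions meeting that bound) is exactly the paper's: Lemma~\ref{lemNodalUpperBound} and Corollary~\ref{cor:Nodalupper} give the upper bound, Proposition~\ref{prop:1maiNodal} rereads Steps 1--4 sign by sign, and Theorem~\ref{thm:K3} follows by combining the two. Two points you gloss over are, however, genuinely needed. First, the two Nehari conditions do \emph{not} literally decouple: $g^\Eps_\pm(\sigma_+,\sigma_-)=\langle d\mathcal{E}^\Eps(\Hat u^{\Eps,\boldsymbol\sigma}),\Hat u^{\Eps,\sigma_\pm}\rangle$ depends on both parameters through the cross terms, and only its leading part $\frac{\kappa_\pm}{2\pi}\log\frac{1}{\eps_\pm}\bigl(\int_{\R^2} f(U_{\kappa_\pm})-f(U_{\kappa_\pm}+\sigma_\pm)\bigr)$ is a function of $\sigma_\pm$ alone; the paper accordingly solves the coupled $2\times 2$ system by the Poincar\'e--Miranda theorem rather than two independent one-dimensional intermediate value arguments. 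Second, before any per-sign estimate can be run you must pin the energy of each signed part separately: since $\langle d\mathcal{E}^\Eps(v^\Eps),v^\Eps_\pm\rangle=0$ places $v^\Eps_\pm$ on a single-sign Nehari manifold, Theorem~\ref{thm:K1} gives $\mathcal{E}^\Eps(v^\Eps_\pm)\ge \frac{\kappa_\pm^2}{4\pi}\log\frac{1}{\eps_\pm}+O(1)$, which combined with \eqref{assumptEnergyUpperboundNodal} yields equality up to $O(1)$ for each sign. Without this decoupling of the energy, the per-sign analogue of \eqref{eq:2etoiles}, which drives all subsequent capacity estimates, does not follow from the total bound alone.

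The one genuine gap is your treatment of the separation of the two vortices. Extracting $\dist(x^\Eps_+,x^\Eps_-)\ge c$ from the Step-3 balance relations does work in the regime $\eps_\pm\ll\abs{x^\Eps_+-x^\Eps_-}\ll 1$: there the cross term $\abs{\kappa_\mp}\log\frac{1}{\abs{x_+-x_-}}$ would inflate $\kappa^\Eps_{\pm}$ beyond the per-sign total-vorticity bound, as you say. But the expansions of the analogue of Lemma~\ref{lemmaSmallScaleLocalEstimates}, which produce those balance relations, already presuppose that the opposite-sign vorticity is concentrated at distance much larger than the blow-up scale; if $\dist(A^\Eps_{+,i},A^\Eps_{-,j})=O(\eps_\pm)$, the blow-up limit is no longer single-signed, the Caffarelli--Friedman symmetry result does not apply, and your argument becomes circular precisely in the dangerous regime. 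The paper excludes this at Step 2, before any blow-up, in Lemma~\ref{lemmaNodalVortexSplit}, estimate \eqref{ineqNodalSignDistance}: the normalized harmonic-off-the-cores function $\eta_\Eps$ built from $v^\Eps_+/\kappa_+$ and $v^\Eps_-/\kappa_-$ is close to two distinct constants on $A^\Eps_{+,i}$ and $A^\Eps_{-,j}$, whence $\frac{2\pi}{\capa(A^\Eps_+,\R^2\setminus A^\Eps_-)}\ge\log\frac{1}{\eps_+\eps_-}+O(1)$, and the condenser estimate of Proposition~\ref{propositionCapacityBoundDistance} (applied with $\Omega=\R^2\setminus\overline{A^\Eps_{\pm,i}}$ and $K=\overline{A^\Eps_{\mp,j}}$) forces $\dist(A^\Eps_{+,i},A^\Eps_{-,j})\ge c$ with no a priori information on the scale of the separation. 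To complete your plan you should either import this capacity argument or supply an independent argument ruling out separations of order $\eps_\pm$ before invoking Step 3; your alternative mechanism through the energy upper bound cannot serve here since, as you yourself note, it is only available after the fine asymptotics are established.
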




\subsection{Upper bounds on the energy}

We compute upper bounds on $d^\eps$ by constructing suitable elements in
$\Meps$. 

\begin{lemma}
\label{lemNodalUpperBound}
For every $\Hat{x}_+, \Hat{x}_- \in \Omega$ such that $\Hat{x}_+\ne \Hat{x}_-$, there exists 
\[
  \Hat{\kappa}^{\pm}_{\Eps}=\kappa_\pm+\frac{2\pi}{\log \dfrac{1}{\eps_\pm}}\Bigl( q(\Hat{x}_\pm)-\kappa_\pm H(\Hat{x}_\pm, \Hat{x}_\pm)-\kappa_{\mp} G(\Hat{x}_\pm, \Hat{x}_\mp)+\dfrac{\kappa_\pm}{2\pi} \log \rho_{\kappa_\pm} \Bigr)+O\bigl(\logEps^{-2}\bigr),
\]
such that, if 	
\[
\begin{split}
  \Hat{u}^\Eps(x)
=&U_{\Hat{\kappa}_+^\Eps}\Bigl(\frac{x-\Hat{x}_+}{\eps_+}\Bigr)+\Hat{\kappa}^\Eps_+ 
    \Bigl( \frac{1}{2\pi} \log \frac{1}{\eps_+\Hat{\rho}_+^\Eps}+H(\Hat{x}_+, x) \Bigr)\\
&+U_{\Hat{\kappa}_-^\Eps}\Bigl(\frac{x-\Hat{x}_-}{\eps_-}\Bigr)+\Hat{\kappa}^\Eps_- 
    \Bigl( \frac{1}{2\pi} \log \frac{1}{\eps_-\Hat{\rho}_-^\Eps}+H(\Hat{x}_-, x) \Bigr),
\end{split}
\]
then 
\[
  \Hat{u}^\Eps\in \mathcal{M}^\Eps. 
\]
Moreover, 
\[
  \Hat{A}_\pm^\Eps:=\bigl\{ x \st \pm \Hat{u}^\Eps(x) > \pm q^\Eps_\pm(x) \bigr\} \subset B(\Hat{x}_\pm, \Hat{r}_\pm^\Eps), \\
\]
with $\Hat{r}_\pm^\Eps=\eps_\pm \rho_{\kappa_\pm}+o(\Eps)$. 
\end{lemma}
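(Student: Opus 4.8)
The plan is to mimic the scheme of Lemma~\ref{lemmaHatuNehari}, but with a \emph{two}-parameter family indexed by $(\sigma_+, \sigma_-) \in \R^2$, one parameter per vortex. For $\sigma_\pm \in \R$ I would set
\[
  \Hat{\kappa}^{\Eps, \sigma_\pm}_\pm = \frac{q^\eps_\pm(\Hat{x}_\pm)+\sigma_\pm-\kappa_\mp G(\Hat{x}_\pm, \Hat{x}_\mp)}{\tfrac{1}{2\pi} \log \tfrac{1}{\eps_\pm \rho_{\kappa_\pm}}+H(\Hat{x}_\pm, \Hat{x}_\pm)},
\]
with $\Hat{\rho}^{\Eps, \sigma_\pm}_\pm = \rho_{\Hat{\kappa}^{\Eps, \sigma_\pm}_\pm}$ and $\Hat{u}^{\Eps, \sigma_+, \sigma_-}$ the corresponding sum of a positive and a negative profile (using the extension $U_{\kappa_-}=-U_{-\kappa_-}$). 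As in the single-vortex case, outside $B(\Hat{x}_\pm, \eps_\pm \Hat{\rho}^{\Eps,\sigma_\pm}_\pm)$ each summand collapses to $\Hat{\kappa}^{\Eps,\sigma_\pm}_\pm G(\Hat{x}_\pm, \cdot)$, so that $\Hat{u}^{\Eps,\sigma_+,\sigma_-}=0$ near $\partial\Omega$ and hence lies in $\mathrm{W}^{1,2}_0(\Omega)$. The subtracted cross term $\kappa_\mp G(\Hat{x}_\pm, \Hat{x}_\mp)$ in the numerator is precisely what will produce the extra $-\kappa_\mp G(\Hat{x}_\pm, \Hat{x}_\mp)$ summand in the stated expansion.

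Because $\Hat{x}_+ \ne \Hat{x}_-$, the two centers are separated by a fixed distance, so for small $\Eps$ the positive profile near $\Hat{x}_-$ reduces to its harmonic expression $\Hat\kappa^{\Eps,\sigma_+}_+ G(\Hat{x}_+, \cdot)$, and symmetrically; the cross interaction thus contributes only a bounded shift near each center. Repeating the dominant-balance estimate of Lemma~\ref{lemmaHatuNehari}, now with this extra $O(1)$ shift, yields $\Hat{A}_\pm^\Eps \subset B(\Hat{x}_\pm, \Hat{r}_\pm^\Eps)$ with $\Hat{r}_\pm^\Eps = \eps_\pm \rho_{\kappa_\pm}+o(\Eps)$. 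The crucial consequence is that $f(\Hat{u}^\Eps-q^\eps_+)$ and $f(q^\eps_--\Hat{u}^\Eps)$ have \emph{disjoint} supports $\Hat{A}_+^\Eps$ and $\Hat{A}_-^\Eps$. Therefore, pairing $d\mathcal{E}^\Eps(\Hat{u}^\Eps)$ with $\Hat{u}_+^\Eps$ kills the negative nonlinearity (it is supported where $\Hat{u}_+^\Eps=0$) and pairing with $\Hat{u}_-^\Eps$ kills the positive one, so each pairing reduces to a single-vortex expression integrated over its own ball. Transcribing the computation of Lemma~\ref{lemmaHatuNehari} (for the minus vortex, applied to $-\Hat u^\Eps$ with data $-q^\eps_-$ and mass $\abs{\kappa_-}$) gives
\[
  g^\Eps_\pm(\sigma_+, \sigma_-) := \dualprod{d\mathcal{E}^\Eps(\Hat{u}^\Eps)}{\Hat{u}_\pm^\Eps} = \frac{\abs{\kappa_\pm}}{2\pi} \log \frac{1}{\eps_\pm}\Bigl(\int_{\R^2} f(U_{\abs{\kappa_\pm}})-f(U_{\abs{\kappa_\pm}}+\sigma_\pm)\Bigr)+O(1),
\]
where the $O(1)$ absorbs the bounded cross-interaction and hence \emph{all} of the dependence on $\sigma_\mp$.

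The main point, and the essential difference with the single-vortex case, is that one must now solve a genuinely two-dimensional system $g^\Eps_+=g^\Eps_-=0$ rather than invoke a scalar intermediate value theorem. The decoupling just established makes this possible: writing $h_\pm(\sigma)=\int_{\R^2} f(U_{\abs{\kappa_\pm}})-f(U_{\abs{\kappa_\pm}}+\sigma)$, one has $\sigma\, h_\pm(\sigma)<0$ for $\sigma\ne 0$ and $h_\pm'(0)=-\int f'(U_{\abs{\kappa_\pm}})\ne 0$, while the prefactor $\tfrac{\abs{\kappa_\pm}}{2\pi}\log\tfrac{1}{\eps_\pm}$ is large and positive. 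Fixing $\delta>0$ and taking $\Eps$ small enough that the leading term dominates the $O(1)$ remainder throughout $[-\delta,\delta]^2$, the function $g^\Eps_+$ has constant opposite signs on the two faces $\sigma_+=\pm\delta$ uniformly in $\sigma_-$, and likewise $g^\Eps_-$ on $\sigma_-=\pm\delta$. The Poincar\'e--Miranda theorem (the multidimensional intermediate value theorem) then produces a zero $(\sigma_+^\Eps,\sigma_-^\Eps)\in[-\delta,\delta]^2$; since $\delta$ is arbitrary this forces $\sigma_\pm^\Eps\to 0$, and the simple zero $h_\pm'(0)\ne 0$ combined with $h_\pm(\sigma_\pm^\Eps)=O(\logEps^{-1})$ even gives $\sigma_\pm^\Eps=O(\logEps^{-1})$. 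Setting $\Hat{\kappa}^\pm_\Eps=\Hat{\kappa}^{\Eps,\sigma_\pm^\Eps}_\pm$ and expanding the defining fraction to second order in $1/\log\tfrac{1}{\eps_\pm}$ (the $\tfrac{2\pi}{\log(1/\eps_\pm)}\sigma_\pm^\Eps$ contribution being absorbed into $O(\logEps^{-2})$) yields the asserted asymptotics. I expect the hardest part to be making the decoupling quantitative enough to secure the boundary sign pattern on $[-\delta,\delta]^2$ uniformly in $\Eps$, since this is what upgrades the scalar argument of Lemma~\ref{lemmaHatuNehari} to the vector-valued root-finding needed here.
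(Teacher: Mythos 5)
Your proposal follows essentially the same route as the paper's proof: the same two-parameter family $\Hat{\kappa}^{\Eps,\boldsymbol{\sigma}}_\pm$ with the cross-interaction $-\kappa_\mp G(\Hat{x}_\pm,\Hat{x}_\mp)$ built into the numerator, the same transcription of the single-vortex computation of Lemma~\ref{lemmaHatuNehari} giving $g^\Eps_\pm(\boldsymbol{\sigma})=\frac{\kappa_\pm}{2\pi}\log\frac{1}{\eps_\pm}\bigl(\int_{\R^2} f(U_{\kappa_\pm})-f(U_{\kappa_\pm}+\sigma_\pm)\bigr)+O(1)$, and the same appeal to the Poincar\'e--Miranda theorem to solve the two-dimensional system $g^\Eps_+=g^\Eps_-=0$ with $\boldsymbol{\sigma}_\Eps\to 0$. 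Your write-up is in fact somewhat more detailed than the paper's (the disjointness of the supports, the uniform sign pattern on $\partial[-\delta,\delta]^2$, and the rate $\sigma_\pm^\Eps=O(\logEps^{-1})$ are left implicit there), but the argument is the same.
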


\begin{proof}
For every $\boldsymbol{\sigma}=(\sigma_+, \sigma_-) \in \R^2$, we define
\begin{gather*}
  \Hat{\kappa}^\pm_{\Eps, \boldsymbol{\sigma}}=\frac{q^\Eps_\pm(x_\pm)-\kappa_\mp G(x_\pm, x_\mp)+\sigma_\pm}{\frac{1}{2\pi} \log \frac{1}{\eps_\pm \rho_{\kappa_\pm}}+H(x_\pm, x_\pm)},\\
\begin{split}
  \Hat{u}_{\Eps, \boldsymbol{\sigma}} =&U_{\Hat{\kappa}_+^\Eps}\Bigl(\frac{x-\Hat{x}_+}{\eps_+}\Bigr)+\Hat{\kappa}^\eps_+ 
    \Bigl( \frac{1}{2\pi} \log \frac{1}{\eps_+\Hat{\rho}_+^\Eps}+H(\Hat{x}_+, x) \Bigr)\\
&+U_{\Hat{\kappa}_-^\Eps}\Bigl(\frac{x-\Hat{x}_-}{\eps_-}\Bigr)+\Hat{\kappa}^\eps_- 
    \Bigl( \frac{1}{2\pi} \log \frac{1}{\eps_-\Hat{\rho}_-^\Eps}+H(\Hat{x}_-, x) \Bigr),
\end{split}
\end{gather*}
and we set
\[
  g^{\Eps}_\pm(\boldsymbol{\sigma})=\langle d \mathcal{E}_{\Eps} (\Hat{u}^{\Eps, \boldsymbol{\sigma}}), \Hat{u}^{\Eps, \sigma_\pm} \rangle. 
\]
We compute as in the proof of Lemma~\ref{lemmaHatuNehari}, 
\begin{equation}\label{eq:badaboum1}
  \int_{\Omega} \abs{\nabla u^{\Eps, \sigma_\pm}}^2
  =\int_{B(0, \rho_{\Hat{\kappa}_{\Eps, \boldsymbol{\sigma}}})} \abs{\nabla U_{\Hat{\kappa}_{\Eps, \boldsymbol{\sigma}}}}^2+\Hat{\kappa}^{\Eps, \boldsymbol{\sigma}}_{\pm}  \Bigl(\frac{\kappa_\pm}{2\pi} \log \frac{1}{\eps_\pm}+q(\Hat{x}_\pm)+\sigma_\pm\Bigr)+O(\abs{\Eps}). 
\end{equation}
We also set 
\[
\Hat{\omega}^{\Eps, \boldsymbol{\sigma}}=\frac{1}{\eps_+^2} f(\Hat{u}^{\Eps, \boldsymbol{\sigma}} - q^\Eps_+) - \frac{1}{\eps_-^2} f(q^\Eps_- -\Hat{u}^{\Eps, \boldsymbol{\sigma}}), 
\]
and we compute as in the proof of Lemma~\ref{lemmaHatuNehari} 
\begin{multline}\label{eq:badaboum2}
  \frac{1}{\eps_\pm^2}\int_{\Omega} \Hat{\omega}^{\Eps, \boldsymbol{\sigma}} \Hat{u}^{\Eps, \boldsymbol{\sigma}}_\pm
 =\int_{\R^2} F(U_{\kappa_\pm}+\sigma_\pm) \\+ (\tfrac{\kappa}{2\pi} \log \tfrac{1}{\eps_\pm}+q(\Hat{x}_\pm)+\sigma_\pm)\int_{\R^2}f(U_{\kappa_\pm}+\sigma_\pm)+o(1). 
\end{multline}
Combining \eqref{eq:badaboum1} and \eqref{eq:badaboum2} we obtain
\[
  g^\Eps_\pm(\boldsymbol{\sigma})=\frac{\kappa_\pm}{2\pi} \log \frac{1}{\eps_\pm} \Bigl( \int_{\R^2} f(U_{\kappa_\pm})-f(U_{\kappa_\pm}+\sigma_\pm)\Bigr)+O(1). 
\]
By the Poincar\'e--Miranda Theorem (see e.g.\ \cite{Kulpa1997}), when $\abs{\Eps}$ is small,  there exists $\boldsymbol{\sigma}_\Eps$ such that $g^\Eps(\boldsymbol{\sigma}_\Eps)=0$ and $\boldsymbol{\sigma}_\Eps=o(1)$ as $\Eps \to 0$. 
\end{proof}

Evaluating $\mathcal{E}_\Eps(\Hat{u}_\Eps)$ yields

\begin{corollary}\label{cor:Nodalupper}
As $\abs{\Eps} \to 0$, we have
\begin{equation*}\begin{split}
d^\Eps \leq\ &\frac{\kappa^2_+}{4\pi} \log \frac{1}{\eps_+}+\frac{\kappa^2_-}{4\pi} \log \frac{1}{\eps_-}-\mathcal{W}(x_+,x_-)+\mathcal{C}_++\mathcal{C}_-+o(1). 
\end{split}\end{equation*}
\end{corollary}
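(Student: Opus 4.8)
The plan is to reproduce the single-vortex computation of Lemma~\ref{lemmaEnergyHatu}, now for the two-signed competitor. Write $\Hat{u}^\Eps=\Hat{u}^\Eps_++\Hat{u}^\Eps_-$, where
\[
  \Hat{u}^\Eps_\pm(x)=U_{\Hat{\kappa}^{\pm}_{\Eps}}\Bigl(\frac{x-\Hat{x}_\pm}{\eps_\pm}\Bigr)+\Hat{\kappa}^{\pm}_{\Eps}\Bigl(\frac{1}{2\pi}\log\frac{1}{\eps_\pm\Hat{\rho}^\Eps_\pm}+H(\Hat{x}_\pm,x)\Bigr)
\]
is the summand attached to the vortex at $\Hat{x}_\pm$. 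By Lemma~\ref{lemNodalUpperBound}, $\Hat{u}^\Eps\in\mathcal{M}^\Eps$, so $d^\Eps\le\mathcal{E}^\Eps(\Hat{u}^\Eps)$ and it remains to expand $\mathcal{E}^\Eps(\Hat{u}^\Eps)$ as $\abs{\Eps}\to0$; throughout I use $\Hat{\kappa}^{\pm}_{\Eps}=\kappa_\pm+O(\logEps^{-1})$ from the same lemma together with the inclusions $\Hat{A}^\Eps_\pm\subset B(\Hat{x}_\pm,\Hat{r}^\Eps_\pm)$, $\Hat{r}^\Eps_\pm=O(\eps_\pm)$.

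For the Dirichlet energy I expand
\[
  \frac{1}{2}\int_\Omega\abs{\nabla\Hat{u}^\Eps}^2=\frac{1}{2}\int_\Omega\abs{\nabla\Hat{u}^\Eps_+}^2+\frac{1}{2}\int_\Omega\abs{\nabla\Hat{u}^\Eps_-}^2+\int_\Omega\nabla\Hat{u}^\Eps_+\cdot\nabla\Hat{u}^\Eps_-.
\]
Each diagonal term is evaluated verbatim as in Lemma~\ref{lemmaEnergyHatu}, contributing $\frac{\kappa_\pm^2}{2\pi}\log\frac{1}{\eps_\pm}+2\kappa_\pm q(\Hat{x}_\pm)-\kappa_\pm^2 H(\Hat{x}_\pm,\Hat{x}_\pm)$ along with the core term $\int_{\R^2}\abs{\nabla(U_{\kappa_\pm})_+}^2+\frac{\kappa_\pm^2}{2\pi}\log\rho_{\kappa_\pm}$, up to $o(1)$. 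For the cross term I integrate by parts, using that $\Hat{u}^\Eps_\pm$ equals $\Hat{\kappa}^{\pm}_{\Eps}G(\Hat{x}_\pm,\cdot)$ near $\partial\Omega$ so that the boundary integral vanishes:
\[
  \int_\Omega\nabla\Hat{u}^\Eps_+\cdot\nabla\Hat{u}^\Eps_-=\int_\Omega\Hat{u}^\Eps_-\,(-\Delta\Hat{u}^\Eps_+).
\]
The measure $-\Delta\Hat{u}^\Eps_+$ has mass $\Hat{\kappa}^+_\Eps\to\kappa_+$ and is supported in $B(\Hat{x}_+,O(\eps_+))$, while on that ball $\Hat{u}^\Eps_-$ coincides with the harmonic function $\Hat{\kappa}^-_\Eps G(\Hat{x}_-,\cdot)$, hence is continuous and equal to $\kappa_-G(\Hat{x}_+,\Hat{x}_-)+o(1)$; therefore the cross term converges to $\kappa_+\kappa_-G(\Hat{x}_+,\Hat{x}_-)$.

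It remains to treat the two nonlinear terms. Since $\Hat{x}_+\ne\Hat{x}_-$ and the cores $\Hat{A}^\Eps_\pm$ shrink to the points $\Hat{x}_\pm$, they are disjoint and separated by a fixed distance for $\abs{\Eps}$ small; on $\Hat{A}^\Eps_+$ the field $\Hat{u}^\Eps_-$ is smooth and $\Hat{u}^\Eps-q^\eps_+=U_{\Hat{\kappa}^+_\Eps}(\tfrac{\cdot-\Hat{x}_+}{\eps_+})+O(\eps_+)$, and symmetrically on $\Hat{A}^\Eps_-$, so a change of variables gives $\frac{1}{\eps_\pm^2}\int_\Omega F(\pm(\Hat{u}^\Eps-q^\eps_\pm))=\int_{\R^2}F(U_{\kappa_\pm})+o(1)$. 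Collecting the three contributions and invoking the definitions of $\mathcal{W}$, $\mathcal{C}_+$ and $\mathcal{C}_-$, the self-energy of each vortex reassembles into $\frac{\kappa_\pm^2}{4\pi}\log\frac{1}{\eps_\pm}+\mathcal{C}_\pm$ together with its self-interaction part of $-\mathcal{W}$, while the cross term supplies the mutual-interaction part; this yields $\mathcal{E}^\Eps(\Hat{u}^\Eps)=\frac{\kappa_+^2}{4\pi}\log\frac{1}{\eps_+}+\frac{\kappa_-^2}{4\pi}\log\frac{1}{\eps_-}-\mathcal{W}(\Hat{x}_+,\Hat{x}_-)+\mathcal{C}_++\mathcal{C}_-+o(1)$, which is exactly the asserted bound for the fixed admissible pair $(\Hat{x}_+,\Hat{x}_-)=(x_+,x_-)$ (optimizing over such pairs then produces the sharp form with $\sup\mathcal{W}$ needed in Theorem~\ref{thm:K3}).

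The main obstacle is the interaction term, which has no analogue in the single-vortex Lemma~\ref{lemmaEnergyHatu}. The delicate points are to show that the cross term contributes \emph{exactly} $\kappa_+\kappa_-G(\Hat{x}_+,\Hat{x}_-)$---which amounts to controlling the error in replacing $\Hat{u}^\Eps_-$ by its value $\kappa_-G(\Hat{x}_+,\Hat{x}_-)$ over the core $B(\Hat{x}_+,O(\eps_+))$ and symmetrically---and to verify that the two nonlinear terms genuinely decouple, with no spurious cross contribution in $F$; both rest on the fixed separation $\dist(\Hat{x}_+,\Hat{x}_-)>0$ and on the fact that, away from its own core, each vortex profile is the smooth harmonic field $\Hat{\kappa}^{\pm}_{\Eps}G(\Hat{x}_\pm,\cdot)$.
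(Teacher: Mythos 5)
Your overall strategy is the paper's own: the proof of Corollary~\ref{cor:Nodalupper} is exactly ``evaluate $\mathcal{E}^\Eps(\Hat{u}^\Eps)$ for the competitor of Lemma~\ref{lemNodalUpperBound}, as in Lemma~\ref{lemmaEnergyHatu}.'' But there is a genuine error in your bookkeeping of the mutual interaction, and it is a sign flip, not a removable $o(1)$. The diagonal terms cannot be evaluated ``verbatim as in Lemma~\ref{lemmaEnergyHatu}'', because in the pair construction the normalized strength is
\[
 \Hat{\kappa}^{\pm}_{\Eps}=\kappa_\pm+\frac{2\pi}{\log\frac{1}{\eps_\pm}}\Bigl(q(\Hat{x}_\pm)-\kappa_\pm H(\Hat{x}_\pm,\Hat{x}_\pm)-\kappa_\mp G(\Hat{x}_\pm,\Hat{x}_\mp)+\tfrac{\kappa_\pm}{2\pi}\log\rho_{\kappa_\pm}\Bigr)+O\bigl(\logEps^{-2}\bigr),
\]
with the extra term $-\kappa_\mp G(\Hat{x}_\pm,\Hat{x}_\mp)$ in the numerator. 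Since the first-order correction of $\Hat{\kappa}^{\pm}_{\Eps}$ gets multiplied by $\log\frac{1}{\eps_\pm}$ inside $(\Hat{\kappa}^{\pm}_{\Eps})^2\bigl(\frac{1}{2\pi}\log\frac{1}{\eps_\pm\rho_{\kappa_\pm}}+H(\Hat{x}_\pm,\Hat{x}_\pm)\bigr)$, it contributes at order $O(1)$: each diagonal term $\int_\Omega\abs{\nabla\Hat{u}^\Eps_\pm}^2$ acquires an additional $-2\kappa_\pm\kappa_\mp G(\Hat{x}_+,\Hat{x}_-)+o(1)$ on top of the single-vortex terms you quote. After halving and summing over $\pm$, the diagonal contribution to the energy is $-2\kappa_+\kappa_- G(\Hat{x}_+,\Hat{x}_-)$, which combines with the $+\kappa_+\kappa_- G(\Hat{x}_+,\Hat{x}_-)$ that you correctly extract from the cross term $\int_\Omega\nabla\Hat{u}^\Eps_+\cdot\nabla\Hat{u}^\Eps_-$ to yield a net $-\kappa_+\kappa_- G(\Hat{x}_+,\Hat{x}_-)$; this is the $G$-part of $-\mathcal{W}(\Hat{x}_+,\Hat{x}_-)$, since $\mathcal{W}$ carries $+\kappa_+\kappa_- G$.

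Consequently your final assembly, in which ``the cross term supplies the mutual-interaction part'', is backwards: with unmodified diagonal terms plus your cross term, the total would be $-\mathcal{W}(\Hat{x}_+,\Hat{x}_-)+2\kappa_+\kappa_- G(\Hat{x}_+,\Hat{x}_-)$ rather than $-\mathcal{W}(\Hat{x}_+,\Hat{x}_-)$ --- the cross term has the sign \emph{opposite} to the $G$-term of $-\mathcal{W}$. The single-vortex case already exhibits this mechanism: the naive self-energy of $\kappa G(\Hat{x},\cdot)$ contains $+\frac{\kappa^2}{2}H(\Hat{x},\Hat{x})$, yet the expansion of Lemma~\ref{lemmaEnergyHatu} ends with $-\frac{\kappa^2}{2}H(\Hat{x},\Hat{x})$; the sign reversal of the subleading terms is produced precisely by the Nehari adjustment of $\Hat{\kappa}^\eps$, and the same adjustment is what generates the correct interaction term here. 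The remainder of your outline --- the integration by parts and concentration argument for the cross term, and the decoupling of the two nonlinear terms thanks to the fixed separation of the cores --- is sound and matches what the computation requires, so the proof is repaired simply by redoing the diagonal expansion with the $\Hat{\kappa}^{\pm}_{\Eps}$ of Lemma~\ref{lemNodalUpperBound} (compare \eqref{eq:badaboum1}).
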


\subsection{Asymptotic behavior of solutions}

We shall prove the counterpart of Proposition~\ref{prop:1mai}

\begin{proposition}\label{prop:1maiNodal}
Let $(v^\Eps)$ be a family of solutions to \eqref{problemPeps} such that $v^\Eps_\pm \ne 0$
\begin{equation}
\label{assumptEnergyUpperboundNodal}
  \mathcal{E}^\Eps(v^\Eps) \le \frac{\kappa^2_+}{4\pi} \log \frac{1}{\eps_+}+\frac{\kappa^2_-}{4\pi} \log \frac{1}{\eps_-}+O(1),
\end{equation}
as $\Eps \to 0$. Define the
quantities $A_\pm^\Eps$, $\omega_\pm^\Eps$, $\kappa_\pm^\Eps$, $x_\pm^\Eps$ and $\rho_\pm^\Eps$ for $v^\Eps$ as in \eqref{defiqNodal} for $u^\Eps$.  Then 
\[
\begin{split}
  v^\Eps= \, & U_{\kappa_+^\Eps} \Big(\frac{\cdot-x^\Eps}{\eps_+}\Big)+\kappa_+^\Eps\Bigl(\frac{1}{2\pi} \log \frac{1}{\eps_+ \rho_{+}^\Eps}+ H(x^\Eps_+, \cdot)\Bigr)\\
&+U_{\kappa_-^\Eps} \Big(\frac{\cdot-x^\Eps}{\eps_-}\Big)+\kappa_-^\Eps\Bigl(\frac{1}{2\pi} \log \frac{1}{\eps_- \rho_-^\Eps}+ H(x^\Eps_-, \cdot)\Bigr)+o(1),
\end{split}
\]
\text{in $\mathrm{W}^{2, 1}_\mathrm{loc}(\Omega)$, in $\mathrm{W}^{1, 2}_0(\Omega)$, and in $\mathrm{L}^\infty(\Omega)$}, where
\[
  \kappa^\Eps=\kappa_\pm+\frac{2\pi}{\log \frac{1}{\Eps_\pm}}\Bigl(q(x^\Eps)-\kappa_\pm H(x^\Eps, x^\Eps)-\kappa_\mp G(x_\pm, x_\mp) -\frac{\kappa}{2\pi} \log \frac{1}{\rho_{\kappa_\pm}} \Bigr)+o(\logEps^{-1}).
\]
In particular, we have
\[
  \mathcal{E}^\Eps (v^\Eps)= \frac{\kappa^2_+}{4\pi} \log \frac{1}{\eps_+}+\frac{\kappa^2_-}{4\pi} \log \frac{1}{\eps_-}-\mathcal{W}(x_+,x_-)+\mathcal{C}_++\mathcal{C}_-+o(1).
\]
and
\[
  B(x^\Eps_\pm, \Bar{r}_\pm^\Eps) \subset A_\pm^\Eps \subset B(x_\pm^\Eps, \mathring{r}_\pm^\Eps),
\] 
with $\Bar{r}^\Eps_\pm=\eps_\pm \rho_{\kappa_\pm}+o(\eps_\pm)$ and $\mathring{r}_\pm^\Eps=\eps_\pm
\rho_{\kappa_\pm} +o(\eps_\pm)$. 
\end{proposition}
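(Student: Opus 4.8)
The plan is to adapt, sign by sign, the four-step scheme used for Proposition~\ref{prop:1mai}, running the positive and negative vorticity sets $A_+^\Eps$ and $A_-^\Eps$ in parallel at their own scales $\eps_+$ and $\eps_-$. The structural fact that makes this decoupling legitimate is that $A_+^\Eps$ and $A_-^\Eps$ are disjoint, since $q^\eps_- < q^\eps_+$; consequently the two nonlinearities in \eqref{Qeps} have disjoint supports and contribute additively both to the equation and to $\mathcal{E}^\Eps$.

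First I would establish the counterpart of Proposition~\ref{propositionEstimatesueps} for each sign. Testing \eqref{Qeps} against $v^\Eps$ produces the Nehari identity, while testing against $(v^\Eps-q^\eps_+)_+$ and against $(q^\eps_--v^\Eps)_+$ separately isolates the two vortex regions; testing against $\min(v^\Eps, q^\eps_+)$, respectively $\max(v^\Eps, q^\eps_-)$, controls the total masses. Because $q^\eps_+\ge 0\ge q^\eps_-$ for small $\abs{\Eps}$, the manipulations of Proposition~\ref{propositionEstimatesueps} (Chebyshev–Poincar\'e for the area, Gagliardo–Nirenberg for the gradient) go through verbatim and the bound \eqref{assumptEnergyUpperboundNodal} splits cleanly into its two pieces, giving $\muleb{2}(A_\pm^\Eps)=O(\logEps^{-1})$, $\int_{A_\pm^\Eps}\abs{\nabla(v^\Eps-q^\eps_\pm)}^2=O(1)$, and $\int_\Omega\abs{\omega^\Eps_\pm}\le\abs{\kappa_\pm}+O(\logEps^{-1})$. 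I would then import the capacity machinery (Lemma~\ref{lemmaAreaDiameter}, Lemma~\ref{lemmaVortexSplit}) for each sign, using $v^\Eps=q^\eps_\pm$ on $\partial A_\pm^\Eps$ together with the energy bound on $\Omega\setminus A_\pm^\Eps$, to obtain connected components of diameter $O(\eps_\pm)$ and area $O(\eps_\pm^2)$, and the splitting into vanishing and finitely many essential vortices, each carrying vorticity bounded below and sitting at distance $\gg\eps$ from $\partial\Omega$.

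The genuinely new difficulty, which I expect to be the main obstacle, is the interaction between essential vortices of opposite sign. The cluster splitting of Step~2 separates essential vortices by $\gg\eps$, but a priori a single cluster of diameter $O(\eps)$ might contain both positive and negative essential vorticity, and the counting argument of Lemma~\ref{lemmaLocalAsymptotics} no longer applies directly: there the cross terms $\kappa^\eps_j\log\tfrac1{\abs{x_i-x_j}}$ were all positive and forced a single vortex, whereas with mixed signs the sum is no longer sign-definite. I expect the resolution to come from the boundedness of the renormalised profiles $w_i^\Eps$: after rescaling a positive essential vortex by $\eps_+$ and using the representation $v^\Eps=\int_\Omega G(\cdot,z)(\omega^\Eps_++\omega^\Eps_-)(z)\,dz$, convergence of the local profile to $U_{\kappa_+}$ (with the Caffarelli–Friedman symmetry result again pinning down the radial limit) requires every neighbouring interaction $\kappa^\Eps_j\log\tfrac1{\abs{x_i-x_j}}$ to stay $O(1)$. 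This forces $\abs{x_+^\Eps-x_-^\Eps}$ to stay bounded away from $0$, so that $G(x_+^\Eps,x_-^\Eps)=O(1)$, both excluding a mixed cluster and leaving exactly one essential vortex of each sign; the surviving bounded cross term is precisely the $\kappa_\mp G(x_\pm,x_\mp)$ contribution in the stated expansion of $\kappa^\Eps$. The delicate point is that the separation must be genuinely bounded below rather than merely $\gg\eps$, which I would close by the energy computation of Step~4: once the profiles converge, $\mathcal{E}^\Eps(v^\Eps)$ equals the claimed main term minus $\mathcal{W}(x_+^\Eps,x_-^\Eps)$ up to $o(1)$, and comparison with \eqref{assumptEnergyUpperboundNodal} bounds $\abs{\kappa_+\kappa_-}\,G(x_+^\Eps,x_-^\Eps)$ from above, confirming $\abs{x_+^\Eps-x_-^\Eps}\ge c>0$.

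Finally I would carry out the global approximation and energy identity as in Proposition~\ref{propositionAsymptoticsW21}, Corollary~\ref{corollaryAsymptotic} and Corollary~\ref{corEnergy}, now with $\Tilde{v}^\Eps$ the sum of the two rescaled profiles and their harmonic corrections. The Newtonian-potential second-derivative estimate applies to each vortex separately, since $\omega^\Eps_\pm-\Tilde{\omega}^\Eps_\pm\to 0$ in $\mathrm{L}^1(\Omega)$, and the vanishing part is absorbed by Lemma~\ref{lemmaVanishingVorticity} as before; this yields $v^\Eps=\Tilde{v}^\Eps+o(1)$ in $\mathrm{W}^{2,1}_{\mathrm{loc}}(\Omega)$, in $\mathrm{W}^{1,2}_0(\Omega)$ and in $\mathrm{L}^\infty(\Omega)$, together with the energy splitting into $\frac{\kappa_+^2}{4\pi}\log\frac1{\eps_+}+\frac{\kappa_-^2}{4\pi}\log\frac1{\eps_-}-\mathcal{W}(x_+^\Eps,x_-^\Eps)+\mathcal{C}_++\mathcal{C}_-+o(1)$ and the inclusions for $A_\pm^\Eps$, as claimed.
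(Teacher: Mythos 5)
Your plan follows the paper's four-step scheme, and most of it (the testing functions in Step 1, the per-sign capacity machinery, the Step-3/Step-4 small-scale and global asymptotics) matches what the paper actually does. But there are two places where, as written, the argument does not close.

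First, Step 1. You assert that, because the two nonlinearities have disjoint supports, the bound \eqref{assumptEnergyUpperboundNodal} ``splits cleanly into its two pieces''. Additivity of the energy over the two sign parts is true, but a bound on the \emph{sum} does not bound each summand to $O(1)$ precision: a priori one piece could exceed $\frac{\kappa_+^2}{4\pi}\log\frac{1}{\eps_+}$ by an unbounded amount compensated by a deficit in the other. The missing input is a \emph{lower} bound per sign, and this is exactly how the paper opens the proof of Proposition~\ref{propositionNodalEstimatesueps}: testing \eqref{Qeps} against $v^\Eps_\pm$ shows each signed part lies on the Nehari manifold of the corresponding single-sign problem, so the least-energy asymptotics of Theorem~\ref{thm:K1} give $\mathcal{E}_\Eps(v^\Eps_\pm)\ge \frac{\kappa_\pm^2}{4\pi}\log\frac{1}{\eps_\pm}+O(1)$, and only combined with \eqref{assumptEnergyUpperboundNodal} does one get the matching identities $\mathcal{E}_\Eps(v^\Eps_\pm)=\frac{\kappa_\pm^2}{4\pi}\log\frac{1}{\eps_\pm}+O(1)$. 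This is not cosmetic: the per-sign analogue of \eqref{eq:2etoiles}, on which every capacity estimate in your Step 2 (and the separation estimate below) rests, is unavailable without it.

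Second, the opposite-sign separation, which you rightly identify as the genuinely new difficulty. Your mechanism inverts the logic: convergence of the local profile to $U_{\kappa_\pm}$ is a conclusion, not a hypothesis, so it cannot be used to ``require'' the interactions $\kappa^\Eps_\mp\log\frac{1}{\abs{x^\Eps_+-x^\Eps_-}}$ to stay $O(1)$; and your proposed closure via the Step-4 energy identity is circular as sketched, since the expansion $\mathcal{E}^\Eps(v^\Eps)=\cdots-\mathcal{W}(x^\Eps_+,x^\Eps_-)+o(1)$ is itself derived only after the structure (one essential vortex per sign, bounded separation, profile convergence) is in place — or else would have to be proved uniformly in the regime $\eps_\pm\ll\abs{x^\Eps_+-x^\Eps_-}\ll 1$, which you do not do. Worse, in the one scenario you explicitly flag, a single mixed cluster of diameter $O(\eps)$, the rescaled limit equation carries both nonlinearities, so there is no single-sign profile and the Caffarelli--Friedman symmetry argument has nothing to act on. The paper avoids all of this by proving \eqref{ineqNodalSignDistance} in Step 2, \emph{before} any local analysis, via a dedicated capacity estimate (Lemma~\ref{lemmaNodalVortexSplit}): the normalized function $\eta_\Eps$, built from $v^\Eps_+/\kappa_+$ and $v^\Eps_-/\kappa_-$ and divided by $\log\frac{1}{\eps_+\eps_-}$, takes two plateau values differing by $1+o(1)$ on $A^\Eps_{\pm,i}$ and $A^\Eps_{\mp,j}$ while its Dirichlet energy is $O\bigl((\log\frac{1}{\eps_+\eps_-})^{-1}\bigr)$ by the per-sign bounds above; Proposition~\ref{propositionCapacityBoundDistance} then forces $\dist(A^\Eps_{\pm,i},A^\Eps_{\mp,j})\ge c$ at all scales simultaneously, mixed clusters included. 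A repaired version of your route does exist at intermediate scales — the analogue of \eqref{eqVorticitiesGreen} combined with the per-sign vorticity upper bounds forces $\log\frac{1}{\abs{x^\Eps_+-x^\Eps_-}}=O(1)$ once the separation is $\gg\eps$ — but it leaves the $O(\eps)$ mixed cluster untouched, so some capacity- or condenser-type input at that scale is unavoidable.
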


In other words, 
$v^\Eps$ satisfies the same asymptotics as the one stated in
Theorem~\ref{thm:K1} for $v^\Eps$ except for the convergence of $x^\Eps$.

\subsubsection{Step 1: First quantitative properties of the solutions}


\begin{proposition}
\label{propositionNodalEstimatesueps} 
We have, as $\abs{\Eps} \to 0$,
\begin{gather*}
\muleb{2}(A^\Eps_\pm) =O\bigl(\logEps^{-1}\bigr), \\
\int_{A^\Eps_+} \abs{\nabla (v^\Eps-q^\Eps_\pm)}^2 =O(1), \\
\frac{1}{\eps_{\pm}^2}\int_{A^\Eps_\pm} F(\pm(v^\Eps-q^\Eps_\pm)) =O(1),\\
\int_{\Omega\setminus A^\Eps_\pm} \abs{\nabla v^\Eps_\pm}^2 \leq \frac{\kappa^2_\pm}{2\pi} \log\frac{1}{\Eps_\pm} + O(1), \\
\pm \int_{\Omega} \omega^\Eps_\pm \leq \pm \kappa_\pm + O\bigl(\logEps^{-1}\bigr). 
\end{gather*}
\end{proposition}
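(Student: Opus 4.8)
The plan is to mirror the proof of Proposition~\ref{propositionEstimatesueps}, the only genuinely new ingredient being a decoupling of the energy that lets the sharp constants be attributed separately to the two signs. First I would record the two one-sided Nehari identities. Testing the equation \eqref{Qeps} against the positive and negative parts $v^\Eps_+$ and $v^\Eps_-$ (admissible test functions since $v^\Eps$ is a solution) and using that $f(v^\Eps-q^\eps_+)$ is supported in $A^\Eps_+\subset\{v^\Eps>0\}$ while $f(q^\eps_--v^\Eps)$ is supported in $A^\Eps_-\subset\{v^\Eps<0\}$, one obtains
\[
  \int_\Omega \abs{\nabla v^\Eps_\pm}^2 = \pm\frac{1}{\eps_\pm^{2}}\int_{A^\Eps_\pm} f\bigl(\pm(v^\Eps-q^\eps_\pm)\bigr)\,v^\Eps .
\]
The same disjointness of supports shows that the energy decouples,
\[
  \mathcal{E}^\Eps(v^\Eps)=\mathcal{E}^{\eps_+}(v^\Eps_+)+\mathcal{E}^{\eps_-}(v^\Eps_-),
\]
where $\mathcal{E}^{\eps_+}$ is exactly the single-vortex functional of Section~\ref{sectionSingleVortex} and, after the reflection $u\mapsto-u$, $q\mapsto-q$, $\kappa_-\mapsto\abs{\kappa_-}$, so is $\mathcal{E}^{\eps_-}$. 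Because each summand is nonnegative (apply \eqref{ineqEnergy} to each part), we get the crude per-sign bound $\mathcal{E}^{\eps_\pm}(v^\Eps_\pm)\le\mathcal{E}^\Eps(v^\Eps)=O(\logEps)$.

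With this crude bound in hand, the first three estimates are obtained precisely as in Proposition~\ref{propositionEstimatesueps}, carried out on each vortex separately with the obvious sign bookkeeping. Chebyshev together with the Poincar\'e inequality applied to $v^\Eps_\pm\in\mathrm{H}^1_0(\Omega)$ gives $\muleb{2}(A^\Eps_\pm)=O(\logEps^{-1})$; testing against $\min(v^\Eps_+,q^\eps_+)$, respectively $\max(v^\Eps_-,q^\eps_-)$, which vanishes on the opposite vortex set, reproduces \eqref{ineqVorticityEnergy} and yields $\pm\int_\Omega\omega^\Eps_\pm=O(1)$; and then the one-sided analogue of \eqref{eqNehariVortex} together with the Gagliardo--Nirenberg inequality \eqref{ineqGN} gives $\int_{A^\Eps_\pm}\abs{\nabla(v^\Eps-q^\eps_\pm)}^2=O(1)$ and $\frac{1}{\eps_\pm^2}\int_{A^\Eps_\pm}F(\pm(v^\Eps-q^\eps_\pm))=O(1)$.

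The delicate point, and the main obstacle, is the fourth estimate with its sharp constant $\frac{\kappa_\pm^2}{2\pi}$: the crude bound only controls the \emph{total} $\frac{\kappa_+^2+\kappa_-^2}{2\pi}\logEps$, which could a priori be split arbitrarily between the two signs. Here I would use that $v^\Eps_\pm$ lies in the single-vortex Nehari manifold $\mathcal{N}^{\eps_\pm}$ (immediate from the one-sided Nehari identity above), so that $\mathcal{E}^{\eps_\pm}(v^\Eps_\pm)\ge c^{\eps_\pm}$. The matching lower bound $c^{\eps_\pm}\ge\frac{\kappa_\pm^2}{4\pi}\log\frac1{\eps_\pm}+O(1)$ is already available from Section~\ref{sectionSingleVortex}: it is implied by Theorem~\ref{thm:K1} (equivalently Corollary~\ref{corEnergy}) applied to the single-vortex minimizer, since $\mathcal{W}$ is bounded above. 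Invoking this for the \emph{opposite} sign and subtracting from the total upper bound \eqref{assumptEnergyUpperboundNodal} forces the sharp per-sign upper bound $\mathcal{E}^{\eps_\pm}(v^\Eps_\pm)\le\frac{\kappa_\pm^2}{4\pi}\log\frac1{\eps_\pm}+O(1)$.

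Once this per-sign energy split is established, the remaining two estimates follow as in Proposition~\ref{propositionEstimatesueps}. The energy decomposition, dropping the nonnegative term $\tfrac12\int_{A^\Eps_\pm}\abs{\nabla v^\Eps_\pm}^2$ and using the third estimate, gives $\int_{\Omega\setminus A^\Eps_\pm}\abs{\nabla v^\Eps_\pm}^2\le\frac{\kappa_\pm^2}{2\pi}\log\frac1{\eps_\pm}+O(1)$; and the vorticity bound $\pm\int_\Omega\omega^\Eps_\pm\le\pm\kappa_\pm+O(\logEps^{-1})$ then comes from the identity \eqref{ineqVorticityEnergy} together with $\inf_\Omega q^\eps_\pm=\pm\frac{\kappa_\pm}{2\pi}\log\frac1{\eps_\pm}+O(1)$, exactly as \eqref{ineqTotalVorticity} was deduced. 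The one subtlety I would verify carefully is that the reflection $u\mapsto-u$ identifies $\mathcal{E}^{\eps_-}(v^\Eps_-)$ with an honest single-vortex energy (for data $\abs{\kappa_-}$ and $-q$, which still satisfies the hypotheses of Section~\ref{sectionSingleVortex}), so that the lower bound on $c^{\eps_-}$ applies verbatim.
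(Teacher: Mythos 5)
Your proposal is correct and follows essentially the same route as the paper: the paper's proof likewise begins by noting that $v^\Eps_\pm$ lies in the single-sign Nehari manifold so that Theorem~\ref{thm:K1} yields $\mathcal{E}^{\eps_\pm}(v^\Eps_\pm)\ge\frac{\kappa_\pm^2}{4\pi}\log\frac{1}{\eps_\pm}+O(1)$, which combined with \eqref{assumptEnergyUpperboundNodal} pins down each per-sign energy sharply, and then repeats the proof of Proposition~\ref{propositionEstimatesueps} with the test functions $v^\Eps_\pm$, $\min(v^\Eps,q^\Eps_+)$ and $\max(v^\Eps,q^\Eps_-)$, and $(v^\Eps-q^\Eps_+)_+$ and $(q^\Eps_--v^\Eps)_+$. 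Your only deviations are cosmetic: you order the crude estimates before the sharp energy split and use the truncation $\min(v^\Eps_+,q^\eps_+)$ instead of $\min(v^\Eps,q^\Eps_+)$, which is an equivalent bookkeeping choice.
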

\begin{proof}
First note that by Theorem~\ref{thm:K1},
\[
  \mathcal{E}_\Eps(v^\Eps_\pm) \ge \frac{\kappa_+^2}{4\pi} \log \frac{1}{\eps_\pm}+O(1). 
\]
By \eqref{assumptEnergyUpperboundNodal}, this implies that 
\[
  \mathcal{E}_\Eps(v^\Eps_\pm) =\frac{\kappa_+^2}{4\pi} \log \frac{1}{\eps_\pm}+O(1). 
\]
We are now in position to proceed as in the proof of Proposition~\ref{propositionEstimatesueps}, testing $(\mathcal{Q}^\Eps)$ against $v^\Eps_+$ and $v^\Eps_-$ instead of $v^\Eps$, then against $\min(v^\Eps, q^\Eps_+)$ and $\max(v^\Eps, q^\Eps_-)$ instead of $\min(v^\Eps, q^\Eps)$, and finally against $(v^\Eps-q^\Eps_+)_+$ and $(q^\Eps_--v^\Eps)_+$ instead of $(v^\Eps-q^\Eps_+)_+$. We skip the details. 
\end{proof}

\subsubsection{Step 2: Structure of the vorticity set}
In this subsection we further describe the vorticity set $A^\Eps=A^\Eps_+\cup A^\Eps_-$. Since it is an open set, it contains at most countably many connected components that we label $A^\Eps_{\pm, i}$, $i \in I^\Eps_\pm$. 
First we have a control on the total area and on the diameter of each connected component. 

\begin{lemma}
\label{lemmaNodalAreaDiameter}
If $\abs{\Eps}$ is sufficiently small, we have
\[
  \muleb{2}(A^\Eps_\pm) \le C \eps_\pm^2
\]
and, for every $i \in I^\Eps_\pm$,
\begin{equation}
\label{ineqNodalVorticityDiameter}
  \diam(A^\Eps_{\pm, i}) \le C \eps_\pm. 
\end{equation}
\end{lemma}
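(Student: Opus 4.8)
The plan is to adapt the capacity-based proof of Lemma~\ref{lemmaAreaDiameter} to the nodal setting, treating the positive and negative vorticity sets independently. The key observation is that the estimates in Proposition~\ref{propositionNodalEstimatesueps} provide, separately for each sign, exactly the analogues of \eqref{eq:2etoiles} that drove the capacity bound in the single-vortex case. First I would fix a sign, say $+$, and introduce the rescaled function
\[
  w^\Eps=\frac{v^\Eps}{\min_{\partial A^\Eps_+}q^\Eps_+},
\]
noting that $v^\Eps=q^\Eps_+$ on $\partial A^\Eps_+$ by definition of the vorticity set. The crucial point here is that $v^\Eps$ need not vanish on $\partial A^\Eps_+$ (it only vanishes on $\partial\Omega$), but since $A^\Eps_+$ is bounded away from the boundary and $w^\Eps\ge 1$ on $\partial A^\Eps_+$ while $w^\Eps=0$ on $\partial\Omega$, the quantity $\int_{\Omega\setminus A^\Eps_+}\abs{\nabla w^\Eps}^2$ still controls $\capa(A^\Eps_+,\Omega)$ from below.

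Next I would combine the lower bound $\min_{\partial A^\Eps_+}q^\Eps_+\ge \frac{\kappa_+}{2\pi}\log\frac{1}{\eps_+}+O(1)$ (valid since $q$ is bounded) with the energy estimate $\int_{\Omega\setminus A^\Eps_+}\abs{\nabla v^\Eps}^2\le \frac{\kappa_+^2}{2\pi}\log\frac{1}{\eps_+}+O(1)$ from Proposition~\ref{propositionNodalEstimatesueps}. Exactly as in \eqref{ineqCapacity}, this yields
\[
  \frac{2\pi}{\capa(A^\Eps_+,\Omega)}\ge \log\frac{1}{\eps_+}+O(1).
\]
Then Proposition~\ref{propositionCapacityArea} converts this capacity lower bound into the area estimate $\muleb{2}(A^\Eps_+)\le C\eps_+^2$, and the monotonicity $\capa(A^\Eps_{+,i},\Omega)\le\capa(A^\Eps_+,\Omega)$ together with Proposition~\ref{propositionBoundDiameter} gives the diameter bound $\diam(A^\Eps_{+,i})\le C\eps_+$. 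The argument for the $-$ sign is identical after replacing $v^\Eps$ by $-v^\Eps$ and $q^\Eps_+$ by $-q^\Eps_-$, using the corresponding estimates in Proposition~\ref{propositionNodalEstimatesueps}; here one should first verify that $A^\Eps_+$ and $A^\Eps_-$ are disjoint (they are, since $q^\Eps_+>q^\Eps_-$ when $\abs{\Eps}$ is small) so that the two sign-components do not interfere.

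The only genuine subtlety, and the main obstacle, is confirming that the capacity argument goes through with $v^\Eps$ not vanishing on $\partial A^\Eps_\pm$ but merely equal to $q^\Eps_\pm$ there, and that the crude bound $\min_{\partial A^\Eps_\pm}q^\Eps_\pm \sim \frac{\kappa_\pm}{2\pi}\logEps$ is sharp enough; this is handled precisely because $q$ is bounded so the $\log\frac{1}{\eps_\pm}$ term dominates. I expect the proof to reduce to the single sentence that Proposition~\ref{propositionNodalEstimatesueps} supplies the sign-separated analogues of \eqref{eq:2etoiles}, after which one continues verbatim as in the proof of Lemma~\ref{lemmaAreaDiameter} for each sign; the authors will likely just state this and omit the repeated computation.
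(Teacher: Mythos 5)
Your route is exactly the paper's: the published proof of this lemma is a single sentence declaring that one repeats the arguments of Lemma~\ref{lemmaAreaDiameter} (the citation in the paper is a self-referential typo, but that is clearly the intended target), sign by sign, fed by Proposition~\ref{propositionNodalEstimatesueps}. There is, however, one step which as literally written would fail. You quote the energy bound as $\int_{\Omega\setminus A^\Eps_+}\abs{\nabla v^\Eps}^2\le \frac{\kappa_+^2}{2\pi}\log\frac{1}{\eps_+}+O(1)$, but Proposition~\ref{propositionNodalEstimatesueps} asserts this only for the positive part, $\int_{\Omega\setminus A^\Eps_\pm}\abs{\nabla v^\Eps_\pm}^2\le \frac{\kappa_\pm^2}{2\pi}\log\frac{1}{\eps_\pm}+O(1)$. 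The full gradient integral over $\Omega\setminus A^\Eps_+$ contains the negative vortex core and is of size $\frac{\kappa_+^2}{2\pi}\log\frac{1}{\eps_+}+\frac{\kappa_-^2}{2\pi}\log\frac{1}{\eps_-}$; inserting that into the computation \eqref{ineqCapacity} yields only $\frac{2\pi}{\capa(A^\Eps_+,\Omega)}\ge \theta \log\frac{1}{\eps_+}+O(1)$ with $\theta=\kappa_+^2\log\frac{1}{\eps_+}\big/\bigl(\kappa_+^2\log\frac{1}{\eps_+}+\kappa_-^2\log\frac{1}{\eps_-}\bigr)$, which is bounded away from $1$ since $\log\eps_+/\log\eps_-$ is pinched between positive constants. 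Via Proposition~\ref{propositionCapacityArea} this gives only $\muleb{2}(A^\Eps_+)\le C\eps_+^{2\theta}$, strictly weaker than the claimed $C\eps_+^2$, and the diameter bound degrades the same way.

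The repair is immediate and is in fact half-contained in your truncation remark: set $w^\Eps=v^\Eps_+\big/\min_{\partial A^\Eps_+}q^\Eps_+$, using the \emph{positive part}. Then $w^\Eps$ vanishes on $\partial\Omega$, equals at least $1$ on $\partial A^\Eps_+$, and the capacity test function obtained by truncating at height $1$ has Dirichlet energy controlled by $\int_{\Omega\setminus A^\Eps_+}\abs{\nabla v^\Eps_+}^2$, which is exactly the proposition's estimate; equivalently, one observes that the truncation of your $w^\Eps$ varies only on the set $\{0<v^\Eps<\min_{\partial A^\Eps_+}q^\Eps_+\}$, where $\nabla v^\Eps=\nabla v^\Eps_+$ almost everywhere, so the negative region never contributes. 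With this single correction, the remainder of your argument — the capacity lower bound, Proposition~\ref{propositionCapacityArea} for the area, monotonicity of capacity and Proposition~\ref{propositionBoundDiameter} for the diameters, and the symmetric treatment of the $-$ sign via $-v^\Eps$ and $-q^\Eps_-$ — goes through verbatim and coincides with the paper's intended proof; your disjointness observation for $A^\Eps_+$ and $A^\Eps_-$ is correct but not actually needed once each sign uses its own part of $v^\Eps$.
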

\begin{proof}
It suffices to repeat the arguments in the proof of Lemma~\ref{lemmaNodalAreaDiameter}. 
\end{proof}

\begin{lemma}
\label{lemmaNodalVortexSplit}
There exists constants $\gamma, C, c>0$ such that, when $\abs{\Eps}$ is small enough,
if 
\begin{equation}
\label{eqNodalSplitVortices}
  \int_{A^\Eps_{\pm, i}} \abs{\nabla (v^\Eps - q^\Eps_\pm)}^2 > \gamma^2,
\end{equation}
then for every $j \in I^\Eps_\mp$,
\begin{gather}
\label{ineqNodalLowerBoundMeas}  \muleb{2}(A_{\pm, i}^\Eps)\ge c\eps^2_\pm, \\ 
\label{ineqNodalLowerBoundDiam}  \diam(A^\Eps_{\pm, i})\ge c\eps_\pm, \\
\label{ineqNodalLowerBoundBoundary}   \dist(A^\Eps_{\pm, i}, \partial \Omega)\ge c, \\
\label{ineqNodalSignDistance}  \dist(A^\Eps_{\pm, i}, A^\Eps_{\mp, j})\ge c,
\end{gather}
while if \eqref{eqNodalSplitVortices} does not hold, then 
\[
  \int_{A^\Eps_{\pm, i}} \abs{\omega^\Eps}^s \le C \Norm{\nabla q}_{\mathrm{L}^r(A^\Eps_{\pm, i})}^{sp} \muleb{2}(A^\Eps_{\pm,i})^{1+sp(\frac{1}{2}-\frac{1}{r})},
\]
where $C$ only depends on $s \ge 1$. 
\end{lemma}
\begin{proof}
The proof is very similar to the one of Lemma~\ref{lemmaVortexSplit} except for \eqref{ineqNodalSignDistance} which remains to be proved. To that purpose, we consider the function
\[
  \eta_\Eps=\frac{\frac{v^\Eps_+}{\kappa_+}+\frac{v^\Eps_-}{\kappa_+}}{\log \frac{1}{\eps_+\eps_-}}. 
\]
We have 
\[
\eta_\Eps \restrictedto{A^\Eps_{\pm, i}}=\frac{\log \frac{1}{\eps_+}}{\log \frac{1}{\eps_+\eps_-}}+O\bigl(\logEps^{-1}\bigr),
\]
and
\[
\eta_\Eps \restrictedto{A^\Eps_{\mp, j}}=\frac{- \log \frac{1}{\eps_-}}{\log \frac{1}{\eps_+\eps_-}}+O\bigl(\logEps^{-1}\bigr). 
\]
Therefore,
\[
  \frac{2\pi}{\capa(A^\Eps_+, \R^2\setminus A^\Eps_-)}\ge \log \frac{1}{\eps_+ \eps_-}+O(1). 
\]
Using Proposition~\ref{propositionCapacityBoundDistance} with $\Omega=\R^2 \setminus \overline{A^\Eps_{\pm, i}}$ and $K=\overline{A^\Eps_{\mp, j}}$, and applying \eqref{ineqNodalVorticityDiameter} to $A^\Eps_{\mp, j}$ and \eqref{ineqNodalLowerBoundMeas} to $A^\Eps_{\pm, i}$, we are led to
\[
  \log \frac{1}{\eps_+\eps_-} \le \log C \Bigl( 1+\frac{\dist(A^\Eps_{\pm, i}, A^\Eps_{\mp, j})}{\eps_\mp}\Bigr)\Bigl( 1+\frac{\dist(A^\Eps_{\pm, i}, A^\Eps_{\mp, j})}{\eps_\pm}\Bigr)+O(1),
\]
which can not hold if $\dist(A^\Eps_{\pm, i}, A^\Eps_{\mp, j}) \to 0$. 
\end{proof}

The vorticity set is split into four subsets:
\begin{align*}
  V^\Eps_\pm&=\bigcup \Bigl\{A_{\pm, i}^\Eps \st \int_{A_{\pm, i}^\Eps} \abs{\nabla (v^\Eps - q^\Eps_\pm}^2 \le \gamma^2\Bigr\}, \\
  E^\Eps_\pm&=\bigcup \Bigl\{A_{\pm, i}^\Eps \st \int_{A_{\pm, i}^\Eps} \abs{\nabla (v^\Eps - q^\Eps_\pm)}^2 > \gamma^2\Bigr\}. 
\end{align*}
By Proposition~\ref{propositionNodalEstimatesueps}, the sets $E^\Eps_+$ and $E^\Eps_-$ contain finitely many connected components, and by \eqref{ineqNodalLowerBoundMeas}, \eqref{ineqNodalLowerBoundDiam}, \eqref{ineqNodalLowerBoundBoundary} and \eqref{ineqNodalSignDistance}, they can thus be split as $E^\Eps_\pm=\bigcup_{j=1}^{k^\Eps_\pm} E^\Eps_{\pm, j}$, where $E^\Eps_{\pm, j}$ are nonempty open sets such that 
\begin{gather*}
  \frac{\dist(E^\Eps_{\pm, i}, E^\Eps_{\pm, j})}{\eps_\pm} \to \infty,\\
  \liminf_{\Eps \to 0} \dist(E^\Eps_{\pm, i}, E^\Eps_{\mp, j}) >0,\\
  \liminf_{\Eps \to 0} \dist(E^\Eps_{\pm, i}, \partial \Omega) >0, \\
\limsup_{\Eps \to 0} \frac{\diam (E^\Eps_{\pm, i})}{\eps_\pm} < \infty,
\end{gather*}
as $\Eps \to 0$. 
By definition of $E^\Eps$ and by \eqref{ineqVortexEnergy}, $k^\Eps_+$ and $k^\Eps_-$ remain bounded as $\Eps \to 0$. 

\subsubsection{Step 3: Small scale asymptotics}
We set 
\begin{align*}
  \omega^\Eps_{\pm, v}&=\omega^\Eps \charfun{V_{\pm}^\Eps}, &
  \omega^\Eps_{\pm, i}&=\omega^\Eps \charfun{E^\Eps_{\pm, i}}, \\
  \kappa^\Eps_{\pm, i}&=\int_{\Omega} \omega^\Eps_{\pm, i}, &
  x^\Eps_{\pm, i}&=\frac{1}{\kappa^\Eps_{\pm, i}}\displaystyle\int_{\Omega} x\omega^\Eps_{\pm, i}(x)\, dx. 
\end{align*}

Using the analogues of Lemma~\ref{lemmaVanishingVorticity} and Lemma~\ref{lemmaSmallScaleLocalEstimates}, one obtains the analogue of Lemma~\ref{lemmaLocalAsymptotics}. 

\begin{lemma}
\label{lemmaNodalLocalAsymptotics}
When $\Eps$ is small, we have $k^\Eps_+=k^\Eps_-=1$, and
\begin{multline*}
  \kappa^\Eps_{\pm, 1}=\kappa_\pm +\frac{2\pi}{\log \frac{1}{\eps_\pm}}\Bigl(q(x^\Eps_\pm)-\kappa_\pm H(x^\Eps_\pm, x^\Eps_\pm)-\kappa_{\mp} G(x^\Eps_{\pm}, x^\Eps_{\mp})-\frac{\kappa_\pm}{2\pi} \log \frac{1}{\rho_{\kappa_\pm}} \Bigr) \\
+o({\logEps}^{-1})
\end{multline*}
and $v^\Eps_{\pm} \to U_{\kappa_\pm}$ in $\mathrm{W}^{1, r}_{\mathrm{loc}}(\R^2)$. 
\end{lemma}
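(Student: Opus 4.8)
The plan is to transpose the proof of Lemma~\ref{lemmaLocalAsymptotics} to each sign class separately, the only genuinely new ingredient being the bookkeeping of the interaction between the positive and negative essential vortices. First I would blow up around each essential vortex at its own scale: for $i$ in the (finite) index set of $E^\Eps_\pm$, set $v^\Eps_{\pm,i}(y)=v^\Eps(x^\Eps_{\pm,i}+\eps_\pm y)-q^\eps_\pm(x^\Eps_{\pm,i})$ and $q^\Eps_{\pm,i}(y)=q(x^\Eps_{\pm,i}+\eps_\pm y)-q(x^\Eps_{\pm,i})$. Because the sign-separation estimate \eqref{ineqNodalSignDistance} keeps $A^\Eps_+$ and $A^\Eps_-$ at mutual distance $\ge c$ while each component has diameter $O(\eps_\pm)$, on any fixed ball the opposite-sign nonlinearity is inactive for small $\abs{\Eps}$, so $v^\Eps_{\pm,i}$ solves the single-sign equation $-\Delta v^\Eps_{\pm,i}=\pm f(\pm(v^\Eps_{\pm,i}-q^\Eps_{\pm,i}))$. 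The analogues of Lemma~\ref{lemmaVanishingVorticity} and Lemma~\ref{lemmaSmallScaleLocalEstimates} then carry over, except that the far-field expansion of $v^\Eps_{\pm,i}$ now subtracts the Green contributions of \emph{all} other vortices, both same-sign and opposite-sign; crucially, the opposite-sign contributions $\kappa^\Eps_{\mp,j}G(x^\Eps_{\pm,i},x^\Eps_{\mp,j})$ stay bounded by \eqref{ineqNodalSignDistance} and are therefore only $O(1)$.

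Next I would argue $k^\Eps_+=k^\Eps_-=1$. As in Lemma~\ref{lemmaLocalAsymptotics}, each rescaled $v^\Eps_{\pm,i}$ changes sign at a uniformly bounded point, so the subtracted constant must be $O(1)$, which after absorbing the cross-sign terms into $O(1)$ yields the balance relation
\[
\frac{\kappa^\Eps_{\pm,i}}{2\pi}\log\frac{1}{\eps_\pm}+\sum_{j\ne i}\kappa^\Eps_{\pm,j}\log\frac{1}{\abs{x^\Eps_{\pm,i}-x^\Eps_{\pm,j}}}=\frac{\kappa_\pm}{2\pi}\log\frac{1}{\eps_\pm}+O(1).
\]
Summing over $i$ within each class and invoking $\sum_i \pm\kappa^\Eps_{\pm,i}\le\pm\kappa_\pm+O(\logEps^{-1})$ from Proposition~\ref{propositionNodalEstimatesueps}, the double sum $\sum_{i\ne j}\kappa^\Eps_{\pm,j}\log\frac{\abs{x^\Eps_{\pm,i}-x^\Eps_{\pm,j}}}{\eps_\pm}$ is $O(1)$; since $\abs{x^\Eps_{\pm,i}-x^\Eps_{\pm,j}}/\eps_\pm\to\infty$ and each essential vorticity is bounded below in modulus (as in the proof of Lemma~\ref{lemmaNodalVortexSplit}), this forces $k^\Eps_\pm\le 1$, while the nodal analogue of Lemma~\ref{lemmaNonVanisingVortex} gives $k^\Eps_\pm\ge 1$. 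In particular $\kappa^\Eps_{\pm,1}=\kappa_\pm+O(\logEps^{-1})$.

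With $k^\Eps_\pm=1$ fixed, compactness of $v^\Eps_{\pm,1}$ in $\mathrm{W}^{3,r}_{\mathrm{loc}}(\R^2)$ produces a limit solving $-\Delta v=\pm f(\pm v)$ with $\int_{\R^2}\pm f(\pm v)=\kappa_\pm$ and the logarithmic decay inherited from the far-field estimate; the Caffarelli--Friedman symmetry theorem forces $v$ radial, whence $v=U_{\kappa_\pm}$ (using $U_{\kappa_-}=-U_{-\kappa_-}$ for the negative vortex). Feeding the exact profile $U_{\kappa_\pm}(y)=\frac{\kappa_\pm}{2\pi}\log\frac{\rho_{\kappa_\pm}}{\abs{y}}$ back into the far-field estimate turns all the $O(1)$ constants explicit, and solving the balance relation for $\kappa^\Eps_{\pm,1}$ gives the stated expansion; here the opposite-sign Green value, being the leading bounded contribution, surfaces precisely as the term $-\kappa_\mp G(x^\Eps_\pm,x^\Eps_\mp)$ inside the $\frac{2\pi}{\log(1/\eps_\pm)}(\cdots)$ correction.

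The main obstacle is to confirm that the two sign classes \emph{decouple} at the level of the vortex-counting argument: one must verify that the cross-sign interactions contribute only at order $O(1)$, so that they neither help nor hinder the collapse $k^\Eps_\pm\to 1$ within each class, while at the same time they survive at the finer $O(\logEps^{-1})$ scale in the expansion of $\kappa^\Eps_{\pm,1}$. This double role of $G(x^\Eps_\pm,x^\Eps_\mp)$ is exactly what the lower bound on $\dist(A^\Eps_{\pm,i},A^\Eps_{\mp,j})$ in \eqref{ineqNodalSignDistance} secures.
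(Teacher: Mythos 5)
Your proposal is correct and follows exactly the route the paper intends: the paper gives no separate proof of this lemma, saying only that the analogues of Lemma~\ref{lemmaVanishingVorticity} and Lemma~\ref{lemmaSmallScaleLocalEstimates} yield the analogue of Lemma~\ref{lemmaLocalAsymptotics}, and your write-up is precisely that adaptation. In particular you correctly identify the one genuinely new point --- that the sign-separation bound \eqref{ineqNodalSignDistance} makes the cross-sign Green contributions $O(1)$, so they drop out of the vortex-counting argument within each class yet reappear as the $-\kappa_\mp G(x^\Eps_\pm, x^\Eps_\mp)$ term at the $\logEps^{-1}$ scale in the expansion of $\kappa^\Eps_{\pm,1}$.
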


\subsubsection{Step 4: Global asymptotics}

The counterpart of Proposition~\ref{propositionAsymptoticsW21} is now

\begin{proposition}
\label{propositionAsymptoticsW21Nodal}
We have
\[
\begin{split}
  v^\Eps= & \ U_{\kappa_{+,1}^\Eps}\Bigl(\frac{\cdot-x^\Eps_{+, 1}}{\eps_+}\Bigr)+\kappa^\Eps_{+, 1}\Bigl(\frac{1}{2\pi} \log \frac{1}{\eps_+ \rho_{\kappa_{+, 1}^\Eps}}+H(x^\Eps_{+, 1}, \cdot)\Bigr) \\
&+U_{\kappa_{-,1}^\Eps}\Bigl(\frac{\cdot-x^\Eps_{-, 1}}{\eps_-}\Bigr)+\kappa^\Eps_{-, 1}\Bigl(\frac{1}{2\pi} \log \frac{1}{\eps_- \rho_{\kappa_{-, 1}^\Eps}}+H(x^\Eps_{-, 1}, \cdot)\Bigr)+o(1)
\end{split}
\]
in $\mathrm{W}^{2, 1}_{\mathrm{loc}}(\Omega)$, in $\mathrm{W}^{1, 2}_0(\Omega)$, and in $\mathrm{L}^\infty(\Omega)$. 
\end{proposition}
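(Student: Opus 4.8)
The plan is to run the single-vortex argument of Proposition~\ref{propositionAsymptoticsW21} essentially verbatim, taking advantage of the fact that, by the separation estimate \eqref{ineqNodalSignDistance}, the positive and negative vortices sit at mutual distance bounded below independently of $\Eps$. This $O(1)$ separation decouples the two singular parts, so the whole statement reduces to two copies of the one-vortex estimate plus smooth cross-terms. Writing $\Tilde{v}^\Eps=\Tilde{v}^\Eps_++\Tilde{v}^\Eps_-$ for the sum of the two single-vortex profiles appearing in the statement, the first step is to establish the small-scale convergence near each center $x^\Eps_{\pm, 1}$. Choosing $r>\Tilde{\rho}$ so that $E^\Eps_{\pm, 1}\subset B(x^\Eps_{\pm, 1}, \eps_\pm r)$ and rescaling by $\eps_\pm$, Lemma~\ref{lemmaNodalLocalAsymptotics} together with the invariance of the $\dot{\mathrm{W}}^{2, 1}$ semi-norm under scaling yields $\int_{B(x^\Eps_{\pm, 1}, 2\eps_\pm r)} \abs{D^2 v^\Eps-D^2 \Tilde{v}^\Eps}\to 0$. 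Here \eqref{ineqNodalSignDistance} guarantees that inside $B(x^\Eps_{+, 1}, 2\eps_+ r)$ only the positive nonlinearity is active, and symmetrically for the negative ball, so each rescaled profile genuinely converges to the corresponding $U_{\kappa_\pm}$.

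Next I would set $\Tilde{\omega}^\Eps_{\pm, 1}=\pm\frac{1}{\eps_\pm^2} f\bigl(\pm(\Tilde{v}^\Eps-q^\Eps_\pm)\bigr)$ and decompose $v^\Eps-\Tilde{v}^\Eps=w^\Eps_v+w^\Eps_r+w^\Eps_s$ exactly as in the single-vortex proof, now with the total vanishing vorticity $\omega^\Eps_{+, v}+\omega^\Eps_{-, v}$ feeding $w^\Eps_v$ and the essential differences $(\omega^\Eps_{+, 1}-\Tilde{\omega}^\Eps_{+, 1})+(\omega^\Eps_{-, 1}-\Tilde{\omega}^\Eps_{-, 1})$ feeding the regular part $w^\Eps_r$ (built from $H$) and the singular part $w^\Eps_s$ (built from the Newtonian kernel $\Gamma$). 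The vanishing part $w^\Eps_v\to 0$ in $\mathrm{W}^{2, s}_{\mathrm{loc}}(\Omega)$ by elliptic estimates, using the nodal analogue of Lemma~\ref{lemmaVanishingVorticity} that furnishes $\omega^\Eps_{\pm, v}\to 0$ in $\mathrm{L}^s$ for some $s>1$; the regular part $w^\Eps_r\to 0$ in $C^\infty_{\mathrm{loc}}(\Omega)$ because both centers stay away from $\partial\Omega$ and each essential difference tends to $0$ in $\mathrm{L}^1$ by Lemma~\ref{lemmaNodalLocalAsymptotics}.

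The term demanding care, and the main obstacle, is the singular part $w^\Eps_s$, which I would split by vortex as $w^\Eps_{s, +}+w^\Eps_{s, -}$. Each piece is handled precisely as in Proposition~\ref{propositionAsymptoticsW21}: since $\int_\Omega \omega^\Eps_{\pm, 1}=\int_\Omega \Tilde{\omega}^\Eps_{\pm, 1}=\kappa^\Eps_{\pm, 1}$, the leading $D^2\Gamma$ singularity cancels, and a Taylor expansion of $D^2\Gamma(x-\cdot)$ about $x^\Eps_{\pm, 1}$ leaves a remainder of size $C\eps_\pm \abs{x-x^\Eps_{\pm, 1}}^{-3}\Norm{\omega^\Eps_{\pm, 1}-\Tilde{\omega}^\Eps_{\pm, 1}}_{\mathrm{L}^1}$, which integrates over $\Omega\setminus B(x^\Eps_{\pm, 1}, 2\eps_\pm r)$ to $o(1)$. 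The cross-contributions, where $x$ lies near one center while the source is supported near the other, involve $\Gamma(x-y)$ with the two supports separated at distance $\ge c>0$ by \eqref{ineqNodalSignDistance}; these kernels are therefore smooth and converge with no further work. The possibly disparate scales $\eps_+$ and $\eps_-$ cause no difficulty, since each vortex is treated at its own scale and the assumption $0<c<\frac{\log\eps_+}{\log\eps_-}<C<\infty$ keeps the two logarithmic weights comparable.

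Finally, the $\mathrm{W}^{2, 1}_{\mathrm{loc}}(\Omega)$ convergence implies the $\mathrm{W}^{1, 2}_{\mathrm{loc}}(\Omega)$ and $\mathrm{L}^\infty_{\mathrm{loc}}(\Omega)$ convergences, and convergence in a neighbourhood of $\partial\Omega$ follows, as in the single-vortex case, by a cutoff argument: near the boundary $v^\Eps-\Tilde{v}^\Eps$ solves $-\Delta(v^\Eps-\Tilde{v}^\Eps)=\omega^\Eps_{+, v}+\omega^\Eps_{-, v}$ with homogeneous Dirichlet data, and classical regularity estimates upgrade the interior convergence up to $\partial\Omega$, giving the claimed $\mathrm{W}^{1, 2}_0(\Omega)$ and $\mathrm{L}^\infty(\Omega)$ convergences.
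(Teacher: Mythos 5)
Your proposal is correct and follows exactly the route the paper intends: the paper states Proposition~\ref{propositionAsymptoticsW21Nodal} without proof, as the direct counterpart of Proposition~\ref{propositionAsymptoticsW21}, and your sketch carries out precisely that adaptation — the decomposition into $w^\Eps_v$, $w^\Eps_r$, $w^\Eps_s$, the mass cancellation in the $D^2\Gamma$ term, the use of \eqref{ineqNodalSignDistance} to decouple the two vortices at their respective scales $\eps_\pm$, and the cutoff argument near $\partial\Omega$. Nothing is missing.
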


We have now all the ingredients to complete the
\begin{proof}[Proof of Proposition~\ref{prop:1maiNodal}]
It follows from the combination of Lemma~\ref{lemmaNodalLocalAsymptotics}, Proposition~\ref{propositionAsymptoticsW21Nodal} and the counterparts of Corollaries~\ref{corollaryAsymptotic} and \ref{corEnergy}.
\end{proof}

\begin{proof}[Proof of Theorem~\ref{thm:K3}]
Since the solutions have the upper bound Corollary~\ref{cor:Nodalupper}, one can conclude from Proposition~\ref{prop:1maiNodal}. 
\end{proof}

\section{Desingularized solutions of the Euler equation}
\label{sect:resu}

\subsection{Bounded domains}
In bounded domains we shall successively consider stationary vortices, rotating vortices and stationary pairs of vortices.

\subsubsection{Stationary vortices in simply-connected bounded domains}
Let us first deduce Theorem~\ref{thm:resu} from Theorem~\ref{thm:K1}.

\begin{proof}[Proof of Theorem~\ref{thm:resu}]
Take $q=-\psi_0$, where $\psi_0$ satisfies \eqref{eqpsi0}. One checks that $\psi_0 \in W^{1+\frac{1}{s}, s}(\Omega)$ so that $u \in W^{1, r}(\Omega)$ for every $r < \infty$. Define $\mathbf{v}_\eps=(\nabla u_\eps)^\perp$ where $u_\eps$ is given by Proposition~\ref{prop:2.1}. The conclusion then follows from Theorem~\ref{thm:K1}.
\end{proof}

We have constructed in Theorem~\ref{thm:resu} a family of solutions that concentrates around a global maximum of the Kirchhoff--Routh function $\mathcal{W}$; it is also possible to construct family of solutions that concentrate around a \emph{local} maximum of $\mathcal{W}$:

\begin{theorem}\label{thmLocalMinimum}
	Let $\Omega \subset \R^2$ be a bounded simply-connected smooth domain and $v_n:\partial \Omega \to \R\in L^s(\partial \Omega)$ for some $s>1$ be such that $\int_{\partial \Omega} v_n = 0.$ Let $\kappa >0$ be given and let $\Hat{x} \in \Omega$ be a strict local minimizer of $\mathcal{W}$. For $\eps>0$ there exist smooth stationary solutions $\mathbf{v}_\eps$ of the Euler equation in $\Omega$ with outward boundary flux given by $v_n$, corresponding to vorticities $\omega_\eps$, such that ${\rm supp}(\omega_\eps) \subset B(x_\eps, C\eps)$ for some $x_\eps \in \Omega$ and $C>0$ not depending on $\eps$. Moreover, as $\eps \to 0$, 
\[
  \int_\Omega \omega_\eps \to \kappa
\]
and $x_\eps \to \Hat{x}$. 
\end{theorem}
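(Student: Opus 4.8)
The plan is to capture $\hat{x}$ by a \emph{min--max} (linking) scheme rather than by minimization. The key observation is that the energy expansion of Theorem~\ref{thm:K1}, namely $\mathcal{E}^\eps(u^\eps)=\frac{\kappa^2}{4\pi}\log\frac{1}{\eps}-\mathcal{W}(x^\eps)+\mathcal{C}+o(1)$, turns a strict local \emph{minimizer} of $\mathcal{W}$ into a strict local \emph{maximizer} of the reduced energy $x\mapsto \frac{\kappa^2}{4\pi}\log\frac{1}{\eps}-\mathcal{W}(x)+\mathcal{C}$; hence a constrained minimization (which would instead select a local maximum of $\mathcal{W}$) is useless here, and a saddle-type construction is required. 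Fix $r_0>0$ so small that $\mathcal{W}(x)>\mathcal{W}(\hat{x})$ for $0<\abs{x-\hat{x}}\le r_0$, and let $2\delta:=\min_{\abs{x-\hat{x}}=r_0}\mathcal{W}-\mathcal{W}(\hat{x})>0$. Using the comparison functions $\hat{u}^\eps_x$ of Lemma~\ref{lemmaHatuNehari} (with the fixed point there replaced by a variable $x$), I define
\[
  \beta^\eps=\inf_{\gamma\in\Gamma^\eps}\ \max_{\abs{x-\hat{x}}\le r_0}\mathcal{E}^\eps(\gamma(x)),\qquad \Gamma^\eps=\{\gamma\in C(\bar{B}(\hat{x},r_0);\mathcal{N}^\eps):\ \gamma=\hat{u}^\eps_{\,\cdot}\ \text{on}\ \partial B(\hat{x},r_0)\}.
\]

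First I would record the upper bound: taking $\gamma=\hat{u}^\eps_{\,\cdot}$ and applying Lemma~\ref{lemmaEnergyHatu} pointwise in $x$ gives $\beta^\eps\le \max_{\abs{x-\hat{x}}\le r_0}\mathcal{E}^\eps(\hat{u}^\eps_x)=\frac{\kappa^2}{4\pi}\log\frac{1}{\eps}-\mathcal{W}(\hat{x})+\mathcal{C}+o(1)$, the maximum being attained near $\hat{x}$ since $\mathcal{W}$ is minimized there, while the boundary values satisfy $\max_{\abs{x-\hat{x}}=r_0}\mathcal{E}^\eps(\hat{u}^\eps_x)\le\frac{\kappa^2}{4\pi}\log\frac{1}{\eps}-(\mathcal{W}(\hat{x})+2\delta)+\mathcal{C}+o(1)$. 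For the matching lower bound I would run a linking argument: introduce a continuous center-of-vorticity map $\Xi^\eps$ on the low-energy part of $\mathcal{N}^\eps$ with $\Xi^\eps(\hat{u}^\eps_x)=x+o(1)$; then for every $\gamma\in\Gamma^\eps$ the map $x\mapsto\Xi^\eps(\gamma(x))$ is $o(1)$-close to the identity on $\partial B(\hat{x},r_0)$, so it has degree one with respect to $\hat{x}$ and meets $\hat{x}$, whence some $x_\ast$ has $\Xi^\eps(\gamma(x_\ast))=\hat{x}$. Coupled with a slice energy estimate asserting $\mathcal{E}^\eps\ge\frac{\kappa^2}{4\pi}\log\frac{1}{\eps}-\mathcal{W}(\hat{x})+\mathcal{C}+o(1)$ whenever the center of vorticity equals $\hat{x}$, this gives $\beta^\eps\ge\frac{\kappa^2}{4\pi}\log\frac{1}{\eps}-\mathcal{W}(\hat{x})+\mathcal{C}+o(1)$. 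Since $\beta^\eps$ then strictly exceeds the boundary values (by roughly $\delta$), a standard deformation/Palais--Smale argument on $\mathcal{N}^\eps$ makes $\beta^\eps$ a critical value, producing a solution $v^\eps$ of \eqref{problemPeps} with $\mathcal{E}^\eps(v^\eps)=\beta^\eps=\frac{\kappa^2}{4\pi}\log\frac{1}{\eps}-\mathcal{W}(\hat{x})+\mathcal{C}+o(1)$.

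Finally, $v^\eps$ obeys the upper bound \eqref{assumptEnergyUpperbound}, so Proposition~\ref{prop:1mai} applies verbatim and delivers the full asymptotic profile, the shrinking vorticity support $\supp(\omega^\eps)\subset B(x^\eps,C\eps)$, the convergence $\int_\Omega\omega^\eps\to\kappa$, and the identity $\mathcal{E}^\eps(v^\eps)=\frac{\kappa^2}{4\pi}\log\frac{1}{\eps}-\mathcal{W}(x^\eps)+\mathcal{C}+o(1)$; comparing with the value of $\beta^\eps$ forces $\mathcal{W}(x^\eps)\to\mathcal{W}(\hat{x})$. Once the concentration point is confined to $\bar{B}(\hat{x},r_0)$ for small $\eps$, strict local minimality upgrades this to $x^\eps\to\hat{x}$: if $\abs{x^\eps-\hat{x}}\ge\eta>0$ along a subsequence, then $\mathcal{W}(x^\eps)\ge\min_{\eta\le\abs{x-\hat{x}}\le r_0}\mathcal{W}>\mathcal{W}(\hat{x})$, contradicting $\mathcal{W}(x^\eps)\to\mathcal{W}(\hat{x})$. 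Setting $\mathbf{v}_\eps=(\nabla v^\eps)^\perp$ as in the proof of Theorem~\ref{thm:resu} then yields the stationary Euler solutions with flux $v_n$. The main obstacle is exactly the linking lower bound: proving the slice energy estimate --- a lower bound on $\mathcal{E}^\eps$ for Nehari competitors of prescribed center of vorticity that are \emph{not} assumed to solve \eqref{problemPeps}, and for which Proposition~\ref{prop:1mai} is therefore unavailable --- and, jointly, the confinement of $x^\eps$ to $\bar{B}(\hat{x},r_0)$ so that the energy level $\frac{\kappa^2}{4\pi}\log\frac{1}{\eps}-\mathcal{W}(\hat{x})+\mathcal{C}$ cannot be realized by concentration at some far-away point of the level set $\{\mathcal{W}=\mathcal{W}(\hat{x})\}$; controlling the non-solution deformations $\gamma(x)$ near $\hat{x}$ is where the argument is most delicate.
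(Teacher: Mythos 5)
Your plan is a genuinely different route from the paper's, but as written it has two gaps that are not repairable with the tools the paper provides, and you have in fact pointed at them yourself. First, the ``slice energy estimate'' and the very construction of the center-of-vorticity map $\Xi^\eps$ on the low-energy part of $\mathcal{N}^\eps$ both require concentration information for functions that are \emph{not} solutions of \eqref{problemPeps}. Inspecting Proposition~\ref{propositionEstimatesueps} and everything downstream of it, the only estimate available from membership in $\mathcal{N}^\eps$ alone is the Nehari identity \eqref{ineqEnergy}; every other step (testing against $\min(v^\eps,q^\eps)$, against $(v^\eps-q^\eps)_+$, the capacity bounds, the splitting into vanishing and essential vortices, the matching argument pinning $\kappa^\eps_1$ to $\kappa$) tests the PDE. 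A generic Nehari competitor with center of vorticity at $\Hat{x}$ need not have a single blob nor total vorticity close to $\kappa$, and a lower bound with $o(1)$ accuracy on a quantity of size $\log\frac{1}{\eps}$ for such functions is precisely the hard analytic content your sketch postpones; in the literature this is usually achieved only via a finite-dimensional (Lyapunov--Schmidt) reduction, which is a different machine from anything in this paper. Second, even granting a critical point at level $\frac{\kappa^2}{4\pi}\log\frac{1}{\eps}-\mathcal{W}(\Hat{x})+\mathcal{C}+o(1)$, Proposition~\ref{prop:1mai} only yields $\mathcal{W}(x^\eps)\to\mathcal{W}(\Hat{x})$, which does not exclude concentration at a distant point of the level set $\{\mathcal{W}=\mathcal{W}(\Hat{x})\}$; your strict-minimality argument only works once $x^\eps$ is already confined near $\Hat{x}$, and nothing in the linking construction confines it.

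The paper's proof avoids all of this with a much softer device: it \emph{modifies $q$}. One keeps $q=-\psi_0$ on $B(\Hat{x},\rho/2)$ and extends $q$ smoothly so that $\kappa q(x)-\frac{\kappa^2}{2}H(x,x)>\kappa q(\Hat{x})-\frac{\kappa^2}{2}H(\Hat{x},\Hat{x})$ for all $x\neq\Hat{x}$, i.e.\ so that the local extremum becomes the strict \emph{global} extremum of the modified Kirchhoff--Routh function. Then Theorem~\ref{thm:K1} --- which was deliberately proved for general $q\in W^{1,r}(\Omega)$, $r>2$, rather than harmonic $q$, exactly for this purpose (see the footnote in Section~\ref{sectionSingleVortex}) --- applies to the modified problem, its global minimizers concentrate at $x_\eps\to\Hat{x}$, and since away from $x_\eps$ one has $u_\eps=\frac{\kappa}{2\pi}\log\frac{1}{\abs{x_\eps-\cdot}}+O(1)\le -\psi_0+\frac{\kappa}{2\pi}\log\frac{1}{\eps}$ for small $\eps$, the nonlinearity computed with the \emph{original} $q=-\psi_0$ vanishes outside $B(x_\eps,\rho/2)$, so $u_\eps$ solves the unmodified equation and $\mathbf{v}_\eps=(\nabla(u_\eps+\psi_0))^\perp$ is the desired Euler flow. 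Note finally a sign point: the condition imposed in the paper's proof makes $\Hat{x}$ a strict local minimizer of $\kappa q-\frac{\kappa^2}{2}H(\cdot,\cdot)=-\mathcal{W}$, that is, a strict local \emph{maximizer} of $\mathcal{W}$ (consistent with Theorem~\ref{thm:K1}, which sends $x^\eps$ to maxima of $\mathcal{W}$, and with the sentence introducing the theorem); under that reading plain minimization suffices after the modification of $q$, and no min--max scheme is needed. Your literal reading of ``minimizer of $\mathcal{W}$'' led you to set up a saddle-point capture that the paper never performs and that, for the reasons above, cannot be completed with the estimates established in the paper.
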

\begin{proof}
Assume that $\Hat{x}$ is the unique minimizer of $\mathcal{W}$ in $B(\Hat{x}, \rho)$. Define $q \in C^\infty(\Bar{\Omega})$ so that $q=-\psi_0$ in $B(\Hat{x}, \rho/2)$, where $\psi_0$ satisfies \eqref{eqpsi0} and for every $x \in \Omega$,
\[
 \kappa q(x)-\frac{\kappa^2}{2} H(x, x) > \kappa q(x_*)-\frac{\kappa^2}{2} H(x_*, x_*). 
\]
We now apply Theorem~\ref{thm:K1} with $q$. By construction of $q$, we have $x_\eps \to \Hat{x}$. 

But then, one has, still by Theorem~\ref{thm:K1}
\[
  u_\eps(x) \ge \frac{\kappa}{2\pi} \log \frac{1}{\abs{x_\eps-x}}+O(1). 
\]
Therefore, when $\eps$ is small enough, $u_\eps \le -\psi_0+\frac{\kappa}{2\pi}\log \frac{1}{\eps}$ and $u_\eps \le q_\eps$ in $\Omega \setminus B(x_\eps, \rho/2)$. Therefore, for such $\eps$, $u_\eps$ solves $-\eps^2\Delta u_\eps=f(u_\eps+\psi_0- \frac{\kappa}{2\pi} \log \frac{1}{\eps})$ in $\Omega$. One can now take $\mathbf{v}_\eps= (\nabla (u_\eps+\psi_0))^\perp$ and show that this is a stationary solution to the Euler equation. 
\end{proof}

\subsubsection{Stationary vortices in multiply-connected bounded domains}

If $\Omega$ is not simply connected then $\Omega = \Omega_0 \setminus \bigcup_{h=1}^m \Omega_h$, where $\Omega_0, \dotsc, \Omega_m$ are bounded simply connected domains, one can prescribe for $h \in \{1, \dotsc, m\}$, the circulations $\int_{\partial \Omega_h} \mathbf{v}\cdot \tau=\gamma_h$.
In that case $\mathbf{v}_0$ is the unique harmonic field whose normal component on the boundary is $v_n$; i.e., $\mathbf{v}_0$ satisfies
\[
\left\{
\begin{aligned}
  \nabla \cdot \mathbf{v}_0&=0, & & \text{in $\Omega$}, \\
  \nabla \times \mathbf{v}_0&=0, & & \text{in $\Omega$}, \\
  n \cdot \mathbf{v}_0&=v_n& & \text{on $\partial \Omega$},\\
\int_{\partial \Omega_h} \mathbf{v}_0 \cdot \tau &=\gamma_h & &\text{for $h \in \{1, \dotsc, m\}$}.
\end{aligned}
\right. 
\]
If $\int_{\partial \Omega_h} v_n=0$ for every $h \in \{1, \dotsc, m\}$,  $\mathbf{v}_0=(\nabla \psi_0)^\perp$ where 
\begin{equation}
 \label{eqpsi0NotConnected}
\left\{
\begin{aligned}
-\Delta \psi_0&=0& &\text{in $\Omega$}, \\
-\frac{\partial \psi_0}{\partial \tau}&=v_n & &  \text{on $\partial \Omega$},\\
\int_{\partial \Omega_h}  \frac{\partial \psi_0}{\partial n} & = \gamma_h & &\text{for $h \in \{1, \dotsc, m\}$}.\\
\end{aligned}
\right. 
\end{equation} 
The Kirchhoff--Routh function associated to the vortex dynamics is then given by 
\[
  \mathcal{W}_*(x)=\frac{\kappa^2}{2}H_*(x, x)+\kappa \psi_0(x),
\]
where one should recall that $\psi_0$ depends on $v_n$ and $\gamma_h$ for $h \in \{1, \dotsc, m\}$.

We have 
\begin{theorem}\label{thm:MultiplyConnected}
Let $\Omega \subset \R^2$ be a bounded smooth domain and $v_n:\partial \Omega \to \R\in L^s(\partial \Omega)$ for some $s>1$ be such that $\int_{\partial \Omega_h} v_n = 0$ for every $h \in \{0, \dotsc, m\}$. Let $\gamma_h \in \R$ for $h \in \{1, \dotsc, m\}$ and let $\kappa >0$ be given. For $\eps>0$ there exist smooth stationary solutions $\mathbf{v}_\eps$ of the Euler equation in $\Omega$ with outward boundary flux given by $v_n$ and circulations given by $\gamma_h$, corresponding to vorticities $\omega_\eps$, such that ${\rm supp}(\omega_\eps) \subset B(x_\eps, C\eps)$ for some $x_\eps \in \Omega$ and $C>0$ not depending on $\eps$. Moreover, as $\eps \to 0$, 
\[
  \int_\Omega \omega_\eps \to \kappa,
\]
and
\[
   \mathcal{W}_*(x^\eps) \to \sup_{x \in \Omega} \mathcal{W}_*(x). 
\]
\end{theorem}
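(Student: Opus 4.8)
The plan is to deduce Theorem~\ref{thm:MultiplyConnected} from Theorem~\ref{thm:K1m} by the stream-function method, in direct analogy with the passage from Theorem~\ref{thm:K1} to Theorem~\ref{thm:resu}. First I would set $q=-\psi_0$, where $\psi_0$ is the stream function solving \eqref{eqpsi0NotConnected}. The hypotheses $\int_{\partial \Omega_h} v_n=0$ for $h\in\{0,\dotsc,m\}$ are exactly what is needed for $\psi_0$ to be single-valued, being obtained by integrating the tangential condition $-\partial \psi_0/\partial \tau=v_n$ along each boundary component; the prescribed circulations $\gamma_h$ enter through the flux conditions $\int_{\partial \Omega_h}\partial \psi_0/\partial n=\gamma_h$. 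The regularity $v_n\in L^s(\partial\Omega)$ with $s>1$ gives, by elliptic estimates, $\psi_0\in \mathrm{W}^{1+\frac1s,s}(\Omega)$, which embeds in $\mathrm{W}^{1,r}(\Omega)$ for some $r>2$, so that $q$ satisfies the standing hypotheses of Section~\ref{sectionmultiply}.

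With this choice of $q$ I would invoke the least-energy solution $u^\eps$ of \eqref{problemPepsstar} produced by minimization on the Nehari manifold (the analogue of Proposition~\ref{prop:2.1}) together with Theorem~\ref{thm:K1m}. This furnishes a positive solution whose vorticity $\omega_\eps=\frac{1}{\eps^2}f(u^\eps-q^\eps)$ is supported in a ball $B(x^\eps,C\eps)$, satisfies $\int_\Omega \omega_\eps=\kappa^\eps\to\kappa$, and, since $\mathcal{W}_*(x)=\frac{\kappa^2}{2}H_*(x,x)-\kappa q(x)=\frac{\kappa^2}{2}H_*(x,x)+\kappa\psi_0(x)$ is precisely the Kirchhoff--Routh function attached to $G_*$ and $H_*$, concentrates at a maximizer of $\mathcal{W}_*$, giving $\mathcal{W}_*(x^\eps)\to\sup_{x\in\Omega}\mathcal{W}_*(x)$.

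The velocity field is then $\mathbf{v}_\eps=(\nabla\psi_\eps)^\perp$ with $\psi_\eps=u^\eps+\psi_0$; by the stream-function observation from the introduction this is a smooth stationary solution of the Euler equation, its vorticity being $-\Delta\psi_\eps=\omega_\eps$, which is supported in $B(x^\eps,C\eps)$ as required. It remains to match the boundary data. Since $u^\eps$ is \emph{constant} on each boundary component $\partial\Omega_h$, its tangential derivative vanishes there, so the normal flux $n\cdot\mathbf{v}_\eps=-\partial\psi_\eps/\partial\tau$ is governed entirely by $\psi_0$ and equals $v_n$; moreover the circulation decomposes as $\int_{\partial\Omega_h}\partial u^\eps/\partial n+\int_{\partial\Omega_h}\partial\psi_0/\partial n=0+\gamma_h$, using the zero-flux constraint in \eqref{problemPepsstar} and the circulation condition in \eqref{eqpsi0NotConnected}. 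The convergence of $\int_\Omega\omega_\eps$ and of $\mathcal{W}_*(x^\eps)$ are then read off directly from Theorem~\ref{thm:K1m}.

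The only place where the multiply-connected structure genuinely intervenes, and hence the main point to check carefully, is this bookkeeping of fluxes and circulations: one must verify that the \emph{unknown} constant boundary values $\lambda^\eps_h$ of $u^\eps$ drop out of the normal-flux computation (because only tangential derivatives of $u^\eps$ enter, and these vanish on each $\partial\Omega_h$), and that the interior flux constraints $\int_{\partial\Omega_h}\partial u^\eps/\partial n=0$ are precisely what forces all of the circulation of $\mathbf{v}_\eps$ along $\partial\Omega_h$ to come from the prescribed $\gamma_h$ carried by $\psi_0$. Everything else is routine once Theorem~\ref{thm:K1m} is granted.
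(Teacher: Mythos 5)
Your proposal is correct and follows essentially the same route as the paper, whose proof of Theorem~\ref{thm:MultiplyConnected} simply says it is ``almost identical'' to that of Theorem~\ref{thm:resu} with Theorem~\ref{thm:K1m} in place of Theorem~\ref{thm:K1}: take $q=-\psi_0$ with $\psi_0$ solving \eqref{eqpsi0NotConnected}, apply the least-energy solution of \eqref{problemPepsstar} and Theorem~\ref{thm:K1m}, and set $\mathbf{v}_\eps=(\nabla(u^\eps+\psi_0))^\perp$. Your explicit verification that the unknown constants $\lambda^\eps_h$ do not affect the normal flux and that the constraints $\int_{\partial\Omega_h}\partial u^\eps/\partial n=0$ force the circulations to equal $\gamma_h$ is exactly the bookkeeping the paper leaves implicit.
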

\begin{proof}
	The proof is almost identical to the one of Theorem~\ref{thm:resu}, it relies on Theorem~\ref{thm:K1m} instead of Theorem~\ref{thm:K1}.
\end{proof}

\begin{remark}
One could similarly prove a counterpart of Theorem~\ref{thmLocalMinimum} for multiply connected domains.
\end{remark}

\subsubsection{Rotating vortices in a discs}
If $\Omega$ is invariant under rotation, one can consider the Euler equation in a reference frame rotating with angular velocity $\alpha$:
\[
\left\{
  \begin{aligned}
    \nabla \cdot \mathbf{v} &= 0, \\
    \mathbf{v}_t + \mathbf{v}\cdot \nabla \mathbf{v}&=-\nabla p+2\alpha \mathbf{v}^\perp-\alpha^2x. 
  \end{aligned}
\right. 
\]
The vorticity of $\mathbf{v}$ with respect to an inertial frame is $\nabla \times \mathbf{v}+2\alpha$. 
The movement of singular vortices is governed by Kirchhoff's law \eqref{equationKirchhoff}, where $\mathcal{W}$ is replaced by  $\mathcal{W}_\alpha(x)=\mathcal{W}(x)+\sum_i \alpha \frac{\abs{x}^2}{2}$. 

The stream-function method to construct stationary solutions in a rotating reference frame can be adapted to this situation. If $-\Delta \psi=f(\psi)-2\alpha$, setting $\mathbf{v}=(\nabla \psi)^\perp$ and $p= F(\psi)-\frac{\alpha^2}{2}\abs{x}^2-\frac{1}{2}\abs{\nabla \psi}^2$ yields a solution\footnote{With the same velocity field, choosing as pressure $p=F(\psi)-2\alpha \psi-\frac{1}{2}\abs{\nabla \psi}^2$ would of course give a solution to the Euler equation in a Galilean frame. }. In particular, the solution is irrotational outside on the set where $\psi=0$. 

\begin{theorem}
\label{thmRotating}
Let $\rho > 0$, $\kappa >0$ and $\alpha > 0$. If $\kappa < 2\pi \alpha \rho^2$
For $\eps>0$ there exist smooth rotating solutions $\mathbf{v}_\eps$ of the Euler equation in $B(0, \rho)$ with angular velocity $\alpha$, corresponding to vorticities $\omega_\eps$, such that ${\rm supp}(\omega_\eps)$ is contained in a disc of radius $O(\eps)$ around a point rotating on the circle of radius $\sqrt{\rho^2-\frac{\kappa}{2\pi\alpha}}$. Moreover, as $\eps \to 0$, 
\[
\int_\Omega \omega_\eps \to \kappa. 
\]
\end{theorem}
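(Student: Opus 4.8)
The plan is to reduce the rotating problem to the Dirichlet problem $\Peps$ on the disc $\Omega = B(0,\rho)$ studied in Section~\ref{sectionSingleVortex}, choosing $q$ so that the superimposed solid-body rotation is absorbed. Concretely, I would take
\[
  q(x) = \frac{\alpha}{2}\bigl(\rho^2 - \abs{x}^2\bigr),
\]
which is smooth (hence in $W^{1,r}$ for every $r$) and nonnegative on $B(0,\rho)$, but no longer harmonic, since $\Delta q = -2\alpha$; this is exactly why the general-$q$ version of Theorem~\ref{thm:K1} is needed here. Proposition~\ref{prop:2.1} then furnishes least-energy solutions $u^\eps$ of $\Peps$ (note $q^\eps\ge 0$ for small $\eps$), and Theorem~\ref{thm:K1} controls their concentration. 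Setting $\psi_\eps = u^\eps - \frac{\alpha}{2}(\rho^2 - \abs{x}^2)$, a direct computation using $\Delta(\rho^2 - \abs{x}^2) = -4$ and $u^\eps - q^\eps = \psi_\eps - \frac{\kappa}{2\pi}\logeps$ gives
\[
  -\Delta \psi_\eps = \frac{1}{\eps^2}\, f\Bigl(\psi_\eps - \frac{\kappa}{2\pi}\logeps\Bigr) - 2\alpha =: f_\eps(\psi_\eps) - 2\alpha,
\]
with $\psi_\eps = 0$ on $\partial B(0,\rho)$. By the stream-function observation for the rotating frame recorded just before the statement, $\mathbf{v}_\eps = (\nabla \psi_\eps)^\perp$, with pressure $p_\eps = F_\eps(\psi_\eps) - \frac{\alpha^2}{2}\abs{x}^2 - \frac{1}{2}\abs{\nabla \psi_\eps}^2$ (where $F_\eps' = f_\eps$), is a stationary solution of the Euler equation in the frame rotating at angular velocity $\alpha$; elliptic regularity for $u^\eps$ makes it a classical (smooth) solution.

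Next I would locate the concentration circle. For the disc the regular part of the Green function is
\[
  H(x,x) = \frac{1}{2\pi}\log\frac{\rho^2 - \abs{x}^2}{\rho},
\]
so, writing $t = \rho^2 - \abs{x}^2 \in (0,\rho^2]$, the Kirchhoff--Routh function reads
\[
  \mathcal{W}(x) = \frac{\kappa^2}{2}H(x,x) - \kappa q(x) = \frac{\kappa^2}{4\pi}\log\frac{t}{\rho} - \frac{\kappa\alpha}{2}\,t.
\]
This is strictly concave in $t$ with interior maximum at $t = \frac{\kappa}{2\pi\alpha}$; the hypothesis $\kappa < 2\pi\alpha\rho^2$ guarantees $t < \rho^2$, i.e. a genuine maximizing radius $\abs{x} = \sqrt{\rho^2 - \frac{\kappa}{2\pi\alpha}} > 0$ lying strictly inside the disc. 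Since $\mathcal{W}$ is radial, its supremum is attained on that entire circle. By Theorem~\ref{thm:K1}, $\mathcal{W}(x^\eps) \to \sup_\Omega \mathcal{W}$, and strict concavity in $t$ forces $\abs{x^\eps} \to \sqrt{\rho^2 - \frac{\kappa}{2\pi\alpha}}$; combined with the support bound $\supp \omega^\eps = \overline{A^\eps} \subset B(x^\eps, O(\eps))$ from the same theorem, the vorticity concentrates in an $O(\eps)$ disc about a point of that circle.

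Finally I would read off the vorticity statement. The vorticity relative to the inertial frame is $\nabla \times \mathbf{v}_\eps + 2\alpha = -\Delta\psi_\eps + 2\alpha = f_\eps(\psi_\eps) = \frac{1}{\eps^2}f(u^\eps - q^\eps) = \omega^\eps$, so the physical vortex is precisely $\omega_\eps := \omega^\eps$, and Theorem~\ref{thm:K1} yields $\int_\Omega \omega_\eps = \kappa^\eps \to \kappa$ together with $\supp\omega_\eps \subset B(x^\eps, O(\eps))$. As the solution is stationary in the rotating frame, in the inertial frame the whole configuration rotates rigidly at angular velocity $\alpha$, so its vorticity support is an $O(\eps)$ disc around a point travelling on the circle of radius $\sqrt{\rho^2 - \frac{\kappa}{2\pi\alpha}}$, as claimed. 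All the existence-and-asymptotics content is imported wholesale from Theorem~\ref{thm:K1}; the only new work is the two bookkeeping computations above, and the main (minor) obstacle is keeping the signs straight when passing between $\Peps$ and the rotating stream equation and in the disc Robin-function computation, so that the maximization lands on the advertised radius and $\kappa < 2\pi\alpha\rho^2$ emerges as exactly the condition for an interior critical circle.
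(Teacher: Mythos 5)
Your proof is correct and follows essentially the same route as the paper's: choose $q$ equal to $-\alpha\abs{x}^2/2$ up to an additive constant (your shift by $\alpha\rho^2/2$ is immaterial, and conveniently makes $\psi_\eps$ vanish on $\partial B(0,\rho)$), apply the general-$q$ Theorem~\ref{thm:K1}, and maximize $\mathcal{W}_\alpha$ using the disc's Robin function, with $\kappa<2\pi\alpha\rho^2$ ensuring the interior maximizing circle. Your explicit verifications of the rotating-frame stream-function reduction and the inertial-frame vorticity bookkeeping merely spell out what the paper leaves to the reader.
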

\begin{proof}
Take 
\[
 q(x)=-\alpha \frac{\abs{x}^2}{2}. 
\]
and apply Theorem~\ref{thm:K1}. One checks that 
\[
  \mathbf{v}_\eps(x, t)=(\nabla u_\eps)^\perp (R(\alpha t) x),
\]
where $R(\alpha t)$ denote the rotation of $\alpha t$, satisfies Euler equation.
Since
\[
 \mathcal{W}_\alpha(x)=\frac{\kappa^2}{4\pi}\log \frac{\rho^2-\abs{x}^2}{\rho}+\frac{\kappa \alpha}{2} \abs{x}^2,
\]
attains its maximum on the circle of radius $\sqrt{\rho^2-\frac{\kappa}{2\pi\alpha}}$, one has the desired concentration result. 
\end{proof}

\begin{remark}
When $\kappa > 2\pi \alpha \rho^2$, the minimizer concentrates around $0$; one recovers thus stationary solutions as in Theorem~\ref{thm:resu}. 
\end{remark}

\subsubsection{Stationary pairs of vortices in bounded domains}

\begin{theorem}
Let $\Omega \subset \R^2$ be a bounded simply-connected smooth domain and $v_n:\partial \Omega \to \R\in L^s(\partial \Omega)$ for some $s>2$ be such that $\int_C v_n = 0$ over each connected component $C$ of $\partial \Omega$. Let $\kappa_+ >0$ and $\kappa_- < 0$ be given. For $\eps>0$ there exist smooth stationary solutions $\mathbf{v}_\eps$ of the Euler equation in $\Omega$ with outward boundary flux given by $v_n$, corresponding to vorticities $\omega_\eps$, such that ${\rm supp}(\omega_\eps^\pm) \subset B(x_\eps^\pm, C\eps)$ for some $x^{\pm}_\eps \in \Omega$ and $C>0$ not depending on $\eps$. Moreover, as $\eps \to 0$, 
\[
\int_\Omega \omega_\eps^\pm \to \kappa^\pm
\]
	 and
\[
   \mathcal{W}(x_\eps^+, x_\eps^-) \to \sup_{x^+, x^- \in \Omega} \mathcal{W}(x^+, x^-). 
\]
 \end{theorem}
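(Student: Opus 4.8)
The plan is to mimic the proof of Theorem~\ref{thm:resu}, replacing the scalar problem \eqref{problemPeps} by the nodal problem \eqref{Qeps} and invoking Theorem~\ref{thm:K3} in place of Theorem~\ref{thm:K1}. First I would fix the harmonic stream function $\psi_0$ attached to the prescribed flux, i.e.\ the solution of \eqref{eqpsi0} (which exists and is unique up to an additive constant on each component, since $\int_C v_n=0$ over each connected component $C$ of $\partial\Omega$), and set $q=-\psi_0$. Because $v_n\in L^s(\partial\Omega)$ with $s>2$, boundary elliptic regularity for the harmonic field $\mathbf{v}_0=(\nabla\psi_0)^\perp$ yields $\psi_0\in \mathrm{W}^{1,r}(\Omega)$ for some $r>2$; this is precisely the point where the stronger hypothesis $s>2$ (as opposed to $s>1$ in Theorem~\ref{thm:resu}) is used, and it guarantees that $q$ meets the standing assumption $q\in \mathrm{W}^{1,r}(\Omega)$, $r>2$, of Section~\ref{sectionVortexPair}. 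Specializing $\Eps=(\eps,\eps)$ (so that $\log\eps_+/\log\eps_-=1$ lies in the admissible range), the existence proposition for the nodal Nehari set $\mathcal{M}^\Eps$ provides least-energy nodal solutions $u^\Eps$ of \eqref{Qeps}, and Theorem~\ref{thm:K3} gives their full asymptotics.

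Next I would reconstruct the velocity field by the stream-function method. Put $\psi_\eps=u^\Eps+\psi_0$ and $\mathbf{v}_\eps=(\nabla\psi_\eps)^\perp$. The key observation is that, since $\psi_0$ is harmonic and $q=-\psi_0$, both arguments of $f$ in \eqref{Qeps} become autonomous functions of $\psi_\eps$: one has $u^\Eps-q^\eps_+=\psi_\eps-\tfrac{\kappa_+}{2\pi}\log\tfrac{1}{\eps}$ and $q^\eps_--u^\Eps=\tfrac{\kappa_-}{2\pi}\log\tfrac{1}{\eps}-\psi_\eps$, so that
\[
 -\Delta\psi_\eps=\frac{1}{\eps^2}f\Bigl(\psi_\eps-\tfrac{\kappa_+}{2\pi}\log\tfrac{1}{\eps}\Bigr)-\frac{1}{\eps^2}f\Bigl(\tfrac{\kappa_-}{2\pi}\log\tfrac{1}{\eps}-\psi_\eps\Bigr)=:g(\psi_\eps),
\]
with $g\in C^1(\R)$. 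Choosing the pressure as in the stream-function construction of the introduction, but with $f$ replaced by $g$, turns $(\mathbf{v}_\eps,p)$ into a stationary solution of the Euler equation that is irrotational wherever $g(\psi_\eps)=0$. A standard elliptic bootstrap (licit because $f\in C^1$) shows $\psi_\eps\in C^{2,\alpha}$, hence $\mathbf{v}_\eps$ is a classical field. The prescribed flux is recovered from the boundary condition $u^\Eps=0$ on $\partial\Omega$: this forces $\partial_\tau\psi_\eps=\partial_\tau\psi_0$ there, so the outward normal component $n\cdot\mathbf{v}_\eps$ coincides with that of $\mathbf{v}_0$, which is $v_n$ by \eqref{eqpsi0}.

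Finally I would read the conclusions off Theorem~\ref{thm:K3}. The vorticity is $\omega_\eps=\nabla\times\mathbf{v}_\eps=-\Delta\psi_\eps=g(\psi_\eps)$, whose positive and negative parts $\omega_\eps^\pm$ coincide with the quantities $\omega^\Eps_\pm$ of \eqref{defiqNodal}; thus the support inclusions $\supp(\omega_\eps^\pm)\subset B(x^\Eps_\pm,C\eps)$, the mass convergences $\int_\Omega\omega_\eps^\pm\to\kappa_\pm$, and the maximization $\mathcal{W}(x^\Eps_+,x^\Eps_-)\to\sup_{\Omega^2_*}\mathcal{W}$ all transfer at once, with $x_\eps^\pm:=x^\Eps_\pm$. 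I would also note that $\mathcal{W}(x_+,x_-)\to-\infty$ as $x_+\to x_-$ or as either point approaches $\partial\Omega$, so the supremum is approached in the interior of $\Omega^2_*$; combined with the separation estimate built into Theorem~\ref{thm:K3}, this keeps the two concentration points distinct and bounded away from the boundary, so that for small $\eps$ the two vortex blobs are genuinely disjoint balls. The main obstacle is not this transfer step but the two upstream verifications: checking that the autonomous reduction above really produces a \emph{classical} stationary Euler solution carrying the correct boundary flux, and confirming the regularity $q=-\psi_0\in\mathrm{W}^{1,r}(\Omega)$, $r>2$, under the weak hypothesis $v_n\in L^s(\partial\Omega)$. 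Once these are secured, the theorem is a direct corollary of Theorem~\ref{thm:K3}.
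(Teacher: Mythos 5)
Your proposal is correct and follows essentially the same route as the paper, which proves this theorem in one line by invoking Theorem~\ref{thm:K3} ``in the same lines as Theorem~\ref{thm:resu}'': take $q=-\psi_0$, apply the nodal asymptotics with $\Eps=(\eps,\eps)$, and reconstruct $\mathbf{v}_\eps=(\nabla(u^\Eps+\psi_0))^\perp$ via the autonomous reduction. Your write-up merely makes explicit the verifications (regularity of $q$, the $C^1$ nonlinearity $g$, recovery of the flux from $u^\Eps=0$ on $\partial\Omega$) that the paper leaves implicit, and these are all carried out correctly.
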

\begin{proof}
This follows from Theorem~\ref{thm:K3} in the same lines as Theorem~\ref{thm:resu}.
\end{proof}

\begin{remark}
There is also a counterpart of Theorem~\ref{thmLocalMinimum} for vortex pairs, concerning the existence of solutions near local maxima of the Kirchoff--Routh function and a counterpart of Theorem~\ref{thm:MultiplyConnected} for domains which are not simply connected.
\end{remark}

\begin{remark}
One can also address the question of rotating vortex pairs. Combining the ingredients of the proof of Theorem~\ref{thmRotating}, one can prove the existence of rotating vortex pairs of strength $\kappa_+ > 0$ and $\kappa_- > 0$ that concentrates around two antipodal rotating points at distance $\rho _+$ and $\rho _-$ which maximize the function
\[
  \frac{\alpha \kappa_+}{2} \rho _+^2 +\frac{\alpha \kappa_-}{2}\rho _-^2+\frac{\kappa_+^2}{4\pi}\log (1-\rho _+^{2})+\frac{\kappa_-^2}{4\pi}\log (1-\rho _-^{2})+\frac{\kappa_+\kappa_-}{2\pi} \log \frac{1+\rho _+\rho _-}{\rho _++\rho _-}.
\]
In contrast with Theorem~\ref{thmRotating}, the pair of vortices obtained is always a nontrivial pair of rotating vortices for any $\alpha \ne 0$, $\kappa_+ > 0$ and $\kappa_- < 0$.
\end{remark}

\subsection{Unbounded domains}
We now consider the application  of the results of Section~\ref{sectUnbounded} to the desingularization of vortices in unbounded domains.

\subsubsection{Translating vortex pair in the plane}
We first consider the construction of a pair of vortices in $\R^2$. 
First recall that pair of vortices translating at velocity $\mathbf{W}$ in a flow with vanishing velocity at infinity is, up to a Galilean change of variables a pair of stationary vortices in a flow with velocity at infinity $-\mathbf{W}$.
The stream-function of the corresponding irrotational flow is $\psi_0(x)=\mathbf{W}^\perp \cdot x$.
Therefore, the positions of two vortices of opposite intensities $\kappa$ and $-\kappa$ in the moving reference frame is a critical point of the Kirchhoff--Routh $\mathcal{W}$ defined by
\[
 \frac{-\kappa^2}{2\pi} \log \frac{1}{\abs{x-y}} + \mathbf{W}^\perp \cdot x.
\]

%
%

\begin{theorem}
Let $W \ge 0$ and $\kappa \ge 0$, for every $\eps > 0$ there exist smooth stationary solutions $\mathbf{v}_\eps$ of the Euler equation in $\R^2$ symmetric with respect to the $x_2$ axis and such that $\lim_{x_1 \to \infty} \mathbf{v}_\eps(x)=(0, W)$, corresponding to vorticities $\omega_\eps$, such that ${\rm supp}(\omega_\eps) \cap \R^2_+ \subset B(\Bar{x}, C\eps)$, where $\Bar{x}= (\frac{\kappa}{4\pi W}, 0)$.
\end{theorem}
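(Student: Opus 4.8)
The plan is to exploit the reflection symmetry so as to reduce the construction in the whole plane to the single–vortex problem on the half–plane $\R^2_+=\{x_1>0\}$ treated in Section~\ref{sectUnbounded}. Seeking a stream function $\psi$ that is \emph{odd} in $x_1$ produces a velocity field $\mathbf{v}=(\nabla\psi)^\perp$ whose component normal to the axis ($v_1$) is odd and whose tangential component ($v_2$) is even under $x_1\mapsto -x_1$, that is, a field symmetric with respect to the $x_2$–axis; oddness (which we may impose since adding a constant to $\psi$ does not change $\mathbf{v}$) forces $\psi=0$ on $\{x_1=0\}=\partial\R^2_+$, so on $\R^2_+$ the function $\psi$ solves a \emph{Dirichlet} problem. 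The prescribed behaviour $\mathbf{v}_\eps\to(0,W)$ corresponds to the harmonic background $\psi_0=-Wx_1$, hence to $q=-\psi_0=Wx_1$, which is precisely the affine datum on the half–plane of Yang's Theorem~\ref{thmYang} (the vanishing additive constant is harmless, since $q^\eps=Wx_1+\frac{\kappa}{2\pi}\log\frac{1}{\eps}>0$ for $\eps$ small).

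First I would invoke Theorem~\ref{thmYang} to obtain, for each small $\eps>0$, a ground state $u^\eps\in\mathrm{D}^{1,2}_0(\R^2_+)$ of \eqref{problemPeps} with $q=Wx_1$. Being a ground state, $u^\eps$ obeys the upper bound of Proposition~\ref{propUnboundedUpper}, hence the hypothesis \eqref{assumptEnergyUpperbound}, so Proposition~\ref{prop:1maiUnbounded} applies: the vorticity $\omega^\eps=\frac{1}{\eps^2}f(u^\eps-q^\eps)$ concentrates, $A^\eps\subset B(x^\eps,\mathring{r}^\eps)$ with $\mathring{r}^\eps=O(\eps)$, and combining the energy identity contained in Proposition~\ref{prop:1maiUnbounded} with the upper bound of Proposition~\ref{propUnboundedUpper} yields $\mathcal{W}(x^\eps)\to\sup_{\R^2_+}\mathcal{W}$, exactly as Theorem~\ref{thm:K1} is deduced from Corollary~\ref{cor:upper}. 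It remains to locate this supremum. For the half–plane the Dirichlet Green function is $G(x,y)=\frac{1}{2\pi}\log\frac{|x-\Bar{y}|}{|x-y|}$ with $\Bar{y}=(-y_1,y_2)$, so its regular part is $H(x,x)=\frac{1}{2\pi}\log(2x_1)$ and
\[
  \mathcal{W}(x)=\frac{\kappa^2}{2}H(x,x)-\kappa q(x)=\frac{\kappa^2}{4\pi}\log(2x_1)-\kappa Wx_1,
\]
which depends only on $x_1$ and is maximised precisely at $x_1=\frac{\kappa}{4\pi W}$. Since $\mathcal{W}$ is independent of $x_2$, I use the translation invariance in the $x_2$ direction to fix the center at $x_2=0$, so that $x^\eps\to\Bar{x}=(\frac{\kappa}{4\pi W},0)$ and $\supp(\omega^\eps)\subset A^\eps\subset B(\Bar{x},C\eps)$ for $\eps$ small.

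Finally I would return to the plane. Setting $\psi=u^\eps-Wx_1$ on $\R^2_+$, a direct computation gives $-\Delta\psi=\frac{1}{\eps^2}f(\psi-\frac{\kappa}{2\pi}\log\frac{1}{\eps})$, an equation in which $x$ enters only through $\psi$; extending $\psi$ to $\R^2_-$ by $\psi(x_1,x_2)=-\psi(-x_1,x_2)$ then produces $-\Delta\psi=\Tilde{g}(\psi)$ on all of $\R^2$ with the odd nonlinearity $\Tilde{g}(s)=\frac{1}{\eps^2}\bigl[(s-\frac{\kappa}{2\pi}\log\frac{1}{\eps})_+^p-(-s-\frac{\kappa}{2\pi}\log\frac{1}{\eps})_+^p\bigr]$, so that $\mathbf{v}_\eps=(\nabla\psi)^\perp$ is a stationary solution of the Euler equation by the stream–function method. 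The reflected field carries the image vortex of intensity $-\kappa$ near $-\Bar{x}$, the decay of $G$ gives $\mathbf{v}_\eps\to(0,W)$ as $x_1\to\infty$, and the symmetry holds by construction. The point requiring the most care is the regularity of the glued field across the axis: because $\supp(\omega^\eps)\subset B(\Bar{x},C\eps)$ stays a fixed distance from $\{x_1=0\}$ for small $\eps$, the flow is irrotational in a neighbourhood of the axis and there $\psi$ is harmonic with $\psi=0$ on the axis, so the Schwarz reflection principle makes the odd extension smooth across $\{x_1=0\}$; equivalently $\Tilde{g}\in C^1$ and elliptic regularity upgrade $\psi$, giving a genuine classical solution. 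The degenerate cases $W=0$ or $\kappa=0$ are trivial, the statement being of interest for $W,\kappa>0$.
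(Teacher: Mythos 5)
Your proposal is correct and follows essentially the same route as the paper: reduction to the half-plane problem with $q=Wx_1$, existence via Theorem~\ref{thmYang}, the upper bound of Proposition~\ref{propUnboundedUpper} combined with the asymptotics of Proposition~\ref{prop:1maiUnbounded}, and identification of the maximizer of $\mathcal{W}(x)=\frac{\kappa^2}{4\pi}\log 2x_1-\kappa Wx_1$ at $x_1=\frac{\kappa}{4\pi W}$. Your additional care with the odd extension and Schwarz reflection across the axis is a detail the paper leaves implicit, and it is handled correctly.
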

\begin{proof}
The problem can be reduced to finding a solution in $\R^2_+$ with vanishing flux on the boundary. The corresponding Kirchhoff--Routh function is 
\[
 \mathcal{W}(x)=\frac{\kappa^2}{4\pi} (\log 2x_1)-\kappa Wx_1
\]
This follows from the existence result of Theorem~\ref{thmYang}, the asymptotics of Proposition~\ref{propUnboundedUpper} and Proposition~\ref{prop:1maiUnbounded}.
\end{proof}

\subsubsection{Stationary vortex in the half-plane with non-vanishing flux}
The method just used extends to non-vanishing flux boundary conditions:

\begin{theorem}
Let $v_n \in L^1 (\R)\cap L^s_{\mathrm{loc}}(\R)$ for $s > 1$. If $\int_{-\infty}^0 v_n=-\int_0^\infty v_n>0$.
For every $W > 0$ and $\kappa > 0$, if $\kappa/W$ is small enough and if $\epsilon > 0$ sufficiently small there exist smooth stationary solutions $\mathbf{v}_\eps$ of the Euler equation in $\Omega$ with outward boundary flux given by $v_n$ and  $\lim_{x_1 \to \infty} \mathbf{v}_\eps(x)=(0, W)$, corresponding to vorticities $\omega_\eps$, such that ${\rm supp}(\omega_\eps) \subset B(x_\eps, C\eps)$ for some $x_\eps \in \Omega$ and $C>0$ not depending on $\eps$, and $\int_{\R^2_+} \omega_\eps \to \kappa$. 
\end{theorem}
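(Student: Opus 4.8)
The plan is to recognize this as a direct application of the machinery of Section~\ref{sectUnbounded} with $\Omega=\R^2_+$, in the same spirit as the preceding translating-pair construction. First I would build the irrotational background field carrying the prescribed data. A stationary Euler flow in $\R^2_+$ with outward flux $v_n$ on $\partial\R^2_+$ and velocity $(0,W)$ at infinity is, outside the vortex core, of the form $\mathbf{v}_0=(\nabla\psi_0)^\perp$ with $\psi_0$ harmonic. I take $\psi_0$ as the solution of \eqref{eqpsi0} on $\R^2_+$ with the normalization $\nabla\psi_0\to(-W,0)$ at infinity, and write $\psi_0=-Wx_1+\psi_1$. The correction $\psi_1$ is the bounded Poisson extension to the half-plane of the boundary datum $F(x_2)=\int_{-\infty}^{x_2}v_n$; the hypotheses $v_n\in L^1(\R)$ and $\int_{-\infty}^0 v_n=-\int_0^\infty v_n$ guarantee $F(\pm\infty)=0$, so $\psi_1$ is bounded and tends to $0$ as $\abs{x}\to\infty$. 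Setting $q=-\psi_0=Wx_1-\psi_1$ turns the construction into solving $\Peps$ on $\R^2_+$ for this $q$.

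Next I would verify the standing hypotheses of Section~\ref{sectUnbounded}. The local regularity $v_n\in L^s_{\mathrm{loc}}$ gives $q\in\mathrm{W}^{1,1}_{\mathrm{loc}}$ together with the uniform bound $\sup_x\int_{B(x,1)}\abs{\nabla q}^r<\infty$ for some $r>2$, since $\nabla(Wx_1)$ is constant and $\nabla\psi_1$ has the requisite integrability; boundedness of $\psi_1$ yields the affine lower bound $q(x)\ge W\dist(x,\partial\R^2_+)+d$ for a suitable $d$, and the decay of $\psi_1$ gives the far-field growth hypothesis of Theorem~\ref{theoremExistenceLevels} with $\Hat{W}=W$ and $\Hat{d}=0$; finally \eqref{condPerturbation} is trivial for the half-plane. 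Using $H(x,x)=\frac{1}{2\pi}\log(2x_1)$ on $\R^2_+$, the Kirchhoff--Routh function reads
\[
  \mathcal{W}(x)=\frac{\kappa^2}{4\pi}\log(2x_1)-\kappa W x_1+\kappa\psi_1(x).
\]

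The crux, and the step I expect to be the main obstacle, is the existence criterion of Theorem~\ref{theoremExistenceLevels}: one must produce the strict inequality
\[
  \sup_{x\in\R^2_+}\mathcal{W}(x)>\frac{\kappa^2}{4\pi}\Bigl(\log\frac{\kappa}{2\pi W}-1\Bigr),
\]
which is precisely $c^\eps<\Hat{c}^\eps$ in the limit. The difficulty is that the first two terms of $\mathcal{W}$ alone attain their maximum at $x_*=(\tfrac{\kappa}{4\pi W},0)$ with value exactly equal to the right-hand side, so that for affine $q$ (no flux) one has only equality; the strict gain must come entirely from $\kappa\psi_1$. Here the sign hypothesis $\int_{-\infty}^0 v_n>0$ is decisive, for it forces $\psi_1$ to be strictly positive near the boundary point $(0,0)$ where the flux changes sign, with $\psi_1(0,0)=\int_{-\infty}^0 v_n$. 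Taking $\kappa/W$ small pushes $x_*$ into this neighbourhood, so that $\psi_1(x_*)>0$ and hence $\mathcal{W}(x_*)$ exceeds the threshold by a strictly positive amount; this is exactly where the smallness of $\kappa/W$ enters, and quantifying this gain against the borderline equality is the delicate point.

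Once this strict inequality holds for $\eps$ small, Theorem~\ref{theoremExistenceLevels} (through Proposition~\ref{propositionPS}) furnishes a minimizer $u^\eps\in\mathrm{D}^{1,2}_0(\R^2_+)$ with $\mathcal{E}^\eps(u^\eps)=c^\eps$, and Proposition~\ref{prop:1maiUnbounded} provides the asymptotics: the vorticity $\omega_\eps=\eps^{-2}f(u^\eps-q^\eps)$ is supported in a ball $B(x_\eps,C\eps)$, with $\int_{\R^2_+}\omega_\eps\to\kappa$ and $x_\eps$ near the maximizer of $\mathcal{W}$. It remains, as in the proofs of Theorems~\ref{thm:resu} and~\ref{thmLocalMinimum}, to set $\mathbf{v}_\eps=(\nabla(u^\eps+\psi_0))^\perp$: the stream-function identity of Section~\ref{sectionSingleVortex} makes this a genuine stationary solution of the Euler equation, it is irrotational outside the shrinking core, and by construction of $\psi_0$ it carries the boundary flux $v_n$ and the velocity $(0,W)$ at infinity, completing the proof.
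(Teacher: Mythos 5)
Your proposal is correct and follows essentially the same route as the paper: the same background stream function $\psi_0$ (with $\psi_0(0,0)=\int_{-\infty}^0 v_n>0$ coming from the flux hypothesis), the same reduction to the strict level inequality \eqref{eqTrenchStrict} of Theorem~\ref{theoremExistenceLevels} tested at $\Hat{x}=\bigl(\tfrac{\kappa}{4\pi W},0\bigr)$, and the same final assembly $\mathbf{v}_\eps=(\nabla(u^\eps+\psi_0))^\perp$ through Proposition~\ref{prop:1maiUnbounded}. The only (harmless) divergence is in how the strict inequality is verified: the paper uses the sufficient condition $\kappa\psi_0(\Hat{x})>0$ together with the slack from the $-1$ in the threshold, whereas you observe that the affine part alone attains exact equality and isolate the strict gain as $\kappa\psi_1(\Hat{x})>0$ --- a slightly sharper bookkeeping of the same mechanism, which needs only $\psi_1(\Hat{x})>0$ rather than $\psi_1(\Hat{x})>\kappa/(4\pi)$ and so tracks the ``$\kappa/W$ small'' hypothesis a bit more faithfully.
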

\begin{proof}
Define $\psi_0$ by 
\[
\left\{
\begin{aligned}
  -\Delta \psi_0 & = 0 & & \text{in $\R^2_+$}, \\
  \partial_2 \psi_0 &=v_n& & \text{on $\partial \R^2_+$}, \\
  \psi_0(0, x_2) &\to 0 & &\text{as $\abs{x_2} \to \infty$},\\
  \frac{\psi_0(x)}{x_1} &\to -W & & \text{as $x_1 \to \infty$}. 
\end{aligned}
\right. 
\]
One checks that by our assumptions, 
\[
 \psi_0(0) > 0. 
\]
In order to apply  Theorem~\ref{theoremExistenceLevels}, we need to find $\Hat{x} \in \Omega$ such that 
\begin{equation}
\label{eqTrenchStrict}
  \kappa \psi_0 (\Hat{x})+\frac{\kappa^2}{4\pi} \log 2 \Hat{x}_1 > \frac{\kappa^2}{4\pi} \Bigl(\log \frac{\kappa}{2\pi W}-1\Bigr). 
\end{equation}
One takes $\Hat{x}=(\frac{\kappa}{4\pi W}, 0)$. If $\kappa/W$ is small enough, one has
\[
 \kappa \psi_0 (\Hat{x}) > 0,
\]
and one checks that 
\[
 \kappa \psi_0 (\Hat{x})+\frac{\kappa^2}{4 \pi} \log 2 \Hat{x}_1 > \frac{\kappa^2}{4\pi} \log \frac{\kappa}{2\pi W} > \frac{\kappa^2}{4\pi} \Bigl(\log \frac{\kappa}{2\pi W}-1\Bigr). 
\]
The conclusion follows then from Theorem~\ref{theoremExistenceLevels}. 
\end{proof}

\subsubsection{Stationary vortex in a perturbed half-plane}
Instead of perturbing the boundary condition on the half-plane, one can instead perturb the geometry. The first situation is the situation in which one has for example enlarged a little bit the half-plane around $0$:

\begin{theorem}
Assume that $\Omega$ is a simply-connected perturbation of $\R^2_+$ in the sense of \eqref{condPerturbation}. Let $\Bar{x} \in \partial \Omega$ be such that $x_1 > \Bar{x}_1$ for every $x \in \Omega$, $\partial \Omega$ is of class $C^2$ in a neighborhood of $\Bar{x}$,
then for every $W > 0$, if $\kappa > 0$ is sufficiently small and if $\epsilon > 0$ sufficiently small there exist smooth stationary solutions $\mathbf{v}_\eps$ of the Euler equation in $\Omega$ with vanishing boundary flux and  $\lim_{x_1 \to \infty} \mathbf{v}_\eps(x)=(0, W)$, corresponding to vorticities $\omega_\eps$, such that ${\rm supp}(\omega_\eps) \subset B(x_\eps, C\eps)$ for some $x_\eps \in \Omega$ and $C>0$ not depending on $\eps$ and $\int_{\Omega} \omega_\eps \to \kappa$. 
\end{theorem}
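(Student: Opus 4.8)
The plan is to reduce the statement, exactly as in the two preceding theorems of this subsection, to an application of Theorem~\ref{theoremExistenceLevels}, the only genuine work being the verification of its hypothesis. First I would let $\psi_0$ be the harmonic stream function of the background flow, i.e.\ the harmonic function on $\Omega$ with vanishing tangential derivative on $\partial\Omega$, normalized so that $\psi_0=0$ on $\partial\Omega$ and $\psi_0(x)/x_1\to -W$ as $x_1\to+\infty$, and set $q=-\psi_0$. Since the flux vanishes, $\psi_0$ is constant on the connected boundary, so $q=0$ there; and since $\psi_0\le 0$ in $\Omega$ by the maximum principle (its boundary value is $0$, it tends to $-\infty$ in the $x_1$ direction and to $0$ in the $x_2$ directions by \eqref{condPerturbation}), the function $q$ is nonnegative and grows like $Wx_1$. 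Far out in the $x_2$ direction $\Omega$ looks like $\R^2_+$ and $q\sim Wx_1$, so one may take $\Hat W=W$ and $\Hat d=0$ in Theorem~\ref{theoremExistenceLevels}; its hypothesis then reads
\[
\sup_{x\in\Omega}\mathcal{W}(x) > \frac{\kappa^2}{4\pi}\Bigl(\log\frac{\kappa}{2\pi W}-1\Bigr),
\qquad \mathcal{W}(x)=\frac{\kappa^2}{2}H(x,x)+\kappa\psi_0(x).
\]

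For the exact half-plane this is an equality (the supremum is attained at height $\kappa/(4\pi W)$), so the whole point is that the enlargement near $\Bar x$ must produce a strict gain. I would test the inequality at a point $\Hat x=\Bar x+d\,\nu$ at distance $d\sim\kappa$ from the protruding tip, $\nu=(1,0)$ being the inner normal at $\Bar x$ and $\Bar x_1<0$ the signature of a genuine enlargement. Because $\partial\Omega$ is $C^2$ near $\Bar x$, the Robin function expands as $H(\Hat x,\Hat x)=\frac{1}{2\pi}\log(2d)+\beta+o(1)$ with $\beta$ the Robin constant at $\Bar x$. For the stream function I would compare with the tangent half-plane $\{x_1>\Bar x_1\}$: the function $\phi=\psi_0+W(x_1-\Bar x_1)$ is harmonic, nonnegative (it equals $W(x_1-\Bar x_1)\ge 0$ on $\partial\Omega$ and tends to $-W\Bar x_1>0$ at infinity, using $\Bar x_1<0$), and vanishes at its minimum $\Bar x$, so the Hopf lemma gives $c_1:=\partial_\nu\phi(\Bar x)>0$ and hence $\psi_0(\Hat x)=-(W-c_1)d+o(d)$; the enlargement strictly slows the flow at the tip, $|\partial_\nu\psi_0(\Bar x)|=W-c_1<W$. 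Inserting these expansions and optimizing $\frac{\kappa^2}{4\pi}\log(2d)-\kappa(W-c_1)d+\frac{\kappa^2}{2}\beta$ over $d$ (optimum $d^*=\kappa/(4\pi(W-c_1))$) yields
\[
\sup_{x\in\Omega}\mathcal{W}(x)\ge \frac{\kappa^2}{4\pi}\Bigl(\log\frac{\kappa}{2\pi W}-1\Bigr)+\frac{\kappa^2}{4\pi}\log\frac{W}{W-c_1}+\frac{\kappa^2}{2}\beta+o(\kappa^2).
\]

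The main obstacle is precisely the sign of the correction $\frac{1}{4\pi}\log\frac{W}{W-c_1}+\frac{\beta}{2}$. Its first term is strictly positive by the Hopf lemma (the gain from the slower tip flow), while the Robin correction satisfies $\beta\le 0$, since $\Omega\subset\{x_1>\Bar x_1\}$ with tangency at $\Bar x$ forces $\beta\le 0$ by domain monotonicity of $H$. Controlling this competition, namely showing that for the admissible small enlargements the flow-slowing gain dominates the loss in the Robin constant so that the displayed correction is strictly positive, is the technical heart of the argument; this is where the $C^2$ regularity at $\Bar x$ and the smallness of the perturbation (which makes both $c_1$ and $|\beta|$ small, with the gain winning) enter. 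Granting it, the strict inequality holds for all sufficiently small $\kappa$, whence $c^\eps<\Hat c^\eps$ for $\eps$ small.

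Once the hypothesis of Theorem~\ref{theoremExistenceLevels} is verified, it provides, for every small $\eps$, a critical point $u^\eps\in\mathrm{D}^{1,2}_0(\Omega)$ of $\mathcal{E}^\eps$ with $\mathcal{E}^\eps(u^\eps)=c^\eps$; Proposition~\ref{prop:1maiUnbounded} then gives the concentration of $\omega_\eps=\eps^{-2}f(u^\eps-q^\eps)$ in a ball $B(x_\eps,C\eps)$ together with $\int_\Omega\omega_\eps\to\kappa$. Setting $\mathbf{v}_\eps=(\nabla(u^\eps+\psi_0))^\perp$ produces the desired stationary Euler flow: it is irrotational outside the vortex core, $u^\eps\to 0$ at infinity gives $\mathbf{v}_\eps\to(\nabla\psi_0)^\perp=(0,W)$ as $x_1\to\infty$, and the normalization of $\psi_0$ yields the vanishing boundary flux, exactly as in the proof of Theorem~\ref{thm:resu}.
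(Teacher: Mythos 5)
Your overall strategy is the paper's: reduce to Theorem~\ref{theoremExistenceLevels} with $q=-\psi_0$, test the Kirchhoff--Routh condition \eqref{eqTrenchStrict} at a point $\Hat{x}=\Bar{x}+d\nu$ with $d\sim\kappa$, and extract the strict gain from a maximum principle/Hopf comparison with the tangent half-plane $\{x_1>\Bar{x}_1\}$; the paper phrases this as $\partial_1\psi_0(\Bar{x})>-W$, whence $\psi_0(x)>-\gamma(x_1-\Bar{x}_1)$ near $\Bar{x}$ for some $\gamma\in(0,W)$, which is your $c_1>0$. Up to that point your argument matches the paper's proof.

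The gap is in your expansion of the Robin function and the ``competition'' you build on it. You posit $H(\Hat{x},\Hat{x})=\frac{1}{2\pi}\log(2d)+\beta+o(1)$ with an order-one ``Robin constant $\beta$ at $\Bar{x}$'', and then declare the sign of $\frac{1}{4\pi}\log\frac{W}{W-c_1}+\frac{\beta}{2}$ to be the technical heart of the proof, resolving it only by fiat (``granting it'') together with a smallness assumption on the perturbation that the theorem does not make. But no such order-one constant exists: for a boundary of class $C^2$ near $\Bar{x}$ the correction to $\frac{1}{2\pi}\log(2d)$ is $O(d)$, not $O(1)$. This is exactly what the paper proves in its appendix, Proposition~\ref{propositionAsymptotH}, which gives $H(\eps x,\eps x)=\frac{1}{2\pi}\log 2\eps x_1-\eps\frac{K\abs{x}^2}{4\pi x_1}+o(\eps)$ with $K$ the curvature at $\Bar{x}$; hence at $d=O(\kappa)$ one has $\frac{\kappa^2}{2}H(\Hat{x},\Hat{x})=\frac{\kappa^2}{4\pi}\log(2d)+O(\kappa^3)$, i.e.\ effectively $\beta=0$ at this order, while the Hopf gain is of order $\kappa^2$ (the paper takes $d=\kappa/(4\pi W)$ and gains $(1-\gamma/W)\frac{\kappa^2}{4\pi}$). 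So the competition you worry about disappears automatically for all sufficiently small $\kappa$, with no smallness of the perturbation required; conversely, on your (false) premise of an order-one negative $\beta$, the proof as written is incomplete, since the claim that ``the gain wins'' is never substantiated --- if both $c_1$ and $\abs{\beta}$ are merely small, nothing you say compares $c_1/(4\pi W)$ with $\abs{\beta}/2$. The missing ingredient is precisely the boundary asymptotics of $H$ at a $C^2$ point; once your $\beta$-expansion is replaced by Proposition~\ref{propositionAsymptotH}, your argument closes and coincides with the paper's (existence from Theorem~\ref{theoremExistenceLevels}, concentration and $\int_\Omega\omega_\eps\to\kappa$ from Proposition~\ref{prop:1maiUnbounded}).
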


\begin{proof}
We are going to obtain the solutions by applying  Theorem~\ref{theoremExistenceLevels} with $q=-\psi_0$. 
Let $\mathbf{v}_0$ be the irrotationnal stationary solution to the Euler equation with vanishing flux on $\partial \Omega$ and $\lim_{x_1 \to \infty} \mathbf{v}_0(x)=(0, W)$, i.e.\ $\mathbf{v}_0=\nabla \psi_0^\perp$ with 
\begin{equation}
\label{eqPsi0}
\left\{
 \begin{aligned}
   -\Delta \psi_0 &= 0, & &\text{in $\Omega$}, \\
   \psi_0 &= 0  & &\text{on $\partial \Omega$}, \\
   \tfrac{\psi_0(x)}{x_1} &\to -W &&\text{as $x \to \infty$}. 
 \end{aligned}
\right. 
\end{equation}
In order to apply  Theorem~\ref{theoremExistenceLevels}, we need to find $\Hat{x} \in \Omega$ such that the condition
\eqref{eqTrenchStrict}
holds. First, by the strong maximum principle, one has $\partial_1 \psi(\Bar{x})>-W$, so that there exists $\gamma \in (0, W)$ such that in a neighborhood of $\Bar{x}$, 
\[
 \psi_0(x) >  -\gamma (x_1-\Bar{x_1}). 
\]
If we consider the point $\Hat{x}=(\Bar{x}_1+\frac{\kappa}{4\pi W}, \Bar{x}_2)$, one has
\[
 \kappa \psi_0(\Hat{x}) > -\gamma \frac{\kappa^2}{4\pi W}. 
\]
On the other hand, if $K$ denotes the curvature of $\partial \Omega$ at $\Bar{x}$, one has by Proposition~\ref{propositionAsymptotH},
\[
 \frac{\kappa^2}{2} H (\Hat{x}, \Hat{x})=-\frac{\kappa^2}{4\pi} \log \frac{\kappa}{2\pi W}+O(\kappa^3). 
\]
Therefore, if $\kappa$ is small enough, one has \eqref{eqTrenchStrict}, and one can then apply Theorem~\ref{theoremExistenceLevels} to obtain the conclusion. 
\end{proof}

\subsubsection{Translating vortex pair near a translating axisymmetric obstacle}
We can also treat a situation in some sense opposite to the situation of the previous section. We obtain the desingularization of vortices on a set which is obtained by removing some part of the half-plane. By a Galilean change of variables and by extension by symmetry of the flow, this corresponds also physically to a rigid body in translation together with a pair of vortices. A similar problem was studied through the vorticity method by B.\thinspace Turkington \cite{Turkington1983}

\begin{theorem}
Let $D \subset \R^2$ be a compact simply-connected set with non-empty interior and symmetric with respect to the $x_1$ variable. Then for every $\kappa> 0$ and $W > 0$, if $\eps > 0$ is sufficiently small there exist smooth stationary solutions $\mathbf{v}_\eps$ of the Euler equation in $\R^2 \setminus D$ symmetric with respect to the $x_2$ axis, with vanishing boundary flux and such that $\lim_{x_1 \to \infty} \mathbf{v}_\eps(x)=(0, W)$, corresponding to vorticities $\omega_\eps$, such that ${\rm supp}(\omega_\eps) \cap \R^2_+ \subset B(x_\eps, C\eps)$ for some $x_\eps \in \Omega \cap \R^2_+$ and $C>0$ not depending on $\eps$ and $\int_{\R^2_+ \setminus D} \omega_\eps \to \kappa$. 
\end{theorem}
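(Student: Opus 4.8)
The plan is to reduce the problem, via the reflection symmetry across the $x_2$ axis, to a single \emph{positive} vortex in the right half of the exterior domain and then to feed it into the unbounded-domain machinery of Section~\ref{sectUnbounded}. Writing $\R^2_+=\{x_1>0\}$, I set $\Omega=(\R^2\setminus D)\cap \R^2_+=\R^2_+\setminus D$; because $D$ is compact, $\Omega$ contains a shifted half-plane and satisfies the perturbation condition \eqref{condPerturbation}. Let $\psi_0$ be the stream function of the background irrotational flow, i.e.\ the solution of a problem of the type \eqref{eqPsi0} on $\Omega$ with $\psi_0=0$ on $\partial\Omega$ and $\psi_0(x)/x_1\to -W$ as $x_1\to\infty$, and take $q=-\psi_0$. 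One checks that $q\ge 0$ meets the growth and regularity hypotheses of Section~\ref{sectUnbounded}. A solution $u^\eps$ of $\Peps$ on $\Omega$ then yields, through the stream-function method and the odd reflection of $u^\eps+\psi_0$ across the $x_2$ axis, a stationary flow $\mathbf v_\eps=(\nabla(u^\eps+\psi_0))^\perp$ on $\R^2\setminus D$ that is symmetric with respect to the $x_2$ axis, has vanishing boundary flux, tends to $(0,W)$ at infinity, and carries a mirror vortex of intensity $-\kappa$; the bound $\supp(\omega_\eps)\cap\R^2_+\subset B(x_\eps,C\eps)$ and $\int_{\R^2_+\setminus D}\omega_\eps\to\kappa$ will be read off from the asymptotics.

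Existence of such a $u^\eps$ is precisely Theorem~\ref{theoremExistenceLevels}, so the proof reduces to verifying its hypothesis, the strict energy gap
\[
  \sup_{x\in\Omega}\Bigl(\frac{\kappa^2}{2}H(x,x)-\kappa q(x)\Bigr) > \frac{\kappa^2}{4\pi}\Bigl(\log\frac{\kappa}{2\pi\Hat W}-1\Bigr)-\kappa\Hat d .
\]
Here the asymptotic data are $\Hat W=W$ and $\Hat d=0$, since $\psi_0\to -Wx_1$ uniformly as $\abs{x_2}\to\infty$ and $\psi_0=0$ on the axis; hence the right-hand side is exactly the supremum of the Kirchhoff--Routh function of the \emph{unobstructed} half-plane $\R^2_+$. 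The gap is therefore a strict comparison of $\mathcal W$ on $\Omega$ against $\mathcal W$ on $\R^2_+$, and this is where the difficulty concentrates.

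To build the gap I would play off the two competing effects of removing $D$. Since $\psi_0=0\ge -Wx_1$ on $\partial\Omega$, with strict inequality on $\partial D$, the maximum principle gives $\psi_0>-Wx_1$ in $\Omega$, so $q=-\psi_0$ is \emph{strictly smaller} than its half-plane counterpart $Wx_1$; this lowers $\kappa q$ and pushes $\mathcal W$ up. Against this, domain monotonicity yields $H_\Omega\le H_{\R^2_+}$, which pushes $\mathcal W$ down. The strategy is to choose the test point $\Hat x$ so that the favorable potential term dominates: I would place $\Hat x$ in the fluid just off a boundary point $\Bar x\in\partial D$ at which the tangential velocity $\partial_n\psi_0$ vanishes, i.e.\ a stagnation point of the potential flow. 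Near such a point $q=-\psi_0$ is \emph{quadratically} small in $\dist(\cdot,\partial\Omega)$, so the loss $-\kappa q(\Hat x)$ is negligible next to the nondegenerate Robin contribution, which by Proposition~\ref{propositionAsymptotH} is flat-wall--like at leading order; optimizing the distance of $\Hat x$ to $\partial\Omega$ then lifts $\mathcal W(\Hat x)$ strictly above the flat half-plane value.

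The main obstacle is exactly this final balance: one must show, quantitatively and in the relevant parameter range, that the gain from the strictly reduced background flow near a slow-flow boundary point outweighs the loss from the additional repulsive image encoded in $H_\Omega$. Once the strict inequality is secured, Theorem~\ref{theoremExistenceLevels} produces $u^\eps\in\mathrm D^{1,2}_0(\Omega)$ with $\mathcal E^\eps(u^\eps)=c^\eps$, Proposition~\ref{prop:1maiUnbounded} delivers the concentration of $\omega_\eps$ in a ball of radius $O(\eps)$ together with $\int\omega_\eps\to\kappa$, and the odd reflection described above upgrades this single vortex to the desired translating pair in $\R^2\setminus D$, completing the argument.
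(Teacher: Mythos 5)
Your setup is exactly the paper's: reduce to $\Omega=\R^2_+\setminus D$, take $q=-\psi_0$, identify $\Hat{W}=W$, $\Hat{d}=0$, and reduce everything to the strict gap \eqref{eqTrenchStrict} for Theorem~\ref{theoremExistenceLevels}, with Proposition~\ref{prop:1maiUnbounded} and odd reflection finishing the job. But the step you yourself flag as ``the main obstacle'' is a genuine gap, and the stagnation-point strategy you sketch cannot close it under the stated hypotheses, for two concrete reasons. First, the theorem assumes only that $D$ is compact, simply connected, with nonempty interior: $\partial D$ need not be $C^2$, so there is no curvature, Proposition~\ref{propositionAsymptotH} does not apply, and there is no guarantee of a stagnation point near which $\psi_0$ vanishes quadratically. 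Second, even granting a smooth boundary, your balance only closes when $\kappa/W$ is small. Near a stagnation point the best available lower bound is $\mathcal{W}(\Hat{x})\gtrsim \frac{\kappa^2}{4\pi}\log 2d-\kappa c\,d^2$ for $d=\dist(\Hat{x},\partial\Omega)\le d_0$, where $d_0$ is the size of the neighborhood on which the quadratic bound on $q$ holds. Optimizing gives $d\sim\sqrt{\kappa/c}$ and a value of order $\frac{\kappa^2}{8\pi}\log\kappa$, which indeed exceeds the flat half-plane value $\frac{\kappa^2}{4\pi}\bigl(\log\frac{\kappa}{2\pi W}-1\bigr)\approx\frac{\kappa^2}{4\pi}\log\kappa$ when $\kappa$ is small; but once $\kappa/W$ is large the optimal $d$ exits $B(\Bar{x},d_0)$, where $q$ reverts to linear growth, and the comparison fails. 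This is precisely the mechanism of the \emph{perturbed half-plane} theorem in Section~\ref{sect:resu}, which is accordingly stated only for $\kappa$ sufficiently small, whereas the present theorem claims every $\kappa>0$ and $W>0$.

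The paper closes the gap by moving the test point far along the wall instead of near the obstacle, which also dispenses with boundary regularity. Assuming $B(0,\rho)\subset D\subset B(0,R)$, it takes $\Hat{x}^\lambda=\bigl(\frac{\kappa}{4\pi W},\lambda\frac{\kappa}{4\pi W}\bigr)$ and lets $\lambda\to\infty$. The maximum principle, compared with the explicit flow past the disc $B(0,\rho)$, gives
\[
 \psi_0(x)>-Wx_1+W\frac{x_1\rho^2}{\abs{x}^2},
 \qquad\text{hence}\qquad
 \kappa\psi_0(\Hat{x}^\lambda)\ge-\frac{\kappa^2}{4\pi}+\frac{c}{1+\lambda^2}
\]
with $c>0$, while domain monotonicity of the Green function against Turkington's explicit formula for $\R^2_+\setminus B(0,R)$ yields
\[
 \frac{\kappa^2}{2}H(\Hat{x}^\lambda,\Hat{x}^\lambda)\ge\frac{\kappa^2}{4\pi}\log\frac{\kappa}{2\pi W}+O(\lambda^{-4}).
\]
The $O(\lambda^{-2})$ gain from the blocked background flow beats the $O(\lambda^{-4})$ image loss, so \eqref{eqTrenchStrict} holds for all $\kappa,W>0$ once $\lambda$ is large, using nothing about $D$ beyond the inner and outer discs. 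If you replace your stagnation-point step with this far-field comparison, the rest of your argument goes through as you describe.
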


\begin{proof}
Set $\Omega=\R^2_+ \setminus D$. We shall consider the case $W > 0$ and $\kappa > 0$, and we shall assume that $B(0, \rho) \subset D \subset B(0, R)$. We use again Theorem~\ref{theoremExistenceLevels} and therefore we shall prove that \eqref{eqTrenchStrict} holds for some $\Hat{x} \in \R^2$ where $\psi_0$ solves \eqref{eqPsi0}. 
We shall take $\Hat{x}^\lambda=(\frac{\kappa}{4\pi W}, \lambda \frac{\kappa}{4\pi W})$ where $\lambda \in \R$.
By the maximum principle on $\Omega$, one has for $x \in \Omega$,
\[
  \psi_0(x)>-Wx_1+W \frac{x_1 \rho^2}{\abs{x}^2}. 
\]
Hence, we have 
\[
 \kappa \psi_0(\Hat{x}^\lambda) \ge - \frac{\kappa^2 }{4\pi} + \frac{4\pi W^2}{1+\lambda^2},
\]
with $c' > 0$. 
We also use the formula of the Green function $\Tilde{G}$ of $\R^2_+ \setminus B(0, R)$ used by B.\thinspace Turkington \cite[p.\thinspace 1047]{Turkington1983}
\[
 \Tilde{G}(x, y)=\frac{1}{4\pi} \log \frac{1+\dfrac{4x_1y_1}{\abs{x-y}^2}}{1+\dfrac{4R^2 x_1y_1 } {(x_1y_1+x_2 y_2-R^2)^2+(x_2y_1-x_1y_2)^2}}
\]
Since $\Tilde{G}(x, y) \le G(x, y)$, one has therefore
\[
 H(x,x)\ge \frac{1}{2\pi} \log 2 x_1 - \frac{1}{2\pi} \log \Bigl(1+ \frac{4R^2x_1^2}{(\abs{x}^2-R^2)^2}\Bigr),
\]
whence
\[
  \frac{\kappa^2}{2} H(\Hat{x}^\lambda, \Hat{x}^\lambda) \ge \frac{\kappa^2}{4\pi} \log \frac{\kappa}{2\pi W}+O(\lambda^{-4}).
\]
One checks thus that for $\lambda$ sufficiently large, 
\[
  \kappa \psi_0 (\Hat{x}^\lambda)+\frac{\kappa^2}{4\pi} \log 2 \Hat{x}^\lambda_1 > \frac{\kappa^2}{4\pi} \Bigl(\log \frac{\kappa}{2\pi W}-1\Bigr).  
\]
and the conclusion thus follows from  Theorem~\ref{theoremExistenceLevels}. 
\end{proof}





\appendix

\section{Capacity estimates}
Let $\Omega \subset \R^2$ be open. The electrostatic capacity of a compact set $K \subset \Omega$ is
\[
  \capa(K, \Omega)=\inf \Bigl\{ \int_{\Omega} \abs{\nabla \varphi}^2 \st \varphi \in C^\infty_c(\Omega)\text{ and } \varphi = 1 \text{ on $K$} \Bigr\}. 
\]
Let us first recall the following standard capacity estimate which was discovered by H.\thinspace Poincar\'e \cite[p.\thinspace 17--22]{Poincare} and whose first complete proof was given by G.\thinspace Szeg\H o~\cite{Szego1930}.

\begin{proposition}
\label{propositionCapacityArea}
Let $\Omega \subset \R^2$ have finite measure. For every $K \subset \Omega$,
\[
  \frac{4\pi}{\capa(K, \Omega)} \le \log \frac{\muleb{2}(\Omega)}{\muleb{2}(K)}. 
\]
\end{proposition}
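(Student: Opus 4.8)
The plan is to work with the capacitary potential of $K$ in $\Omega$ and to run the classical level-set (rearrangement) argument combining the coarea formula with the planar isoperimetric inequality. First I would let $u$ denote the minimizer in the definition of $\capa(K, \Omega)$, so that $0 \le u \le 1$, $u = 1$ on $K$, $u$ vanishes near $\partial \Omega$, $u$ is harmonic in the open set $\Omega \setminus K$, and $\capa(K, \Omega) = \int_\Omega \abs{\nabla u}^2$; write $c = \capa(K, \Omega)$. For $t \in (0, 1)$ set $A(t) = \muleb{2}(\{u > t\})$, a nonincreasing function satisfying $A(t) \ge \muleb{2}(K)$ for every $t < 1$ (since $\{u > t\} \supset K$) and $A(t) \le \muleb{2}(\Omega)$.

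The core computation proceeds on the level sets $\{u = t\}$. Since $u$ is harmonic between $K$ and $\partial \Omega$, the flux $\int_{\{u = t\}} \abs{\nabla u}\, d\muhaus{1}$ is independent of $t$ and equals $c$ for a.e.\ $t$; combined with the coarea formula this recovers $\int_\Omega \abs{\nabla u}^2 = \int_0^1 c\, dt = c$. Applying the Cauchy--Schwarz inequality on each regular level set gives
\[
  \muhaus{1}(\{u = t\})^2 \le \Bigl(\int_{\{u = t\}} \abs{\nabla u}\, d\muhaus{1}\Bigr)\Bigl(\int_{\{u = t\}} \frac{1}{\abs{\nabla u}}\, d\muhaus{1}\Bigr) = c\,(-A'(t)),
\]
where the last equality is the coarea identity $-A'(t) = \int_{\{u = t\}} \abs{\nabla u}^{-1}\, d\muhaus{1}$. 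The planar isoperimetric inequality applied to $\{u > t\}$ bounds its perimeter from below by $\muhaus{1}(\{u = t\}) \ge 2\sqrt{\pi A(t)}$, so that $4\pi A(t) \le c\,(-A'(t))$, i.e.
\[
  \frac{-A'(t)}{A(t)} \ge \frac{4\pi}{c}.
\]

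Integrating this differential inequality over $(0, 1)$ yields $\log \frac{A(0^+)}{A(1^-)} \ge 4\pi/c$, and since $A(0^+) \le \muleb{2}(\Omega)$ and $A(1^-) \ge \muleb{2}(K)$ the desired bound $\frac{4\pi}{\capa(K, \Omega)} \le \log \frac{\muleb{2}(\Omega)}{\muleb{2}(K)}$ follows. The main obstacle is technical rather than conceptual: one must justify every manipulation for a potential that is only Sobolev-regular. I would invoke Sard's theorem to guarantee that almost every $t$ is a regular value, so the level sets are rectifiable and the coarea formula, the flux identity, and the Cauchy--Schwarz step are all legitimate; for the remaining irregular levels the isoperimetric inequality is applied in the framework of sets of finite perimeter. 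The delicate point is establishing that $A$ is absolutely continuous (so that integrating $-A'/A$ is valid), which is handled through the coarea representation $A(t) = \int_t^1 \int_{\{u = s\}} \abs{\nabla u}^{-1}\, d\muhaus{1}\, ds$.
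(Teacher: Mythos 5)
Your proof is correct, but it takes a genuinely different route from the paper's. The paper disposes of the proposition in two lines: by the P\'olya--Szeg\H{o} symmetrization inequality, $\capa(K,\Omega)\ge\capa\bigl(\overline{B(0,\rho)},B(0,R)\bigr)$ once $\rho$ and $R$ are chosen so that $\muleb{2}(B(0,\rho))=\muleb{2}(K)$ and $\muleb{2}(B(0,R))=\muleb{2}(\Omega)$, and the capacity of the concentric condenser is computed explicitly, $\capa\bigl(\overline{B(0,\rho)},B(0,R)\bigr)=2\pi/\log(R/\rho)=4\pi/\log\bigl(\muleb{2}(\Omega)/\muleb{2}(K)\bigr)$, which is exactly the stated bound. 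Your coarea-plus-isoperimetry computation amounts to a direct proof of precisely the special case of P\'olya--Szeg\H{o} that is needed here (it is in fact close to Szeg\H{o}'s original argument, to which the paper attributes the result), so it buys self-containedness at the price of the technicalities you list. Those are all surmountable, and two can be streamlined. First, Sard's theorem applies not because $u$ is merely Sobolev but because $u$ is harmonic, hence real-analytic, in $\Omega\setminus K$, where every level set $\{u=t\}$ with $0<t<1$ lives; isolatedness of the critical points of a nonconstant planar harmonic function also gives $\muleb{2}(\{\nabla u=0\})=0$, which is the fact underlying the absolute continuity of $A$ on compact subintervals of $(0,1)$. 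Second, you can bypass the existence of the minimizer, the flux identity and all boundary-regularity issues at once by running the same computation on an arbitrary admissible $\varphi\in C^\infty_c(\Omega)$ with $\varphi=1$ on $K$: setting $\phi(t)=\int_{\{\varphi=t\}}\abs{\nabla\varphi}\,d\muhaus{1}$, your level-set Cauchy--Schwarz and isoperimetric steps give $-A'(t)/A(t)\ge 4\pi/\phi(t)$ for a.e.\ $t\in(0,1)$ (any singular part of the monotone function $A$ only strengthens the integrated inequality), and integrating in $t$ and using Jensen's inequality together with the coarea bound $\int_0^1\phi(t)\,dt\le\int_\Omega\abs{\nabla\varphi}^2$ yields $\log\bigl(\muleb{2}(\Omega)/\muleb{2}(K)\bigr)\ge 4\pi\bigl(\int_0^1\phi\bigr)^{-1}\ge 4\pi/\int_\Omega\abs{\nabla\varphi}^2$; taking the infimum over admissible $\varphi$ gives the proposition with classical Sard available outright and none of the delicate points you had to handle for the capacitary potential.
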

\begin{proof}
One shows by the P\'olya--Szeg\H o inequality (for a modern treatment, see e.g. \cite{Kawohl1985}, \cite{LiebLoss2001} or \cite{BrockSolynin2000}) that 
\[
 \capa(K, \Omega) \ge \capa (\overline{B(0, \rho)}, B(0,R)
\]
if $\rho$ and $R$ are chosen so that $\muleb{2}(B(0,\rho))=\muleb{2}(K)$ and $\muleb{2}(B(0,R))=\muleb{2}(\Omega)$. One can then compute explicitly the right-hand-side to reach the conclusion.
\end{proof}

When $\mathcal{L}^2(\Omega)=+\infty$, Proposition~\ref{propositionCapacityArea} loses its interest. However, one still has:

\begin{proposition}
\label{propositionCapacityLocalArea}
Let $K \subset \R^2_+$, we have
\[
  \frac{4\pi}{\capa(K, \Omega)} \le \log \frac{8\pi \sup_{x \in K} \abs{x}^2}{\muleb{2}(K)}. 
\]
\end{proposition}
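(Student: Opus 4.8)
The plan is to exploit the conformal invariance of the Dirichlet integral in dimension two in order to trade the unbounded half-plane for a bounded disc, where Proposition~\ref{propositionCapacityArea} applies directly. As a preliminary reduction I take $\Omega=\R^2_+$: for a general admissible $\Omega\subset\R^2_+$ the monotonicity of capacity in the domain gives $\capa(K,\Omega)\ge\capa(K,\R^2_+)$, so it suffices to prove the bound for the half-plane itself. I write $R=\sup_{x\in K}\abs{x}$, so that $K\subset\overline{B(0,R)}$, and identify $\R^2_+$ with the right half-plane $\{z\in\C : \Re z>0\}$ via $z=x_1+ix_2$, the origin lying on $\partial\R^2_+$.

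First I would introduce, for a parameter $a>0$ to be fixed later, the M\"obius transformation $\phi(z)=\frac{z-a}{z+a}$, which maps $\R^2_+$ conformally onto the unit disc $B(0,1)$ and the boundary line $\partial\R^2_+$ onto $\partial B(0,1)$. Since the Dirichlet integral $\int\abs{\nabla u}^2$ is invariant under conformal changes of variable in two dimensions, and since $\phi$ carries boundary to boundary, the correspondence $u\mapsto u\circ\phi^{-1}$ is an energy-preserving bijection between the functions admissible for $\capa(K,\R^2_+)$ and those admissible for $\capa(\phi(K),B(0,1))$; the only boundary point requiring attention is $\phi(\infty)=1$, a single point of $\partial B(0,1)$, which carries no capacity. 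This yields the exact identity $\capa(K,\R^2_+)=\capa(\phi(K),B(0,1))$.

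I would then apply Proposition~\ref{propositionCapacityArea} with the domain $B(0,1)$, of measure $\pi$, to get $\frac{4\pi}{\capa(K,\R^2_+)}\le\log\frac{\pi}{\muleb{2}(\phi(K))}$, and it remains only to bound $\muleb{2}(\phi(K))$ from below. As $\phi$ is conformal its Jacobian is $\abs{\phi'(z)}^2=\frac{4a^2}{\abs{z+a}^4}$, so $\muleb{2}(\phi(K))=\int_K\abs{\phi'}^2$, and using $\abs{z+a}\le\abs{z}+a\le R+a$ on $K$ I obtain $\muleb{2}(\phi(K))\ge\frac{4a^2}{(R+a)^4}\muleb{2}(K)$. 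Substituting and optimizing the factor $(R+a)^4/a^2$ over $a>0$, whose minimum is attained at $a=R$ with value $16R^2$, leads to $\frac{4\pi}{\capa(K,\R^2_+)}\le\log\frac{4\pi R^2}{\muleb{2}(K)}\le\log\frac{8\pi R^2}{\muleb{2}(K)}$, which is the claim, even with a margin.

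The routine computations (the Jacobian of $\phi$ and the elementary one-variable minimization) are harmless. The conceptual heart, and the step I expect to be the most delicate to write carefully, is the conformal transfer of the capacity: one must verify that the classes of admissible functions genuinely correspond under $\phi$ despite $\R^2_+$ being unbounded and $\phi$ sending the point at infinity to a boundary point of the disc, and then that the area distortion is under control. Here the slack in the constant $8\pi$ — compared with the sharper $4\pi$ the argument actually produces — is precisely what renders the crude estimate $\abs{z+a}\le R+a$ sufficient, so no fine control of the Jacobian is needed.
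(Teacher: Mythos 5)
Your proof is correct and follows essentially the same route as the paper: the same M\"obius map $z\mapsto (z-a)/(z+a)$ onto the unit disc, conformal invariance of the Dirichlet integral, Proposition~\ref{propositionCapacityArea} applied in $B(0,1)$, and the same crude Jacobian bound $\abs{z+a}\le R+a$ with the choice $a=R$. Your version is in fact slightly sharper, giving $4\pi\sup_{x\in K}\abs{x}^2$ in place of the stated $8\pi$ (the paper's proof writes $2\pi$ where the area $\pi$ of the unit disc suffices), and your extra care with the correspondence of admissible functions, the point at infinity, and the monotonicity reduction to $\Omega=\R^2_+$ is sound.
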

\begin{proof}
Set $a=\sup_{x \in K} \abs{x}^2=1$ and define the conformal transformation
\[  
  \psi(z)= \frac{z-a}{z+a}. 
\]
We have $\psi(\R^2_+)=B(0, 1)$. By the previous Lemma, we have
\[
  \frac{4\pi}{\capa(\psi(K), B(0, 1))} \le \log \frac{2\pi}{\muleb{2}(\psi(K))}. 
\]
The conclusion comes from
\[
  \muleb{2}(\psi(K))=\int_K \abs{\psi'}^2\ge \frac{\muleb{2}(K)}{4a^2}. \qedhere
\]
\end{proof}

Another question about estimates of the capacity is whether one can estimate the diameter of $K$, instead of its area, by its capacity. This is possible if one assumes moreover that $K$ is connected. L.\thinspace E.\thinspace Fraenkel \cite{Fraenkel1981} has obtained in this direction the inequality
\[
 \frac{2\pi}{\capa(K, \Omega)}\le \log C \frac{\diam K}{\sqrt{\mathcal{L}^2(\Omega)}}.
\]
We improve this estimate so that it holds on unbounded sets and it takes into account the distance from the boundary. 

\begin{proposition}
\label{propositionBoundDiameter}
Let $\Omega$ be such that $\R^2\setminus \Omega$ is connected and contains a ball of radius $\rho$ and $K \subset \Omega$ be compact. Then,
\[
  \frac{2\pi}{\capa(K, \Omega)}\le \log 16\Bigl(1+ \frac{\dist(K, \partial \Omega)}{2\rho}\Bigr)\Bigl(1+ \frac{2\dist(K, \partial \Omega)}{\diam(K)}\Bigr).
\]
\end{proposition}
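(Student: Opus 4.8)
The plan is to exploit the conformal invariance of the Dirichlet integral. Since $\R^2\setminus\Omega$ is connected, the domain $\Omega$ is simply connected (viewed on the sphere $\C\cup\{\infty\}$ its complement is connected), so $\Omega\setminus K$ is a genuine ring domain, conformally equivalent to a round model annulus $\{1<\abs{w}<M\}$. Because $\int\abs{\nabla\varphi}^2$ is a conformal invariant in the plane, $\capa(K,\Omega)$ equals the capacity of this annulus, namely $\frac{2\pi}{\log M}$, so that $\frac{2\pi}{\capa(K,\Omega)}=\log M$. The proposition is therefore \emph{equivalent} to the modulus bound $M\le 16\bigl(1+\tfrac{\dist(K,\partial\Omega)}{2\rho}\bigr)\bigl(1+\tfrac{2\dist(K,\partial\Omega)}{\diam K}\bigr)$, and the whole problem reduces to estimating the modulus of a ring from above in terms of the geometry of the two continua it separates: the inner one $K$, of diameter $\diam K$, and the outer one $\R^2\setminus\Omega$, which contains a ball of radius $\rho$.

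Next I would reduce to an explicit extremal configuration by monotonicity and symmetrization. Write $D=\dist(K,\partial\Omega)$ and $F=\R^2\setminus\Omega$. The condenser capacity $\capa(K,\Omega)$ is nondecreasing when $K$ is enlarged or $\Omega$ is shrunk (i.e.\ $F$ enlarged), so $\log M$ can only \emph{increase} when $K$ and $F$ are shrunk; it thus suffices to bound $\log M$ for a convenient sub-configuration. For the inner continuum I would use circular symmetrization, which does not decrease capacity (the same P\'olya--Szeg\H o principle underlying Proposition~\ref{propositionCapacityArea}), to reduce the inner contribution to that of a radial segment, the relevant length being controlled by the classical capacity--diameter inequality $\mathrm{cap}(K)\ge\tfrac14\diam K$; the half-length $\tfrac12\diam K$ and the factor $4$ concealed in this inequality are what produce the factor $(1+\tfrac{2D}{\diam K})$ and one of the two fours in the constant $16$. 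For the outer continuum $F$, which is connected, meets the ball $B(a,\rho)$, and comes within distance $D$ of $K$, I would shrink it to that ball together with a thin spine reaching toward $K$; the conformal radius $\rho$ of the ball then feeds the second factor $(1+\tfrac{D}{2\rho})$ and the second four.

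Finally, for the resulting model ring---a segment of length $\diam K$ separated by a distance of order $D$ from a disc of radius $\rho$---I would compute the modulus explicitly, opening the segment by a Joukowski map and straightening the disc by a M\"obius transformation, and then bound the outcome by the elementary estimate $\Phi(t)\le 16(1+t)$ for the Teichm\"uller modulus function, reading off the announced product of the two factors. Since each reduction step only increased $\log M$, the bound descends to the original pair $(K,\Omega)$.

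I expect the main obstacle to be this model computation together with the bookkeeping of the two conformal radii: after $K$ has been symmetrized and $F$ reduced to a disc, one must track how the single distance $D$ sits against the two distinct natural scales $\tfrac12\diam K$ and $\rho$, and check that the two contributions combine \emph{multiplicatively}, with the sharp constant $16=4\times4$, rather than merely additively. Controlling the shape of $K$---of which only the diameter is known---is the delicate point, and it is precisely what forces the conformal/symmetrization route instead of an elementary length--area argument: the naive combined inequality $L^2\ge 4\pi A+4(\diam K)^2$ already fails for thin sets (for an $a\times b$ rectangle with $a\gg b$ the left side falls short by $(\pi-2)\,4ab>0$), so no purely isoperimetric bound on level curves can yield the diameter-sensitive factor.
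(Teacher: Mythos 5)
Your toolbox---shrinking the plates by monotonicity, circular symmetrization, a M\"obius normalization, and the Teichm\"uller-function bound $\Phi(t)\le 16(1+t)$---is exactly the paper's, but the reduction of the \emph{outer} plate fails as you state it, and this is a genuine gap. Monotonicity only allows you to replace $F=\R^2\setminus\Omega$ by a \emph{subset}, and ``the ball together with a thin spine reaching toward $K$'' is not a subset of $F$: connectedness gives you \emph{some} subcontinuum of $F$ joining $B(a,\rho)$ to a point at distance $D=\dist(K,\partial\Omega)$ from $K$, but its shape and position are as uncontrolled as those of $K$ itself. Consequently your model ring---a segment of length $\diam K$ at distance of order $D$ from a disc of radius $\rho$---is not reachable: the ball inside $F$ may sit arbitrarily far from $K$, and a far-away disc alone would have arbitrarily large modulus, so the proximity information $D$ must be carried by the connecting continuum, which your sketch discards. (Two smaller points: the statement does not forbid $\R^2\setminus\Omega$ from being bounded, in which case $\Omega\setminus K$ is not a ring---on the sphere the complement acquires the extra component $\{\infty\}$, since test functions have compact support---so the identification $2\pi/\capa(K,\Omega)=\log M$ needs care; and the proof genuinely requires $K$ connected, which the surrounding text of the paper assumes even though the displayed statement omits it.)

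The paper closes precisely this gap by circularly symmetrizing the \emph{whole condenser} around a point $0\in K$ chosen so that $\dist(0,\partial\Omega)=D$ (Wolontis/Sarvas). Connectedness of $K$ gives $A^*\supset[-\tfrac{1}{2}\diam K,\,0]$, and---this is the step your outer reduction is missing---connectedness of $F$, together with $B(a,\rho)\subset F$ and $\dist(0,F)=D$, forces $\abs{a}\ge D+\rho$, hence $F$ meets every circle of radius $r\in[D,\,D+2\rho]$ and the symmetrized complement contains the segment $[D,\,D+2\rho]$. Monotonicity then leaves two \emph{collinear segments}; a single real M\"obius map sends $-\tfrac{1}{2}\diam K,\,0,\,D,\,D+2\rho$ to $-1,\,0,\,s,\,\infty$, and the cross-ratio yields the exact algebraic identity $1+s=\bigl(1+\tfrac{D}{2\rho}\bigr)\bigl(1+\tfrac{2D}{\diam K}\bigr)$---so the multiplicative combination you flagged as delicate is automatic, and the logarithmic capacity inequality $\mathrm{cap}\ge\tfrac14\diam$ is never used. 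The constant $16$ arrives in one stroke from the explicit formula $\capa([-1,0],\R^2\setminus[s,\infty))=2\,\mathcal{K}(\sqrt{1/(1+s)})/\mathcal{K}(\sqrt{s/(1+s)})$ and the elliptic-integral estimate $\mathcal{K}(\gamma)/\mathcal{K}(\sqrt{1-\gamma^2})>\pi/\bigl(2\log\bigl(2(1+\sqrt{1-\gamma^2})/\gamma\bigr)\bigr)$, which is your bound $\Phi(t)\le 16(1+t)$. If you symmetrize the outer plate instead of replacing it by a ball-plus-spine, your argument becomes the paper's.
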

\begin{proof}
Since $K$ is compact, up to translations and rotations we can assume that $0 \in K$ and $\dist(K, \partial \Omega)=\dist(0, \partial \Omega)$. Let $A^*$ and $\Omega^*$ be the sets obtained by circular symmetrization around $0$ introduced by V.\thinspace Wolontis \cite[III.1]{Wolontis1952} (see also J.\thinspace Sarvas \cite{Sarvas1972}). We have
\begin{gather*}
  \capa(A^*, \Omega^*) \le \capa(A, \Omega),\\
  [-\diam(A)/2, 0] \subset A^*,
\end{gather*}
and, since $\R^2 \setminus \Omega^*$ contains a ball of radius $\rho$, 
\[
  [\dist(A, \partial \Omega), \dist(A, \partial \Omega)+2\rho] \subset \R^2\setminus \Omega^*. 
\]
We have thus
\[
  \capa(A, \Omega) \ge \capa([-\diam(A)/2, 0], \R^2 \setminus [\dist(A, \partial \Omega), \dist(A, \partial \Omega)+2\rho]. 
\]
Now, identifying $\R^2$ with $\C$, there exists a M\"obius transformations that brings the points 
$-\diam(A)/2$, $0$, $\dist(A, \partial \Omega)$ and $\dist(A, \partial \Omega)+2\rho$ to $-1$, $0$, $s$ and $\infty$
with
\[
  s=\frac{(2\rho+\dist(K, \partial \Omega)+\frac{1}{2}\diam(K))\dist(K, \partial \Omega)}{\rho \diam(K)},
\]
from which we deduce that 
\[
 \capa(A, \Omega) \ge \capa ([-1, 0], \C \setminus [s, +\infty[).
\]
The conclusion comes from the next lemma.
\end{proof}

As in L.\thinspace E.\thinspace Fraenkel's proof \cite{Fraenkel1981}, we use

\begin{lemma}
Let $s>0$. We have
\[
  \frac{2\pi}{\capa([-1, 0], \R^2 \setminus [s, \infty))}\le \log 16(1+s).
\]
\end{lemma}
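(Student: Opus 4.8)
The plan is to read the quantity $2\pi/\capa([-1,0],\R^2\setminus[s,\infty))$ as the conformal modulus of a \emph{Teichm\"uller ring} and to reduce it, by an explicit conformal change of variables, to a \emph{Gr\"otzsch ring}, for which a sharp classical bound on the modulus is available. Identifying $\R^2$ with $\C$, the relevant condenser has plates $[-1,0]$ and $[s,+\infty]$ inside $(\C\cup\{\infty\})\setminus([-1,0]\cup[s,+\infty])$. Recall first that for the round annulus $\{r<\abs{z}<R\}$ the capacitary potential is $\log(R/\abs{z})/\log(R/r)$, so that $\capa=2\pi/\log(R/r)$. Since the condenser capacity is a conformal invariant of the Dirichlet integral and every doubly connected domain is conformally a round annulus, $2\pi/\capa$ equals the conformal modulus $M$ of the ring for an arbitrary condenser.

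Next I would transport the plates $[-1,0]$ and $[s,+\infty]$ onto a Gr\"otzsch ring by composing three elementary maps. The M\"obius map $w=z/(z+1)$ sends the plates to $[-\infty,0]$ and $[a,1]$ with $a=s/(s+1)$; the principal branch $\zeta=\sqrt{w}$, cut along $[-\infty,0]$, maps the complement of the first plate onto the half-plane $\{\mathrm{Re}\,\zeta>0\}$ and carries the plates to the imaginary axis and to $[b,1]$, where $b=\sqrt{a}$; finally $\xi=(\zeta-1)/(\zeta+1)$ sends the half-plane to the unit disc, the imaginary axis to the unit circle, and $[b,1]$ to the radial slit $[(b-1)/(b+1),0]$. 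After a rotation this exhibits the condenser as conformally equivalent to the Gr\"otzsch ring $G_r=B(0,1)\setminus[0,r]$ with $r=(1-b)/(1+b)$ and $b=\sqrt{s/(s+1)}$, whence $2\pi/\capa([-1,0],\R^2\setminus[s,\infty))=M(G_r)$.

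I would then invoke the classical Gr\"otzsch inequality $M(G_r)\le\log(4/r)$, the sharp upper bound for the modulus of the Gr\"otzsch ring (one has $M(G_r)=\log(4/r)+o(1)$ as $r\to0$, and the lower bound $M(G_r)\ge\log(1/r)$ follows by comparison with the concentric annulus). Substituting $r=(1-b)/(1+b)$ gives $M(G_r)\le\log\bigl(4(1+b)/(1-b)\bigr)$. It remains to compare this with the claimed bound: since $1+s=1/\bigl((1-b)(1+b)\bigr)$, one computes
\[
  \frac{4(1+b)/(1-b)}{16(1+s)}=\frac{(1+b)^2}{4}\le 1,
\]
because $0<b<1$, and therefore $M(G_r)\le\log\bigl(16(1+s)\bigr)$, which is the assertion.

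The main obstacle is precisely the Gr\"otzsch inequality $M(G_r)\le\log(4/r)$: the three conformal maps and the closing algebra are routine, but this sharp modulus bound is where the genuine work lies. One must either quote it from the theory of the Gr\"otzsch modulus function, or establish it directly by a length--area (extremal length) estimate for the family of curves separating the radial slit $[0,r]$ from the unit circle, the delicate point being to control the contribution of the metric near the singular endpoint $0$ of the slit.
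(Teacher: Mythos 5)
Your proof is correct, and the details check out: the three maps do carry the condenser onto the Gr\"otzsch ring $B(0,1)\setminus[0,r]$ with $r=(1-b)/(1+b)$, $b=\sqrt{s/(s+1)}$, and the closing algebra is right since $1+s=1/\bigl((1-b)(1+b)\bigr)$. Your route differs from the paper's in presentation but is, in substance, exactly equivalent. The paper does no conformal mapping by hand: it quotes from Vuorinen's book the explicit formula
\[
\capa\bigl([-1,0],\R^2\setminus[s,\infty)\bigr)=2\,\frac{\mathcal{K}(\sqrt{1/(1+s)})}{\mathcal{K}(\sqrt{s/(1+s)})},
\]
which is precisely the Teichm\"uller--Gr\"otzsch identity your maps reprove (combined with Landen's transformation $\mu\bigl((1-b)/(1+b)\bigr)=2\mu(\sqrt{1-b^2}\,)$, where $\mu(r)=\frac{\pi}{2}\mathcal{K}(\sqrt{1-r^2})/\mathcal{K}(r)$ is the Gr\"otzsch modulus), and then the Anderson--Vamanamurthy--Vuorinen inequality $\mathcal{K}(\gamma)/\mathcal{K}(\gamma')>\pi/\bigl(2\log(2(1+\gamma')/\gamma)\bigr)$, which in modulus language reads $\mu(\gamma)<\log\bigl(2(1+\gamma')/\gamma\bigr)$. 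Applying your bound $\mu(r)\le\log(4/r)$ at $r=(1-b)/(1+b)$ and using Landen yields exactly this inequality at $\gamma=\sqrt{1-b^2}$, because $4(1+b)/(1-b)=\bigl(2(1+b)/\sqrt{1-b^2}\,\bigr)^2$; indeed your intermediate bound $\log\bigl(4(1+b)/(1-b)\bigr)$ coincides with the paper's $2\log\bigl(2(\sqrt{s}+\sqrt{1+s})\bigr)$, and both are then relaxed to $\log 16(1+s)$ in the same way. What your version buys is geometric transparency and a weaker citation load: you need only the classical Gr\"otzsch estimate $\log(1/r)<\mu(r)<\log(4/r)$, which, as you note, can be established by a length--area argument; the paper's version is shorter given access to the two cited results but hides the geometry inside them. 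Your flagging of $\mu(r)\le\log(4/r)$ as the one nontrivial input is accurate and does not constitute a gap --- it is a genuine classical theorem, and the paper's own proof leans on a quoted inequality of exactly the same depth.
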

\begin{proof}
We have the formula \cite[5.60 (1)]{Vuorinen1988}
\[
  \capa([-1, 0], \R^2 \setminus [s, \infty))=2 \frac {\mathcal{K}(\sqrt{1/(1+s)})}{\mathcal{K}(\sqrt{s/(1+s)})},
\]
where $\mathcal{K}$ is the complete elliptic integral of the first kind
\[
  \mathcal{K}(\gamma)=\int_0^{\frac{\pi}{2}} \frac{1}{\sqrt{1-\gamma^2 (\sin \theta)^2}}\,d\theta. 
\]
Since (see \cite{AndersonVamanamurthyVuorinen1997})
\[
  \frac{\mathcal{K}(\gamma)}{\mathcal{K}(\sqrt{1-\gamma^2})} > \frac{\pi}{2\log\Bigl(2
\dfrac{1+\sqrt{1-\gamma^2}}{\gamma}\Bigr) }
\]
We have then
\[
  \capa([-1, 0], \R^2 \setminus [s, \infty))> \frac{\pi}{\log 2(\sqrt{s}+\sqrt{1+s})}> \frac{\pi}{\log 4\sqrt{1+s}}=\frac{2\pi}{\log 16(1+s)}.\qedhere
\]
\end{proof}


We also have an estimate in the case where the inner radius $\rho$ of $\R^2 \setminus \Omega$ is replaced by the connectedness and the measure of $\R^2 \setminus \Omega$.

\begin{proposition}
\label{propositionCapacityBoundDistance}
Let $\Omega$ be such that $\R^2\setminus \Omega$ is connected and has finite measure and $K \subset \Omega$ be compact. 
We have
\[
  \frac{2\pi}{\capa(K, \Omega)}\le  \log 16\Bigl(1+ \frac{\pi \dist(K, \partial \Omega)\diam(\R^2 \setminus \Omega)}{2\muleb{2}(\R^2 \setminus \Omega)}\Bigr)\Bigl(1+ \frac{2\dist(K, \partial \Omega)}{\diam(K)}\Bigr)
\]
\end{proposition}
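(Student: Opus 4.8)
The plan is to follow the proof of Proposition~\ref{propositionBoundDiameter} almost verbatim, replacing the single place where the inner radius $\rho$ of $\R^2\setminus\Omega$ is used by an estimate that extracts the same geometric information from the measure and the diameter of $\R^2\setminus\Omega$. Write $F=\R^2\setminus\Omega$, $m=\muleb{2}(F)$ and $D=\diam F$. As in Proposition~\ref{propositionBoundDiameter}, after a translation and a rotation I may assume $0\in K$ and $\dist(0,\partial\Omega)=\dist(K,\partial\Omega)=:d$, and I apply the circular symmetrization about the positive real axis of V.\thinspace Wolontis \cite{Wolontis1952} and J.\thinspace Sarvas \cite{Sarvas1972} to obtain $K^{*}$ and $\Omega^{*}$ with $\capa(K^{*},\Omega^{*})\le\capa(K,\Omega)$, with $[-\tfrac12\diam(K),0]\subset K^{*}$, and with $F^{*}=\R^2\setminus\Omega^{*}$ circularly symmetric about the positive real axis, $\dist(0,F^{*})=d$, $\muleb{2}(F^{*})=m$, and (symmetrization not increasing the diameter) $\diam F^{*}\le D$.

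The new ingredient, replacing the inclusion of a ball of radius $\rho$, is that $F^{*}$ contains a radial segment $[d,d+\ell]$ on the positive real axis with $\ell\ge\tfrac{2m}{\pi D}$. Two facts enter. First, since $F$ is connected its image under $z\mapsto\abs{z}$ is an interval $[d,r_{\max}]$; for each such $r$ the circle $\{\abs{z}=r\}$ meets $F$, so after symmetrization the corresponding (single) arc is centered on the positive real axis and hence contains the point $r$. Thus $F^{*}\supset[d,r_{\max}]$, i.e. $\ell=r_{\max}-d$. Second, I bound the arc length $\mu(r):=\muleb{1}\bigl(F^{*}\cap\{\abs{z}=r\}\bigr)$ by the diameter: if the arc subtends angle $\theta(r)\le\pi$, its endpoints lie at distance $2r\sin(\theta(r)/2)\le\diam F^{*}\le D$, and the elementary inequality $\sin t\ge\tfrac{2}{\pi}t$ on $[0,\tfrac{\pi}{2}]$ gives $\mu(r)=\theta(r)\,r\le\tfrac{\pi}{2}D$; the case $\theta(r)>\pi$ forces the arc to contain antipodal points, so $2r\le D$ and such radii are confined to $r\le D/2$. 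Integrating over $r\in[d,r_{\max}]$ then yields $m\le\tfrac{\pi}{2}D\,\ell$, that is $\ell\ge\tfrac{2m}{\pi D}$.

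With this segment in hand the argument closes exactly as in Proposition~\ref{propositionBoundDiameter}. By monotonicity of the capacity in both arguments,
\[
\capa(K,\Omega)\ge\capa(K^{*},\Omega^{*})\ge\capa\bigl([-\tfrac12\diam(K),0],\ \R^{2}\setminus[d,d+\ell]\bigr),
\]
and a Möbius transformation sending $-\tfrac12\diam(K),0,d,d+\ell$ to $-1,0,s,\infty$ turns the right-hand side into $\capa([-1,0],\C\setminus[s,\infty))$ with
\[
1+s=\Bigl(1+\frac{d}{\ell}\Bigr)\Bigl(1+\frac{2d}{\diam(K)}\Bigr)\le\Bigl(1+\frac{\pi d D}{2m}\Bigr)\Bigl(1+\frac{2d}{\diam(K)}\Bigr),
\]
the last inequality being exactly $\ell\ge\tfrac{2m}{\pi D}$. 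The preceding Lemma gives $\tfrac{2\pi}{\capa([-1,0],\C\setminus[s,\infty))}\le\log 16(1+s)$, which is the announced bound.

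The step I expect to be delicate is the geometric estimate of the second paragraph, and specifically the radii at which the symmetrized cross-section wraps more than half-way around the circle: there the crude bound $\mu(r)\le\pi D$ costs a factor $2$, and recovering the sharp constant $\tfrac{\pi}{2}$ requires exploiting that all such radii satisfy $r\le D/2$ and accounting for their contribution to both $m$ and the radial segment $\ell$ simultaneously. I would also need to confirm the two properties of circular symmetrization used tacitly, namely that it preserves $\muleb{2}$ and does not increase the diameter, both of which are classical. Everything else is a transcription of the proof of Proposition~\ref{propositionBoundDiameter} together with the explicit model computation already recorded there.
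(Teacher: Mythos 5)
Your route is the paper's own: the published proof simply repeats the symmetrization argument of Proposition~\ref{propositionBoundDiameter} and asserts, without justification, the inclusion $[d, d+\frac{2m}{\pi D}]\subset \R^2\setminus\Omega^*$, where $d=\dist(K,\partial\Omega)$, $m=\muleb{2}(\R^2\setminus\Omega)$, $D=\diam(\R^2\setminus\Omega)$. Your monotonicity and M\"obius steps, with $1+s=\bigl(1+\frac{d}{\ell}\bigr)\bigl(1+\frac{2d}{\diam K}\bigr)$, coincide with the paper's, and the two symmetrization facts you leave tacit are true (measure preservation is Fubini in polar coordinates; non-increase of the diameter follows from the fact that two circular cross-sections of angular measures $\theta_1,\theta_2$ always contain a pair of points at angular distance at least $\min\bigl(\pi,\frac{\theta_1+\theta_2}{2}\bigr)$, by a difference-set estimate on the circle). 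The genuine gap is exactly the step you flag, and it is worse than you suspect: the inequality $\ell\ge\frac{2m}{\pi D}$ is \emph{false}, so no simultaneous bookkeeping of the radii with $\theta(r)>\pi$ can rescue the constant $\frac{\pi}{2}$. Take $F=\R^2\setminus\Omega=\{x \st a\le\abs{x-c}\le a+h\}$ a thin annulus, $c=(c_0,0)$ with $0<c_0<h/4$ and $h\le a$, and $K=\overline{B(c,c_0)}$; then $0\in K$ and $\dist(0,\partial\Omega)=\dist(K,\partial\Omega)=a-c_0=d$, as in the normalization. Symmetrization preserves the radial support, so $F^*\subset\{z \st d\le\abs{z}\le d+\ell\}$ with $\ell=h+2c_0$, while $m=\pi h(2a+h)$ and $D=2(a+h)$ give $\frac{2m}{\pi D}=\frac{h(2a+h)}{a+h}>h+2c_0=\ell$ precisely when $c_0<\frac{ah}{2(a+h)}$, which holds here. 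The culprit is your $\theta(r)>\pi$ regime: for every $r\in[a+c_0,a+h-c_0]$ the full circle $\{\abs{z}=r\}$ lies in $F$, and there $\mu(r)=2\pi r$ genuinely exceeds $\frac{\pi}{2}D$. Incidentally, you have thereby located an error in the paper's own (unproved) assertion.

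The repair costs a factor $2$ inside the logarithm. Your two cases do give unconditionally $\mu(r)\le\pi\diam(F^*)\le\pi D$ (chord estimate when $\theta(r)\le\pi$; antipodal points when $\theta(r)>\pi$), whence $m\le\pi D\,\ell$ and $[d,d+\frac{m}{\pi D}]\subset\R^2\setminus\Omega^*$. Feeding $\ell=\frac{m}{\pi D}$ into your M\"obius computation proves the proposition with $\frac{\pi \dist(K,\partial\Omega)\diam(\R^2\setminus\Omega)}{\muleb{2}(\R^2\setminus\Omega)}$ in place of $\frac{\pi \dist(K,\partial\Omega)\diam(\R^2\setminus\Omega)}{2\muleb{2}(\R^2\setminus\Omega)}$, and the annulus shows this is sharp for the method. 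The loss is immaterial for the paper: the proposition is used only in the proof of Lemma~\ref{lemmaNodalVortexSplit}, where merely the shape $\log C(1+\cdot)(1+\cdot)$ of the bound enters. Two minor loose ends to tie up as well: dispose first of the trivial case $\diam(\R^2\setminus\Omega)=\infty$ (the right-hand side is then infinite; otherwise $F$ is bounded and $r_{\max}$ is attained by compactness), and note that, exactly as in Proposition~\ref{propositionBoundDiameter}, the inclusion $[-\frac{1}{2}\diam K,0]\subset K^*$ requires $K$ connected --- a hypothesis missing from both statements but satisfied where the proposition is applied.
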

\begin{proof}
One begins as in the proof of the previous proposition. 
We have then that 
\[
  [\dist(K, \partial \Omega), \dist(K, \partial \Omega)+\frac{2\muleb{2}(\R^2 \setminus \Omega)}{\pi \diam (\R^2 \setminus \Omega)}] \subset \R^2\setminus \Omega^*. 
\]
And one continues as previously. 
\end{proof}

\section{Green function asymptotics}

This appendix is devoted to the study of the asymptotic expansion of Green's function near a point of the boundary:

\begin{proposition}
\label{propositionAsymptotH}
Let $\Omega \subset \R^2$ and assume that $\Omega$ is of class $C^2$ around $0$ and that the tangent to $\partial \Omega$ is perpendicular to $x_1$. One has then as $\eps \to 0$, 
\[
  G (\eps x, \eps y)=\frac{1}{4\pi} \log \frac{\abs{x-y}^2+4x_1y_1}{\abs{x-y}^2}-
\eps \frac{K}{2\pi} \frac{x_1  \abs{y}^2+y_1 \abs{x}^2}{\abs{x-y}^2+4x_1y_1}+o(\eps). 
\]
uniformly on compact subsets of $\R^2_+ \times \R^2_+$,
where $K$ is the curvature of $\partial \Omega$ at $0$.
In particular,
\[
 H (\eps x, \eps x)=\frac{1}{2\pi} \log 2\eps x_1-\eps \frac{K\abs{x}^2}{4\pi x_1}  +o(\eps). 
\]
\end{proposition}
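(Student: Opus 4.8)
The plan is to exploit the conformal invariance of the two-dimensional Dirichlet Green function. Since the relevant domains are simply connected, let $\Psi\colon \Omega \to \R^2_+$ be a conformal bijection with $\Psi(0)=0$; identifying $\R^2$ with $\C$ via $x=(x_1,x_2)\mapsto \zeta=x_1+ix_2$ and $\R^2_+$ with $\{\Re\zeta>0\}$, conformal invariance gives the \emph{exact} identity $G(\eps x,\eps y)=G_{\R^2_+}(\Psi(\eps x),\Psi(\eps y))$, where $G_{\R^2_+}(\zeta,\eta)=\frac{1}{2\pi}\log\frac{\abs{\zeta+\bar\eta}}{\abs{\zeta-\eta}}$ is the explicit half-plane Green function; note $\abs{\zeta+\bar\eta}^2=\abs{x-y}^2+4x_1y_1$, so the leading term of the claimed expansion is exactly $G_{\R^2_+}$. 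The key structural feature is that $G_{\R^2_+}$ is invariant under the dilation $(\zeta,\eta)\mapsto(\lambda\zeta,\lambda\eta)$, $\lambda>0$, so I may rescale and write $G(\eps x,\eps y)=G_{\R^2_+}\bigl(\tfrac1\eps\Psi(\eps x),\tfrac1\eps\Psi(\eps y)\bigr)$.

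First I would record the boundary regularity of $\Psi$. Because $\partial\Omega$ is $C^2$ near $0$ with tangent perpendicular to $x_1$, Kellogg--Warschawski type results let me normalise $\Psi'(0)=1$ and expand $\Psi(\zeta)=\zeta+c\,\zeta^2+o(\abs{\zeta}^2)$ near $0$ with uniform remainder, so that $\tfrac1\eps\Psi(\eps\zeta)=\zeta+c\,\eps\zeta^2+o(\eps)$ is a perturbation of the identity of size $\eps$. Writing the preimage of the imaginary axis as $\Psi^{-1}(is)=is+c\,s^2+o(s^2)$ identifies the curvature through $\Re c=K/2$; the value of $\Im c$ is a gauge artefact of the choice of $\Psi$ that must disappear from the final, intrinsic answer. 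Substituting $\zeta\mapsto \zeta+c\eps\zeta^2$ and $\eta\mapsto \eta+c\eps\eta^2$ into $G_{\R^2_+}$ and differentiating once in $\eps$ produces the corrector $\eps g(x,y)$: using that $\zeta-\eta$ factors as $(\zeta-\eta)(1+c\eps(\zeta+\eta))$ in the $\abs{\zeta-\eta}$ term and expanding $\abs{\zeta+\bar\eta}$, the expression collapses (after the $\Im c$ contributions cancel) to
\[
g(x,y)=-\frac{K}{2\pi}\,\frac{x_1\abs{y}^2+y_1\abs{x}^2}{\abs{x-y}^2+4x_1y_1}.
\]

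As an independent check that also pins $g$ down, I would verify directly that this $g$ is harmonic in $x$ on $\R^2_+$, symmetric, and has boundary trace $-\frac{K}{2\pi}\,y_1x_2^2/(y_1^2+(x_2-y_2)^2)$ on $\{x_1=0\}$, which is exactly the trace forced by linearising the boundary condition $G=0$ on the displaced boundary $x_1=\eps\tfrac{K}{2}x_2^2$; uniqueness for the half-plane Dirichlet problem then identifies $g$ with the corrector. This yields the stated expansion of $G(\eps x,\eps y)$, and the formula for $H(\eps x,\eps x)$ follows at once: subtracting $\frac{1}{2\pi}\log\frac{1}{\eps\abs{x-y}}$ turns the leading term into $\frac{1}{2\pi}\log(\eps\abs{x-\bar y})$, and letting $y\to x$ with $\abs{x-\bar x}=2x_1$ and $g(x,x)=-K\abs{x}^2/(4\pi x_1)$ gives $H(\eps x,\eps x)=\frac{1}{2\pi}\log(2\eps x_1)-\eps\frac{K\abs{x}^2}{4\pi x_1}+o(\eps)$.

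The main obstacle is the rigorous remainder control, i.e.\ upgrading this formal expansion to a genuine $o(\eps)$ bound uniform on compact subsets of $\R^2_+\times\R^2_+$. In the conformal route this reduces to a uniform modulus of continuity for $\Psi'$ at $0$ (hence for the quadratic Taylor remainder of $\Psi$), which is precisely where the $C^2$ regularity of $\partial\Omega$ enters; since $\eps x,\eps y\to 0$, only this local expansion contributes and the global shape of $\Omega$ is irrelevant to the first two orders. If instead one avoids the global Riemann map and argues directly---taking $G_{\R^2_+}+\eps g$ as an ansatz, observing that its difference with $G(\eps\,\cdot\,,\eps\,\cdot\,)$ is harmonic with boundary values $o(\eps)$ near $0$ and small away from it, and closing the estimate by the maximum principle---then the delicate point becomes the behaviour of the rescaled problem at infinity together with the contribution of the far boundary, both of which must be shown negligible via the decay of the half-plane Poisson kernel.
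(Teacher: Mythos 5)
Your proposal is correct, but your primary route is genuinely different from the paper's. The paper never introduces a conformal map: it defines the rescaled difference $w_{\eps, y}(x)=\frac{1}{\eps}\bigl(\frac{1}{4\pi}\log\frac{\abs{x-y}^2+4x_1y_1}{\abs{x-y}^2}-G(\eps x,\eps y)\bigr)$ on the blown-up domain $\Omega_\eps$, notes it is harmonic (the logarithmic singularities cancel), proves a uniform $L^\infty$ bound on its boundary values using only the local graph representation $x_1=f(x_2)$ with $f\in C^2$, $f(0)=f'(0)=0$ (so $\abs{x_1}\le C\eps\abs{x_2}^2$ on the rescaled boundary near $0$, and $O(\eps)$ far away), and then identifies the locally uniform limit as the unique \emph{bounded} solution of the half-plane Dirichlet problem with trace $\frac{f''(0)}{2\pi}\,y_1x_2^2/(y_1^2+(x_2-y_2)^2)$, given explicitly by $\frac{K}{2\pi}\frac{y_1\abs{x}^2+x_1\abs{y}^2}{(x_1+y_1)^2+(x_2-y_2)^2}$. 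This is exactly the argument you relegate to an ``independent check,'' so your fallback \emph{is} the paper's proof. Your conformal route is also sound, and its algebra checks out: the first variation of $\frac{1}{2\pi}\log\abs{\zeta+\bar\eta}-\frac{1}{2\pi}\log\abs{\zeta-\eta}$ under $\zeta\mapsto\zeta+c\eps\zeta^2$, $\eta\mapsto\eta+c\eps\eta^2$ does equal $-\eps\frac{2\Re c}{2\pi}\frac{x_1\abs{y}^2+y_1\abs{x}^2}{\abs{x-y}^2+4x_1y_1}$ (the $\Im c$ part of the variation reduces to $\Re(-2i\,y_1\Im c)=0$, as you claim, and $\Re c=K/2$ is forced by the curvature). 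What each approach buys: conformal invariance makes the leading term exact and turns the corrector into a short complex-analytic computation, but it costs you (i) simple connectivity of $\Omega$, which the proposition does not assume and which the paper's argument never needs (only the existence of $G$), and (ii) boundary regularity of the Riemann map, which is the one point where your write-up is thinner than it should be: Kellogg--Warschawski in its standard form requires $C^{2,\alpha}$ (or Dini-continuous curvature), whereas the hypothesis is merely $C^2$ near $0$, and since the stated uniformity is on compact subsets of $\R^2_+\times\R^2_+$ \emph{including the diagonal}, you need the derivative-level expansion $\Psi'(\zeta)=1+2c\zeta+o(\abs{\zeta})$ to control $\frac{R(\eps\zeta)-R(\eps\eta)}{\eps(\zeta-\eta)}$ when $\zeta$ and $\eta$ are close, not just the pointwise expansion of $\Psi$. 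You correctly flag remainder control as the main obstacle, and your maximum-principle alternative (which avoids the Riemann map entirely and, like the paper, uses only the local $C^2$ graph plus the decay of the half-plane Poisson kernel) closes that gap, so the proposal as a whole stands.
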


\begin{proof}
Define
\[
 w_{\eps, y}(x) = \frac{1}{\eps} \Bigl(\frac{1}{4\pi} \log \frac{\abs{x-y}^2+4x_1y_1}{\abs{x-y}^2}-G(\eps x, \eps y)\Bigr). 
\]
This function is defined for every $x, y \in \Omega^\eps=\{ z \in \R^2 \st \eps z \in \Omega \}$. 
Moreover, $w_{\eps, y}$ satisfies
\[
\left\{\begin{aligned}
 -\Delta w_{\eps, y} &= 0 &&\text{in $\Omega$},\\
  w_{\eps, y} &= \frac{1}{4\pi\eps} \log \frac{\abs{x-y}^2+4x_1y_1}{\abs{x-y}^2} && \text{on $\partial \Omega$}. 
\end{aligned}\right. 
\]

By construction, $w_{\eps, y}$ is a bounded function. 
We first claim that $w_{\eps, y}$ is bounded uniformly in $L^\infty(\Omega_\eps)$ as $\eps \to 0$ and $y$ stays in a compact subset of $\R^2$. Indeed, since $\Omega$ is $C^2$ around $0$, there exists $r > 0$ such that if $z \in \partial \Omega \cap B(0, r)$, $\abs{z_1} \le C \abs{z_2}^2$. One has thus, for $x \in \partial \Omega_\eps \cap B(0, \frac{r}{\eps})$, $\abs{x_1} \le C \eps \abs{x_2}^2$, and therefore, when $\eps$ is small enough
\[
 \abs{w_{\eps, y}(x)} \le \frac{C'}{\eps} \frac{\eps y_1 \abs{x_2}^2}{\abs{x-y}^2}
\]
On the other hand, if $x \in \partial \Omega_\eps \setminus B(0, \frac{r}{\eps})$, then if $\eps$ is small enough, $x \in \partial \Omega_\eps \cap B(0, \frac{r}{2\eps})$ so that $x_1 \le 2 \abs{x-y}$ and $\abs{x-y} \ge \frac{r}{2\eps}$, and
\[
 \abs{w_{\eps, y}}(x) \le C \eps. 
\]

Since, $\Omega$ is of class $C^2$, there exists a function $f : I \subset \R  \to \R$ such that $\partial \Omega \cap B(0, r')= \{(f(t), t) \in \Omega \st t \in I \}$. One has thus, using the Taylor expansion of $f$ and recalling that $f(0)=0$ and $f'(0)=0$,
\[
 w_{\eps, y}(x)=\frac{1}{4\pi \eps} \log \Bigl( 1+ \frac{4 y_1 \eps^{-1}f(\eps x_2)}{( \eps^{-1}f(\eps x_2)-y_1)^2+(x_2-y_2)^2}\Bigr). 
\]

Therefore, by classical regularity estimates, $w_{\eps, y}$ converges uniformly with respect to compact subsets of $\R^2_+ \times \R^2_+$ to the unique bounded solution of 
\[
 \left\{
\begin{aligned}
 -\Delta w_y &= 0 &&\text{in $\R^2_+$}, \\
 w_y&=\frac{f''(0)}{2\pi}\frac{y_1 x_2^2}{y_1^2+(x_2-y_2)^2}&&\text{on $\partial \R^2_+$}. 
\end{aligned}
\right. 
\]
One can check that 
\[
 w_y(x)=\frac{f''(0)}{2\pi}\frac{y_1 (x_1^2+x_2^2)+x_1(y_1^2+y_2^2)}{(x_1+y_1)^2+(x_2-y_2)^2}.
\]
The announced expressions for $G(\eps x, \eps y)$ and $H(\eps x, \eps x)$ follow. 
\end{proof}


\begin{bibdiv}
\begin{biblist}

\bib{AmbrosettiStruwe1989}{article}{
      author={Ambrosetti, A.},
      author={Struwe, M.},
       title={Existence of steady vortex rings in an ideal fluid},
        date={1989},
        ISSN={0003-9527},
     journal={Arch. Rational Mech. Anal.},
      volume={108},
      number={2},
       pages={97\ndash 109},
      
}

\bib{AmbrosettiMancini1981}{incollection}{
      author={Ambrosetti, Antonio},
      author={Mancini, Giovanni},
       title={On some free boundary problems},
        date={1981},
   booktitle={Recent contributions to nonlinear partial differential
  equations},
      series={Res. Notes in Math.},
      volume={50},
   publisher={Pitman},
     address={Boston, Mass.},
       pages={24\ndash 36},
      
}

\bib{AndersonVamanamurthyVuorinen1997}{book}{
      author={Anderson, Glen~D.},
      author={Vamanamurthy, Mavina~K.},
      author={Vuorinen, Matti~K.},
       title={Conformal invariants, inequalities, and quasiconformal maps},
      series={Canadian Mathematical Society Series of Monographs and Advanced
  Texts},
   publisher={John Wiley \& Sons Inc.},
     address={New York},
        date={1997},
        ISBN={0-471-59486-5},
}

\bib{ArnoldKhesin}{book}{
      author={Arnold, Vladimir~I.},
      author={Khesin, Boris~A.},
       title={Topological methods in hydrodynamics},
      series={Applied Mathematical Sciences},
   publisher={Springer-Verlag},
     address={New York},
        date={1998},
      volume={125},
        ISBN={0-387-94947-X},
      
}

\bib{BartschPistoiaWeth}{unpublished}{
      author={Bartsch, Thomas},
      author={Pistoia, Angela},
      author={Weth, Tobias},
       title={$n$-vortex equilibria for ideal fluids in bounded planar domains
  and new nodal solutions of the $\sinh$-poisson and the Lane-Emden-Fowler
  equations},
        note={preprint},
}

\bib{BartschWeth2003}{article}{
      author={Bartsch, Thomas},
      author={Weth, Tobias},
       title={A note on additional properties of sign changing solutions to
  superlinear elliptic equations},
        date={2003},
        ISSN={1230-3429},
     journal={Topol. Methods Nonlinear Anal.},
      volume={22},
      number={1},
       pages={1\ndash 14},
      
}

\bib{BartschWeth2005}{article}{
      author={Bartsch, Thomas},
      author={Weth, Tobias},
       title={Three nodal solutions of singularly perturbed elliptic equations
  on domains without topology},
        date={2005},
        ISSN={0294-1449},
     journal={Ann. Inst. H. Poincar\'e Anal. Non Lin\'eaire},
      volume={22},
      number={3},
       pages={259\ndash 281},
      
}

\bib{BartschWethWillem}{article}{
      author={Bartsch, Thomas},
      author={Weth, Tobias},
      author={Willem, Michel},
       title={Partial symmetry of least energy nodal solutions to some
  variational problems},
        date={2005},
        ISSN={0021-7670},
     journal={J. Anal. Math.},
      volume={96},
       pages={1\ndash 18},
      
}

\bib{BergerFraenkel1974}{article}{
      author={Berger, M.~S.},
      author={Fraenkel, L.~E.},
       title={A global theory of steady vortex rings in an ideal fluid},
        date={1974},
        ISSN={0001-5962},
     journal={Acta Math.},
      volume={132},
       pages={13\ndash 51},
      
}

\bib{BergerFraenkel1980}{article}{
      author={Berger, M.~S.},
      author={Fraenkel, L.~E.},
       title={Nonlinear desingularization in certain free-boundary problems},
        date={1980},
        ISSN={0010-3616},
     journal={Comm. Math. Phys.},
      volume={77},
      number={2},
       pages={149\ndash 172},
}

\bib{BBH}{book}{
   author={Bethuel, Fabrice},
   author={Brezis, Ha{\"{\i}}m},
   author={H{\'e}lein, Fr{\'e}d{\'e}ric},
   title={Ginzburg-Landau vortices},
   series={Progress in Nonlinear Differential Equations and their
   Applications, 13},
   publisher={Birkh\"auser Boston Inc.},
   place={Boston, MA},
   date={1994},
   pages={xxviii+159},
   isbn={0-8176-3723-0},
}

\bib{BrockSolynin2000}{article}{
      author={Brock, Friedemann},
      author={Solynin, Alexander~Yu.},
       title={An approach to symmetrization via polarization},
        date={2000},
        ISSN={0002-9947},
     journal={Trans. Amer. Math. Soc.},
      volume={352},
      number={4},
       pages={1759\ndash 1796},
      
}

\bib{Burton1987}{article}{
      author={Burton, G.~R.},
       title={Vortex rings in a cylinder and rearrangements},
        date={1987},
        ISSN={0022-0396},
     journal={J. Differential Equations},
      volume={70},
      number={3},
       pages={333\ndash 348},
      
}

\bib{Burton1988}{article}{
      author={Burton, G.~R.},
       title={Steady symmetric vortex pairs and rearrangements},
        date={1988},
        ISSN={0308-2105},
     journal={Proc. Roy. Soc. Edinburgh Sect. A},
      volume={108},
      number={3\ndash 4},
       pages={269\ndash 290},
      
}

\bib{CaffarelliFriedman1980}{article}{
      author={Caffarelli, Luis~A.},
      author={Friedman, Avner},
       title={Asymptotic estimates for the plasma problem},
        date={1980},
        ISSN={0012-7094},
     journal={Duke Math. J.},
      volume={47},
      number={3},
       pages={705\ndash 742},
      
}

\bib{CastroCossioNeuberger}{article}{
      author={Castro, Alfonso},
      author={Cossio, Jorge},
      author={Neuberger, John~M.},
       title={A sign-changing solution for a superlinear {D}irichlet problem},
        date={1997},
        ISSN={0035-7596},
     journal={Rocky Mountain J. Math.},
      volume={27},
      number={4},
       pages={1041\ndash 1053},
      
}

\bib{CotiZelatiRabinowitz1992}{article}{
      author={Coti~Zelati, Vittorio},
      author={Rabinowitz, Paul~H.},
       title={Homoclinic type solutions for a semilinear elliptic {PDE} on
  {${\bf R}\sp n$}},
        date={1992},
        ISSN={0010-3640},
     journal={Comm. Pure Appl. Math.},
      volume={45},
      number={10},
       pages={1217\ndash 1269},
      
}

\bib{delPinoKowalczykMusso2005}{article}{
      author={del Pino, Manuel},
      author={Kowalczyk, Michal},
      author={Musso, Monica},
       title={Singular limits in {L}iouville-type equations},
        date={2005},
        ISSN={0944-2669},
     journal={Calc. Var. Partial Differential Equations},
      volume={24},
      number={1},
       pages={47\ndash 81},
      
}

\bib{EspositoMussoPistoia2006}{article}{
      author={Esposito, Pierpaolo},
      author={Musso, Monica},
      author={Pistoia, Angela},
       title={Concentrating solutions for a planar elliptic problem involving
  nonlinearities with large exponent},
        date={2006},
        ISSN={0022-0396},
     journal={J. Differential Equations},
      volume={227},
      number={1},
       pages={29\ndash 68},
      
}

\bib{EspositoMussoPistoia2007}{article}{
      author={Esposito, Pierpaolo},
      author={Musso, Monica},
      author={Pistoia, Angela},
       title={On the existence and profile of nodal solutions for a
  two-dimensional elliptic problem with large exponent in nonlinearity},
        date={2007},
        ISSN={0024-6115},
     journal={Proc. Lond. Math. Soc. (3)},
      volume={94},
      number={2},
       pages={497\ndash 519},
      
}

\bib{Fraenkel1981}{article}{
      author={Fraenkel, L.~E.},
       title={A lower bound for electrostatic capacity in the plane},
        date={1981},
        ISSN={0308-2105},
     journal={Proc. Roy. Soc. Edinburgh Sect. A},
      volume={88},
      number={3\ndash 4},
       pages={267\ndash 273},
      
}

\bib{Fraenkel2000}{book}{
      author={Fraenkel, L.~E.},
       title={An introduction to maximum principles and symmetry in elliptic
  problems},
      series={Cambridge Tracts in Mathematics},
   publisher={Cambridge University Press},
     address={Cambridge},
        date={2000},
      volume={128},
        ISBN={0-521-46195-2},
      
}

\bib{FridemannTurkington1981}{article}{
      author={Friedman, Avner},
      author={Turkington, Bruce},
       title={Vortex rings: existence and asymptotic estimates},
        date={1981},
        ISSN={0002-9947},
     journal={Trans. Amer. Math. Soc.},
      volume={268},
      number={1},
       pages={1\ndash 37},
      
}

\bib{GilbargTrudinger2001}{book}{
      author={Gilbarg, David},
      author={Trudinger, Neil~S.},
       title={Elliptic partial differential equations of second order},
      series={Classics in Mathematics},
   publisher={Springer-Verlag},
     address={Berlin},
        date={2001},
        ISBN={3-540-41160-7},
      
}

\bib{Kawohl1985}{book}{
      author={Kawohl, Bernhard},
       title={Rearrangements and convexity of level sets in {PDE}},
      series={Lecture Notes in Mathematics},
   publisher={Springer-Verlag},
     address={Berlin},
        date={1985},
      volume={1150},
        ISBN={3-540-15693-3},
      
}

\bib{Koebe1918}{article}{
      author={Koebe, P.},
       title={Abhandlungen zur Theorie der konformen Abbildung. IV. Abbildung
  mehrfach zusammenh\"angender schlichter Bereiche auf Schlitzbereiche.},
        date={1918},
     journal={Acta Math.},
      volume={41},
       pages={305\ndash 344},
}

\bib{Kulpa1997}{article}{
      author={Kulpa, Wladyslaw},
       title={The {P}oincar\'e-{M}iranda theorem},
        date={1997},
        ISSN={0002-9890},
     journal={Amer. Math. Monthly},
      volume={104},
      number={6},
       pages={545\ndash 550},
      
}

\bib{LiYanYang2005}{article}{
      author={Li, Gongbao},
      author={Yan, Shusen},
      author={Yang, Jianfu},
       title={An elliptic problem related to planar vortex pairs},
        date={2005},
        ISSN={0036-1410},
     journal={SIAM J. Math. Anal.},
      volume={36},
      number={5},
       pages={1444\ndash 1460},
      
}

\bib{LiebLoss2001}{book}{
      author={Lieb, Elliott~H.},
      author={Loss, Michael},
       title={Analysis},
     edition={Second},
      series={Graduate Studies in Mathematics},
   publisher={American Mathematical Society},
     address={Providence, RI},
        date={2001},
      volume={14},
        ISBN={0-8218-2783-9},
      
}

\bib{Lin1941}{article}{
      author={Lin, C.~C.},
       title={On the motion of vortices in two dimensions. {I}. {E}xistence of
  the {K}irchhoff--{R}outh function},
        date={1941},
     journal={Proc.\ Nat.\ Acad.\ Sci.\ U. S. A.},
      volume={27},
       pages={570\ndash 575},
      
}

\bib{Lin1943}{book}{
      author={Lin, C.~C.},
       title={On the {M}otion of {V}ortices in {T}wo {D}imensions},
      series={University of Toronto Studies, Applied Mathematics Series, no.
  5},
   publisher={University of Toronto Press},
     address={Toronto, Ont.},
        date={1943},
      
}

\bib{Lions1984}{article}{
      author={Lions, P.-L.},
       title={The concentration-compactness principle in the calculus of
  variations. {T}he locally compact case. {II}},
        date={1984},
        ISSN={0294-1449},
     journal={Ann. Inst. H. Poincar\'e Anal. Non Lin\'eaire},
      volume={1},
      number={4},
       pages={223\ndash 283},
      
}

\bib{MarchioroPulvirenti1983}{article}{
      author={Marchioro, C.},
      author={Pulvirenti, M.},
       title={Euler evolution for singular initial data and vortex theory},
        date={1983},
        ISSN={0010-3616},
     journal={Comm. Math. Phys.},
      volume={91},
      number={4},
       pages={563\ndash 572},
      
}

\bib{MawhinWillem}{book}{
      author={Mawhin, Jean},
      author={Willem, Michel},
       title={Critical point theory and {H}amiltonian systems},
      series={Applied Mathematical Sciences},
   publisher={Springer-Verlag},
     address={New York},
        date={1989},
      volume={74},
        ISBN={0-387-96908-X},
      
}

\bib{Nirenberg1959}{article}{
      author={Nirenberg, L.},
       title={On elliptic partial differential equations},
        date={1959},
     journal={Ann. Scuola Norm. Sup. Pisa (3)},
      volume={13},
       pages={115\ndash 162},
      
}

\bib{Norbury1975}{article}{
      author={Norbury, J.},
       title={Steady planar vortex pairs in an ideal fluid},
        date={1975},
        ISSN={0010-3640},
     journal={Comm. Pure Appl. Math.},
      volume={28},
      number={6},
       pages={679\ndash 700},
      
}

\bib{Poincare}{book}{
      author={Poincar\'e, Henri},
       title={Figures d'\'equilibre d'une masse fluide},
     address={Paris},
        date={1903},
}

\bib{Rabinowitz1992}{article}{
      author={Rabinowitz, Paul~H.},
       title={On a class of nonlinear {S}chr\"odinger equations},
        date={1992},
        ISSN={0044-2275},
     journal={Z. Angew. Math. Phys.},
      volume={43},
      number={2},
       pages={270\ndash 291},
      
}

\bib{Sarvas1972}{article}{
      author={Sarvas, Jukka},
       title={Symmetrization of condensers in {$n$}-space},
        date={1972},
     journal={Ann. Acad. Sci. Fenn. Ser. A I},
      number={522},
       pages={44},
      
}

\bib{Schochet_CPDE_95}{article}{
      author={Schochet, Steven},
       title={The weak vorticity formulation of the {$2$}-{D} {E}uler equations
  and concentration-cancellation},
        date={1995},
        ISSN={0360-5302},
     journal={Comm. Partial Differential Equations},
      volume={20},
      number={5-6},
       pages={1077\ndash 1104},
      
}

\bib{Szego1930}{article}{
      author={Szeg{\H o}, G.},
       title={{\" U}ber einige extremalaufgaben der potentialtheorie},
        date={1930},
     journal={Math. Z.},
      volume={31},
       pages={583\ndash 593},
}

\bib{Turkington1983}{article}{
      author={Turkington, Bruce},
       title={On steady vortex flow in two dimensions. {I}, {II}},
        date={1983},
        ISSN={0360-5302},
     journal={Comm. Partial Differential Equations},
      volume={8},
      number={9},
       pages={999\ndash 1030, 1031\ndash 1071},
      
}

\bib{Vuorinen1988}{book}{
      author={Vuorinen, Matti},
       title={Conformal geometry and quasiregular mappings},
      series={Lecture Notes in Mathematics},
   publisher={Springer-Verlag},
     address={Berlin},
        date={1988},
      volume={1319},
        ISBN={3-540-19342-1},
      
}

\bib{Willem1996}{book}{
      author={Willem, Michel},
       title={Minimax theorems},
      series={Progress in Nonlinear Differential Equations and their
  Applications, 24},
   publisher={Birkh\"auser Boston Inc.},
     address={Boston, MA},
        date={1996},
        ISBN={0-8176-3913-6},
      
}

\bib{Wolontis1952}{article}{
      author={Wolontis, Vidar},
       title={Properties of conformal invariants},
        date={1952},
        ISSN={0002-9327},
     journal={Amer. J. Math.},
      volume={74},
       pages={587\ndash 606},
      
}

\bib{Yang1991}{article}{
      author={Yang, Jianfu},
       title={Existence and asymptotic behavior in planar vortex theory},
        date={1991},
     journal={Math.\ Models Methods Appl.\ Sc.},
      volume={1},
      number={4},
       pages={461\ndash 475},
}

\bib{Yang1995}{article}{
      author={Yang, Jianfu},
       title={Global vortex rings and asymptotic behaviour},
        date={1995},
        ISSN={0362-546X},
     journal={Nonlinear Anal.},
      volume={25},
      number={5},
       pages={531\ndash 546},
      
}

\end{biblist}
\end{bibdiv}

\end{document}